\newif\iffigure
\newif\ifcycle
\title{$H$-games played on vertex sets of random graphs}
\author{Gal Kronenberg}
\thanks{School of Mathematical Sciences, Raymond and Beverly Sackler Faculty of Exact Sciences, Tel Aviv University, Tel Aviv, 6997801, Israel. Email: galkrone@mail.tau.ac.il.}
\author {Adva Mond}
\thanks{School of Mathematical Sciences, Raymond and Beverly Sackler Faculty of Exact Sciences, Tel Aviv University, Tel Aviv, 6997801, Israel. Email: advamond@mail.tau.ac.il.}
\author {Alon Naor}
\thanks{School of Mathematical Sciences, Raymond and Beverly Sackler Faculty of Exact Sciences, Tel Aviv University, Tel Aviv, 6997801, Israel. Email: alonnaor@tau.ac.il.}
\date{\today}
\def\subsection{\@startsection{subsection}{2}%
  \z@{.5\linespacing\@plus.7\linespacing}{.3\linespacing}%
  {\normalfont\bfseries}}
\theoremstyle{plain}
\newtheorem{ourtheorem}{Theorem}[]
\newtheorem{ourcorollary}[ourtheorem]{Corollary}
\newtheorem{ourconjecture}[ourtheorem]{Conjecture}
\newtheorem{theorem}{Theorem}[section]
\newtheorem{lemma}[theorem]{Lemma}
\newtheorem{claim}[theorem]{Claim}
\newtheorem{proposition}[theorem]{Proposition}
\newtheorem{observation}[theorem]{Observation}
\newtheorem{corollary}[theorem]{Corollary}
\newtheorem{question}[theorem]{Question}
\theoremstyle{definition}
\newtheorem{remark}[theorem]{Remark}
\newtheorem{definition}[theorem]{Definition}
\newcommand{\tmi}{T_{\min}^{(i)}}
\newcommand{\tmij}{T_{\min}^{(i,j)}}
\newcommand{\ttt}{TT}
\newcommand{\ssi}[1]{\overrightarrow{s_{#1}}}
\newcommand{\sss}{\overrightarrow{s}}
\newcommand{\yqtss}{Y_{q,t,\sss}}
\newcommand{\xqtss}{X_{q,t,\sss}}
\newcommand{\xqts}{X_{q,t,s}}
\newcommand{\yqts}{Y_{q,t,s}}
\newcommand{\zqts}{Z_{q,t,s}}
\newcommand{\z}[2]{Z_{#1,t,#2}}
\newcommand{\lnn}{\ln n}
\newcommand{\GG}{\Gamma}
\newcommand{\LL}{\Lambda}
\newcommand{\gnp}{G(n,p)}
\newcommand{\ggn}[1]{G \sim G(n,#1)}
\newcommand{\ggnp}{\ggn{p}}
\newcommand{\whp}{w.h.p.\ }
\newcommand{\prgg}{\Pr[\GG \sbst G]}
\newcommand{\md}[2]{\equiv #1~(\mathrm{mod}~#2)}
\newcommand{\A}{\mathcal A}
\newcommand{\B}{\mathcal B}
\newcommand{\cH}{\mathcal H}
\newcommand{\cP}{\mathcal P}
\newcommand{\F}{\mathcal F}
\newcommand{\G}{\mathcal G}
\newcommand{\E}{\mathcal E}
\newcommand{\W}{\mathcal W}
\newcommand{\D}{\mathcal D}
\newcommand{\C}{\mathcal C}
\newcommand{\T}{\mathcal T}
\newcommand{\cS}{\mathcal S}
\newcommand{\eps}{\varepsilon}
\newcommand{\boldm}[1] {\mathversion{bold}$#1$\mathversion{normal}}
\newcommand{\case}[2] {~\\ \noindent\textbf{Case \boldm{#1}: \boldm{#2}.\\*}}
\newcommand{\stm}{\setminus}
\newcommand{\sbst}{\subseteq}
\begin{document}

\begin{abstract}
We introduce a new type of positional games, played on a vertex set of a graph. Given a graph $G$, two players claim vertices of $G$, where the outcome of the game is determined by  the subgraphs of $G$ induced by the vertices claimed by each player (or by one of them). We study classical positional games such as Maker-Breaker, Avoider-Enforcer, Waiter-Client and Client-Waiter games, where the board of the game is the vertex set of the binomial random graph $G\sim G(n,p)$. Under these settings, we consider those games where the target sets are the vertex sets of all graphs containing a copy of a fixed graph $H$, called $H$-games, and focus on those cases where $H$ is a clique or a cycle.
We show that, similarly to the edge version of $H$-games, there is a strong connection between the threshold probability for these games and the one for the corresponding vertex Ramsey property (that is, the property that every $r$-vertex-coloring of $G(n,p)$ spans a monochromatic copy of $H$). Another similarity to the edge version of these games we demonstrate, is that the games in which $H$ is a triangle or a forest present a different behavior compared to the general case.
\end{abstract}

\maketitle

\section{Introduction}\label{sec:intro}
Positional games are finite, perfect information games with no chance moves, played by two players $\A$ and $\B$ (which usually have more informative names, in correspondence to the particular game in discussion). In its most general form, a  positional game is a 4-tuple $(a,b,X,\mathcal F)$, where $a$ and $b$ are two positive integers (called the \emph{bias} of the players), $X$ is a finite set (called the \textit{board}) and ${\mathcal F} \subseteq 2^X$ is a
family of subsets of $X$. The pair $(X,\mathcal F)$ is referred to as the \textit{hypergraph of the game}, and every member in $\mathcal F$ is called a \textit{target set} (according to the game in discussion we sometimes refer to the target sets as either \textit{winning sets} or \textit{losing sets}). The definition of the game is complete by identifying the first player to move (when this is relevant) and by specifying the winning criteria in the game.

The course of the game goes as follows: the two players alternately claim previously unclaimed elements of the board (each such element is called \emph{free}), until there are none left. In each round, $\A$ claims $a$ elements, and $\B$ claims $b$ elements. The last player to play may claim fewer elements than his bias, if not enough free elements remain. The most basic case is $a=b=1$, the so-called \emph{unbiased} game, while for all other choices of $a$ and $b$, the game is called \emph{biased}. Positional games have drawn much attention in the past decade, and numerous papers investigating them have been published. We refer the reader to the extremely thorough book on the subject by Beck~\cite{BeckBook}, and to the more recent book by Hefetz, Krivelevich, Stojakovi\'c and Szab\'o~\cite{HKSSbook}.

The positional games discussed in this paper cannot end in a draw. Hence, and given the nature of positional games in general, each such game must satisfy exactly one of the following: either $\A$ has a strategy to ensure his win (which works against \textbf{any} strategy of $\B$), or $\B$ has such strategy. Thus, we may (and systematically do) refer to every given positional game as either $\A$'s win or $\B$'s win.

It is natural to play positional games on the edge set of a graph $G$. In this case, $X = E(G)$ and $\F$ consists of all edge sets of subgraphs of $G$ satisfying some monotone increasing graph property. Such a property could be, for example, ``being connected and spanning" (the \emph{connectivity} game), ``containing a perfect matching" (the \emph{perfect matching} game), ``containing a Hamilton cycle" (the \emph{Hamiltonicity} game), ``containing a copy of a predetermined fixed graph $H$", and so on. The latter family of games is called $H$-games, and they are the subject of research of this paper.

It turns out that when considering $H$-games, different types of density parameters of graphs are crucial for the analysis. Thus, before continuing with the description of positional games, we define these density parameters in order to later state known and new results for $H$-games. Here, and for the remainder of this paper, for any given graph $H$ we use the standard notation $v(H) := |V(H)|$ and $e(H) := |E(H)|$.

\begin{definition}\label{def:densities}
For a graph $H$, the parameter $d(H) := e(H)/v(H)$ is called the \emph{density} of $H$, and $m(H) := \max\{d(H') \mid H'\subseteq H\}$ is the \emph{maximum density} of $H$. A graph $H$ is called \emph{strictly balanced} if $d(H) > d(H')$ for every $H' \subsetneq H$.

For $i\in \{1,2\}$, if $v(H)\geq i+1$ we define $d_i(H) := \frac{e(H)-i+1}{v(H)-i}$ to be the \emph{$i$-density} of $H$. Similarly to the previous definitions, we define $m_i(H) := \max\{d_i(H') \mid H'\subseteq H,\ v(H')\geq i+1 \}$ to be the \emph{maximum $i$-density} of $H$, and a graph $H$ is called \emph{strictly $i$-balanced} if $d_i(H) > d_i(H')$ for every $H' \subsetneq H$.
%
\end{definition}

Let us now define the main type of positional games we investigate in this paper, called \textit{Maker-Breaker games}. In the $(a:b)$ Maker-Breaker $(X,\mathcal F)$ game, the two players --- who are now called Maker and Breaker --- take turns in claiming the elements of $X$. In every round Maker claims $a$ board elements and Breaker claims $b$ elements. Maker wins the game if he occupies all elements of some target set (a winning set in this case) by the end of the game; if he fails to do so, Breaker wins the game (so indeed, a draw is not possible).

Maker-Breaker games in general, and especially those who are played on graphs, are probably the most studied family of positional games. The most natural choice for the graph whose edges the players claim is $K_n$, the complete graph on $n$ vertices. As it turns out, many natural games played on it, such as the connectivity, perfect matching and Hamiltonicity games, are drastically in favor of Maker: he wins these games in their unbiased version in (almost) the minimal number of moves required to fully claim a winning set (for more details and for similar results, see~\cite{HKSS2009b,HS,Lehman}). Therefore, in order to even out the odds and make these games more interesting, two main approaches are considered, many times simultaneously.

The first is to give Breaker a larger bias than that of Maker, and typically the $(1:b)$ version is considered. An important (and quite easy for observation) property of Maker-Breaker games is that they are \emph{bias monotone}: if Maker wins some game $\F$ with bias $(a:b)$, he also wins this game with bias $(a':b')$, for every $a' \geq a$ and $b' \leq b$. In other words, no player can be harmed by claiming more elements per move. This bias monotonicity enables the definition of the \emph{threshold bias}: for a given hypergraph $\F$, it is the unique integer $b^*$ for which Maker wins the $(1:b)$ game $\F$ if and only if $b < b^*$. For example, it was shown~\cite{CE,GS,K} that for the connectivity, perfect matching and Hamiltonicity games played on $K_n$, the threshold bias is $(1+o(1))n/\ln n$. Bednarska and  \L uczak analyzed $H$-games in~\cite{BL} and showed that for any graph $H$ (under the technical assumption that $H$ contains at least two edges), the threshold bias for the $H$-game played on $K_n$ satisfies $b^* = \Theta(n^{1/m_2(H)})$.

The other main approach towards balancing Maker-Breaker games is to play on sparse graphs. Here the typical case study is that of random graphs, and specifically the binomial random graph $G\sim G(n,p)$, a graph on $n$ vertices where each of the $\binom{n}{2}$ potential edges is included with probability $p$, independently of all other edges. It is well known by the seminal (and more general) result of Bollob\'as and Thomason~\cite{BT} that every monotone increasing graph property $\cP$ has a \emph{threshold probability}. That is, a function $p^* = p^*(n)$ which satisfies
\begin{equation*}
\lim_{n\to \infty} \Pr\left[G\sim G(n,p) \in \cP \right] =
\begin{cases}
1 &p=\omega(p^*),\\
0 &p=o(p^*).
\end{cases}
\end{equation*}

%
%
Now, given a monotone increasing graph property and the bias of the players, it is easy to see that ``being Maker's win" is a monotone increasing graph property as well (Maker's winning strategy for a graph $G$ is applicable to any graph containing $G$ on the same vertex set). This allows us to consider the \emph{threshold probability of the game}, i.e., look for the turning point of the game, where $\ggnp$ goes through a phase transition, from being \whp (with high probability, that is, with probability tending to 1 as $n$ tends to infinity) Breaker's win to being \whp Maker's win.

The first to study positional games on random graphs were Stojakovi\'c and Szab\'o \cite{PGonRandomGraphs}, who analyzed Maker-Breaker games played on $E(G)$ where $G\sim G(n,p)$. In that paper they investigated the threshold probability of the unbiased connectivity, perfect matching, Hamiltonicity, and $K_k$ games. They also considered the biased $(1:b)$ versions of these games, and provided bounds for the threshold bias $b^*$ as a function of $p$. Since then, much progress has been made in understanding positional games played on the edge set of $G\sim G(n,p)$ (see, e.g., \cite{BFHK,CFKL,DK,FGKN,MullerStojakovic,HgameRandomGraphs}). In particular, the study of Maker-Breaker $H$-games in this setting was continued by M\"uller and Stojakovi\'c  \cite{MullerStojakovic}, who found the threshold probability for the unbiased $K_k$-game where $k \ge 4$, by giving a lower bound on the threshold probability matching the upper bound given in~\cite{PGonRandomGraphs}. For the $K_3$-game they provided a \emph{hitting time} result, thus achieving a better understanding of this game, whose threshold probability was already determined in~\cite{PGonRandomGraphs}. We discuss the $K_3$-game and the meaning of hitting time results more thoroughly in Section~\ref{sec:GlobLoc}. In~\cite{HgameRandomGraphs}, Nenadov, Steger, and Stojakovi\'c solved the unbiased Maker-Breaker $H$-game for a large class of graphs $H$. Their main result is the following.

\begin{theorem}[Theorem 2 in \cite{HgameRandomGraphs}]\label{thm:edgeHgame}
    Let $H$ be a graph for which there exists $H'\subseteq H$ such that $d_2(H')=m_2(H)$, $H'$ is strictly 2-balanced  and it is not a tree or a triangle. Then there exist constants $c,C>0$ such that
    $$ \lim_{n\to \infty} \Pr\left[G\sim G(n,p) \text{ is Maker's win in the $(1:1)$ $H$-game} \right] = \begin{cases}
    1 &p\geq Cn^{-1/m_2(H)},\\ 0 &p\leq cn^{-1/m_2(H)}.
    \end{cases}$$
\end{theorem}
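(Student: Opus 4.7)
My approach to proving Theorem~\ref{thm:edgeHgame} would be to attack the two bounds separately, treating the upper bound (Maker's win above the threshold) and the lower bound (Breaker's win below it) with quite different tools.

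For the lower bound, the plan is a potential-function argument in the spirit of Erd\H{o}s--Selfridge, tailored to the sparseness of $G(n,p)$ below the threshold. When $p \le c n^{-1/m_2(H)}$, a standard first/second moment calculation shows that \whp every small subgraph of $G(n,p)$ has close to the expected density, and in particular copies of $H'$ (the critical strictly $2$-balanced subgraph) are few and pairwise share at most a single edge in most cases. Breaker then plays to prevent any copy of $H'$ from being completed in Maker's graph: weighting each "not yet blocked" copy by $2^{-(e(H')-k)}$, where $k$ is the number of its edges already claimed by Maker, Breaker can always respond to keep the total weight below $1$. The hypothesis that $H'$ is strictly $2$-balanced is what guarantees that the initial total weight is $o(1)$ at the threshold and that dangerous configurations (two partially-built copies overlapping in many edges) are negligible.

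For the upper bound, I would follow a Bednarska--\L uczak style random-strategy argument. Maker plays essentially uniformly at random among the free edges of $G(n,p)$, stopping after $\Theta(n^2 p)$ moves. The claim is that Maker's graph then stochastically dominates a random subgraph of $G(n,p)$ of density $q = \Theta(1)$ times $p$. The heart of the argument is a KŁR-type embedding lemma for sparse random graphs: any sufficiently dense subgraph of $G(n,p)$ with $p \ge C n^{-1/m_2(H)}$ contains a copy of $H'$ (and hence of $H$) \whp The resolved KŁR conjecture, via Conlon--Gowers / Schacht or the hypergraph container method, supplies exactly this when $H'$ is strictly $2$-balanced and $m_2(H') = m_2(H)$, giving the required copy regardless of how Breaker's $\Theta(n^2 p)$ edges were placed.

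The main obstacle will be the upper bound, specifically the step that translates Maker's random play into a statement about a random-like subgraph of $G(n,p)$ on which KŁR can be applied. One must condition on the good event that $G(n,p)$ is "regular enough" (bounded maximum degree, no very dense small subgraphs, the expected count of copies of $H'$ on every linear-sized vertex subset, etc.), and verify that Maker's random play is not derailed by Breaker's adaptive choices; this typically uses a concentration/switching argument to compare Maker's real graph with a truly random subgraph. The exclusions of trees and triangles are exactly the obstructions to this scheme: for trees the relevant density is $m_1$ rather than $m_2$, and for $K_3$ the KŁR-based lower bound on the embedding count degenerates, which is why the sharper hitting-time analysis of~\cite{MullerStojakovic,PGonRandomGraphs} is required in that case and must be quoted separately rather than reproved here.
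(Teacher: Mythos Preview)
This theorem is not proved in the present paper at all: it is quoted verbatim as ``Theorem~2 in~\cite{HgameRandomGraphs}'' (Nenadov, Steger and Stojakovi\'c), and the paper only uses it as background to motivate the vertex analogue. There is therefore no ``paper's own proof'' to compare your proposal against.

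That said, the paper does comment on how the 1-statement can be obtained, and this is worth contrasting with your plan. Right after stating Theorem~\ref{thm:edgeHgame} the authors observe that, when Maker moves first, the 1-statement follows almost for free from the R\"odl--Ruci\'nski edge-Ramsey theorem (Theorem~\ref{thm:edgeRamsey}) by strategy stealing: once $p \ge Cn^{-1/m_2(H)}$, every 2-colouring of the edges yields a monochromatic $H$, so the first player can guarantee one in his own colour. This is far simpler than the Bednarska--\L uczak random-play plus K\L R route you outline, and it does not require any embedding lemma or conditioning on regularity events. (The paper also notes that~\cite{HgameRandomGraphs} actually derives the 1-statement from a stronger internal claim, but the point stands that your machinery is heavier than necessary.)

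For the 0-statement your Erd\H{o}s--Selfridge potential sketch is the part most likely to run into trouble. At $p = cn^{-1/m_2(H)}$ the expected number of copies of $H'$ is $\Theta(n^{v(H')}p^{e(H')})$, which is polynomial in $n$, not $o(1)$; so the initial potential $\sum 2^{-e(H')}$ is not small, and the standard criterion does not apply directly. The actual argument in~\cite{HgameRandomGraphs} is structural rather than potential-based: one analyses the possible configurations of overlapping $H'$-copies in the sparse random graph and provides Breaker with an explicit (pairing/deletion-type) strategy on each component --- exactly the philosophy this paper adapts to the vertex setting in Sections~\ref{sec:gnrlH}--\ref{sec:cycle}. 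If you want to reconstruct a proof, that is the template to follow rather than a weight function.
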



This result is very strongly correlated with a result concerning the following \emph{edge Ramsey property}: for graphs $G,H$ and an integer $r\geq 2$, let $G \to (H)^e_r$ be the property that every $r$-edge-coloring of $G$ yields a monochromatic $H$-copy. For $\ggnp$ we have the following.

\begin{theorem}[R\"odl and Ruci\'nski \cite{RR1,RR2,RR3}]\label{thm:edgeRamsey}
    Let $r\geq 2$ be an integer and let $H$ be a graph which is not a forest of stars (and not a path of length 3 if $r=2$). Let $G\sim G(n,p)$. Then there exist constants $c,C\geq 0$ such that
    $$ \lim_{n\to \infty} \Pr\left[G \to (H)^e_r \right] = \begin{cases}
    1 &p\geq Cn^{-1/m_2(H)},\\ 0 &p\leq cn^{-1/m_2(H)}.
    \end{cases}$$
\end{theorem}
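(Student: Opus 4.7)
The plan is to prove the $1$-statement and the $0$-statement of the threshold separately. For the $1$-statement ($p \geq Cn^{-1/m_2(H)}$ implies $G \to (H)^e_r$ w.h.p.), I would use the sparse regularity method. Apply a sparse version of Szemer\'edi's regularity lemma to a typical $G \sim G(n,p)$, obtaining a partition of $V(G)$ into constantly many parts such that almost all induced bipartite pairs are $(\eps,p)$-regular. Any $r$-edge-coloring of $G$ induces on each such pair a majority color of density at least $p/r$. A Ramsey-type pigeonhole on the reduced (weighted) graph then yields a monochromatic ``skeleton'' of regular pairs arranged in the shape of $H$. The key input is a sparse embedding/counting lemma -- the K\L R conjecture, settled in full generality by Balogh--Morris--Samotij and by Saxton--Thomason via the container method -- which certifies that a typical sparse $(\eps,p)$-regular structure contains essentially the number of $H$-copies predicted by the dense counting lemma, producing a monochromatic copy of $H$.

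For the $0$-statement I would construct, w.h.p., an $r$-edge-coloring of $\ggnp$ with $p \leq cn^{-1/m_2(H)}$ that avoids a monochromatic copy of $H$. The standard route is a two-phase procedure. First, a deletion step removes a small set of edges so that no small subgraph of the remainder has $2$-density exceeding $m_2(H)$; at the chosen $p$, a first-moment calculation keyed to the strictly $2$-balanced subgraph $H^*\subseteq H$ realizing $m_2(H)$ shows that the number of copies of $H^*$ is of order $c^{e(H^*)-1}|E(G)|$, so a suitable deletion affects only a vanishing fraction of the edges. Second, color the remaining edges uniformly at random with $r$ colors. Each bad event ``some fixed $H$-copy is monochromatic'' has probability $r^{1-e(H)}$, and after the deletion the dependency among bad events is controlled because small dense configurations have been eliminated; the Lov\'asz Local Lemma then supplies a proper $r$-coloring, which is extended arbitrarily to the deleted edges (with a bit of extra care to handle $H$-copies that touch them) to cover all of $\ggnp$.

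The main technical obstacle is the $1$-statement: dense regularity gives no information about the handful of expected $H$-copies lying inside a sparse regular pair, and ruling out rare pathological configurations is precisely the content of the K\L R conjecture, whose proof is the deep ingredient of the whole argument. The excluded cases (forests of stars in general, and the path of length $3$ for $r=2$) must be set aside because for them $m_2(H) \leq 1$, causing both the counting bound and the Local-Lemma calibration to degenerate; the correct threshold there is governed by simpler appearance phenomena rather than by the density $m_2(H)$.
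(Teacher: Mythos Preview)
The paper does not prove this theorem; it is quoted as a known result of R\"odl and Ruci\'nski and used only as background to motivate the analogy between edge-Ramsey thresholds and the threshold for Maker-Breaker $H$-games. There is therefore nothing in the paper to compare your proposal against.

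A few remarks on the sketch nonetheless. Your $1$-statement route via sparse regularity together with the K\L R conjecture is a legitimate modern proof, but it is anachronistic relative to the cited references: the K\L R conjecture was settled roughly two decades after the original R\"odl--Ruci\'nski papers, which proceed instead by an intricate iterated argument combining regularity with direct density and supersaturation bounds. Your $0$-statement outline has a genuine soft spot: after you delete edges to destroy dense local configurations and apply the Local Lemma to the remainder, extending the coloring ``arbitrarily'' to the deleted edges can create monochromatic $H$-copies through those very edges, and ``a bit of extra care'' hides the real work. The actual $0$-statement proofs are more structural --- one shows that for small enough $c$ the hypergraph of $H$-copies in $G(n,p)$ w.h.p.\ decomposes into pieces that can be explicitly $r$-colored --- which is rather close in spirit to the exploration/core arguments this paper carries out for its own $0$-statements.
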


The resemblance between the two theorems is not coincidental. In fact, if Maker moves first, the 1-statement of Theorem~\ref{thm:edgeHgame} can be proved almost directly from the 1-statement of Theorem~\ref{thm:edgeRamsey} (and even with the same constant $C$) by applying a simple \emph{strategy stealing} argument. Without going into much details, such an argument states that if the two players have the same bias, the first player can mimic (steal) any strategy of the second player with a slight modification, and thus can always do at least as well as the second player. In this case, since in the end of the unbiased game at least one of the players has an $H$-copy in his graph by the Ramsey property, Maker as a first player can ensure his graph does, and win (the authors of~\cite{HgameRandomGraphs}, however, deduced the 1-statement of Theorem~\ref{thm:edgeHgame} from some stronger claim they have in the paper).

\subsection{Our setting and first results}
Naturally, an equivalent of the above edge Ramsey property is the following \textit{vertex Ramsey property}: for graphs $G$ and $H$, and an integer $r$, let $G \to (H)^v_r$ be the property that every $r$-vertex-coloring of $G$ yields a monochromatic $H$-copy. For $\ggnp$ we have the analogue of Theorem~\ref{thm:edgeRamsey}.

\begin{theorem}[Theorem 1' in~\cite{LRV}]\label{thm:LRV}
    Let $r\geq 2$ be an integer and let $H$ be a graph with at least one edge (containing a path of length 3 if $r=2$). Let $G\sim G(n,p)$. Then there exist constants $c,C\geq 0$ such that
    $$ \lim_{n\to \infty} \Pr\left[G \to (H)^v_r \right] = \begin{cases}
    1 &p\geq Cn^{-1/m_1(H)},\\ 0 &p\leq cn^{-1/m_1(H)}.
    \end{cases}$$
\end{theorem}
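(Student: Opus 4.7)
My plan is to treat the two directions of Theorem~\ref{thm:LRV} separately: the $1$-statement via Janson's inequality and a union bound over large vertex subsets, and the more delicate $0$-statement via a random coloring combined with local corrections. Write $v=v(H)$ and $e=e(H)$. For the $0$-statement a monochromatic $H$-copy contains a monochromatic copy of any $H^*\sbst H$, so it suffices to find an $r$-coloring with no monochromatic $H^*$-copy for $H^*$ attaining $m_1(H)=e(H^*)/(v(H^*)-1)$; we may therefore assume in that direction that $H$ is strictly $1$-balanced.

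For the $1$-statement, every $r$-vertex-coloring of $V(G)$ has a color class $S$ with $|S|\geq n/r$, so it is enough to show that \whp every such $S$ spans an $H$-copy in $\ggnp$. For fixed $S$ let $X_S$ count $H$-copies inside $G[S]$; then $\mu:=\mathbb{E}[X_S]=\Theta(n^v p^e)$ and the Janson parameter decomposes by intersection type as
\[
\Delta=\sum_{J\sbst H,\ v(J)\geq 2}\Theta\!\left(n^{2v-v(J)}\,p^{2e-e(J)}\right).
\]
A direct computation gives $\mu^2/\Delta_J=\Theta(n^{v(J)-e(J)/m_1(H)})$, and the inequality $d_1(J)\leq m_1(H)$, built into the definition of $m_1$, forces each exponent to be at least $1$. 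Hence $\mu^2/\Delta=\Omega(n)$ at $p=Cn^{-1/m_1(H)}$, so Janson yields $\Pr[X_S=0]\leq\exp(-\Omega(n))$, and a union bound over the at most $2^n$ choices of $S$ closes this direction for $C$ sufficiently large.

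For the $0$-statement, I would exhibit an $r$-coloring with no monochromatic $H$-copy \whp via a random coloring with alterations. A uniformly random $r$-coloring leaves $\Theta(c^{e}\,r^{1-v}\,n)$ monochromatic $H$-copies in expectation, linear in $n$, so a first-moment argument alone fails. The key structural input at $p\leq cn^{-1/m_1(H)}$ is that (i) the expected number of $H$-copies through a fixed vertex is $\Theta(n^{v-1}p^e)=\Theta(1)$, and (ii) the expected number of pairs of $H$-copies sharing at least two vertices is $O(c^{2e}n)$ by the same computation as for $\Delta$ above, small for small $c$. Hence the hypergraph $\cH$ of $H$-copies on $V(G)$ is ``locally tree-like'': most of its hyperedges are pairwise near-disjoint. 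I would (i) remove the $o(n)$ vertices lying in abnormally many $H$-copies, (ii) randomly $r$-color the remainder, (iii) apply the Lov\'asz Local Lemma or an iterative greedy flip on the resulting low-degree sub-hypergraph of monochromatic copies to destroy all of them by recoloring a small correction set, and (iv) reinsert the discarded and flipped vertices one by one, each time choosing a color that avoids completing a monochromatic copy.

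The main obstacle is controlling this correction phase without cascading: each single recoloring may create new monochromatic copies through heavy or previously flipped vertices, and steps (iii)--(iv) must be carried out in an order for which the ``newly generated'' monochromatic copies can themselves be destroyed without further escalation. Making this precise uses the density bound $p\leq cn^{-1/m_1(H)}$ essentially, since it is exactly this bound that forces the $H^*$-copies in $\cH$ to have enough pairwise disjointness (in fact, an almost matching structure) for the repair procedure to terminate with a valid $r$-coloring whp.
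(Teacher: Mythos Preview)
The paper does not prove Theorem~\ref{thm:LRV}; it is quoted verbatim from \cite{LRV} and used as a black box (the paper only revisits the \emph{proof} of \cite{LRV} briefly in Section~\ref{sec:constants} to extract an explicit constant for Theorem~\ref{thm:RamseyVertex}). So there is no ``paper's own proof'' to compare against here.

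Evaluating your attempt on its merits: your $1$-statement is correct and is exactly the argument the paper alludes to in Section~\ref{sec:constants} via Theorem~\ref{thm:JLR}. The Janson computation $\mu^2/\Delta_J=\Theta\big(n^{v(J)-e(J)/m_1(H)}\big)\ge\Theta(n)$ is right, and taking $C$ large makes the implied constant beat $\ln 2$ for the union bound over~$2^n$ subsets.

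Your $0$-statement, however, is a plan rather than a proof, and you have correctly identified its own gap. After a uniformly random $r$-coloring the expected number of monochromatic $H$-copies is $\Theta(n)$ and the expected number through a fixed vertex is $\Theta(1)$, not $o(1)$; so neither a naive Local Lemma (which would require bounded dependency degree \emph{after} conditioning on the coloring) nor a greedy repair is justified as stated. Step~(iv) is the real problem: a removed ``heavy'' vertex may lie in many $H$-copies whose remaining $v-1$ vertices are already monochromatic in each of the $r$ colors, so no safe color exists for reinsertion. The proof in \cite{LRV} does not go through random coloring plus alteration; it is structural. One shows that at $p\le cn^{-1/m_1(H)}$ the hypergraph of $H$-copies \whp\ decomposes into pieces of bounded ``complexity'' (essentially, after stripping, each component is small or tree-like in a suitable sense), and then exhibits a valid $r$-coloring of each piece deterministically. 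Your outline does not supply this structural decomposition, and without it the cascading obstacle you name is genuine and unresolved.
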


It is thus interesting to ask whether a \textbf{vertex} version of Maker-Breaker games on random graphs presents a similar behavior to the vertex Ramsey property, in the same way the \textbf{edge} version of the game resembles the edge Ramsey property.

First, one has to define what a vertex version of the game would be. In this paper, we suggest the following setting. In a Maker-Breaker game on $V(G)$, the players alternately claim vertices of a graph $G$ according to their bias. For a graph property $\mathcal P$, Maker wins the game if the subgraph of $G$ induced by his vertices satisfies $\mathcal P$, otherwise Breaker wins. Note that in this setting, playing the game on the vertex set of a sparse graph is a very natural choice, since the case $G=K_n$ --- which is usually the most basic choice when the players claim edges --- is completely trivial: Maker's graph in the end of the game is always a clique on $n/(a+b)$ vertices, no matter how the players play.

It is important to note that for a fixed graph $H$, the \emph{vertex $H$-game} (that is, the $H$-game played on the vertices of a graph) is bias monotone, as claiming more vertices cannot harm Maker. This is not necessarily the case for other vertex games, see Section~\ref{sec:concluding} for more details. Furthermore, given a graph $H$ and an integer $b\geq 1$, ``being Maker's win in the $(1:b)$ vertex $H$-game" is a monotone increasing graph property. Thus we can study the threshold function for this game, namely the function $p^* = p^*(n,b,H)$ that satisfies
$$ \lim_{n\to \infty} \Pr\left[G\sim G(n,p) \text{ is Maker's win in the $(1:b)$ vertex $H$-game} \right] = \begin{cases}
1 &p=\omega(p^*),\\ 0 &p=o(p^*).
\end{cases}$$

For the remainder of this paper, $p^*_{b,H}$ stands for the threshold probability of being Maker's win in the $(1:b)$ vertex $H$-game played on $\ggnp$; we abbreviate to $p^*$ when there is no risk of confusion. Our main result in this paper is that the $(1:b)$ vertex $H$-game is indeed correlated with the aforementioned vertex Ramsey property whenever a subgraph of $H$ of maximal 1-density is either a clique or a cycle, where the only exception is that this subgraph is a triangle and the game is unbiased.

\begin{ourtheorem}\label{thm:main}
    Let $k,b$ be positive integers such that either $k\geq 4$, or $k=3$ and $b\geq 2$.
    Let $H$ be a graph for which there exists $H'\subseteq H$ such that $d_1(H')=m_1(H)$, and either
    $H'=K_k$ or $H'=C_k$. Then there exist constants $c,C > 0$ such that
    $$ \lim_{n\to \infty} \Pr\left[G\sim G(n,p) \text{ is Maker's win in the $(1:b)$ vertex $H$-game}\right] = \begin{cases}
    1 &p\geq Cn^{-1/m_1(H)},\\ 0 &p\leq cn^{-1/m_1(H)}.
    \end{cases}$$
\end{ourtheorem}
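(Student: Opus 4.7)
My plan is to prove the $0$- and $1$-statements separately, mirroring how the edge Ramsey Theorem~\ref{thm:edgeRamsey} drives the edge version Theorem~\ref{thm:edgeHgame}, but using the vertex Ramsey Theorem~\ref{thm:LRV} as input. Throughout I exploit the containment $H' \subseteq H$: every copy of $H$ in $G$ contains a copy of $H'$, so for the $0$-statement it suffices for Breaker to occupy at least one vertex of every $H'$-copy, and for the $1$-statement it suffices for Maker to build an $H'$-copy sitting inside some $H$-copy of $G$. The threshold $n^{-1/m_1(H)} = n^{-(v(H')-1)/e(H')}$ is exactly the threshold appearing in Theorem~\ref{thm:LRV}.

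For the \textbf{1-statement}, I split on the bias. When $b=1$ (so necessarily $k\geq 4$), Theorem~\ref{thm:LRV} with $r=2$ gives $G\sim G(n,p)\to (H)^v_2$ w.h.p. for $p\geq Cn^{-1/m_1(H)}$; since the final partition of $V(G)$ between the two players must contain a monochromatic copy of $H$, a standard strategy-stealing argument with Maker moving first yields Maker the win. When $b\geq 2$ I apply Theorem~\ref{thm:LRV} with $r=b+1$ and use a Bednarska--\L uczak-style random strategy: Maker claims a uniformly random free vertex in each round, so his final vertex set $V_M$ has size $\approx n/(b+1)$. A first-moment computation shows that the expected number of $H$-copies with all vertices in $V_M$ is of order $n^{v(H)-e(H)/m_1(H)}(b+1)^{-v(H)} = \Omega(n)$, and a second-moment argument for $H'$-copies in $G[V_M]$ (justified by the strict $1$-balance of $K_k$ and $C_k$) then shows that $G[V_M]$ contains a copy of $H$ with probability bounded away from zero against any Breaker strategy; derandomisation produces a deterministic winning strategy.

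For the \textbf{0-statement}, at $p=cn^{-1/m_1(H)}$ with $c$ small, the expected number of $H'$-copies in $G(n,p)$ is $\Theta(c^{e(H')}n)$, and a standard moment computation exploiting the strict $1$-balance of $H'$ shows that w.h.p.\ the ``bad core'' of vertices lying in two or more $H'$-copies has size $o(n)$, while the remaining $H'$-copies are mutually vertex-disjoint. Breaker's strategy is then a pairing move on each disjoint $H'$-copy together with preemptive claims covering the bad core, using the bias $b$ to absorb the small overlaps and stay ahead of Maker's threats. Destroying every $H'$-copy prevents Maker from ever completing an $H$-copy, since every $H$-copy contains an $H'$-copy. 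The excluded case $H'=K_3$, $b=1$ is precisely the regime in which a single Breaker move per round cannot absorb the triangle overlaps, which explains the hypothesis $b\geq 2$ when $k=3$.

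The \textbf{main obstacle} I expect is the biased half of the 1-statement: Breaker's adaptive play biases the conditional distribution of Maker's random choices, so $V_M$ is \emph{not} uniformly distributed among size-$n/(b+1)$ subsets of $V(G)$. The technical heart is therefore a coupling or conditional second-moment argument showing that the count of $H$-copies in $G[V_M]$ remains positive with probability bounded away from zero regardless of Breaker's play. A secondary difficulty is the $0$-statement analysis when $H'=K_3$, where triangles may share edges and produce dense clusters that Breaker must neutralise simultaneously using his bias-$b$ moves.
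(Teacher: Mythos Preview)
Your $1$-statement plan is far more complicated than necessary, and your ``main obstacle'' is illusory. The paper states Theorem~\ref{thm:RamseyVertex}: for $p\ge Cn^{-1/m_1(H)}$, w.h.p.\ \emph{every} subset of $V(G)$ of size $\lfloor n/(b+1)\rfloor$ spans an $H$-copy. Since Maker ends the game holding exactly such a subset regardless of how either player moves, he wins automatically. There is no need for strategy stealing (which in any case only handles $b=1$ with Maker moving first), no random Bednarska--\L uczak play, no second-moment derandomisation, and no worry about Breaker's adaptivity biasing the distribution of $V_M$. The entire $1$-statement is two lines once you invoke Theorem~\ref{thm:RamseyVertex}.

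Your $0$-statement sketch contains a genuine structural error. At $p=cn^{-1/m_1(H')}$ the expected number of $H'$-copies is $\Theta(c^{e(H')}n)$, but the expected number of \emph{pairs} of $H'$-copies sharing exactly one vertex is also $\Theta(c^{2e(H')}n)$: for $H'=K_k$ one checks $n^{2k-1}p^{2\binom{k}{2}}=\Theta(n)$. Hence the $H'$-copies are \emph{not} mostly vertex-disjoint; they assemble into $H'$-chains of every fixed length (and, more carefully, of length up to order $\log n$), and the set of vertices lying in two or more copies has size $\Theta(n)$, not $o(n)$. A reactive pairing on isolated copies plus ``preemptive claims on a small bad core'' therefore cannot work as stated. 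The paper's route is substantially more delicate: a deletion algorithm strips bad vertices, bad edges, bad $(b{+}1)$-sets and small components until one reaches an $(H',b)$-stable core $G^*$, with the property that any winning strategy for Breaker on $V(G^*)$ extends to $V(G)$. One then runs an exploration process on each component of $G^*$, bounding how many ``internal'' edges and internal $H'$-copies can occur without creating a subgraph too dense to appear in $G(n,p)$; this forces $G^*$ to be empty w.h.p.\ when $H'=K_k$ with $k\ge4$, and when $H'=K_3$ with $b\ge2$, while for $H'=C_k$ it leaves at most three explicit ``feasible'' components (only for $k=4$), each of which Breaker wins by a concrete pairing. The exclusion of $(k,b)=(3,1)$ is not about ``absorbing overlaps with bias $b$'' but about the existence of a specific $(K_3,1)$-stable graph (the double diamond) on which Maker wins locally.
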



As mentioned, the unbiased game for $H'=K_3$ is excluded from Theorem~\ref{thm:main}. The reason is that if $H=K_3$, Maker wins the $(1:1)$ game playing only on the vertices of a certain fixed graph, which appears in $\ggnp$ \whp for much smaller edge densities than those given in Theorem~\ref{thm:main}. In particular, show that $p^*_{1,K_3} = n^{-7/10} = o(n^{-1/m_1(K_3)})$, as $m_1(K_3) = 3/2$. We provide full details about this case in the next subsection.

Similarly to the edge version of the game, if the game is unbiased and Maker moves first, the 1-statement of the theorem follows from the 1-statement of Theorem~\ref{thm:LRV} by applying strategy stealing. However, in order to prove the 1-statement of the theorem in its full generality we need something stronger.
\begin{theorem}\label{thm:RamseyVertex}
	 Let $r\geq 2$ and let $H$ {be a graph with at least one edge (containing a path of length 3 if $r=2$)}. Then  there exists a constant $C > 0$ such that for $G\sim G(n,p)$, if $p\geq Cn^{-1/m_1(H)}$, then \whp  every subset of $V(G)$ of size $\lfloor n/r \rfloor$ spans an $H$-copy.
\end{theorem}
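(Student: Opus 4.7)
The plan is to prove the theorem by combining Janson's inequality with a union bound over all vertex subsets of size $\lfloor n/r \rfloor$. Fix such a subset $S$ with $|S| = m := \lfloor n/r \rfloor$, so $G[S] \sim G(m,p)$, and let $X_S$ count the labelled copies of $H$ in $G[S]$. Then $\mu := \mathbb{E}[X_S] = \Theta(m^{v(H)} p^{e(H)})$, and splitting Janson's dependency sum according to the isomorphism type of the common subgraph $F = H_1 \cap H_2$ gives
$$\Delta \;=\; \sum_{\substack{F \subseteq H \\ e(F) \geq 1}} \Theta\!\bigl(m^{2v(H)-v(F)} p^{2e(H)-e(F)}\bigr).$$
A standard manipulation then yields $\mu^2/(\mu+\Delta) = \Omega\bigl(\min_{F \subseteq H,\, e(F)\geq 1} m^{v(F)} p^{e(F)}\bigr)$, so Janson produces a bound of the form $\Pr[X_S = 0] \leq \exp\!\bigl(-\Omega(\min_F m^{v(F)} p^{e(F)})\bigr)$.

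The crucial estimate is that for each subgraph $F \subseteq H$ with $e(F) \geq 1$, substituting $p \geq C n^{-1/m_1(H)}$,
$$m^{v(F)} p^{e(F)} \;\geq\; \frac{C^{e(F)}}{r^{v(F)}}\cdot n^{\,v(F) - e(F)/m_1(H)}.$$
Since every $F \subseteq H$ with $v(F) \geq 2$ satisfies $d_1(F) = e(F)/(v(F)-1) \leq m_1(H)$ by the very definition of $m_1(H)$, the exponent of $n$ is at least $1$, so each such term is $\Omega(C^{e(F)} n)$. Because the minimizer $F^*$ has $e(F^*) \geq 1$, increasing $C$ directly amplifies the Janson exponent, and we obtain $\Pr[X_S = 0] \leq \exp(-\gamma(C,r)\cdot n)$ with $\gamma(C,r)\to\infty$ as $C\to\infty$ for each fixed $r$.

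Finally, there are $\binom{n}{m} \leq 2^{(\eta + o(1))n}$ subsets of $V(G)$ of size $m$, where $\eta = \eta(r) \in (0,1]$ is the binary entropy of $1/r$. Choosing $C$ large enough that $\gamma(C,r) > \eta \ln 2$ and summing the fixed-$S$ estimate over all subsets $S$ of size $m$ yields
$$\Pr\bigl[\exists\, S \subseteq V(G),\ |S|=m,\ G[S]\not\supseteq H\bigr] \;\leq\; \binom{n}{m}\, e^{-\gamma(C,r) n} \;=\; o(1),$$
as required. The main point is the exponent estimate in the middle paragraph: the definition of $m_1(H)$ is used precisely to ensure that every $F$ in the Janson dependency sum produces a term $\Omega(C^{e(F)} n)$, and the condition $e(F^*) \geq 1$ on the Janson minimizer is exactly what allows the $C$-amplification to defeat the $2^{\Theta(n)}$ enumeration of subsets.
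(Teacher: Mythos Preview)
Your proposal is correct and is exactly the argument the paper has in mind: the paper does not write out a proof of this theorem but says it ``follows easily from the proof of Theorem~\ref{thm:LRV} in~\cite{LRV}'' and omits the details; in Section~\ref{sec:constants} it sketches the same computation for $H=K_k$, invoking Janson's inequality (Theorem~\ref{thm:JLR}) on each $\lfloor n/r\rfloor$-subset and then taking a union bound, which is precisely your Janson-plus-union-bound argument with the key observation that $d_1(F)\le m_1(H)$ for every $F\subseteq H$ with $e(F)\ge 1$.
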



The proof of Theorem~\ref{thm:RamseyVertex} follows easily from the proof of Theorem~\ref{thm:LRV} in \cite{LRV}; we omit the straightforward details. In Section~\ref{sec:constants} we go into some more details as we better estimate the constant $C$ from the theorem in case $H$ is a clique. In any case, the proof of Maker's side in Theorem~\ref{thm:main} is now immediate.

\begin{proof}[Proof of the 1-statement of Theorem~\ref{thm:main}]
Consider a $(1:b)$ Maker-Breaker vertex $H$-game played on $\ggnp$. In the end of the game Maker's graph is spanned by a $\frac{1}{b+1}$-fraction of the vertices, no matter how he plays. By Theorem~\ref{thm:RamseyVertex}, if $p \ge Cn^{-1/m_1(H)}$, where $C$ is the constant from the theorem corresponding to $H$ and $r = b+1$, then \whp Maker's graph contains an $H$-copy.
\end{proof}

Note that in fact we got that the 1-statement of Theorem~\ref{thm:main} holds for \textbf{any} fixed graph $H$ and not only those specified in the theorem (if $H$ does not meet the requirements of Theorem~\ref{thm:RamseyVertex} then Maker's win is trivial). The proof of the 0-statement of Theorem~\ref{thm:main} will be presented in Sections~\ref{sec:clique} (cliques) and~\ref{sec:cycle} (cycles).

In contrast to the graphs specified in Theorem~\ref{thm:main}, the correlation between the Ramsey property and the game is not maintained in case of forests. Indeed, if $H$ is a forest, then $m_1(H) = 1$. Theorem~\ref{thm:LRV} therefore implies that the threshold function for the corresponding vertex Ramsey property is $p = 1/n$. However, the following theorem shows that the order of magnitude of the threshold function for the vertex $H$-game is significantly smaller.

%

\begin{ourtheorem}\label{thm:MBforest}
Let $H$ be a forest consisting of trees $T_1,\dots,T_k$, and let $b$ be a positive integer.
\begin{enumerate}
\item If $H$ is a tree, i.e. $k=1$, then there exists a tree $T$ such that Maker wins the $(1:b)$ $H$-game played on $V(T)$.
\item For any integer $k \ge 1$, and for every $1 \le i \le k$, let  $T_{\min}^{(i)}$ be a tree of minimal size such that Maker, as a first player, wins when playing the $(1:b)$ $T_i$-game on its vertex set. Let $T_{\max}$ be a tree of maximal size among all trees $T_{\min}^{(i)}$. Then $p^*_{b,H} = n^{-1/m(T_{\max})} = n^{-v(T_{\max})/e(T_{\max})}$.
\end{enumerate}
\end{ourtheorem}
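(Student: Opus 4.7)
The plan is to handle the two parts separately; throughout, set $v^* := v(T_{\max})$. For \emph{Part (1)} I would prove existence by a direct construction: fix a BFS enumeration $h_1,\ldots,h_{v(H)}$ of $V(H)$ from some chosen root, and let $T$ be the complete $K$-ary rooted tree of depth $v(H)$ with $K := bv(H)+1$. Maker greedily embeds $H$ into $T$ --- he claims the root of $T$ as $t_1$ (the image of $h_1$), and having already claimed $t_1,\ldots,t_j$ realising $h_1,\ldots,h_j$, he adds $h_{j+1}$ (whose parent in $H$ is some $h_i$ with $i \le j$) by claiming any unclaimed child of $t_i$ in $T$. Such a child exists, because $t_i$ has $K > bv(H)$ children while Breaker has made fewer than $bv(H)$ moves in total.

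\smallskip
For \emph{Part (2), $0$-statement}, suppose $p = o(n^{-v^*/(v^*-1)})$, and fix $i^*$ with $T_{\min}^{(i^*)} = T_{\max}$. A first-moment estimate shows that the expected number of $T_{\max}$-copies in $\gnp$ is $\Theta(n^{v^*} p^{v^*-1}) = o(1)$, and since also $p = o(n^{-1})$, \whp $\gnp$ is a forest every component of which has at most $v^* - 1$ vertices. My proposed Breaker strategy is a ``component response'': whenever Maker plays in a component $C$, Breaker uses as many of his $b$ moves as possible inside $C$, placing any leftover moves in arbitrary other components. Restricted to any fixed component $C$, the resulting play is a $(1:b)$-game on $V(C)$ with Maker moving first, possibly supplemented by extra Breaker moves that spilled in from neighbouring components; since $|V(C)| < v^* = v(T_{\min}^{(i^*)})$, the minimality of $T_{\min}^{(i^*)}$ together with bias monotonicity (extra Breaker moves only help Breaker) imply that Maker cannot build a $T_{i^*}$-copy inside $V(C)$. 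Because $T_{i^*}$ is connected, Maker never builds a $T_{i^*}$-copy at all, and in particular never builds~$H$.

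\smallskip
For \emph{Part (2), $1$-statement}, suppose $p = \omega(n^{-v^*/(v^*-1)})$. A first/second-moment computation shows that \whp $\gnp$ contains many copies of every $T_{\min}^{(i)}$ and retains a copy of each after the deletion of any $O(1)$-sized vertex subset. Maker's strategy is to play the $k$ tree-games sequentially: at stage $i$, writing $P_i$ for the (constant-size) set of vertices already played, he selects a copy $S_i$ of $T_{\min}^{(i)}$ in $\gnp$ with $V(S_i) \cap P_i = \emptyset$, and plays the $(1:b)$ $T_i$-game on $V(S_i)$ using his winning first-player strategy there. Breaker moves outside $V(S_i)$ are irrelevant to this local game: by bias monotonicity, a first-player winning strategy for $(1:b)$ on $V(S_i)$ remains winning when Breaker plays at most $b$ moves per round inside $V(S_i)$. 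After $k$ stages, Maker's vertex set contains vertex-disjoint copies of each $T_i$, that is, a copy of $H$.

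\smallskip
The most delicate step is the $0$-statement: one has to verify that the component-response strategy really does reduce the play in each component to a $(1:b)$-or-harder instance in which the minimality of $T_{\min}^{(i^*)}$ applies, paying careful attention to how the leftover Breaker moves spill over when a small component fills up mid-response.
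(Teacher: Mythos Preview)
Your proposal is correct and follows essentially the same route as the paper: a greedy embedding of $H$ into a large complete $d$-ary tree for Part~(1), a component-by-component Breaker strategy on small tree components for the $0$-statement of Part~(2), and a sequential Maker strategy on fresh copies of the $T_{\min}^{(i)}$ for the $1$-statement. One small slip to fix in the $0$-statement: the absence of $T_{\max}$-copies together with $p=o(n^{-1})$ does \emph{not} force every component to have at most $v^*-1$ vertices (a component could be some other tree on $v^*$ vertices); you need the first-moment bound for \emph{all} trees on $v^*$ vertices and all short cycles, which is precisely the content of Claim~\ref{cl:smallTrees} in the paper.
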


%
We conjecture that as in the edge version of the game, forests and triangles as subgraphs of maximum 1-density (or 2-density in the edge version) are the only exceptions for the very strong connection between the game and the Ramsey property.

\begin{ourconjecture}\label{conj:main}
Let $b\geq 1$ be an integer, and let $H$ be a graph for which there exists $H'\subseteq H$ such that $d_1(H')=m_1(H)$, $H'$ is strictly 1-balanced and is not a single edge, and in case $b=1$ also not a triangle.
Then there exist constants $c,C>0$ such that
$$ \lim_{n\to \infty} \Pr\left[G\sim G(n,p) \text{ is Maker's win in the $(1:b)$ vertex $H$-game}\right] = \begin{cases}
1 &p\geq Cn^{-1/m_1(H)},\\ 0 &p\leq cn^{-1/m_1(H)}.
\end{cases}$$
\end{ourconjecture}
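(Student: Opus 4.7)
The 1-statement requires no new argument. The proof of the 1-statement of Theorem~\ref{thm:main} in the excerpt nowhere uses that $H'$ is a clique or a cycle: it just feeds Theorem~\ref{thm:RamseyVertex} into the observation that Maker's final vertex set has size $\lfloor n/(b+1)\rfloor$, and concludes by the vertex Ramsey property that this set spans an $H$-copy whenever $p\geq Cn^{-1/m_1(H)}$ with $C$ large enough. So the 1-statement of Conjecture~\ref{conj:main} already follows from Theorem~\ref{thm:RamseyVertex}.

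For the 0-statement, fix a strictly 1-balanced $H'\subseteq H$ with $d_1(H')=m_1(H)$, outside the excluded cases, and set $p=cn^{-1/m_1(H)}$ with $c>0$ small. A routine first-moment calculation gives that the expected number of copies of $H'$ in $\gnp$ is $\Theta(n)$. The key consequence of strict 1-balancedness is that every proper subgraph $H''\subsetneq H'$ with at least two vertices satisfies $d_1(H'')<d_1(H')$; this gives, by another first-moment calculation, that the expected number of pairs of distinct $H'$-copies sharing more than one vertex is $o(n)$. Hence \whp\ the hypergraph $\mathcal{H}$ on $V(\gnp)$ whose hyperedges are the vertex sets of $H'$-copies has $\Theta(n)$ hyperedges, and all but $o(n)$ of them pairwise meet in at most one vertex. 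Since every $H$-copy contains an $H'$-copy, it suffices for Breaker to hit every hyperedge of $\mathcal{H}$.

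Breaker's plan, following the template of Sections~\ref{sec:clique} and~\ref{sec:cycle}, is two-layered. On the $o(n)$ ``bad'' hyperedges involved in heavy intersections, Breaker reserves a vanishing fraction of his moves to claim one vertex in each. On the remaining ``good'' hyperedges, which pairwise meet in at most one vertex, Breaker runs an online blocking strategy driven by a potential that weights, for each good copy $C$, how many of its vertices Maker already owns; whenever Maker brings some $C$ to within a single vertex of completion, Breaker uses his next $b$ moves to claim one of the remaining vertices of $C$. After standard Erd\H{o}s--Selfridge-style bookkeeping, feasibility reduces to bounding the number of distinct $H'$-copies that a single new Maker vertex can ``activate''; strict 1-balancedness and the near-disjointness of the good hyperedges give the needed bound. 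The excluded cases are exactly where this template fails: $H'=K_2$ offers no ``interior'' vertex to target, and for $H'=K_3$ with $b=1$ the small construction identified after Theorem~\ref{thm:main} shifts the threshold below $n^{-1/m_1(H)}$.

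The main obstacle is the online blocking step. In Sections~\ref{sec:clique} and~\ref{sec:cycle} the corresponding argument is genuinely tailored to $H'$: cliques admit a symmetric blocking scheme and cycles admit a pairing along the cycle, neither of which generalizes to arbitrary strictly 1-balanced $H'$. A uniform approach I would try is a greedy potential: Breaker always claims a free vertex maximizing a weighted count of the partial $H'$-copies in Maker's set that it would complete, with weights increasing as partial copies get closer to completion. Proving that this potential stays bounded throughout the game --- using strict 1-balancedness to control overlaps between partial copies --- is, I expect, the technical heart of any proof of Conjecture~\ref{conj:main}, and is where one either succeeds uniformly or is forced to split into the special cases handled in Theorem~\ref{thm:main}.
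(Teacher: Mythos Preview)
This statement is a \emph{conjecture} in the paper; the 0-statement is not proved there and is named as the main open problem in Section~\ref{sec:concluding}. Your handling of the 1-statement is correct and is exactly what the paper records in Remark~\ref{rmk:MakerSideConj}: Theorem~\ref{thm:RamseyVertex} plus the observation that Maker ends with $\lfloor n/(b+1)\rfloor$ vertices.

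For the 0-statement you give a plan rather than a proof, and you flag the gap yourself. Still, two points deserve comment. First, your proposed architecture is quite different from what the paper actually does in the cases it settles. Sections~\ref{sec:clique} and~\ref{sec:cycle} contain no Erd\H{o}s--Selfridge potential and no online blocking on the hypergraph of $H'$-copies. Instead, the paper runs the $(H,b)$ deletion algorithm of Section~\ref{sec:gnrlH} to pass to the core $G^*$ via Lemma~\ref{lem:BwinG'}, and then argues \emph{structurally}: the exploration process together with the density bound of Claim~\ref{cl:noNegativeExponent} (and its cycle analogue, Claim~\ref{Ap:cl:noNegativeExponentCk}) forces $G^*$ to be \whp empty, or to consist of a short explicit list of components on which a pairing strategy wins. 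The paper's own suggestion for the general conjecture (Section~\ref{sec:concluding}) is to push this deletion/exploration framework further, replacing the case analysis by a general structural statement about the surviving components---not a potential-function argument on the raw copy hypergraph.

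Second, your sketch has a concrete problem even before the acknowledged obstacle. The step where Breaker ``reserves a vanishing fraction of his moves'' for the $o(n)$ bad copies, and the step where he responds whenever a good copy reaches $v(H')-1$ Maker vertices, both assume Breaker can keep up. But a single Maker move at a vertex $v$ can push many good copies through $v$ to the $v(H')-1$ threshold simultaneously: since good copies through $v$ share only $v$, each such copy needs $v(H')-2$ earlier Maker vertices disjoint from those of the others, so the number of copies activated in one round can be of order (Maker's move count)$/(v(H')-2)$, which is $\Theta(n)$ late in the game and not controllable with $b$ responses. This is exactly why a naive last-moment blocking rule fails and why the paper's reduction to $G^*$---which removes bad pairs and small components \emph{before} any play begins---is the more promising route.
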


\begin{remark}\label{rmk:MakerSideConj}
    The 1-statement of Conjecture~\ref{conj:main} follows immediately from Theorem~\ref{thm:RamseyVertex}, as shown in the proof of the 1-statement of Theorem~\ref{thm:main}. Thus the interesting part of the conjecture is Breaker's side.
\end{remark}

Note that there are two types of strictly 1-balanced graphs not covered by Conjecture~\ref{conj:main}. In the terminology of the conjecture, observe that $H'$ is an edge if and only if $H$ is a forest, so this case is covered by Theorem~\ref{thm:MBforest}. In the next subsection we consider the unbiased triangle game.
It remains to deal with unbiased $H$-games where $H \neq K_3$ is a graph that satisfies $m_1(H)  = 3/2$, and every strictly $1$-balanced subgraph $H'\sbst H$ with $d_1(H') = 3/2$ is a triangle. Let $\mathcal H$ be the family of all such graphs and let $H\in \mathcal H$. By Corollary~\ref{cor:triangle11} (in the next subsection), if $p = o\left(n^{-7/10}\right)$, then \whp Breaker can prevent Maker from claiming a triangle, thus winning the $H$-game. This implies that $p^*_{1,H} = \Omega\left(n^{-7/10}\right)$. On the other hand, the fact that $p^*_{1,H} = O\left(n^{-2/3}\right)$ is an immediate corollary of Theorem~\ref{thm:RamseyVertex}. We show that if Conjecture~\ref{conj:main} is confirmed, then there exist infinitely many rational values $\alpha\in \left(\frac{10}{7}, \frac32 \right)$ for which there exists a graph $H\in \mathcal H$ such that $p^*_{1,H} = n^{-1/\alpha}$. We actually prove something more general.

%

\begin{ourtheorem}\label{thm:infamily}
Let $H'$ be a graph for which there exists $H''\subseteq H'$ such that $d_1(H'')=m_1(H')=\alpha$, $H''$ is strictly 1-balanced and is not a triangle.
Let $v\in V(H')$ and let $H$ be the graph obtained by connecting $v$ to a triangle via a path of length 4 (see Figure~\ref{fig:infFamily}).

\begin{itemize}
\item In case $\alpha > 10/7$ there exist positive constants $c,C$ such that
$$ \lim_{n\to \infty} \Pr\left[G\sim G(n,p) \text{ is Maker's win in the $(1:1)$ vertex $H$-game}\right] = \begin{cases}
1 &p\geq Cn^{-1/\alpha},\\ 0 &p\leq cn^{-1/\alpha},
\end{cases}$$
where the 0-statement holds if the 0-statement of Conjecture~\ref{conj:main} holds (and for the same $c$).
\item In case $\alpha \le 10/7$ we have
$$ \lim_{n\to \infty} \Pr\left[G\sim G(n,p) \text{ is Maker's win in the $(1:1)$ vertex $H$-game}\right] = \begin{cases}
1 &p = \omega(n^{-7/10}),\\ 0 &p = o(n^{-7/10}).
\end{cases}$$
\end{itemize}
\end{ourtheorem}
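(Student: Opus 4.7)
\emph{Overview.} My plan is to handle both cases in parallel, exploiting the fact that the boundary $\alpha = 10/7$ is precisely the value at which the two natural thresholds $n^{-1/\alpha}$ (for the vertex Ramsey property of $H'$) and $n^{-7/10}$ (for the unbiased vertex $K_3$-game) coincide. Above this boundary the $H'$-Ramsey threshold dominates; at or below it, the $K_3$-game threshold does. In each case, the 0-statement is deduced from a corresponding Breaker-winning result for a subgraph of $H$, and the 1-statement from a strengthened vertex Ramsey property $G \to (H)^v_2$ combined with strategy stealing (as in the proof of the 1-statement of Theorem~\ref{thm:main}).

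\emph{The 0-statements.} Since both $H' \sbst H$ and $K_3 \sbst H$, any Breaker strategy preventing Maker from inducing an $H'$-copy (resp.\ a $K_3$-copy) in his vertex set also prevents an $H$-copy. For $\alpha > 10/7$, the graph $H'$ satisfies the hypotheses of Conjecture~\ref{conj:main}: its subgraph $H''$ of maximum $1$-density $\alpha$ is by assumption strictly $1$-balanced and not a triangle, and since $\alpha > 1$ it is also not a single edge. The assumed 0-statement of Conjecture~\ref{conj:main} then delivers Breaker's win in the $(1{:}1)$ vertex $H'$-game at $p \le c n^{-1/\alpha}$, hence in the $H$-game. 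For $\alpha \le 10/7$, I invoke Corollary~\ref{cor:triangle11} instead: at $p = o(n^{-7/10})$, Breaker \whp wins the unbiased vertex $K_3$-game, and therefore the $H$-game.

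\emph{The 1-statements.} I aim to prove the stronger property that $G \to (H)^v_2$ \whp at the target density; Maker's win then follows by strategy stealing as recalled above. The difficulty is that when $\alpha < 3/2$ one has $m_1(H) = 3/2 > \alpha$, so a direct application of Theorem~\ref{thm:RamseyVertex} to $H$ would kick in only at the larger density $n^{-2/3}$. To beat this, I exploit the specific structure of $H$ as an $H'$-core with a pendant $6$-vertex, $7$-edge appendage attached at a single vertex. Fix $S \sbst V(G)$ with $|S| = \lfloor n/2 \rfloor$. Since the target density satisfies $p \ge \Omega(n^{-1/\alpha}) = \Omega(n^{-1/m_1(H')})$, Theorem~\ref{thm:RamseyVertex} applied to $H'$ together with a first-moment count shows that $G[S]$ \whp contains $\Omega(n)$ distinct $H'$-copies. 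For any vertex $w \in S$, the expected number of pendant path-triangle configurations rooted at $w$ with all other vertices in $S$ is $\Theta(|S|^6 p^7)$. Consequently, the expected number of pairs $(C, T)$, with $C$ an $H'$-copy in $G[S]$ and $T$ a pendant configuration rooted at some vertex of $C$ and disjoint from $V(C)$, is $\Omega(n \cdot n^6 p^7)$: this is $\Omega(n^{7 - 7/\alpha})$ when $p = C n^{-1/\alpha}$ and $\omega(n^{21/10})$ when $p = \omega(n^{-7/10})$; in both regimes the exponent comfortably exceeds $1$. Janson's inequality then bounds $\Pr[G[S] \not\supseteq H]$ by a quantity exponentially small in this count, which suffices for a union bound over the $\binom{n}{\lfloor n/2 \rfloor} \le 2^n$ choices of $S$ to deliver the extended Ramsey property w.h.p.

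\emph{Main obstacle.} The crucial technical step is justifying the Janson bound $\Pr[G[S] \not\supseteq H] \le e^{-\Omega(\mu)}$, where $\mu$ is the expected count above; this requires controlling the codegree term $\Delta = \sum_{i \ne j,\, i \cap j \ne \emptyset} \Pr[A_i \wedge A_j]$ (summed over candidate $H$-copies in $G[S]$), which decomposes into a finite case analysis over overlap types $F \sbst H$. The dominant contributions come from the densest subgraphs of $H$, namely $H''$ (with $d_1 = \alpha$) and the triangle (with $d_1 = 3/2$), and the choice of $p$ places both comfortably above their appearance thresholds, so $\Delta = O(\mu)$ and the Janson step goes through. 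Notably, this analysis only uses $\alpha > 7/6$ in case~1; the stronger hypothesis $\alpha > 10/7$ is needed only so that the 0-statement and 1-statement describe the same threshold, since otherwise the $K_3$-bottleneck from Corollary~\ref{cor:triangle11} would forbid Maker's win at $p = C n^{-1/\alpha}$.
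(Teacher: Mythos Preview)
Your 0-statements are correct and coincide with the paper's: Breaker blocks the $H'$-subgame (via the conjecture) when $\alpha>10/7$, and blocks the $K_3$-subgame (via Corollary~\ref{cor:triangle11}) when $\alpha\le 10/7$.

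Your 1-statement argument, however, has a genuine gap. You aim to show that \whp every $\lfloor n/2\rfloor$-set of $\ggnp$ induces an $H$-copy at the target density, and then invoke strategy stealing. But this property is simply false in the interesting regime $\alpha<3/2$. Since $H$ contains a triangle, $m_1(H)=3/2$, and the 0-statement of Theorem~\ref{thm:LRV} guarantees that for $p\le c'n^{-2/3}$ one has $G\not\to(H)^v_2$ w.h.p., i.e.\ some $\lfloor n/2\rfloor$-set is $H$-free. Your target densities ($p=Cn^{-1/\alpha}$ with $10/7<\alpha<3/2$, or $p=\omega(n^{-7/10})$ with $p=o(n^{-2/3})$) lie strictly below this, so no Ramsey/strategy-stealing route can succeed there. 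Concretely, your Janson step breaks at the claim $\Delta=O(\mu)$: for the overlap $F=K_3$ one has
\[
\frac{\Delta_{K_3}}{\mu}\;\asymp\; n^{v(H)-3}p^{e(H)-3}\;=\;\bigl(n^{v(H')}p^{e(H')}\bigr)\cdot n^{3}p^{4}\;\asymp\; n\cdot n^{3}p^{4},
\]
which at $p=n^{-7/10}$ is of order $n^{6/5}\to\infty$. Hence the Janson exponent is governed by $\mu^2/\Delta\asymp n^3p^3=o(n)$ for $p=o(n^{-2/3})$, and the union bound over $2^n$ half-sets fails. (Your heuristic ``both $H''$ and $K_3$ are above their appearance thresholds, so $\Delta=O(\mu)$'' conflates $n^{v(F)}p^{e(F)}\to\infty$ with $\Delta_F\le\mu$; these are different conditions.)

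The paper's 1-statement does not go through Ramsey for $H$ at all. Instead it gives Maker an explicit strategy exploiting the decomposition of $H$ into a triangle, a length-4 path, and $H'$: first Maker claims a triangle in three moves using a $DD$-copy (available since $p=\omega(n^{-7/10})$); then, using the expansion property $|N_G(U)|\ge |U|np/4$ for $|U|\le 1/(2p)$, he grows four successive neighbourhood layers $M_2,M_3,M_4,M_5$ outward from one triangle vertex until $|M_5|=n/20$; finally Theorem~\ref{thm:RamseyVertex} applied to $H'$ (with $r=20$) places an $H'$-copy inside $G[M_5]$, and by construction every vertex of $M_5$ is joined to the triangle by a path of length~4 through $M_4,M_3,M_2$. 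The point is that the Ramsey input is used only for $H'$, whose 1-density is $\alpha$, so it is available at $p\ge Cn^{-1/\alpha}$; the triangle is handled separately and constructively.
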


Note that the most interesting case is where $\alpha \in \left(\frac{10}{7}, \frac32 \right)$, as it shows that there exists an infinite family of graphs for
which the threshold probability of the game is not determined by a subgraph on which the maximum 1-density is obtained (which in this case is a triangle).
This is the equivalent of the vertex phenomenon demonstrated in Theorem 4 of~\cite{HgameRandomGraphs}.

%
%
%

\subsection{Global vs.~local and the random graph process}\label{sec:GlobLoc}
%
In this subsection we consider a somewhat different model for the random graph on which the game is played. For an integer $n$ let $[n]=\{1, \ldots, n \}$ and $m = \binom n2$, denote the set of edges of $K_n$ by $e_1,\dots,e_m$, and let $\pi \in S_m$ be an arbitrary permutation of $[m]$. For $G_i=([n],\{e_{\pi(1)},\dots,e_{\pi(i)}\})$, the increasing sequence of graphs $\tilde G = \{G_i \}_{i=0}^{m}$ is called a graph process.  The \emph{random graph process} is the graph process obtained by choosing $\pi$ uniformly at random from all possible permutations. This random setting generates a random graph model that is closely related to the standard random graph model $G(n,p)$ we have considered so far (see e.g.\ \cite{JLR}, and more discussion in Section~\ref{sec:pre}).

For a given graph process $\tilde G$, the \emph{hitting time} of a monotone increasing graph property $\mathcal P$ is defined to be $\tau (\tilde G, \mathcal P) = \min\{i ~|~ G_i \in \mathcal P \}$. We would like to examine the hitting time of the property ``being Maker's win in the $(1:1)$ vertex $K_3$-game". It turns out that \whp the graph becomes Maker's win at the same moment a certain fixed graph appears in $G$ for the first time. Before stating the result formally we need to describe this graph and to introduce new notation.

\begin{definition}\label{def:DD}
A \emph{diamond} is a $K_4$-copy with one edge missing. A \emph{Double Diamond}, denoted by $DD$,  consists of two diamonds with vertex sets $\{x,y_1,z_1,z_2\}$ and $\{x,y_2,z_3,z_4\}$ where $z_1z_2$ and $z_3z_4$ are the missing edges (see Figure~\ref{fig:DD}). The intersection of the two diamonds, namely the vertex $x$, is called \emph{the center} of $DD$.
\end{definition}

%
%
%

\iffigure
\begin{figure}
	\centering
	\begin{minipage}{.6\textwidth}
		\centering
		\begin{tikzpicture}[auto, vertex/.style={circle,draw=black!100,fill=black!100, thick,
			inner sep=0pt,minimum size=1mm}]
		\node (a) at (-3,0) [vertex] {};
		\node (b) at (-2.5,0.866) [vertex] {};
		\node (x1) at (-2,0) [vertex] {};
		\node (x2) at (-1,0) [vertex] {};
		\node (x3) at (0,0) [vertex] {};
		\node (x4) at (1,0) [vertex] {};
		\node (x5) at (2,0) [vertex,label=right:$v$] {};
		\node (c) at (3.52,0) [cloud, draw,cloud puffs=10,cloud puff arc=120, aspect=2, inner ysep=1em] {$H'$};
		
		\draw [-] (a) --node[inner sep=0pt,swap]{} (b);
		\draw [-] (a) --node[inner sep=2pt,swap]{} (x1);
		\draw [-] (b) --node[inner sep=0pt,swap]{} (x1);
		\draw [-] (x1) --node[inner sep=0pt,swap]{} (x2);
		\draw [-] (x2) --node[inner sep=0pt,swap]{} (x3);
		\draw [-] (x3) --node[inner sep=0pt,swap]{} (x4);
		\draw [-] (x4) --node[inner sep=0pt,swap]{} (x5);
		\end{tikzpicture}
		\caption{The construction of $H$ from $H'$}\label{fig:infFamily}
	\end{minipage}%
	\begin{minipage}{.4\textwidth}
		\centering
		\begin{tikzpicture}[auto, vertex/.style={circle,draw=black!100,fill=black!100, thick,
			inner sep=0pt,minimum size=1mm}]
		\node (z1) at ( 0.5,0.866) [vertex,label=above:$z_1$] {};
		\node (y1) at ( 0,0) [vertex,label=left:$y_1$] {};
		\node (x) at ( 1,0) [vertex,label=below:$x$] {};
		\node (z2) at ( 0.5,-0.866) [vertex,label=below:$z_2$] {};
		\node (z3) at ( 1.5,0.866) [vertex,label=above:$z_3$] {};
		\node (y2) at ( 2,0) [vertex,label=right:$y_2$] {};
		\node (z4) at ( 1.5,-0.866) [vertex,label=below:$z_4$] {};
		
		\draw [-] (z1) --node[inner sep=0pt,swap]{} (y1);
		\draw [-] (y1) --node[inner sep=2pt,swap]{} (x);
		\draw [-] (z1) --node[inner sep=0pt,swap]{} (x);
		\draw [-] (y1) --node[inner sep=0pt,swap]{} (z2);
		\draw [-] (z2) --node[inner sep=0pt,swap]{} (x);
		\draw [-] (z3) --node[inner sep=0pt,swap]{} (x);
		\draw [-] (z4) --node[inner sep=0pt,swap]{} (x);
		\draw [-] (y2) --node[inner sep=0pt,swap]{} (x);
		\draw [-] (z3) --node[inner sep=0pt,swap]{} (y2);
		\draw [-] (z4) --node[inner sep=0pt,swap]{} (y2);
		\end{tikzpicture}
		\caption{The graph $DD$}\label{fig:DD}
	\end{minipage}
\end{figure}

\fi

Throughout the paper we use the following notation: for an integer $k$ and a fixed graph $H$, let $\G_{kH}$ denote the graph property of containing $k$ (possibly intersecting) copies of $H$. We abbreviate $\G_{1H}$ to $\G_H$.


\begin{ourtheorem}\label{thm:triangle11}
    Let $\mathcal M^1_{K_3}$ and $\mathcal M^2_{K_3}$ be the graph properties of being Maker's win in the $(1:1)$ vertex $K_3$-game, where Maker moves first or second, respectively. For a random graph process $\tilde G$, \whp $\tau(\tilde G,\mathcal M^1_{K_3}) = \tau(\tilde G, \mathcal G_{DD})$ and $\tau(\tilde G,\mathcal M^2_{K_3}) = \tau(\tilde G, \mathcal G_{2DD})$.
\end{ourtheorem}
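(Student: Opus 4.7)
The plan is to prove each of the two equalities as two matched hitting-time inequalities. The ``upper bound'' $\tau(\tilde G, \mathcal M^1_{K_3}) \le \tau(\tilde G, \G_{DD})$ (and its analogue with $\mathcal M^2_{K_3}$ and $\G_{2DD}$) is a deterministic game-theoretic statement: every graph containing the corresponding witness is a Maker-win. The reverse inequalities hold whp and combine a typical-structure lemma for $\tilde G$ with an explicit Breaker strategy.

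For the first-player upper bound, I would show that $G \supseteq DD$ suffices for Maker-first to win by playing only inside a fixed copy of $DD$. He opens at the center $x$. Since Breaker's single response can affect at most one of the two diamonds of $DD$ (by claiming its apex $y_i$ or one of the $z$-vertices on that side), Maker then claims the apex $y_j$ of the undamaged diamond; this yields two Maker-adjacent vertices $\{x,y_j\}$ with two unclaimed common neighbors $\{z_{2j-1},z_{2j}\}$, a double threat that Breaker cannot block in one move, so Maker completes a triangle on move three. For the second-player upper bound with $G \supseteq 2DD$, Maker plays second but after Breaker's opener the same scheme applies inside whichever of the two copies of $DD$ is least damaged. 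Intersection patterns of the two $DD$-copies require a brief separate check, but whp in $\tilde G$ at time $\tau(\tilde G, \G_{2DD})$ the two copies overlap only in structurally benign ways (a routine first-moment count on pairs of $DD$'s sharing various vertex sets just below the threshold), and the residual cases can be handled by inspection.

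The lower bounds are the main content. Using standard first-moment arguments and the correspondence between $G(n,m)$ and $G(n,p)$ (cf.\ \cite{JLR}), I would establish that whp, for every $t < \tau(\tilde G, \G_{DD})$ (resp.\ $t < \tau(\tilde G, \G_{2DD})$), the graph $G_t$ satisfies: it contains $0$ (resp.\ at most $1$) copies of $DD$; every vertex outside that single copy is the center of at most one diamond; any two distinct diamonds share at most a mild controlled intersection; and no other small subgraph capable of producing a ``hidden'' double $K_3$-threat is present. Under these structural guarantees I would describe a Breaker strategy maintaining the invariant that after each of his moves, no adjacent pair $\{u,v\} \in E(G)$ of Maker-claimed vertices has two unclaimed common neighbors in $G$. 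When Maker claims a new vertex $v$, any potential violation would require $v$ to be the center of a diamond in $G$ whose apex is a previous Maker vertex; the no-$DD$ hypothesis forces this diamond to be unique at $v$, and Breaker's safe response is simply to claim its other apex $y$, which kills both nascent threats simultaneously. In the second-player case Breaker additionally uses his opening move to claim the center of the at-most-one pre-existing $DD$ on the board, reducing the remainder of the game to the first-player situation on a $DD$-free graph.

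The hard part lies in the lower bound: showing that this simple local Breaker response maintains the invariant across the entire game, since Maker can spread apparently innocuous moves across the graph that later combine into a global double threat. The key point is that any latent violation of Breaker's invariant would already require a $DD$-like configuration in $G$, which the typical-structure lemma precludes; translating this into a clean inductive argument -- and in particular treating the second-player case carefully when a single $DD$ may already be present on the board -- is the most delicate step, and closely mirrors the analogous hitting-time analysis of the edge $K_3$-game in \cite{MullerStojakovic}.
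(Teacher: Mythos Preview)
Your Maker-side arguments are essentially those of the paper: it claims the center $x$ and then uses the pairing $\{y_1,y_2\},\{z_1,z_2\},\{z_3,z_4\}$, and for the second-player case it dispenses with your intersection casework by noting that $DD$ is strictly balanced, so the first two copies in $\tilde G$ are vertex disjoint whp (Corollary~\ref{cor:balancedDisjoint}).

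The Breaker side, however, has a real gap. First, your invariant ``no Maker edge has two unclaimed common neighbours'' does not prevent Maker from winning: a Maker edge with \emph{one} unclaimed common neighbour is already a triangle threat that Maker completes next move. Even under the natural strengthening (no Maker edge has any unclaimed common neighbour), your uniqueness claim ``the no-$DD$ hypothesis forces this diamond to be unique at $v$'' is false. The triple-triangle $TT$ (Figure~\ref{fig:trio}) contains no $DD$, yet its central vertex $v_3$ is a degree-$3$ vertex in two distinct diamonds, and since $d(TT)=7/5<10/7=d(DD)$ there are whp many $TT$-copies in $G$ before the first $DD$ appears. On $TT$ a purely reactive strategy loses: after Maker opens at $v_3$ there is no Maker edge, hence no threat for Breaker to answer; but any Breaker reply other than $v_2$ or $v_4$ already concedes the game (e.g.\ after Breaker plays $v_1$ or $v_5$, Maker's second move creates a double threat that cannot be blocked). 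The winning Breaker move here is precisely the pairing $\{v_2,v_3\},\{v_4,v_5\}$, which your local-response framework does not generate.

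The paper takes a different route. It first applies the $(K_3,1)$ deletion algorithm (Lemma~\ref{lem:BwinG'}) to reduce the game to the core $G^*$, and then proves a structural lemma (Claim~\ref{cl:ConComp}): if $G$ avoids all graphs on fewer than $25$ vertices of density larger than $10/7$ --- which holds whp at times $i_1,i_2$ by Claim~\ref{cl:sparseBeforeDense} --- then every connected component of the $K_3$-stable core is either a $TT$ or a $DD$. Breaker then wins component-by-component via the explicit natural pairings; in the second-player case he uses his opening move to claim the center of the single $DD$. It is this global reduction to a finite list of component types, rather than a local threat-response invariant, that makes the Breaker argument go through.
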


The following corollary of Theorem~\ref{thm:triangle11} is due to the asymptotic connection between $G(n, p)$ and the random graph process, and the distribution of the number of $H$-copies in the binomial graph. See Proposition~\ref{prop:GnpRGP} and Theorem~\ref{thm:fixedGraphDist} in Section~\ref{sec:pre}.

\begin{ourcorollary}\label{cor:triangle11}
    Let $p=p(n)$ and let $x=np^{10/7}$. Then assuming Maker moves first we have
    $$\lim_{n\to \infty}\Pr\left[G\sim G(n,p )\text{ is Maker's win in the $(1:1)$ vertex $K_3$-game} \right] = \begin{cases}
    0 & x\to 0,\\
    1-e^{-c^7/8} & x\to c \in \mathbb{R}^+,\\
    1 & x\to \infty .
    \end{cases}
    $$
    In particular, the game has a threshold at $p = n^{-7/10}$.
\end{ourcorollary}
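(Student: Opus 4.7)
The plan is to derive the corollary from Theorem~\ref{thm:triangle11} by transferring the hitting time statement to the $G(n,p)$ model and then applying the Poisson approximation for the number of small subgraph copies. By Theorem~\ref{thm:triangle11}, in the random graph process $\tilde G$ the property of being Maker's win (with Maker moving first) coincides with high probability with the appearance of a first $DD$-copy. Proposition~\ref{prop:GnpRGP} allows us to translate this between the random graph process and $G(n,p)$: up to a $o(1)$ error, the event ``$G\sim G(n,p)$ is Maker's win in the $(1:1)$ vertex $K_3$-game'' has the same probability as the event ``$G\sim G(n,p)$ contains a copy of $DD$.'' Consequently it suffices to analyze $\Pr[G\sim G(n,p)\in \mathcal G_{DD}]$ in the three regimes.

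Next I would compute the relevant parameters of $DD$. A direct count gives $v(DD)=7$ and $e(DD)=10$, so the density is $d(DD)=10/7$, matching the threshold exponent. For the automorphism group, the center $x$ (the unique degree-$6$ vertex) must be fixed, $y_1,y_2$ (degree $3$) may be swapped, and within each diamond the two degree-$2$ vertices may be swapped, yielding $|\mathrm{Aut}(DD)|=2\cdot 2\cdot 2=8$. Therefore, if $X_{DD}$ denotes the number of labeled $DD$-copies in $G(n,p)$,
\[
\mathbb{E}[X_{DD}] \;=\; \frac{1}{|\mathrm{Aut}(DD)|}\binom{n}{7}7!\,p^{10} \;=\; (1+o(1))\,\frac{n^7 p^{10}}{8} \;=\; (1+o(1))\,\frac{x^7}{8}.
\]

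I would then invoke Theorem~\ref{thm:fixedGraphDist} from Section~\ref{sec:pre}. A routine check of subgraphs (triangles contribute density $1$, single diamonds $5/4$, and every proper subgraph of $DD$ is obtained by deleting at least one vertex of degree $\leq 3$, which strictly decreases the density below $10/7$) confirms that $DD$ is strictly balanced, so the theorem yields Poisson convergence: if $x\to c\in \mathbb{R}^+$ then $X_{DD}$ converges in distribution to a Poisson random variable with mean $c^7/8$, giving
\[
\Pr[G\in \mathcal G_{DD}] \;=\; 1-\Pr[X_{DD}=0] \;\longrightarrow\; 1-e^{-c^7/8}.
\]

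Finally, the boundary regimes are immediate: if $x\to 0$ then $\mathbb{E}[X_{DD}]\to 0$ and Markov's inequality gives $\Pr[G\in \mathcal G_{DD}]\to 0$; if $x\to \infty$ then Poisson convergence (or a standard second moment computation, using strict balance to control the variance) gives $\Pr[G\in \mathcal G_{DD}]\to 1$. Combining the three regimes with the equivalence established in the first step yields the corollary, and in particular shows that $p=n^{-7/10}$ is the threshold. The only nontrivial step is the bookkeeping needed to justify the equivalence between the process and $G(n,p)$ formulations; the automorphism count and the verification of strict balance of $DD$ are straightforward.
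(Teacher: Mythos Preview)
Your proposal is correct and follows exactly the route the paper indicates: invoke Theorem~\ref{thm:triangle11} to identify Maker's win with the appearance of a $DD$-copy, transfer via Proposition~\ref{prop:GnpRGP}, and then apply Theorem~\ref{thm:fixedGraphDist} using that $DD$ is strictly balanced with $v(DD)=7$, $e(DD)=10$, and $|\mathrm{Aut}(DD)|=8$. The paper does not spell out the details you provide (the automorphism count, the verification of strict balance, and the handling of the three regimes), so your write-up is a faithful expansion of its one-line justification.
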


Recall that the proof of Maker's side in Theorem~\ref{thm:main} and Conjecture~\ref{conj:main} is trivial: the graph is such that Maker wins no matter how he plays. We say that in these cases Maker wins due to a \emph{global} reason, that is, the structure of the entire graph. This stands in sharp contrast to Maker's side in the vertex $H$-game when $H$ is a forest or a triangle, as shown in Theorems~\ref{thm:MBforest} and~\ref{thm:triangle11}, where Maker applies a straightforward winning strategy on some small, fixed graph $\hat H$. We say that in these cases Maker wins due to a \emph{local} reason, that is, the appearance of $\hat H$ in the random graph.
The question is what is more likely to appear first in the random graph process --- a local reason or a global reason. In this paper we show that when $H$ is either a triangle or a forest, \whp the local reason appears first, and conjecture that these are the only cases (we actually show that a triangle is an exception only in the unbiased game).

The exact same phenomenon --- of Maker winning globally unless $H$ is either a forest or a triangle --- was proven for the unbiased edge version of $H$-games. Theorem~\ref{thm:edgeHgame} shows that the threshold for most graphs is the one matching the global reason, that is, $n^{-1/m_2(H)}$ (this was already shown for cliques in~\cite{PGonRandomGraphs}). The case that $H$ is a forest was considered by Stojakovi\'{c} in~\cite[Lemma 36]{Sthesis}, where he showed (and by that inspired our Theorem~\ref{thm:MBforest}) that in this case Maker wins due to a local reason, and so there exists some constant $\alpha(H) > 1$ such that the threshold function of the game is $n^{-\alpha(H)}$, while $m_2(H) = 1$ (in fact, he only considered trees, but his result may be easily generalized to forests, see Remark~\ref{rmk:milos} in Section~\ref{sec:forests}). The case $H = K_3$ was first considered in~\cite{PGonRandomGraphs}, where the threshold function was shown to be $n^{-5/9}$, since Maker wins locally on a copy of $K_5^-$, the clique on five vertices with one edge missing. Later, in~\cite{MullerStojakovic}, this result was improved to a hitting time result, that is, assuming Maker moves first, in the random graph process \whp the graph becomes his win at the same moment the first copy of $K_5^-$ appears. This is of course the equivalent of Theorem~\ref{thm:triangle11}.

\subsection{Avoider-Enforcer games}
We move on to a different type of positional games, called Avoider-Enforcer games, which are the mis\`{e}re version of Maker-Breaker games. An $(a,b,X,\F)$ Avoider-Enforcer game is played in the same manner as the corresponding Maker-Breaker game: the two players, called Avoider and Enforcer,
alternately claim $a$ and $b$ free elements of $X$ per move, respectively. The difference is that the target sets are now losing sets, and so at the end of the game Avoider loses if he has fully claimed some $F \in \F$, and wins otherwise.

Despite being closely related to Maker-Breaker games,
Avoider-Enforcer games are unfortunately (and perhaps surprisingly)
not bias monotone in general (see e.g.\ \cite{HKSS10},\cite{HKS07}).
Even though one may assume intuitively that no player can be harmed
by claiming fewer elements per move, this is not always the case.
This behavior motivated Hefetz, Krivelevich, Stojakovi\'c and
Szab\'o to propose in~\cite{HKSS10} a bias monotone version for
Avoider-Enforcer games: in the new version Avoider and Enforcer
claim \textbf{at least} $a$ and $b$ elements per move, respectively.
It is easy to see that this new version is indeed bias monotone, and
no player can be harmed from lowering his bias. As it turns out,
this change of rules may change dramatically the outcome of the
game. We refer to the traditional and new sets of rules as the
\emph{strict} and \emph{monotone} rules, respectively, and
accordingly refer to either strict games or monotone games. For every monotone game there exists a threshold bias, defined in a similar way to that of Maker-Breaker games. However, for strict games it is only possible to define lower and upper threshold biases. We do not elaborate on that. For more
information about the differences between the two sets of rules and
about Avoider-Enforcer games in general, see for
example~\cite{BeckBook,HKSS10,HKSSbook,HKS07}.

Avoider-Enforcer games are much less studied than Maker-Breaker games, certainly when considering
$H$-games or games played on random boards as we do in this paper.
Some specific $H$-games were analyzed thoroughly in~\cite{AEstars,HKSS10}, and a more general investigation of $H$-games was performed in~\cite{BedAE,BBGT,FKN}.
In all these papers the $(1:b)$ game played on $K_n$ was considered, with the intention to find the values of the different types of threshold biases.
To the best of our knowledge, Avoider-Enforcer games played on random graphs were only considered
in~\cite{FGKN}, where the authors analyzed the $k$-connectivity, Hamiltonicity, and perfect matching monotone biased games played on the edge set of $\ggnp$, determining the threshold bias as a function of $p$.


It is worth mentioning that when playing on the vertex set of a
graph, ``being Enforcer's win" is trivially
a monotone increasing graph property for both strict and monotone settings (a winning strategy for Enforcer remains such if we add edges to the graph). Thus we can define the threshold
probability for these games, just as in Maker-Breaker games. Note that this is not necessarily the case when
playing the edge version of the game: in general, Avoider-Enforcer
games -- in contrast to Maker-Breaker games -- do not have
hypergraph monotonicity in the following sense. It is possible for Enforcer to win an $(a:b)$ game  $(X, \mathcal F)$, but lose an $(a:b)$ game $(X',\mathcal F')$, even if $X \sbst X'$ and $\F \sbst \F'$. We now state our results, starting with the monotone game.

\begin{ourtheorem}\label{thm:trivialWins}
    Let $H$ be a fixed graph and let $a,b$ be two positive integers.
    Then the threshold probability for Enforcer's win in the monotone $(a:b)$ vertex $H$-game played on $\ggnp$ is $p = n^{-1/m(H)}$.
    Furthermore, if $H$ is strictly balanced, then there exists a
    constant $N = N(a,b,v(H))$, such that in the random graph process,
    $\tau(\tilde G,\E) = \tau(\tilde G,\G_{NH})$ holds w.h.p., where $\E$ denotes the property
    ``being Enforcer's win in the monotone $(a:b)$ vertex
    $H$-game".
\end{ourtheorem}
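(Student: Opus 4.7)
The plan is to split Theorem~\ref{thm:trivialWins} into a threshold part and, for strictly balanced $H$, a hitting-time part, both reducing to one deterministic lemma. For the 0-statement I would appeal to the classical Erd\H{o}s--R\'enyi/Bollob\'as threshold for subgraph containment: if $p = o(n^{-1/m(H)})$, then w.h.p.\ $G(n,p)$ contains no copy of $H$ at all, so the family of losing sets is empty and Avoider wins trivially.

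For the 1-statement the deterministic core is the following lemma I would prove: there exists a constant $N = N(a, b, v(H))$ such that whenever $G$ contains at least $N$ copies of $H$, Enforcer wins the monotone $(a:b)$ vertex $H$-game on $G$. The strategy is a ``saturate-and-force'' scheme that uses the monotone flexibility---Enforcer may claim arbitrarily many vertices in any single move. Fix $N$ copies $H_1, \dots, H_N$ of $H$. During a bounded preparatory phase Enforcer plays minimally and refrains from claiming inside the vertices of a candidate target copy; by a pigeonhole over the $N$ copies, after boundedly many Avoider moves there must exist some copy $V(H^*)$ that contains at least $v(H) - a$ vertices of Avoider's current claim $A$ and no vertices of Enforcer's claim. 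Enforcer then executes a single large move claiming every vertex of $V(G) \setminus V(H^*)$ not already in $A$; the remaining at most $a$ unclaimed vertices all lie in $V(H^*)$, and the rule permitting the last player to claim fewer than his bias forces Avoider to claim them all, completing $V(H^*)$ inside his set and losing. Given the lemma, the 1-statement is routine: for $p \ge C n^{-1/m(H)}$ with $C$ sufficiently large, a standard second-moment computation shows $G(n,p)$ contains at least $N$ copies of $H$ w.h.p.

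For the hitting-time statement with $H$ strictly balanced, I would prove $\tau(\tilde G, \E) = \tau(\tilde G, \G_{NH})$ in both directions. The inequality $\tau(\E) \le \tau(\G_{NH})$ is the lemma applied at the instant the $N$-th copy of $H$ appears in $\tilde G$. For the reverse inequality, I would exhibit an Avoider winning strategy when $G$ has fewer than $N$ copies of $H$: strict balancedness of $H$ implies that any two overlapping copies span a subgraph of strictly higher density, so by standard random-graph-process arguments, w.h.p.\ no such overlap appears in $\tilde G$ before the first $O(1)$ copies of $H$ do. Hence at the instant just before the $N$-th copy appears, the at most $N-1$ copies of $H$ present in $\tilde G$ are vertex-disjoint w.h.p., and Avoider wins by a pairing strategy inside these finitely many disjoint copies, guaranteeing that each copy retains a vertex claimed by Enforcer at the end of the game.

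The main obstacle I anticipate is the combinatorial core of the deterministic lemma: bounding the constant $N = N(a, b, v(H))$ explicitly and verifying that the preparatory phase in which Enforcer refuses the eventual target copy can really be carried out against an Avoider who plays minimally and distributes his claims as evenly as possible over the $N$ copies. This reduces to a quantitative pigeonhole guaranteeing that the target copy $V(H^*)$ with the required concentration of Avoider's claims appears after a bounded number of rounds, together with a careful bookkeeping of which vertices Enforcer allows himself to claim during those preparatory rounds.
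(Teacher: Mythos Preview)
Your $0$-statement and the hitting-time disjointness argument are fine and match the paper. The gap is in the $1$-statement: your deterministic lemma is false as stated, and your ``saturate-and-force'' strategy cannot be made to work.

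First, the lemma \emph{whenever $G$ contains at least $N$ copies of $H$, Enforcer wins} is false without a disjointness hypothesis. Take $H=K_2$, $a=b=1$, and $G=K_{1,N}$: there are $N$ copies of $H$, all through the center. Avoider wins for every $N$ by claiming all remaining leaves in a single (monotone) move, forcing Enforcer to take the center. So the lemma must be about vertex-disjoint copies.

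Second, even on $N$ disjoint copies your saturate-and-force strategy fails. During the preparatory phase Enforcer must claim at least $b$ vertices per round, and Avoider can simply \emph{shadow}: each round, Avoider claims a vertex in the same copy Enforcer just touched. Then every copy that contains an Avoider vertex also contains an Enforcer vertex, so no copy ever satisfies your requirement of having $\ge v(H)-a$ Avoider vertices and zero Enforcer vertices. The big final move never becomes available. (Your own ``main obstacle'' paragraph correctly identifies this as the crux, but the obstacle is fatal rather than merely technical.) A related failure mode: even if the target $H^*$ exists, the set $V(G)\setminus V(H^*)$ not yet claimed may have fewer than $b$ vertices, so Enforcer cannot execute the big move without dipping into $V(H^*)$.

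The paper's argument sidesteps all of this by reducing to a known auxiliary game. Enforcer's very first move (using the monotone rules) claims every vertex outside a maximum collection $G'$ of vertex-disjoint $H$-copies; thereafter the game on $V(G')$ is exactly the monotone Avoider--Enforcer box game $AEBox(|G'|\times v(H),(a:b))$, which BoxEnforcer wins once the number of boxes exceeds a constant $N(a,b,v(H))$ (Theorem~\ref{thm:AEBoxMonotone}). The reverse hitting-time inequality is then automatic: $N$ is taken to be the minimal box count for BoxEnforcer, so with $N-1$ disjoint copies BoxAvoider wins, and Avoider lifts this to the full game by dumping any extra vertices outside the copies into his moves. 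Your ``pairing strategy'' for Avoider is the right intuition but is really the BoxAvoider strategy, which need not be a pairing.
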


In Section~\ref{sec:others} we have a short discussion about strict $H$-games in general. However, we state an explicit result only for the
$(1:1)$ triangle-game, for two reasons. First, this is an
interesting game: the threshold probability for it when played on a
random graph is unique comparing to other $H$-games or to biased
triangle games, whether we consider the Maker-Breaker games for both
edge and vertex versions, or Client-Waiter games (which will be
introduced shortly), again for both edge and vertex versions.
Second, it turns out that this game presents an analogous behavior
to that of the Maker-Breaker game in the following way.

Recall that when the random graph process is considered, Maker wins the $(1:1)$ triangle-game as soon as the first or the second $DD$-copy appears in the graph,
depending on the identity of the \textbf{first} player. The reason
for this difference is that both players \textbf{wish to claim} the
center of a $DD$-copy, and so if only one copy exists the first player wins.
Analogously, Enforcer wins in the $(1:1)$ triangle Avoider-Enforcer
game as soon as the first or the second $DD$-copy appears in the
graph, depending on the identity of the \textbf{last} player to
play (that is, the player who claims the last free vertex in the game; not to be confused with the player who plays second). Here both players \textbf{wish to avoid claiming} the center of
a $DD$-copy, and so if only one copy exists the last player to move loses.
It is important to notice that unlike the edge version of positional game
played on random graphs, the identity of the last player is
determined by the identity of the first player and the number of
vertices in the graph, which are both part of the definition of the game. It does not depend on the random graph
process itself or on the number of edges in the graph in any way,
which makes the following theorem, and this whole discussion, well
defined.

\begin{ourtheorem}\label{thm:AEstrict}
    Let $\E_{K_3}^A$ and $\E_{K_3}^E$ be the graph properties of being Enforcer's win in the strict $(1:1)$ vertex $K_3$-game, where Avoider
    or Enforcer, respectively, makes the last move in the game. For a
    random graph process $\tilde G$, \whp $\tau(\tilde G,\E_{K_3}^A) = \tau(\tilde G,\mathcal G_{DD})$ and $\tau(\tilde G,\E_{K_3}^E) = \tau(\tilde G,\mathcal G_{2DD})$.
\end{ourtheorem}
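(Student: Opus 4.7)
The plan is to establish a hitting-time statement by mirroring the proof of Theorem~\ref{thm:triangle11}, with the identity of the last mover replacing the first-player distinction. For each of the two claims I would prove both an upper bound (Enforcer wins once the prescribed structure appears in $\tilde G$) and a lower bound (Avoider wins until then).

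For the upper bound $\tau(\tilde G, \E_{K_3}^A) \le \tau(\tilde G, \G_{DD})$, fix a $DD$-copy with center $x$ and use the natural pairing $\{z_1,z_2\}, \{z_3,z_4\}, \{y_1,y_2\}$ of its non-center vertices. Enforcer's strategy is: never claim $x$; whenever Avoider has just claimed a paired $DD$-vertex whose pair is still free, claim the pair; otherwise claim any free vertex outside the $DD$. Since Avoider makes the last move of the game, a parity check shows that at each Enforcer-turn at least two vertices are free, so Enforcer can always avoid $x$, and hence $x$ must fall to Avoider. The pair-response then forces Avoider's three non-$x$ vertices inside $DD$ to be exactly one from each pair, and inspection of the four triangles of $DD$ (namely $xy_1z_1, xy_1z_2, xy_2z_3, xy_2z_4$) shows that any such four-vertex set contains a triangle. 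For $\tau(\tilde G, \E_{K_3}^E) \le \tau(\tilde G, \G_{2DD})$, fix two disjoint $DD$-copies $D_1, D_2$ with centers $x_1, x_2$ and apply the pair-response on each simultaneously, refusing $x_1$ and $x_2$ unless forced. A short counting argument---comparing Enforcer's $n/2$ moves against the outside's $n-14$ vertices and the pair-response budget of three per $D_i$---shows that Avoider must play at least seven moves inside $D_1 \cup D_2$; by pigeonhole some $D_i$ receives at least four, and the same pair-response analysis then forces a triangle in that $D_i$.

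For the lower bound, Avoider applies a pair-response strategy of his own, based on the local structure of $\ggnp$ just below the $DD$-threshold. We would show that \whp the triangle-containing subgraphs of $G$ decompose into isolated triangles and isolated diamonds (in the $\E_{K_3}^A$ case) or these together with a single isolated $DD$-copy (in the $\E_{K_3}^E$ case); the non-existence of a (second) $DD$-copy is precisely what allows the natural local pairings---two vertices of each isolated triangle, the two apex vertices $\{x,y_1\}$ of each isolated diamond, and so on---to be patched into a global vertex pairing such that every triangle of $G$ meets some pair in both of its vertices. Avoider then always claims the pair of Enforcer's previous move, so every pair is split and every triangle has at least one vertex in Enforcer's hand, preventing Avoider from ever completing a triangle. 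In the $\E_{K_3}^E$ case a further local parity argument shows that the lone $DD$'s center is forced onto Enforcer, neutralising that potentially winning copy.

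The main obstacle is the lower-bound structural analysis: one must pin down the typical local configurations of $\ggnp$ at density around $n^{-7/10}$ (just below the first and second $DD$-thresholds) and verify that the natural pair-response templates on each configuration patch into a consistent global strategy. The upper-bound strategies, by contrast, are essentially uniform pair-response templates on a $DD$-copy whose correctness reduces to a careful parity check of the center vertex.
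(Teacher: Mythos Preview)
Your upper-bound arguments are close to the paper's: Enforcer plays the role of ``AuxAvoider'' in the pairing hypergraph of Claim~\ref{cl:AEAux}. For $\E_{K_3}^E$ the paper simply adds $\{x_1,x_2\}$ as a seventh pair and invokes Claim~\ref{cl:AEAux}(a), which forces Avoider to take at least one center directly; this is cleaner than your pigeonhole count, which is plausible but would need a more careful endgame analysis of who is forced onto the centers.

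The lower bound, however, has two genuine gaps.

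\textbf{Structure.} Your claim that the triangle-bearing part of $G$ decomposes into isolated triangles and isolated diamonds is false at $p\sim n^{-7/10}$. The triple-triangle $TT$ (five vertices, seven edges, density $7/5<10/7$) appears with expected count $n^{1/10}\to\infty$, and so do long $K_3$-chains; none of these fit your template. The paper does not try to enumerate triangle-containing pieces directly. It runs the $(K_3,1)$-deletion algorithm of Section~\ref{sec:gnrlH} on $G_{i_j}$, records the bad pairs and small components removed along the way, and then uses Claim~\ref{cl:ConComp} (together with $\F$-freeness, via Claim~\ref{cl:sparseBeforeDense}) to conclude that every component of the core $G^*_j$ is a $TT$ (plus, for $j=2$, the single $DD$). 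Avoider's pairs are then the deleted bad pairs, arbitrary disjoint pairs inside each removed small component, and the natural $TT$ pairs --- not a triangle/diamond template.

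\textbf{Avoider's strategy.} ``Claim the pair of Enforcer's previous move'' is not by itself a valid winning strategy in the strict $(1{:}1)$ game: when Enforcer plays an unpaired vertex, or when only whole free pairs remain and it is Avoider's turn, Avoider is forced to open a pair, and naive pair-response no longer guarantees a split. The paper closes this with Claim~\ref{cl:AEAux}, whose proof reduces the endgame on whole free pairs to the strict box game $AEBox(n\times 2,(1{:}1))$ and invokes Theorem~\ref{thm:AEBoxStrict} (BoxAvoider wins because $\gcd(2,2)>1$). Part~(b) of the same claim is what lets Avoider also dodge the lone $DD$-center in the $\E_{K_3}^E$ case; it uses the last-move parity hypothesis in an essential way and replaces your unspecified ``further local parity argument''. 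Without this box-game ingredient your lower-bound plan does not close.
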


\subsection{Waiter-Client and Client-Waiter games}
Waiter-Client and Client-Waiter games resemble Maker-Breaker and
Avoider-Enforcer games and were introduced by
Beck~\cite{BeckBook,Beck} under the names Picker-Chooser and
Chooser-Picker, respectively. Since the original names of the players were confusing, it is now
conventional to use the new names Waiter and Client as suggested
in~\cite{BHL}. As in Maker-Breaker and Avoider-Enforcer games, the
parameters of these games are a set $X$, a family $\F \sbst 2^X$,
and two positive integers $a$ and $b$ which denote the bias of Client and Waiter, respectively (note that $a$ denotes Client's bias even in the Waiter-Client game, which might be confusing).

The course of every round, however, is different. In an $(a:b)$ Waiter-Client game
$(X,\F)$, in every round Waiter selects $a + b$ previously unclaimed elements of
$X$. Client then chooses $a$ of those elements to claim and the
remaining elements are claimed by Waiter. For the last round of the
game, let $t \le a + b$ be the number of free elements remaining. If $t \le b$ then Waiter claims all these elements, and
otherwise Client claims $t - b$ elements and the rest go to Waiter.
Waiter wins if by the end of the game
Client has claimed all elements of some $F \in \F$, and otherwise
Client wins.
We can think of Waiter as the \emph{builder} of the game and of Client as the \emph{spoiler}.

The definition of $(a:b)$ Client-Waiter games is
similar, but with a few differences. In every round Waiter selects
$t$ free elements of $X$, where $a \le t \le a+b$, from which Client
chooses $a$ to claim, and the rest are claimed by Waiter. In the
last round Client chooses $a$ of the remaining elements to claim (and all others go to Waiter), or
he claims all of them if less than $a$ free elements remain. Client wins if by the end of the game he has
claimed all elements of some $F \in \F$, and otherwise Waiter wins.
In this game we can think of Client as the builder of the game and of Waiter as the spoiler.

The reason that Waiter may offer less than $a+b$ elements per move
in the Client-Waiter game is that otherwise the game would not be
monotone in Waiter's bias, as first observed by
Bednarska-Bzd\c{e}ga~\cite{BedCW} (the game is monotone in Client's
bias even without this relaxation). Waiter-Client games as defined
here are monotone in Waiter's bias only. Since in the study of
Waiter-Client (and Client-Waiter) games the typical case study is that the
bias of Client is 1, no similar adjustment of the rules is usually
considered (although there exists one), including in this paper.

Waiter-Client and Client-Waiter games have drawn much interest in
the last several years, resulting in quite a few papers. We do not
intend to provide a full background on this subject, as we limit
our focus to $H$-games and games on random boards. For more information on the subject we refer the reader to the papers~\cite{BedCW,BHL,DK,HKT}, and to the many other works cited in them.

It is trivial to see that whether Waiter offers exactly $a + b$
elements per move or not, in either of the games, the following
holds. For any monotone increasing graph property $\cP$, and for
both games, when playing an $(a:b)$ game $\cP$ on the vertex set of
a graph, ``being the builder's win" is a monotone increasing graph
property (note that this is not true in the edge version of the
Client-Waiter game if we do not allow Waiter to offer fewer edges
per round, which is another important motivation for this adjustment
of rules). Indeed, if $G \sbst G'$ and $V(G) = V(G')$, then in both
games a winning strategy for the builder in the game played on
$V(G)$ remains such without any changes for the game played on
$V(G')$. Once again, this important --- and non-trivial --- property allows us to define the
threshold probability for these games.

We now present some results, which demonstrate that
Waiter-Client and Client-Waiter games not only resemble
Avoider-Enforcer and Maker-Breaker games, respectively, in the roles
of the players, but also in the outcome of the corresponding
$H$-games. Theorem~\ref{thm:trivialWins} basically shows that for any $a$,$b$ and $H$, Enforcer wins the $(a:b)$ $H$-game as soon as $G$ contains sufficiently many $H$-copies. The following theorem shows that the same holds for Waiter in the Waiter-Client game.

\begin{ourtheorem}\label{thm:WC}
    Let $H$ be a fixed graph and let $a,b$ be two positive integers.
    Then the threshold probability for Waiter's win in the $(a:b)$ Waiter-Client vertex $H$-game played on $\ggnp$ is $p^* = n^{-1/m(H)}$.
    Furthermore, if $H$ is strictly balanced, then there exists a
    constant $N = N(a,b,v(H))$, such that in the random graph process,
    $\tau(\tilde G,\W) = \tau(\tilde G,\G_{NH})$ holds w.h.p., where $\W$ denotes the property
    ``being Waiter's win in the $(a:b)$ Waiter-Client vertex
    $H$-game".
\end{ourtheorem}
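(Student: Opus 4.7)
The key combinatorial lemma I would aim to prove is that if a graph $G$ contains $N$ vertex-disjoint copies of $H$, then Waiter wins the $(a:b)$ Waiter-Client vertex $H$-game on $V(G)$, for a constant $N=N(a,b,v(H))$. I would establish this via a ``tournament'' strategy: fix a labelling $1,\dots,v(H)$ of the vertices of $H$ which transfers to every disjoint copy, and have Waiter play in $v(H)$ stages. In stage $k$, Waiter groups the copies still alive into batches of size $a+b$ and in each round offers the $a+b$ vertices labelled $k$ from one batch; Client must take $a$ of them (so $a$ copies remain alive) and Waiter takes $b$ (so $b$ copies die). Each stage shrinks the surviving set by a factor of $a/(a+b)$, so starting from $N=\lceil((a+b)/a)^{v(H)}\rceil$ copies, at least one survives all $v(H)$ stages, and Client has then claimed every one of its vertices.

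From this lemma the 0-statement of the threshold is immediate: for $p=o(n^{-1/m(H)})$, fixing $H^*\sbst H$ with $d(H^*)=m(H)$, Markov's inequality gives that \whp $G(n,p)$ contains no $H^*$-copy and hence no $H$-copy, so Client wins trivially. For the 1-statement, when $p=\omega(n^{-1/m(H)})$ the expected number of $H$-copies in $G(n,p)$ tends to infinity, so by the second moment method (or Janson's inequality) their actual count grows \whp, and a greedy deletion argument extracts $N$ vertex-disjoint copies. The Waiter strategy lemma then gives Waiter the win \whp.

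For the hitting-time statement with strictly balanced $H$ I would invoke Theorem~\ref{thm:fixedGraphDist}: when only a bounded number of $H$-copies have appeared in the random graph process, they are \whp pairwise vertex-disjoint, since for strictly balanced $H$ the expected number of overlapping pairs is of lower order than the expected total count. Applying the combinatorial lemma at time $\tau(\tilde G,\G_{NH})$ then yields $\tau(\tilde G,\W)\le\tau(\tilde G,\G_{NH})$ \whp. The reverse inequality needs a Client strategy on any graph with fewer than $N$ pairwise-disjoint $H$-copies: designate one ``sacrifice'' vertex per copy, and whenever Waiter offers a set $Y$ of size $a+b$, have Client take his $a$ vertices from $Y$ minus the sacrifice vertices, which is possible as long as at most $b$ sacrifice vertices appear in any single offer. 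If needed one would enlarge $N$, or sharpen the Client argument via an Erd\H{o}s--Selfridge-type criterion for Waiter-Client games, so that both directions lock in at the same hitting time.

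The main obstacle is reconciling the Waiter-side constant ($\lceil((a+b)/a)^{v(H)}\rceil$) with the Client-side constant (of order $b$ in the naive sacrifice strategy). The resolution leans on the strictly balanced assumption: for any two fixed constants $k_1<k_2$, the hitting times $\tau(\tilde G,\G_{k_1H})$ and $\tau(\tilde G,\G_{k_2H})$ coincide up to lower-order terms \whp, thanks to the Poisson-like arrival of new $H$-copies in this regime. This lets one pick a single $N$ for which both the Waiter and Client strategies are valid at exactly the same hitting time, completing the proof.
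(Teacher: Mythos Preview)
Your tournament strategy for Waiter is essentially the paper's box-game argument (Claim~\ref{cl:WCBox}), and your threshold argument is the same as the paper's, so the first part is fine.

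The genuine gap is in the hitting-time statement. The theorem demands an \emph{exact} equality $\tau(\tilde G,\W)=\tau(\tilde G,\G_{NH})$ w.h.p., not merely asymptotic agreement. For strictly balanced $H$ the first $N$ copies to appear in the process are w.h.p.\ pairwise vertex-disjoint (Corollary~\ref{cor:balancedDisjoint}), so each newly added edge completes at most one new $H$-copy, and hence $\tau(\tilde G,\G_{k_1H})<\tau(\tilde G,\G_{k_2H})$ w.h.p.\ whenever $k_1<k_2$. Your ``Poisson-like arrival'' remark therefore cannot collapse two different constants into one hitting time; you really must exhibit a single $N$ for which Waiter wins on $N$ disjoint copies and Client wins on $N-1$. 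Your sacrifice strategy handles only at most $b$ copies, leaving the range $b<n<\lceil((a+b)/a)^{v(H)}\rceil$ unresolved.

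The paper closes this gap not by improving either bound separately, but by taking $N$ to be the \emph{exact} threshold of the auxiliary box game $WCBox(n\times v(H),(a:b))$, i.e.\ the minimal $n$ for which BoxWaiter wins. This threshold exists because the box game is monotone in the number of boxes. With $N$ disjoint copies Waiter plays BoxWaiter and wins; with $N-1$ copies --- which, crucially, are w.h.p.\ the \emph{only} $H$-copies present --- Client plays BoxClient on those $N-1$ boxes and wins. The value of $N$ is never computed; all that matters is that the box game itself has a sharp transition.
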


Moving to Client-Waiter games, we observe that they feature an almost identical behavior to that of the corresponding Maker-Breaker games, and we have the following perfect analogues of Theorems~\ref{thm:main} and~\ref{thm:MBforest}, and Conjecture~\ref{conj:main}.

\begin{ourtheorem}\label{thm:CWsimple}
    Let $k,b$ be positive integers such that either $k\geq 4$, or $k=3$ and $b\geq 2$.
    Let $H$ be a graph for which there exists $H'\subseteq H$ such that $d_1(H')=m_1(H)$, and either
    $H'=K_k$ or $H'=C_k$. Then there exist constants $c,C > 0$ such that the following holds for the $(1:b)$ Client-Waiter vertex $H$-game.
    $$ \lim_{n\to \infty} \Pr\left[G\sim G(n,p) \text{ is Client's win}\right] = \begin{cases}
    1 &p\geq Cn^{-1/m_1(H)},\\ 0 &p\leq cn^{-1/m_1(H)}.
    \end{cases}$$
\end{ourtheorem}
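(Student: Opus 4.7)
The plan is to mirror the proof of Theorem~\ref{thm:main}, with Client playing the role of Maker and Waiter that of Breaker.

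For the 1-statement I would use the same trivial observation as for Maker: in any play of the $(1{:}b)$ Client-Waiter game on $V(G)$, Client ends up with at least $\lfloor n/(b+1)\rfloor$ vertices, because in each round Waiter offers at most $b+1$ free vertices and Client takes exactly one. Applying Theorem~\ref{thm:RamseyVertex} with $r = b+1$, if $p \ge Cn^{-1/m_1(H)}$ then \whp every $\lfloor n/(b+1)\rfloor$-subset of $V(G)$ spans a copy of $H$, so Client wins. This is literally the Client-Waiter analogue of the 1-statement proof of Theorem~\ref{thm:main}, and yields the same constant $C$.

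For the 0-statement the plan is to convert the Breaker strategy from the 0-statement of Theorem~\ref{thm:main} (proved in Sections~\ref{sec:clique} and~\ref{sec:cycle}) into a Waiter strategy. The guiding intuition is that Waiter has at least as much flexibility as Breaker in a $(1{:}b)$ game: Waiter chooses which $b+1$ free vertices go on the table and then claims the $b$ that Client rejects, whereas Breaker merely responds to Maker's single chosen vertex with $b$ moves. Concretely, Waiter maintains in his head an auxiliary $(1{:}b)$ Maker-Breaker game in which Breaker plays a winning strategy $S_B$ obtained from Theorem~\ref{thm:main}. In each round, given the simulated state $(M,B)$, Waiter picks a free vertex $v$, computes $S_B$'s response $\{u_1,\ldots,u_b\}$ to the hypothetical Maker move $v$, and offers $\{v,u_1,\ldots,u_b\}$ to Client. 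If Client picks $v$, the simulation proceeds identically to the Maker-Breaker game; otherwise Client picks some $u_j$, which we re-interpret as Maker's move in the simulated game.

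The hard part will be the asymmetric case in which Client picks $u_j$ rather than $v$: then Waiter's actually claimed $b$ vertices $\{v,u_1,\ldots,u_b\}\setminus\{u_j\}$ need not agree with the vertices $S_B$ would assign in response to Maker playing $u_j$. To handle this I would exploit the fact that the Breaker strategies in Sections~\ref{sec:clique} and~\ref{sec:cycle} are built from a danger/threat-counting argument, where Breaker only needs, per round, to eliminate a prescribed number of high-threat free vertices so as to maintain an invariant bounding the number of live partial $H'$-copies ($H'=K_k$ or $C_k$). By choosing $v$ and the $u_j$ adaptively so that the $b+1$ offered vertices are precisely the $b+1$ most threatening free vertices according to the current state, Waiter's retained $b$ vertices are automatically among the most threatening regardless of Client's pick, so the invariant is preserved. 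The probabilistic analysis of $G\sim\gnp$ for $p \le cn^{-1/m_1(H)}$ (the structural properties of the random graph used to justify the danger-function argument) transfers verbatim from the proof of Theorem~\ref{thm:main}, so the same constant $c$ works.
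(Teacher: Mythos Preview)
Your 1-statement argument is correct and matches the paper exactly.

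Your 0-statement plan has a genuine gap. The premise of your ``fix'' for the asymmetric case --- that the Breaker strategies in Sections~\ref{sec:clique} and~\ref{sec:cycle} are danger/threat-counting arguments in which Breaker greedily removes the most threatening free vertices --- is simply false. Those strategies are of a very different and much simpler kind. Lemma~\ref{lem:BwinG'} (which is already stated and proved for \emph{both} the Maker-Breaker and the Client-Waiter game) reduces the game on $V(G)$ to the game on $V(G^*)$; the proofs in Sections~\ref{sec:largeCliques}, \ref{sec:(1:2)triangle} and~\ref{sec:cycle} then show that \whp $G^*$ is either empty (for $H'=K_k$ with $k\ge 4$; for $H'=K_3$ with $b\ge 2$; and for $H'=C_k$ with $k\ge 5$) or, in the sole remaining case $H'=C_4$, consists only of copies of three explicit feasible components, on each of which Breaker wins by a \emph{pairing strategy}. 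There is no potential function or threat count anywhere in these arguments, so your proposed adaptive offering rule has nothing to hook into, and the simulation you describe does not go through.

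The paper's actual route is the one-line observation you are missing: since Lemma~\ref{lem:BwinG'} already covers Client-Waiter, the only thing left is to win on $V(G^*)$, and a pairing strategy for Breaker converts trivially to one for Waiter --- Waiter offers each block of at most $b+1$ paired vertices in a single round, guaranteeing Client takes at most one vertex from each block. This observation (together with Observation~\ref{obs:playSeparately}, which lets Waiter handle components separately) replaces your entire simulation argument.
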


\begin{ourtheorem}\label{thm:CWforest}
Let $H$ be a forest consisting of trees $T_1,\dots,T_k$, and let $b$ be a positive integer. The following hold for the $(1:b)$ Client-Waiter vertex $H$-game.
\begin{enumerate}
\item If $H$ is a tree, i.e. $k=1$, then there exists a tree $T$ such that Client wins the game played on $T$.
\item For any integer $k \ge 1$, and for every $i \in [k]$, let  $T_{\min}^{(i)}$ be a tree of minimal size such that Client wins the $(1:b)$ $T_i$-game on its vertex set. Let $T_{\max}$ be a tree of maximal size among all trees $T_{\min}^{(i)}$. Then the threshold for Client's win in the $H$-game is $p^* = n^{-1/m(T_{\max})}$.
\end{enumerate}
\end{ourtheorem}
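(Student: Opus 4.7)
The proof follows the template of the Maker-Breaker analogue, Theorem~\ref{thm:MBforest}, adapted to the Client-Waiter setting. For part (1), I would proceed by induction on $v(H)$. The base case is straightforward. For the inductive step, let $\ell$ be a leaf of $H$ attached to some vertex $u$, and set $H' = H - \ell$. By induction there is a tree $T'$ on which Client wins the $(1:b)$ $H'$-game. Construct $T$ by attaching $N$ fresh pendant leaves to every vertex of $T'$, for a sufficiently large constant $N = N(v(T'),b)$. Client simulates his $H'$-winning strategy inside $V(T')$: whenever Waiter offers a set $S$ with $S \cap V(T') \neq \emptyset$ he responds according to his $H'$-strategy applied to $S \cap V(T')$; otherwise he claims any pendant in $S$. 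This ``virtual'' game proceeds correctly and ends with Client owning an $H'$-copy in $V(T')$, whose image of $u$ is some $u^*$. Since Waiter's claims outside $V(T')$ are bounded by a constant depending only on $v(T')$ and $b$, choosing $N$ large enough ensures that at least one pendant adjacent to $u^*$ remains free; Client claims it at the first opportunity to complete the $H$-copy.

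For the 1-statement of part (2), suppose $p \geq Cn^{-1/m(T_{\max})}$. Since $m(T_{\min}^{(i)}) \leq m(T_{\max})$ for every $i$, a standard disjoint-copies argument shows that \whp $G$ contains $k$ pairwise vertex-disjoint trees $\hat T^{(1)}, \ldots, \hat T^{(k)}$, with $\hat T^{(i)}$ isomorphic to an enlarged version of $T_{\min}^{(i)}$ (obtained by iterating the part (1) gadget construction). The enlargement is designed so that Client's strategy on $\hat T^{(i)}$ tolerates up to a constant number of preemptive Waiter claims on $V(\hat T^{(i)})$. Client plays on the boards $\hat T^{(1)}, \ldots, \hat T^{(k)}$ in sequence, using the virtual-game device from part (1) on the current board and ignoring offers disjoint from it. Since only a bounded number of Waiter claims accumulate on a not-yet-visited board before Client reaches it, the tolerance built into each $\hat T^{(i)}$ suffices for Client to win on every board, yielding the desired $H$-copy.

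For the 0-statement, suppose $p \leq cn^{-1/m(T_{\max})}$. A first-moment argument on tree containment shows that \whp $G$ contains no tree on $v(T_{\max})$ vertices, so every component of $G$ has strictly fewer than $v(T_{\max})$ vertices. Let $j$ be the index with $T_{\min}^{(j)} = T_{\max}$. Any $T_j$-copy that Client builds must lie inside a single component of $G$, so on fewer than $v(T_{\max}) = v(T_{\min}^{(j)})$ vertices. Appealing to the key subclaim that Client cannot win the $(1:b)$ $T_j$-game on any graph with fewer than $v(T_{\min}^{(j)})$ vertices (which upgrades the minimality of $T_{\min}^{(j)}$ from trees to arbitrary graphs), Client loses the $T_j$-game and hence the $H$-game.

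The main obstacle is precisely this upgrade: verifying that the minimum vertex size of a graph on which Client wins the $(1:b)$ $T_j$-game is achieved by a tree and hence equal to $v(T_{\min}^{(j)})$. Intuitively, extra non-tree edges on the board offer no topological help for building a tree target, but making this rigorous requires a careful reduction, for instance showing that in a minimum-vertex counterexample one can delete a non-bridge edge without destroying Client's winning strategy, or arguing directly by induction on the number of excess edges. A secondary technical point is the scheduling across disjoint boards in the 1-statement, handled above by strengthening part (1) to tolerate a bounded number of preemptive Waiter claims.
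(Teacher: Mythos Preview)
Two things go wrong here, and the issue you flag as the main obstacle is actually not one.

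\textbf{The 0-statement.} You do not need to upgrade minimality from trees to arbitrary graphs. For $p=o\bigl(n^{-1/m(T_{\max})}\bigr)=o\bigl(n^{-v/(v-1)}\bigr)$ with $v=v(T_{\max})$, a first-moment argument over all cycles of length at most $v$ (in addition to trees on $v$ vertices) shows that w.h.p.\ every connected component of $G$ is itself a \emph{tree} on fewer than $v$ vertices; this is exactly Claim~\ref{cl:smallTrees}. Waiter then offers vertices one component at a time, and since each component is a tree strictly smaller than $T_{\min}^{(j)}$, the defining tree-minimality of $T_{\min}^{(j)}$ applies verbatim. Your ``key subclaim'' is never needed.

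\textbf{Part (1) and the 1-statement.} Your repeated claim that Waiter's off-board claims are bounded by a constant is false, and this is what breaks both arguments. Waiter controls the offers and can front-load arbitrarily many rounds that avoid the current board. In part~(1), since your strategy is deterministic, Waiter can predict $u^*$ in advance, then in each round offer $b$ pendants at $u^*$ together with one pendant at some other vertex; under ``claim any pendant'' Client may always take the distractor, so all $N$ pendants at $u^*$ go to Waiter before a single vertex of $T'$ is ever offered. The same mechanism defeats your sequential scheme for the 1-statement: while Client is on $\hat T^{(1)}$ and ``ignoring'' offers disjoint from it, Waiter can repeatedly offer $\hat T^{(2)}$-only sets and claim a $\tfrac{b}{b+1}$-fraction of $\hat T^{(2)}$, far more than any fixed tolerance can absorb. (This is precisely where Client-Waiter differs from Maker-Breaker: Maker controls timing, Client does not.)

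The paper handles these points differently. Part~(1) quotes a result of Dean and Krivelevich: Client can build a $k$-ary tree of height $k-1$ on the vertices of a sufficiently wide complete $m$-ary tree, and this contains any tree $H$ on $k$ vertices. For the 1-statement the paper does not harden a single board; it plants $n_i=\bigl((bv)^2+1\bigr)^{i-1}$ vertex-disjoint copies of $T_{\min}^{(i)}$ for each $i$, and whenever an offer meets this forest, Client plays his $T_i$-strategy inside the copy of lexicographically smallest index $(i,j)$ and \emph{discards} every other copy touched by that offer. A short counting argument shows that at least one copy survives in each layer $i$, so Client eventually owns a copy of every $T_i$.
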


\begin{ourconjecture}\label{conj:CW}
Let $b\geq 1$ be an integer, and let $H$ be a graph for which there exists $H'\subseteq H$ such that $d_1(H')=m_1(H)$, $H'$ is strictly 1-balanced and is not a single edge, and in case $b=1$ also not a triangle.
Then there exist constants $c,C>0$ such that the following holds for the $(1:b)$ Client-Waiter vertex $H$-game.
$$ \lim_{n\to \infty} \Pr\left[G\sim G(n,p) \text{ is Client's win}\right] = \begin{cases}
1 &p\geq Cn^{-1/m_1(H)},\\ 0 &p\leq cn^{-1/m_1(H)}.
\end{cases}$$
\end{ourconjecture}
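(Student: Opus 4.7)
The plan is to mirror, as closely as possible, the framework proposed for Conjecture~\ref{conj:main}. For the 1-statement, in any play of the $(1:b)$ Client-Waiter game on $V(G)$, no matter how Waiter offers and Client picks, Client ends the game with precisely $\lfloor n/(b+1) \rfloor$ vertices (with one possible unit deviation in the final round). Applying Theorem~\ref{thm:RamseyVertex} with $r=b+1$ yields a constant $C>0$ such that whenever $p \geq Cn^{-1/m_1(H)}$, \whp every subset of $V(G)$ of size $\lfloor n/(b+1) \rfloor$ spans a copy of $H$. In particular Client's final vertex set spans such a copy, so Client wins regardless of Waiter's strategy. This argument is identical to the one used in Remark~\ref{rmk:MakerSideConj} for Maker.

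The substantive part is the 0-statement: producing a Waiter strategy that prevents Client from occupying every vertex of some $H$-copy in $G$ when $p \leq cn^{-1/m_1(H)}$. The natural route is the standard reduction from Client-Waiter to Maker-Breaker: any winning strategy for Breaker in the $(1:b)$ Maker-Breaker game on a hypergraph $(X,\F)$ translates into a winning strategy for Waiter in the $(1:b)$ Client-Waiter game on the same hypergraph. Concretely, Waiter simulates a virtual Maker-Breaker game; in each round he consults Breaker's strategy to produce $b$ ``Breaker moves'' and offers them together with an auxiliary element playing the role of a Maker move. If Client picks the auxiliary element the simulation proceeds in the obvious way, while if Client picks one of the intended Breaker moves $e$, Waiter re-interprets $e$ as Maker's next move in the virtual game, which is legal as long as Breaker's strategy responds to arbitrary Maker plays --- a feature typically enjoyed by the density-based strategies anticipated for Conjecture~\ref{conj:main}. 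Under this reduction, the 0-statement of Conjecture~\ref{conj:CW} follows from the (still conjectural) 0-statement of Conjecture~\ref{conj:main}.

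Failing a clean black-box reduction, a direct proof would parallel the Maker-Breaker analysis in Sections~\ref{sec:clique} and~\ref{sec:cycle}. The idea is to equip Waiter with a potential function measuring how close Client is to completing a copy of $H$ --- weighting partial copies of $H$ by their number of uncovered vertices, say --- and to choose each offer so that this potential does not increase whichever element Client picks. Notice that the Client-Waiter rules actually give Waiter \emph{more} control than Breaker has in Maker-Breaker, since Waiter may offer as few as a single element and thereby force Client to claim it; this freedom can be exploited to block dangerous configurations proactively. The main obstacle is the one already confronting Conjecture~\ref{conj:main} in full generality: for strictly $1$-balanced $H'$ outside the clique/cycle family treated in Theorem~\ref{thm:main}, the local combinatorial structure of dangerous subgraphs of $\gnp$ near $p = n^{-1/m_1(H)}$ is delicate, and designing a potential function that tracks these configurations is not routine. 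Overcoming this will likely require a structural analysis of $\gnp$ at this density together with a careful Bednarska--\L uczak-type argument adapted to the Client-Waiter setting.
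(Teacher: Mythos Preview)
This statement is a \emph{conjecture}; the paper does not prove it. Your treatment of the 1-statement is correct and matches the paper's handling of the partial result Theorem~\ref{thm:CWsimple}: Client necessarily ends with \emph{at least} $\lfloor n/(b+1)\rfloor$ vertices (not ``precisely'', since Waiter may offer fewer than $b+1$ elements per round --- but the lower bound is all that is needed), and Theorem~\ref{thm:RamseyVertex} with $r=b+1$ then guarantees an $H$-copy in Client's graph.

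Your proposed black-box reduction for the 0-statement, however, does not work as written. Trace the simulation in the case you flag: Waiter picks an auxiliary element $m$, computes Breaker's response $B$ to the virtual Maker move $m$, and offers $\{m\}\cup B$. If Client selects some $b_i\in B$, you propose to re-interpret $b_i$ as Maker's move in the virtual game. But Breaker's prescribed response to $b_i$ is some set $B'$ that in general has nothing to do with the set $\{m\}\cup B\setminus\{b_i\}$ that Waiter has actually claimed, and there is no mechanism to reconcile the virtual and real positions. The hedge that this is ``legal as long as Breaker's strategy responds to arbitrary Maker plays'' misses the point: every Breaker strategy responds to arbitrary Maker plays; the problem is that Waiter's claimed set need not contain $B'$. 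No general implication ``Breaker wins $(1:b)$ Maker--Breaker $\Rightarrow$ Waiter wins $(1:b)$ Client--Waiter'' is established here or in the paper.

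The paper's route to the partial Client--Waiter results is different and does not go through such a reduction. Lemma~\ref{lem:BwinG'} (the deletion-algorithm reduction to $G^*$) is proved \emph{separately} for Waiter, and then the specific winning strategies Breaker uses on the components of $G^*$ --- which are all pairing strategies --- are observed to transfer verbatim to Waiter, who simply offers each pair in a single round. So the paper's implicit programme for Conjecture~\ref{conj:CW} is not a game-theoretic reduction to Conjecture~\ref{conj:main}, but the expectation that any proof of Conjecture~\ref{conj:main} via the deletion algorithm plus pairing on $G^*$ would simultaneously establish Conjecture~\ref{conj:CW}. Your fallback suggestion of a potential-function argument is also not the paper's approach.
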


More discussion about the similarities between Client-Waiter games and Maker-Breaker games can be found in Sections~\ref{sec:gnrlH},~\ref{sec:forests} and~\ref{sec:others}.

We finish with the $(1:1)$ Client-Waiter triangle-game, where once again we observe the
phenomenon of ``the $(1:1)$ triangle-game behaves differently". This game in fact
presents the most interesting behavior of all games considered in
this paper, and, if Conjectures~\ref{conj:main} and~\ref{conj:CW} turn out to
be true, then of all vertex $H$-games.

Recall that in the unbiased \textbf{edge} version, Maker wins the
triangle game locally, but
wins due to a global reason for most other $H$-games. Dean
and Krivelevich showed in~\cite{DK} that for any given graph $H$, the threshold probability
for the Client-Waiter $H$-game is exactly the same as in the
Maker-Breaker $H$-game even when Waiter is allowed an arbitrary fixed
bias. Furthermore, they showed that not only Client wins locally the unbiased triangle
game, but also that the fixed graph for which Client awaits is the same one
Maker waits for, namely $K_5^-$.

However, in the vertex version of these games, Client and Maker need
different fixed graphs to apply their winning strategies on. While Maker
wins as soon as a double diamond appears, Client has to wait further
for the appearance of a \emph{triple diamond}, which we
describe in Section~\ref{sec:others}. In particular, unlike any
other game mentioned in this paper (for both edge and vertex
versions), in the unbiased triangle-game on the vertex set, Waiter is
significantly stronger then Breaker.

But there is more to it. As it turns out, the threshold probability for Client's ``local win" is of
the same order of magnitude as the threshold probability for
Client's ``global win".
Consequently, there is a range of values of $p$ for which all of the following occur with probability bounded away from zero in $\ggnp$: Client wins due to a local reason, Client wins due to a global reason, Waiter wins.
\begin{ourtheorem}\label{thm:CWtriangle}
There exist positive constants $c,C,\alpha$ such that in the $(1:1)$ Client-Waiter vertex $K_3$-game played on $\ggnp$ the following hold.
\begin{enumerate}[$(1)$]
\item Waiter wins \whp for any $p = o\left(n^{-2/3}\right)$;
\item For any constant $0<d < c$ and for $p = dn^{-2/3}$, we have $$\lim_{n \to \infty}\Pr[\text{Waiter wins the game}] \geq \alpha;$$
\item For any constant $d>0$ there exists a constant $\beta = \beta(d) > 0$, such that for $p = dn^{-2/3}$ we have $$\lim_{n \to \infty}\Pr[\text{Client wins the game}] \geq \beta;$$
\item Client wins \whp for any $p \ge Cn^{-2/3}$.
\end{enumerate}
\end{ourtheorem}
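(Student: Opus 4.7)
The plan is to handle the four parts separately, around a common finite graph: the \emph{triple diamond} $TD$ to be introduced in Section~\ref{sec:others}, which is strictly balanced of density $m(TD)=3/2$ and on which Client has a winning strategy in the $(1:1)$ Client-Waiter $K_3$-game. Part (4) is an immediate consequence of Theorem~\ref{thm:RamseyVertex} applied with $r=2$ and $H=K_3$: there is a constant $C>0$ such that for $p\geq Cn^{-2/3}$, \whp every $\lfloor n/2\rfloor$-vertex subset of $\gnp$ spans a triangle. Since in the unbiased Client-Waiter game Client claims at least $\lceil n/2\rceil$ vertices regardless of Waiter's offer sizes (Waiter offers $t\in\{1,2\}$ per round and Client claims one of them), \whp Client's final set spans a triangle.

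For part (3), once Client's local winning strategy on $V(TD)$ is in hand it extends to any $G$ containing a copy of $TD$: Client focuses on a distinguished copy $TD^*\subseteq G$, plays according to the local strategy whenever Waiter offers a pair inside $V(TD^*)$, takes the $V(TD^*)$-vertex whenever Waiter offers a mixed pair, and picks arbitrarily on offers lying entirely outside $V(TD^*)$. Since $TD$ is strictly balanced of density $3/2$, Theorem~\ref{thm:fixedGraphDist} gives that for $p=dn^{-2/3}$ the number of $TD$-copies in $\gnp$ converges in distribution to a Poisson random variable with a positive mean $\mu(d)$, whence $\Pr[\gnp\text{ contains a copy of }TD]\to 1-e^{-\mu(d)}=:\beta(d)>0$ for every $d>0$.

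For parts (1) and (2) the aim is the structural claim that if $G$ contains no copy of $TD$, then Waiter has a winning strategy in the $(1:1)$ Client-Waiter $K_3$-game on $V(G)$. Granted this claim, part (1) is immediate from a first-moment bound: $\mathbb{E}[\#TD\text{-copies in }\gnp]=\Theta(n^{v(TD)}p^{e(TD)})$, which is $o(1)$ when $p=o(n^{-2/3})$, so \whp $\gnp$ is $TD$-free and Waiter wins; part (2) follows from the same Poisson approximation as in (3), since for $p=dn^{-2/3}$ with $d<c$ small enough $\Pr[\gnp\text{ is }TD\text{-free}]\geq e^{-\mu(c)}-o(1)$, giving a uniform lower bound $\alpha>0$. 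The structural claim itself I would prove by a case analysis of the ``triangle clusters'' of $G$ (components of the graph whose vertices are vertices of $G$ lying in some triangle and whose edges are pairs of vertices co-occurring in a triangle): in the absence of $TD$ every such cluster belongs to a short list of sparse configurations (isolated triangles, diamonds, books, bowties, double diamonds and a few further small gadgets), each of which admits an explicit pairing or short adaptive Waiter strategy that covers every triangle inside it.

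The main obstacle is establishing this Waiter-side structural claim. One must identify the minimal vertex-game obstructions for Client --- confirming that $TD$ is such a graph --- and design a Waiter strategy for every $TD$-free triangle cluster. This parallels the classification underlying the Maker-Breaker analogue in Theorem~\ref{thm:triangle11}, but is more delicate because in the Client-Waiter setting Client can only pick from a pair offered by Waiter, whereas Maker is free to claim any available vertex; accordingly the minimal Client-winnable graph is larger and denser than in the Maker-Breaker case, so the excluded subgraph grows from $DD$ (for Maker-Breaker) to $TD$ (for Client-Waiter) and the case analysis becomes correspondingly more involved.
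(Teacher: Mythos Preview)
Your treatment of parts~(3) and~(4) is correct and essentially matches the paper: Client's win on a triple diamond and the Poisson approximation of Theorem~\ref{thm:fixedGraphDist} give~(3), and Theorem~\ref{thm:RamseyVertex} gives~(4).

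For parts~(1) and~(2), however, the structural claim ``if $G$ is $TD$-free then Waiter wins'' is false as a deterministic statement. Take $G=K_5$: it has only five vertices and hence contains no copy of the ten-vertex graph $TD$, yet Client gets at least three vertices in the unbiased game and any three vertices of $K_5$ span a triangle. More generally, there is no finite ``short list of sparse configurations'' covering all $TD$-free triangle clusters, so the case analysis you sketch cannot close. One might hope to rescue the argument probabilistically (ruling out dense pieces like $K_5$ because $p=O(n^{-2/3})$), but even then you would have to show that \emph{every} $(K_3,1)$-stable component of density at most $3/2$ that is not Waiter-winnable contains a copy of $TD$; there are infinitely many such candidate components (one for each $t$ in $\z 03$), and the paper neither proves nor needs this.

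The paper's route for~(1) and~(2) bypasses any classification of Client-winnable graphs. It applies the $(K_3,1)$-deletion algorithm (Lemma~\ref{lem:BwinG'}) to reduce to the core $G^*$; uses Claim~\ref{cl:noNegativeExponent} to restrict attention \whp to components with $2q+s\le 3$; disposes of $K_3$-cycles, $TT$, and all $DD_t$ via explicit Waiter pairing strategies (Definitions~\ref{def:K3cyclePairing}, \ref{def:trioPairing} and Claim~\ref{cl:DDtPairing}); and for the remaining case $2q+s=3$ reduces via Claim~\ref{cl:11is30} to the families $\z 03$, then performs a union bound over all of $\bigcup_t \z 03$ using the counting estimate $|\z 03|\le 12\binom{t+3}{2}$ (Claim~\ref{cl:y0t3}) together with the bound $\Pr[\Gamma\subseteq G]\le a^{3t+6}$ from~\eqref{eq:xqts}. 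Summing gives $\Pr[\text{some }\Gamma\in\bigcup_t\z 03\text{ appears}]\le 12a^2/(1-a)^3$, which is $o(1)$ when $a=o(1)$ and bounded away from~$1$ for small constant~$a$. So the paper never decides which members of $\z 03$ Client can actually win on; it only shows that \whp (or with positive probability) none of them is present.
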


\subsection{Organization of the paper}
The organization of the paper is as follows. In Section~\ref{sec:pre} we provide notation and technical preliminaries, and present some random graph results. In Section~\ref{sec:gnrlH} we describe a strategy for the spoiler (either Breaker or Waiter) which can be used in any $H$-game, and additionally focus on clique games. The content of this section is the basis for the remainder of the paper. In Section~\ref{sec:clique} we deal with Maker-Breaker clique games: we first prove the 0-statement of
Theorem~\ref{thm:main} for $H=K_k$, dividing the proof into the two cases $k\geq 4$ (Section~\ref{sec:largeCliques}) and $k=3,~b\geq 2$ (Section~\ref{sec:(1:2)triangle}), and then prove Theorem~\ref{thm:triangle11} (Section~\ref{sec:(1:1)triangle}). In Section~\ref{sec:cycle} we prove the 0-statement of Theorem~\ref{thm:main} for $H=C_k,~k\geq 4$. In Section~\ref{sec:forests} we deal with forests, and in particular prove Theorems~\ref{thm:MBforest} and~\ref{thm:CWforest}. In Section~\ref{sec:infamily} we prove Theorem~\ref{thm:infamily}. Avoider-Enforcer, Waiter-Client and Client-Waiter games are all discussed in Section~\ref{sec:others}, and the proofs for all corresponding theorems are given (except for Theorem~\ref{thm:CWforest}). Finally, in Section~\ref{sec:concluding} we provide some concluding remarks and open problems.


\section{Preliminaries}\label{sec:pre}
Our graph-theoretic notation is standard and follows that of \cite{IntroGraphTheory}.
In particular we use the following. For a graph $G = (V,E)$ and a set $U\sbst V$, let $G[U]$ denote the corresponding vertex-induced subgraph of $G$, and let $N_G(U) = \{v \in V \setminus U
\mid \exists u\in U \textrm{ such that } uv \in E\}$ denote the
external neighborhood of $U$ in $G$. For a vertex $v \in V$ we
abbreviate $N_G(\{v\})$ to $N_G(v)$ and let $d_G(v) = |N_G(v)|$
denote the degree of $v$ in $G$. The maximal degree in $G$ is denoted by $\Delta(G)$. Often, when there is no risk of ambiguity, we omit the
subscript $G$ in the above notation.

Considering a fixed graph $H$, we say that a graph $G$ is \emph{$H$-free} if it does not contain a copy of $H$ as a subgraph. More generally, for a family $\mathcal F$ of fixed graphs, we say that $G$ is \emph{$\mathcal F$-free} if it is $H$-free for every $H\in \mathcal F$. We say that two $H$-copies in a graph $G$ \emph{intersect} if they are not vertex disjoint.

Our results are asymptotic in nature and we assume that $n$ is large enough where needed. We omit floor and ceiling signs whenever these are not crucial.
%

\subsection{Intersecting $H$-copies}
Let $H$ be a connected graph. We now present notation for some specific structures involving intersecting $H$-copies, which will be very useful in our proofs.

\begin{definition}\label{def:HChain}
    A graph $\Gamma$ is an \emph{$H$-chain} of length $t$ if it consists of $t\geq 1$ copies $H_1 , \ldots , H_t$ of $H$, such that
    $$\forall\ 1 \le i < j \le t:\; |V(H_i) \cap V(H_{j})| = \begin{cases} 1 &j - i = 1,\\ 0 &otherwise. \end{cases}$$
\end{definition}

Note that if $H$ is a clique then for every integer $t$ there exists exactly one $H$-chain of length $t$ (up to isomorphism), and therefore we can refer to \textbf{the} $H$-chain of length $t$ in this case.
This is not the case for any other graph, as there are different chains of each length, according to which vertices lie in the intersections between consecutive $H$-copies.

\begin{definition}\label{def:HCycle}
    A graph $\Gamma$ is an
    \emph{$H$-cycle} of length $t$ if it consists of $t \geq 3$ copies
    $H_1, \ldots, H_t$ of $H$, such that
    $$\forall\ 1 \le i,j \le t: \; |V(H_i) \cap V(H_j)| =
    \begin{cases} 1 &|j - i| \md{1}{t},\\ 0 &otherwise. \end{cases}$$
\end{definition}

\begin{definition}
    Given a graph $G$, an edge $e \in E(G)$ belonging to two distinct $H$-copies in $G$ is called \emph{dangerous} with respect to $H$. Since $H$ is always clear from the context, we simply refer to such edges as dangerous.
\end{definition}

\subsection{Strictly balanced and strictly 1-balanced graphs}
\begin{claim}\label{cl:balancedDense}
    For every strictly balanced graph $H$, if $\GG$ is a graph
    consisting of two $H$-copies with a non-empty intersection $H'$,
    then $m(\GG) > m(H) + \frac 1{2v(H)^2}$.
\end{claim}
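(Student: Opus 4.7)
My plan is to bound $m(\Gamma)$ from below by $d(\Gamma)$ itself (since trivially $\Gamma\sbst\Gamma$ gives $m(\Gamma)\ge d(\Gamma)$), and then compute $d(\Gamma)$ directly. Setting $v=v(H)$, $e=e(H)$, $v'=v(H')$, $e'=e(H')$, inclusion--exclusion yields $v(\Gamma)=2v-v'$ and $e(\Gamma)=2e-e'$, and a short rearrangement gives
\[
d(\Gamma)-d(H) \;=\; \frac{2e-e'}{2v-v'}-\frac{e}{v} \;=\; \frac{ev'-ve'}{v(2v-v')} \;=\; \frac{v'\bigl(d(H)-d(H')\bigr)}{2v-v'}.
\]
Because the two copies are distinct, $H'$ is (isomorphic to) a proper subgraph of $H$, and so the strict balancedness assumption yields $d(H)-d(H')>0$.

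The next step is to turn this qualitative strict inequality into the explicit $1/(2v^2)$ gap. Writing $d(H)-d(H') = (ev'-ve')/(vv')$ and noting that the numerator is a positive integer, I get $d(H)-d(H')\ge 1/(vv')$. Plugging this into the displayed formula and using $v'\ge 1$ (the intersection is non-empty, which makes $2v-v'<2v$),
\[
d(\Gamma)-d(H) \;\ge\; \frac{1}{v(2v-v')} \;>\; \frac{1}{2v^2}.
\]
Since $H$ is strictly balanced we have $m(H)=d(H)$, and combining this with $m(\Gamma)\ge d(\Gamma)$ gives $m(\Gamma)>m(H)+1/(2v(H)^2)$, as required.

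There is no serious obstacle here; the argument is purely arithmetic. The only care needed is in the integrality step that upgrades the strict inequality $d(H)>d(H')$ into the concrete lower bound $1/(vv')$, and in noting that strict balancedness serves a dual role: it both guarantees $d(H)-d(H')>0$ (so that the integer gap is at least $1$) and identifies $m(H)$ with $d(H)$ at the end of the argument.
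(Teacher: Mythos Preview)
Your proof is correct and follows essentially the same approach as the paper: both compute $d(\Gamma)-e/v = (ev'-e'v)/\bigl(v(2v-v')\bigr)$, use strict balancedness to conclude that $ev'-e'v$ is a positive integer, and bound the denominator by $2v^2$. You spell out the intermediate step through $d(H)-d(H')\ge 1/(vv')$ and explicitly invoke $m(H)=d(H)$, but the argument is the same.
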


\begin{proof}
    Let $v(H) = v$, $e(H) = e$, $v(H')= v'$ and $e(H') = e'$. Since $H' \sbst
    H$ and $H$ is strictly balanced, we get $e'/v' <
    e/v$, which implies that $ev' - e'v$ is a positive integer. It
    follows that
    \begin{equation*}
        m(\GG) - m(H) \ge d(\GG) - m(H) = \frac {2e-e'}{2v-v'} - \frac ev =
        \frac {ev' - e'v}{v(2v-v')} > \frac 1{2v^2}. \qedhere
    \end{equation*}
\end{proof}

%
%

\begin{claim}\label{cl:1balanced2connected}
Every strictly 1-balanced graph with at least three vertices is 2-vertex-connected.
\end{claim}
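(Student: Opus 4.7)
The plan is to argue by contradiction: suppose $H$ is strictly 1-balanced, $v(H)\ge 3$, but $H$ is either disconnected or has a cut vertex. In each case I will decompose $H$ into two proper subgraphs $H_1,H_2$ that cover every vertex and every edge of $H$ and share at most one vertex, and then show that at least one of them satisfies $d_1(H_i)\ge d_1(H)$, contradicting strict 1-balance.

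For the decomposition: if $v$ is a cut vertex, pick a connected component $C$ of $H-v$ and set $H_1=H[V(C)\cup\{v\}]$, $H_2=H[V(H)\setminus V(C)]$; these meet exactly in $v$, each has at least two vertices, and both are proper subgraphs of $H$. If $H$ is disconnected, take any connected component $C$ carrying at least one edge (which exists, since otherwise $H$ is edgeless and every subgraph on $\ge 2$ vertices has $d_1=0=d_1(H)$, violating strict 1-balance because $v(H)\ge 3$ gives several such subgraphs), and set $H_1=H[V(C)]$, $H_2=H[V(H)\setminus V(C)]$, which are vertex-disjoint proper subgraphs.

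The density comparison uses the mediant inequality. In the cut-vertex case
\[
v(H)-1=(v(H_1)-1)+(v(H_2)-1),\qquad e(H)=e(H_1)+e(H_2),
\]
so $d_1(H)=\dfrac{e(H_1)+e(H_2)}{(v(H_1)-1)+(v(H_2)-1)}$ is the mediant of $d_1(H_1)$ and $d_1(H_2)$, and therefore $d_1(H)\le \max\{d_1(H_1),d_1(H_2)\}$. In the disconnected case the denominator of $d_1(H)$ is $(v(H_1)-1)+(v(H_2)-1)+1$, strictly larger than the mediant denominator, so (since $e(H)>0$) we get $d_1(H)<\max\{d_1(H_1),d_1(H_2)\}$. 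In either subcase this produces a proper subgraph $H'\subsetneq H$ with $v(H')\ge 2$ and $d_1(H')\ge d_1(H)$, contradicting strict 1-balance.

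The only real bookkeeping issue is the edge case in the disconnected scenario where some component is a single vertex, so $d_1$ is not defined on that side. This is easy to handle: take $H_1$ to be the nontrivial component (which exists by the argument above) and observe directly that $d_1(H_1)=e(H)/(v(H_1)-1)>e(H)/(v(H)-1)=d_1(H)$, since $v(H_1)<v(H)$. The hypothesis $v(H)\ge 3$ is used precisely to guarantee that in every scenario both pieces of the decomposition are proper subgraphs of $H$.
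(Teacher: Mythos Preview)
Your proof is correct and follows essentially the same approach as the paper: decompose $H$ into two proper subgraphs $H_1,H_2$ overlapping in at most one vertex, so that $v(H)-1\ge (v(H_1)-1)+(v(H_2)-1)$ and $e(H)=e(H_1)+e(H_2)$, and then derive a contradiction with strict 1-balance. The paper carries this out via direct algebraic manipulation of the two inequalities $d_1(H)>d_1(H_i)$, while you phrase it as a mediant inequality; your version is slightly more careful in that it explicitly treats the disconnected case (including the singleton-component edge case), which the paper's write-up folds into the cut-vertex case without comment.
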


\begin{proof}
Let $H$ be a strictly 1-balanced graph with at least three vertices, and assume for contradiction that $H$ is not 2-vertex-connected. Then there exist two subgraphs $H_1,H_2 \sbst H$, each containing at least two vertices, such that $H = H_1 \cup H_2$ and $|V(H_1) \cap V(H_2)| = 1$. For $i=1,2$, let $v(H_i) = v_i$ and $e(H_i) = e_i$. Since $H$ is strictly 1-balanced we have $d_1(H) > d_1(H_1)$, that is
$$\frac{e_1+e_2}{v_1+v_2-2} > \frac{e_1}{v_1-1}.$$
Rearranging, we get $e_2v_1 - e_1v_2 > e_2 - e_1$. Similarly, from $d_1(H) > d_1(H_2)$ we get $e_1v_2 - e_2v_1 > e_1 - e_2$. Putting the two inequalities together, we get $e_2 - e_1 < e_2v_1 - e_1v_2 < e_2 - e_1$, a contradiction.
\end{proof}

\subsection{Random graph results}\label{sec:preGnp}
For several proofs in this paper we rely on the well-known asymptotic connection between the
random graph model $G(n,p)$ and the random graph process, given in the following proposition. Roughly
speaking, if a typical graph from one of these models satisfies some
monotone increasing graph property $\mathcal P$ with some given
probability, then a  typical graph from the other model (with the
corresponding parameters) also satisfies $\mathcal P$ with the same
probability (see \cite{Bol98,FrBook,JLR} for more details).
%
%
%
%
\begin{proposition}\label{prop:GnpRGP}
    Let $\mathcal P$ be a monotone graph property, let $p=p(n)$, $0<a<1$, let $\tilde G = (G_i)$ be a random graph process and let $G\sim G(n,p)$.
    Then there exist constants $c_1, c_2>0$ such that:
    \begin{itemize}
        \item $\Pr[G\in \mathcal P] = a ~\Rightarrow~ \Pr[G_i\in \mathcal P] = a$ for $i=c_1n^2p$.
        \item $\Pr[G_i\in \mathcal P] = a ~\Rightarrow~ \Pr[G\in \mathcal P] = a$ for $i=c_2n^2p$.
    \end{itemize}
\end{proposition}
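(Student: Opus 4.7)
The plan is to use the standard coupling between $\gnp$ and the random graph process $\tilde G$, and to exploit the monotonicity of $\cP$ together with concentration of the binomial distribution.

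I would set up the natural joint coupling as follows. Attach to each of the $N = \binom{n}{2}$ potential edges of $K_n$ an independent uniform $[0,1]$ random variable $U_e$. On the one hand, the graph $\{e : U_e \le p\}$ is distributed exactly as $\gnp$. On the other hand, ordering the edges by increasing $U_e$-value yields a uniformly random permutation of $E(K_n)$, so its length-$i$ prefix is distributed exactly as $G_i$ in the random graph process. Under this coupling, $G = G_M$, where $M = |\{e : U_e \le p\}| \sim \Bin(N,p)$.

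Next I would bring in concentration. It suffices to treat the regime $n^2 p \to \infty$, since otherwise $G$ and $G_i$ for $i = O(n^2 p)$ are both \whp empty and the statement holds trivially. In this regime, Chernoff's inequality gives that for any fixed $\eta > 0$ we have $M \in [(1-\eta)Np,(1+\eta)Np]$ with probability $1 - o(1)$, and since $Np = (1 + o(1)) n^2 p / 2$ this range sits inside $[(1-2\eta) n^2 p /2,(1+2\eta) n^2 p /2]$ for $n$ large.

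Now I would combine concentration with monotonicity. Assume without loss of generality that $\cP$ is monotone increasing, so the map $i \mapsto \Pr[G_i \in \cP]$ is non-decreasing. Picking $c_1 = 1$ (any constant strictly greater than $1/2$ works), concentration yields $M \le c_1 n^2 p$ \whp, and monotonicity then gives $G_{c_1 n^2 p} \supseteq G_M = G$ on this event, whence $\Pr[G_{c_1 n^2 p} \in \cP] \ge \Pr[G \in \cP] - o(1)$. Choosing $c_1 = 1/4$ instead reverses the containment and delivers $\Pr[G_{c_1 n^2 p} \in \cP] \le \Pr[G \in \cP] + o(1)$. The converse direction, passing from $G_i$ to $G$ with $i = c_2 n^2 p$, is entirely symmetric, reading the same coupling in reverse.

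The delicate point will be the equality ``$= a$'': the two one-sided implications above lose a factor of $1+o(1)$ in the index, not in the limit. To extract a genuine limit equal to $a$, I would let $\eta = \eta(n) \to 0$ slowly enough that the Chernoff event still has probability $1-o(1)$. Monotonicity of $i \mapsto \Pr[G_i \in \cP]$ then forces this probability to vary by $o(1)$ across the resulting narrow window around $n^2 p / 2$, so any fixed $c_1$ (or $c_2$) inside this window yields the claimed limit $a$.
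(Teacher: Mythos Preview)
The paper does not give a proof of this proposition; it is quoted as a standard fact about the asymptotic equivalence of $G(n,p)$ and the random graph process, with pointers to the textbooks \cite{Bol98,FrBook,JLR}. So there is nothing in the paper to compare your argument against.

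Your coupling via i.i.d.\ uniform edge labels, together with Chernoff and the monotonicity of $\cP$, is exactly the standard mechanism, and the one-sided inequalities you extract are correct. They are also all the paper ever needs: every application (e.g.\ Claim~\ref{cl:sparseBeforeDense}) only transfers \whp statements, that is, the cases $a\to 0$ or $a\to 1$, between the two models, and for those your bounds with any fixed $c_1>1/2$ (one direction) or $c_1<1/2$ (the other) already suffice.

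Your final paragraph, however, has a genuine gap. Writing $f(i)=\Pr[G_i\in\cP]$, the coupling gives you $f\bigl((1-\eta)Np\bigr)\le \Pr[G\in\cP]+o(1)$ and $f\bigl((1+\eta)Np\bigr)\ge \Pr[G\in\cP]-o(1)$: an \emph{upper} bound at the left endpoint of the window and a \emph{lower} bound at the right endpoint. Monotonicity of $f$ says nothing about the gap $f\bigl((1+\eta)Np\bigr)-f\bigl((1-\eta)Np\bigr)$ from these two inequalities; they point the wrong way to squeeze. Concretely, for the monotone property $\cP=\{G:e(G)>m\}$ with $m=\lfloor Np\rfloor$ one has $\Pr[G(n,p)\in\cP]\to 1/2$ while $f(i)\in\{0,1\}$ for every $i$, so no single constant $c_1$ gives $f(c_1 n^2 p)\to 1/2$. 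The proposition is best read as shorthand for the \whp transfer principle, which your correct one-sided argument already delivers.
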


In our proofs we often upper bound the probability that $\ggnp$ is not $H$-free (for a given fixed graph $H$), and make use of the simple inequality $\Pr[H \sbst G] \le n^{v(H)} p^{e(H)}$. The next two theorems deal with the appearance of $H$-copies in $\ggnp$, where the order of magnitude of $p$ is compared to $n^{-1/m(H)}$.

\begin{theorem}[Theorem 5.3 in \cite{FrBook}]\label{thm:HinGnp}
    For any fixed graph $H$ and for any fixed positive integer $N$, \whp
    $\ggnp$ contains $N$ vertex disjoint copies of $H$ for every $p =
    \omega(n^{-1/m(H)})$, and is $H$-free for every $p =
    o(n^{-1/m(H)}$).
\end{theorem}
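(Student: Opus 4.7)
The plan for the 0-statement is a one-line first moment argument. Let $H^{*} \subseteq H$ be a subgraph achieving $d(H^{*}) = m(H)$, so that $v(H^{*})/e(H^{*}) = 1/m(H)$. The expected number of $H^{*}$-copies in $\gnp$ is at most $n^{v(H^{*})} p^{e(H^{*})} = (n^{1/m(H)} p)^{e(H^{*})} = o(1)$ whenever $p = o(n^{-1/m(H)})$. By Markov's inequality, \whp $\gnp$ contains no copy of $H^{*}$, and is therefore $H$-free.

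For the 1-statement my plan is two-step: first show that a single $H$-copy appears \whp via the second moment method, then bootstrap to $N$ vertex-disjoint copies via a partition argument. Let $X$ count the $H$-copies in $\gnp$; then $E[X] = \Theta(n^{v(H)} p^{e(H)})$, and since $v(H)/e(H) \ge 1/m(H)$, the assumption $p = \omega(n^{-1/m(H)})$ gives $E[X] \to \infty$. For the variance I would decompose $E[X^{2}]$ by the isomorphism type $J$ of the intersection of two labeled $H$-copies: for each nonempty $J$ the contribution to $E[X^{2}]$ is of order $n^{2v(H) - v(J)} p^{2e(H) - e(J)}$, so its ratio to $E[X]^{2}$ is of order $n^{-v(J)} p^{-e(J)} = (n^{v(J)/e(J)} p)^{-e(J)}$ when $e(J) \ge 1$. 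Using $v(J)/e(J) \ge 1/m(H)$ (the defining property of $m(H)$) together with $p = \omega(n^{-1/m(H)})$, each such ratio tends to zero; terms in which $J$ consists only of isolated vertices contribute $n^{-v(J)} = o(1)$. Thus $\mathrm{Var}[X]/E[X]^{2} \to 0$ and Chebyshev's inequality gives $\Pr[X > 0] \to 1$.

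To upgrade a single copy to $N$ vertex-disjoint copies I would partition $V(\gnp)$ into $N$ arbitrary parts $V_{1},\dots,V_{N}$ of size $\lfloor n/N \rfloor$. The induced subgraphs $G[V_{i}]$ are mutually independent samples of $G(\lfloor n/N \rfloor, p)$, and since $N$ is a constant, the condition $p = \omega((n/N)^{-1/m(H)})$ still holds. Applying the previous step to each part and taking a union bound over the $N$ parts yields, \whp, an $H$-copy in every $G[V_{i}]$, and these copies are vertex-disjoint by construction. The one step that really demands care is the second moment calculation, specifically securing uniform control over every possible intersection graph $J$; the key observation that makes it work is precisely that $m(H)$ is defined as the maximum density over \emph{all} subgraphs of $H$, so the bound $n^{v(J)/e(J)} p \ge n^{1/m(H)} p \to \infty$ holds simultaneously for every relevant $J$.
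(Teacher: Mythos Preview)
The paper does not give its own proof of this statement: it is quoted verbatim as Theorem~5.3 from Frieze and Karo\'nski's textbook and used as a black box throughout. So there is no in-paper argument to compare against.

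Your proof is correct and is essentially the standard textbook argument. The first-moment step for the $0$-statement is exactly right. For the $1$-statement, your second-moment computation is the classical one: the crucial point, which you identify, is that every nonempty subgraph $J\subseteq H$ with $e(J)\ge 1$ satisfies $v(J)/e(J)\ge 1/m(H)$ by definition of $m(H)$, so every intersection term is $o(E[X]^2)$; the finitely many intersection types then sum to $o(E[X]^2)$. The partition trick to get $N$ vertex-disjoint copies is also standard and works because the vertex classes are fixed in advance, making the induced subgraphs genuinely independent instances of $G(\lfloor n/N\rfloor,p)$. One minor cosmetic point: you do not actually need independence of the $G[V_i]$ for the union bound, only that each individually contains an $H$-copy w.h.p.; the union bound over a constant number of events each of probability $o(1)$ already suffices.
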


\begin{theorem}[Theorem 3.19 in \cite{JLR}]\label{thm:fixedGraphDist}
	Let $H$ be a strictly balanced graph and let $p=cn^{-1/m(H)}$ for some constant $c >0$. The number of $H$-copies in $\ggnp$ converges in distribution to $Poisson(\lambda)$, the Poisson distribution with parameter $\lambda := c^{e(H)}/|Aut(H)|$, where $Aut(H)$ is the automorphism group of $H$. In particular,
	$$\lim_{n\to \infty}\Pr[\textrm{$\ggnp$ is $H$-free}] = e^{-\lambda}.$$
\end{theorem}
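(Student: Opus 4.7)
The plan is to apply the method of factorial moments (Brun's sieve). Let $X = X_H$ denote the number of copies of $H$ in $G \sim G(n,p)$, and recall that a Poisson distribution is determined by its moments, with $k$-th factorial moment $\lambda^k$. It thus suffices to show that $E[(X)_k] = E[X(X-1)\cdots(X-k+1)] \to \lambda^k$ as $n \to \infty$ for every fixed $k \geq 1$; the final claim about $H$-freeness then follows by specializing to $\Pr[X=0] \to e^{-\lambda}$.

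The first moment is a direct computation: writing $E[X] = \binom{n}{v(H)} \cdot \frac{v(H)!}{|Aut(H)|} \cdot p^{e(H)}$ and substituting $p = cn^{-1/m(H)} = cn^{-v(H)/e(H)}$ (valid because $m(H) = e(H)/v(H)$ when $H$ is strictly balanced) yields $E[X] = (1+o(1))c^{e(H)}/|Aut(H)| = (1+o(1))\lambda$. For $k \geq 2$, I would expand
\[E[(X)_k] = \sum_{(H_1,\ldots,H_k)} p^{e(H_1\cup\cdots\cup H_k)},\]
where the sum ranges over ordered $k$-tuples of distinct copies of $H$ in $K_n$, and partition this sum by the isomorphism type of the union. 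The dominant contribution comes from vertex-disjoint tuples, which contribute $(1+o(1))E[X]^k \to \lambda^k$; the remaining task is to bound all non-vertex-disjoint tuples by $o(1)$.

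The core calculation is the case $k=2$. For $H' = H_1 \cap H_2$ with $v(H') \geq 1$ (and necessarily $H' \subsetneq H$ since the copies are distinct), the expected number of such pairs is $O(n^{2v(H)-v(H')} p^{2e(H)-e(H')}) = O(n^{(e(H')v(H)-v(H')e(H))/e(H)})$. Strict balance of $H$ forces $d(H') < d(H)$ whenever $e(H') \geq 1$, while $e(H') = 0$ makes the inequality trivial (since $d(H) > 0$); the exponent is therefore strictly negative in every case. Summing over the finitely many possible isomorphism types of $H'$ gives the desired $o(1)$ bound.

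The main obstacle is extending this to $k \geq 3$, which I would handle by grouping the copies into the connected components of their overlap graph (where $i \sim j$ iff $V(H_i) \cap V(H_j) \neq \emptyset$). Within any component of size at least two I order the copies so that each new one shares a vertex with the union of the previous ones (possible by connectivity of the component), and then iterate the $k=2$ estimate to conclude that the union $F$ of that component satisfies $d(F) > d(H)$. Taking a weighted average over all components then shows that $d(H_1 \cup \cdots \cup H_k) > d(H)$ whenever the tuple is not vertex-disjoint, which, after substituting $p$, translates into $n^V p^E = o(1)$ for the entire union. Since there are only finitely many isomorphism types of such unions (with $V \leq kv(H)$), the total contribution from non-disjoint tuples is $o(1)$, yielding $E[(X)_k] \to \lambda^k$ and finishing the proof.
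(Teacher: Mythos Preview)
The paper does not supply a proof of this statement; it is quoted as Theorem~3.19 of \cite{JLR} and used as a black-box preliminary, so there is no in-paper argument to compare against. Your factorial-moment (Brun sieve) outline is exactly the standard proof of this result and is correct as sketched; the only point worth making explicit in the $k\ge 3$ step is that when you add $H_j$ to $F_{j-1}$ along a connected overlap component, the intersection $I_j=H_j\cap F_{j-1}$ could in principle equal $H_j$ (if $H_j$ lies inside the union of earlier copies), but then $F_j=F_{j-1}$ already satisfies $d(F_{j-1})>d(H)$ by induction, and this degenerate case cannot occur at $j=2$ since two distinct copies of $H$ cannot contain one another.
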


%


Using Theorem~\ref{thm:HinGnp} we obtain the following characterization of the typical structure of sparse random graphs.

\begin{claim}\label{cl:smallTrees}
Let $k$ be a positive integer and let $\ggnp$ for $p =
o\left(n^{-\frac {k+1}{k}}\right)$. Then \whp every connected
component of $G$ is a tree with at most $k$ vertices.
\end{claim}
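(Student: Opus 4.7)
The plan is to combine two first-moment estimates: one ruling out cycles of any length in $G$, and one ruling out the appearance of any tree on exactly $k+1$ vertices. Together these two events force every connected component of $G$ to be a tree on at most $k$ vertices, since a component on at least $k+1$ vertices either contains a cycle or, being acyclic, contains a connected subgraph on exactly $k+1$ vertices --- which is then a tree on $k+1$ vertices.

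First I would rule out cycles. The expected number of copies of $C_j$ in $G$ is at most $\binom{n}{j}\frac{(j-1)!}{2}p^j \leq \tfrac{1}{2}(np)^j$. Since $p = o(n^{-(k+1)/k})$ we have $np = o(n^{-1/k}) = o(1)$, so
$$\sum_{j=3}^{n} \frac{(np)^j}{2} \leq \frac{(np)^3}{2(1-np)} = o(1).$$
Markov's inequality then yields that \whp $G$ contains no cycle, so every component is a tree.

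Next I would rule out trees on $k+1$ vertices. For any such tree $T$, since every subgraph $H'\subseteq T$ on $s$ vertices is itself a forest and hence has at most $s-1$ edges, we have $d(H') \leq (s-1)/s \leq k/(k+1)$, with equality attained by $T$ itself. Thus $m(T) = k/(k+1)$ and $n^{-1/m(T)} = n^{-(k+1)/k}$. By Theorem~\ref{thm:HinGnp}, \whp $G$ is $T$-free, and a union bound over the finitely many isomorphism types of trees on $k+1$ vertices completes this step. The only mildly delicate point is that cycles of unbounded length (up to $n$) must be ruled out simultaneously; this is handled cleanly by the geometric decay coming from $np = o(1)$, and once past it the argument is entirely routine.
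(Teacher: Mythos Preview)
Your proof is correct and follows essentially the same two-step plan as the paper: rule out cycles and rule out trees on $k+1$ vertices, then conclude. The one difference is that you handle cycles of \emph{all} lengths via a direct first-moment computation with a geometric series, whereas the paper observes that once trees on $k+1$ vertices are excluded, every component already has at most $k$ vertices, so any cycle present would have length at most $k$; hence only the finite family of cycles $C_3,\dots,C_k$ needs to be excluded, and this follows from Theorem~\ref{thm:HinGnp} just as for the trees. This sidesteps precisely the ``mildly delicate point'' you flagged, at the cost of reordering the logic (bound the component size first, then argue acyclicity). Both routes are short and valid; the paper's is marginally cleaner in that it keeps the forbidden family finite throughout.
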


\begin{proof}
Let $\F$ be the family of all trees with $k+1$ vertices and all
cycles of length at most $k$. Every $H \in \F$ satisfies either
$m(H) = \frac k{k+1}$ (if $H$ is a tree) or $m(H) = 1$ (if $H$ is a
cycle). Since $\F$ is a finite family, a simple union bound on the members of $\F$ implies that $G$ is \whp $\F$-free by
Theorem~\ref{thm:HinGnp}. Hence every connected component of $G$ has
at most $k$ vertices, because every larger component contains a tree
with $k+1$ vertices as a subgraph. The absence of all short cycles
from $G$ completes the proof.
\end{proof}

We conclude this section with two results concerning the appearance of any number of $H$-copies in a random graph process.

\begin{claim}\label{cl:sparseBeforeDense}
    For any fixed graph $H$ and every finite family of fixed graphs $\F
    = \{H_1,\dots,H_k\}$, each satisfying $m(H_i) > m(H)$, and for every
    positive integer $N$, the appearance of $N$ vertex disjoint copies
    of $H$ in a random graph process occurs \whp before the appearance of any member of $\F$.
\end{claim}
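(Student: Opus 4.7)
The plan is to exploit the density gap between $H$ and the members of $\F$ by working in $G(n,p)$ at an intermediate edge density, and then transfer the conclusion to the random graph process via Proposition~\ref{prop:GnpRGP}. Since $\F$ is finite and $m(H_i) > m(H)$ for every $i$, the numbers $\beta := \max_{1 \le i \le k} 1/m(H_i)$ and $\alpha := 1/m(H)$ satisfy $0 < \beta < \alpha$, so I can pick any $\gamma \in (\beta, \alpha)$ and set $p := n^{-\gamma}$. This choice gives simultaneously $p = \omega(n^{-1/m(H)})$ and $p = o(n^{-1/m(H_i)})$ for every $i \in [k]$.

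Next I would apply Theorem~\ref{thm:HinGnp} twice. In the forward direction, since $p = \omega(n^{-1/m(H)})$, w.h.p.\ $G \sim G(n,p)$ contains $N$ vertex disjoint copies of $H$. In the other direction, for each fixed $H_i$, since $p = o(n^{-1/m(H_i)})$, w.h.p.\ $G$ is $H_i$-free; a union bound over the finite family $\F$ then gives that w.h.p.\ $G$ is $\F$-free. Combining these two high-probability events, w.h.p.\ $G \sim G(n,p)$ both contains $N$ vertex disjoint copies of $H$ and is $\F$-free.

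To transfer this to the random graph process, let $\mathcal P_1$ be the monotone increasing property ``contains $N$ vertex disjoint copies of $H$'' and let $\mathcal P_2$ be the monotone increasing property ``contains at least one copy of some $H_i \in \F$''. Proposition~\ref{prop:GnpRGP} applied separately to each of $\mathcal P_1$ and $\mathcal P_2$ yields an integer $i^* = \Theta(n^2 p)$ such that $\Pr[G_{i^*} \in \mathcal P_1] \to 1$ and $\Pr[G_{i^*} \in \mathcal P_2] \to 0$; by taking a single common $i^*$ (e.g.\ via the standard equivalence between $G(n,p)$ and $G(n,m)$ for monotone properties with $m = \binom{n}{2} p$) both conclusions hold at the same time. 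A union bound then shows that w.h.p.\ $G_{i^*} \in \mathcal P_1 \setminus \mathcal P_2$, which is exactly $\tau(\tilde G,\mathcal P_1) \le i^* < \tau(\tilde G,\mathcal P_2)$, as required.

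The main (and essentially only) delicate point is the last step: the conjunction of $\mathcal P_1$ with ``$\F$-free'' is not monotone, so one cannot feed it directly into Proposition~\ref{prop:GnpRGP}. This is why I split into two monotone increasing properties and then synchronize the times, either by using that the constants in Proposition~\ref{prop:GnpRGP} depend only on the property being increasing (so both can be handled at a common $i^* = c\,n^2 p$ for a suitable $c$), or by appealing to the tighter equivalence $G(n,m) \stackrel{d}{=} G_m$. Everything else is a direct application of the tools recalled in Section~\ref{sec:pre}.
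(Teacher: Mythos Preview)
Your proof is correct and follows essentially the same approach as the paper: exploit the density gap via Theorem~\ref{thm:HinGnp}, observe that the conjunction is not monotone, and transfer each monotone property separately to the process via Proposition~\ref{prop:GnpRGP}. The only minor difference is that the paper sidesteps your synchronization step by using two \emph{different} values of $p$, namely $p_1 = n^{-1/m(H)}\ln n$ for $\mathcal P_1$ and $p_2 = n^{-1/m(H)}\ln^2 n$ for $\mathcal P_2$, so that the resulting bounds $\tau(\tilde G,\mathcal P_1) = O(n^{2-1/m(H)}\ln n)$ and $\tau(\tilde G,\mathcal P_2) = \Omega(n^{2-1/m(H)}\ln^2 n)$ are automatically separated regardless of the constants in Proposition~\ref{prop:GnpRGP}.
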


\begin{proof}
    Let $\G_{NH}'$ be the graph property of containing $N$ vertex disjoint
    $H$-copies, and let $\G_\F$ be the graph property of not being $\F$-free. By Theorem~\ref{thm:HinGnp}, for $p = n^{-1/m(H)}\lnn$,
    \whp $\ggnp$ contains $N$ vertex disjoint $H$-copies, and by the
    same theorem, for $p = n^{-1/m(H)}\ln^2 n$, \whp $\ggnp$ is $\F$-free
    (since $\F$ is finite and this value of $p$ satisfies $p = o(n^{-1/m(H_i)})$ for every $i \in
    [k]$). Proposition~\ref{prop:GnpRGP} implies that \whp $\tau(\tilde G, \G_{NH}') =
    O(n^{2- 1/m(H)}\lnn)$ and $\tau(\tilde G, G_\F) = \Omega(n^{2-1/m(H)}\ln^2 n)$, which completes the proof. Note that we had to
    consider each property separately since ``containing $N$ vertex
    disjoint $H$-copies and being $\F$-free" is not a monotone graph
    property.
\end{proof}

\begin{corollary}\label{cor:balancedDisjoint}
    For every strictly balanced graph $H$ and for any integer $N$, \whp
    the first $N$ copies of $H$ which appear in a random graph process
    are all vertex disjoint.
\end{corollary}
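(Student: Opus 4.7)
The plan is to combine Claim~\ref{cl:balancedDense} with Claim~\ref{cl:sparseBeforeDense}: strict balancedness forces any pair of intersecting $H$-copies to live inside a denser graph, and the sparser structure (disjoint $H$-copies) must therefore appear first in the process.

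First I would enumerate, up to isomorphism, all graphs $\Gamma$ that arise as the union of two distinct $H$-copies with non-empty vertex intersection. Since any such $\Gamma$ has $v(\Gamma)\le 2v(H)-1$, there are only finitely many, so we may list them as $\mathcal F=\{\Gamma_1,\dots,\Gamma_s\}$. By Claim~\ref{cl:balancedDense}, each $\Gamma_i$ satisfies $m(\Gamma_i) > m(H) + \frac{1}{2v(H)^2} > m(H)$, so $\mathcal F$ fulfils the hypothesis of Claim~\ref{cl:sparseBeforeDense}.

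Next, apply Claim~\ref{cl:sparseBeforeDense} with this $H$, this $\mathcal F$, and the given $N$: in the random graph process $\tilde G$, \whp the hitting time for containing $N$ vertex disjoint copies of $H$ is strictly smaller than the hitting time for containing any member of $\mathcal F$. On this whp event, fix the step $i^\ast$ at which the $N$-th vertex disjoint $H$-copy first appears; then $G_{i^\ast}$ is $\mathcal F$-free, which by construction of $\mathcal F$ is exactly the statement that no two $H$-copies in $G_{i^\ast}$ share a vertex.

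Finally I would verify that this implies the claim about the \emph{first} $N$ copies to appear. Since $G_{i^\ast}$ contains at least $N$ copies of $H$ and no two of them intersect, the $H$-copies that appeared during steps $1,\dots,i^\ast$ form a set of pairwise vertex-disjoint copies; in particular at most one new $H$-copy can have appeared at any single step (two copies appearing at the same step would share the newly added edge). Hence, listing the $H$-copies in their order of appearance, the first $N$ of them are among the $H$-copies present in $G_{i^\ast}$ and are therefore pairwise vertex disjoint, which is exactly the assertion of the corollary. No step here is really an obstacle; the only point requiring a moment of care is the verification that ``no intersecting pair of $H$-copies exists in $G_{i^\ast}$'' translates into a statement about the first $N$ copies rather than just $N$ arbitrary disjoint copies.
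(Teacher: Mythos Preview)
Your proof is correct and takes essentially the same approach as the paper: define $\mathcal F$ as the family of unions of two intersecting $H$-copies, invoke Claim~\ref{cl:balancedDense} for the density bound, and apply Claim~\ref{cl:sparseBeforeDense}. The paper's proof is terser and omits your final paragraph of verification (it simply says the result ``immediately follows''), but your extra care in checking that $\mathcal F$-freeness of $G_{i^\ast}$ translates into disjointness of the \emph{first} $N$ copies is a reasonable elaboration of what the paper leaves implicit.
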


\begin{proof}
    Let $\F$ be the family of all graphs consisting of two $H$-copies
    with a non-empty intersection. Then $\F$ is finite, and by
    Claim~\ref{cl:balancedDense} every member in it has maximal density
    larger than $m(H)$. The result then immediately follows from Claim~\ref{cl:sparseBeforeDense}.
\end{proof}

\section{General results and tools for $H$-Games}\label{sec:gnrlH}
In this section we provide general tools that will be fundamental for the proofs of the 0-statements. First, we start with a general method called the ``pairing strategy", which will be used (mostly) in the proofs of the $K_3$-games.
Next, we will focus on the proofs of the 0-statements of Theorems~\ref{thm:main} and~\ref{thm:CWsimple}. 
We present a process of deleting vertices and edges from an arbitrary graph $G$, until we get a subgraph $G^* \sbst G$ with the following property: any winning strategy for the spoiler for the $H$-game played on $V(G^*)$ may be extended to a winning strategy for the original game, played on $V(G)$. Then, we characterize the possible connected components of $G^*$. Last, in the light of this characterization, we focus on clique games and provide more specific results.

We discuss Maker-Breaker and Client-Waiter $H$-games, both played on $V(G)$ where $\ggnp$ and $H$ is a fixed, strictly 1-balanced graph on at least three vertices. In particular, $H$ is connected by Claim~\ref{cl:1balanced2connected}. Since the proofs for both theorems are almost identical, in this section we sometimes refer to Breaker and Waiter collectively as ``the spoilers".

\subsection{Pairings}
Consider a $(1:b)$ Maker-Breaker game $(X,\F)$, and let $\A = \{A_1, A_2, \dots\}$ be a family of pairwise disjoint subsets of $X$, each of size at most $b+1$. Suppose that for every $i$, whenever Maker claims an element of $A_i$, Breaker responds by claiming all free elements of $A_i$ (and, if necessary, completes his move according to some strategy). Then at the end of the game Maker occupies at most one element from each member of $\A$. This seemingly trivial strategy, called the \emph{pairing strategy}, turns out to be one of the most useful and basic strategies in positional games. It can be Breaker's entire strategy, or a part of a more involved one. Waiter can use the same strategy in a Client-Waiter game, by offering an entire set $A_i$ in each move (and offer the remaining elements of $X$, if there are any, according to some other strategy).

Pairing strategies are fundamental for our proofs in this paper: our main method is to provide the spoiler (either Breaker or Waiter) with a winning strategy for a game played on a ``simple" graph, which can be extended via a pairing strategy to a winning strategy for the original game. This is explained in more details in Section~\ref{sec:gnrlH}. Furthermore, the winning strategy for this simple graph usually involves another pairing strategy, applied to each connected component separately. Two of the basic connected components we have to deal with when analyzing $K_3$-games are the following.
\begin{itemize}
	\item A \emph{Triple Triangle}, denoted by $\ttt$, is the graph with vertex set $V(\ttt)=\{v_1, v_2, v_3, v_4, v_5 \}$ and edge set $E(\ttt)=\{v_1v_2, v_1v_3, v_2v_3, v_2v_4, v_3v_4, v_3v_5, v_4v_5 \}$, as shown in Figure~\ref{fig:trio}.
	\item For an integer $t \geq 2$ let $DD_t$ be the graph obtained by taking a $K_3$-chain of length $t$ consisting of the triangles $\{a_ib_ic_i \}_{i=1}^t$ where $c_i=a_{i+1}$ for every $0<i<t$, and adding to it two triangles, $a_1c_1y$ and $c_{t-1}c_tx$, where $x,y$ are new vertices (see Figure~\ref{fig:ddt}). Note that we got two diamonds connected by a $K_3$-chain of length $t-2$, and in particular $DD_2 = DD$ (recall Definition~\ref{def:DD}).
\end{itemize}

\iffigure
\begin{figure}
	\centering
	\begin{minipage}{.4\textwidth}
		\centering
		\begin{tikzpicture}[auto, vertex/.style={circle,draw=black!100,fill=black!100, thick,
			inner sep=0pt,minimum size=1mm}]
		\node (v1) at ( 0,0) [vertex,label=below:$v_1$] {};
		\node (v2) at ( 0.5,0.866) [vertex,label=above:$v_2$] {};
		\node (v3) at ( 1,0) [vertex,label=below:$v_3$] {};
		\node (v4) at ( 1.5,0.866) [vertex,label=above:$v_4$] {};
		\node (v5) at ( 2,0) [vertex,label=below:$v_5$] {};
		
		\draw [-] (v1) --node[inner sep=0pt,swap]{} (v2);
		\draw [-] (v1) --node[inner sep=0pt,swap]{} (v3);
		\draw [-] (v2) --node[inner sep=0pt,swap]{} (v3);
		\draw [-] (v2) --node[inner sep=0pt,swap]{} (v4);
		\draw [-] (v3) --node[inner sep=0pt,swap]{} (v4);
		\draw [-] (v3) --node[inner sep=0pt,swap]{} (v5);
		\draw [-] (v4) --node[inner sep=0pt,swap]{} (v5);
		
		\end{tikzpicture}
		\caption{The graph $\ttt$}\label{fig:trio}
	\end{minipage}%
	\begin{minipage}{.6\textwidth}
		\centering
		\begin{tikzpicture}[auto, vertex/.style={circle,draw=black!100,fill=black!100, thick,
			inner sep=0pt,minimum size=1mm}]
		\node (b1) at (-2,0.866) [vertex,label=above:$b_1$] {};
		\node (a1) at (-2.5,0) [vertex,label=left:$a_1$] {};
		\node (y) at (2.5,-0.866) [vertex,label=below:$y$] {};
		\node (c1) at (-1.5,0) [vertex, label={[shift={(0.1,-0.63)}]$c_1$}] {};
		\node (b2) at (-1,0.866) [vertex,label=above:$b_2$] {};
		\node (v3) at (1,0) [vertex] {};
		\node at (0,0) {\ldots};
		\node at (0.5,0) {\ldots};
		\node (c2) at (-0.5,0) [vertex,label=below:$c_2$] {};
		\node (bt1) at (1.5,0.866) [vertex,label=above:$b_{t-1}$] {};
		\node (ct1) at (2,0) [vertex,label={[shift={(-0.2,-0.63)}]$c_{t-1}$}] {};
		\node (bt) at (2.5,0.866) [vertex,label=above:$b_t$] {};
		\node (ct) at (3,0) [vertex,label=right:$c_t$] {};
		\node (x) at (-2,-0.866) [vertex,label=below:$x$] {};
		
		\draw [-] (bt) --node[inner sep=0pt,swap]{} (ct);
		\draw [-] (bt) --node[inner sep=0pt,swap]{} (ct1);
		\draw [-] (ct) --node[inner sep=0pt,swap]{} (ct1);
		\draw [-] (ct) --node[inner sep=0pt,swap]{} (y);
		\draw [-] (y) --node[inner sep=0pt,swap]{} (ct1);
		\draw [-] (ct1) --node[inner sep=0pt,swap]{} (bt1);
		\draw [-] (ct1) --node[inner sep=0pt,swap]{} (v3);
		\draw [-] (bt1) --node[inner sep=0pt,swap]{} (v3);
		\draw [-] (b1) --node[inner sep=0pt,swap]{} (a1);
		\draw [-] (b1) --node[inner sep=0pt,swap]{} (c1);
		\draw [-] (a1) --node[inner sep=0pt,swap]{} (c1);
		\draw [-] (a1) --node[inner sep=0pt,swap]{} (x);
		\draw [-] (x) --node[inner sep=0pt,swap]{} (c1);
		\draw [-] (c1) --node[inner sep=0pt,swap]{} (b2);
		\draw [-] (c1) --node[inner sep=0pt,swap]{} (c2);
		\draw [-] (b2) --node[inner sep=0pt,swap]{} (c2);
		\end{tikzpicture}
		\caption{The graph $DD_t$}\label{fig:ddt}
	\end{minipage}
\end{figure}
\fi


We now provide pairing strategies for several graphs, which are later used in the proofs of the different unbiased $K_3$-games. For each of these graphs we provide a list $\LL$ of pairs, to which we refer as its \emph{natural pairs}. A pairing strategy with respect to these pairs is called \emph{the natural pairing strategy} for the graph. We begin with two graphs for which, if the spoiler uses the natural pairing strategy, he prevents the builder from creating a triangle.

\begin{definition}\label{def:K3cyclePairing}
	Given a $K_3$-cycle of length $t \ge 4$, denote the vertices of its triangles by $\{a_i, b_i, c_i\}_{i = 1}^t$, where $c_t = a_1$ and $c_i = a_{i+1}$ for every $0 < i < t$. The natural pairs for this graph are $\Lambda = \left\{\{a_i,b_i\} \right\}_{i=1}^t$.
\end{definition}

\begin{definition}\label{def:trioPairing}
	The set of natural pairs of the graph $\ttt$, using the labeling of Figure~\ref{fig:trio}, is $\Lambda = \{\{v_2, v_3\}, \{v_4, v_5\} \}$.
\end{definition}

We next deal with the graph $DD_t$. Here we do not have a ``proper" pairing strategy, and in particular Breaker cannot follow it. Moreover, Maker wins the unbiased $K_3$-game played on this graph. However, there is a winning strategy for Waiter in this game, in which he uses a pairing strategy from his second move onwards, where the set of pairs he uses depends on Client's first move.

\begin{claim}\label{cl:DDtPairing}
	For any $t \ge 2$, Waiter wins the unbiased Client-Waiter $K_3$-game played on $DD_t$.
\end{claim}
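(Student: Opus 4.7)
The plan is to exhibit for Waiter an explicit winning strategy that consists of one adaptive opening round followed by a static pairing. In round~$1$ Waiter will offer $\{b_1,b_t\}$; using the evident left-right automorphism of $DD_t$ (sending $a_1\leftrightarrow c_t$, $b_i\leftrightarrow b_{t+1-i}$, $c_i\leftrightarrow c_{t-i}$, and $x\leftrightarrow y$) it suffices to handle the case in which Client claims $b_1$ and Waiter thereby secures $b_t$. From round~$2$ onward Waiter will follow the pairing strategy with respect to
\[
\Lambda \;=\; \bigl\{\{a_1,c_1\},\,\{c_t,x\}\bigr\}\,\cup\,\bigl\{\{b_i,c_i\}\,:\,2\le i\le t-1\bigr\},
\]
offering each pair in its own round and offering the leftover vertex $y$ alone in some round (the order is irrelevant). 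A quick count confirms that $\Lambda$ together with $\{y\}$ partitions the $2t+1$ vertices still on the board, so the pairing is well-defined; by construction Client will end the game holding at most one vertex from each pair of $\Lambda$, plus the singleton $y$.

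The bulk of the argument is a short case analysis over the $t+2$ triangles of $DD_t$, showing Client holds at most two vertices of each. The triangle $T_t=\{c_{t-1},b_t,c_t\}$ is blocked outright because $b_t$ belongs to Waiter. The two diamond triangles $T_1=\{a_1,b_1,c_1\}$ and $\{a_1,c_1,y\}$ each contain the pair $\{a_1,c_1\}\in\Lambda$, giving at most one vertex from the pair plus either $b_1$ or $y$, hence at most two overall. The third diamond triangle $\{c_{t-1},c_t,x\}$ contains the pair $\{c_t,x\}\in\Lambda$, and $c_{t-1}$ lies in another pair of $\Lambda$, yielding again at most two. Finally, every chain triangle $T_i=\{c_{i-1},b_i,c_i\}$ with $2\le i\le t-1$ contains the whole pair $\{b_i,c_i\}\in\Lambda$, and its third vertex $c_{i-1}$ lies in a different pair (namely $\{a_1,c_1\}$ when $i=2$ and $\{b_{i-1},c_{i-1}\}$ when $i\ge 3$), so Client again collects at most two of the three vertices.

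The main design obstacle, in my view, is identifying the opening move. A purely static pairing cannot work: blocking the two outer diamond triangles essentially forces the pairs $\{a_1,c_1\}$ and $\{c_t,x\}$ into $\Lambda$, and once these are fixed one of the end chain triangles inevitably has its three vertices spread across three distinct pairs (or across pairs plus the singleton), which Client can then pick up one at a time. Sacrificing $b_t$ (equivalently, $b_1$) up front in round~$1$ is exactly what breaks this obstruction: it hands $T_t$ directly to Waiter, after which the pairing $\Lambda$ handles every remaining triangle, and the automorphism of $DD_t$ absorbs the symmetric case in which Client claims $b_t$ instead of $b_1$.
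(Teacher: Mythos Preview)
Your proof is correct and follows essentially the same approach as the paper: one adaptive opening round followed by a static pairing. The only difference is cosmetic---the paper offers $\{x,y\}$ in round~1 (so that Waiter secures one of the two outer degree-$2$ vertices) and then uses the pairing $\{\{a_1,c_1\}\}\cup\{\{b_i,c_i\}\}_{i=2}^{t}$ or its mirror image, whereas you offer $\{b_1,b_t\}$ first and adjust the pairing accordingly; both choices exploit the same left--right automorphism and yield the same case analysis over the $t+2$ triangles.
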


\begin{proof}
	With respect to the labeling of Figure~\ref{fig:ddt}, let $\Lambda_x = \{a_1,c_1\} \cup \left\{\{b_i,c_i\} \right\}_{i=2}^t$ and $\Lambda_y = \{a_t,c_t\} \cup \left\{\{a_i,b_i\} \right\}_{i=1}^{t-1}$. Waiter's winning strategy goes as follows. In his first move he offers the pair $\{x,y\}$. If Client chooses the vertex $x$ Waiter proceeds with the pairing strategy according to $\LL_x$, and otherwise proceeds with the pairing strategy according to $\LL_y$.
\end{proof}

We finish this part with a pairing strategy for the graph $DD$, which unlike the previous pairing strategies is constructive, and used by the builders Maker and Enforcer in their respective unbiased triangle games, that is, in the proofs of Theorems~\ref{thm:triangle11} (Maker) and~\ref{thm:AEstrict} (Ennforcer).

\begin{observation}\label{obs:DDpairing}
	The set of natural pairs of the graph $DD$, using the labeling of Figure~\ref{fig:DD}, is $\Lambda = \{\{y_1, y_2\}, \{z_1, z_2\}, \{z_3, z_4\} \}$. If $U \sbst V(DD)$ contains the center of $DD$ (the vertex $x$) and at least one vertex from each of the natural pairs of $DD$, then $DD[U]$ contains a triangle.
\end{observation}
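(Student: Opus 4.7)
The plan is to prove Observation~\ref{obs:DDpairing} by a short enumeration followed by a trivial case analysis. First, I would read off the triangles of $DD$ directly from Definition~\ref{def:DD}. Each of the two diamonds is a copy of $K_4$ with one ``outer'' edge removed ($z_1z_2$ in the first diamond, $z_3z_4$ in the second), and hence each diamond contains exactly two triangles: the triangles of the first diamond are $\{x,y_1,z_1\}$ and $\{x,y_1,z_2\}$, and those of the second are $\{x,y_2,z_3\}$ and $\{x,y_2,z_4\}$. Since the two diamonds share only the vertex $x$, these four are all the triangles of $DD$, and every triangle of $DD$ uses $x$ together with either $y_1$ or $y_2$ and one of the two $z$-vertices of the same diamond.

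With this enumeration in hand, the observation follows by inspection. The set $U$ contains $x$ and meets $\{y_1,y_2\}$ by hypothesis, so by the symmetry of $DD$ interchanging the two diamonds I may assume $y_1 \in U$. Since $U$ also meets $\{z_1,z_2\}$, I pick $z_i \in U \cap \{z_1,z_2\}$; then $\{x,y_1,z_i\} \sbst U$ and these three vertices span a triangle of $DD$ by the enumeration above, so $DD[U]$ contains a triangle. Note that the vertex from $\{z_3,z_4\}$ is never used in this case — the hypothesis is slightly stronger than strictly necessary — but the symmetric formulation is what will be convenient when the observation is invoked to verify the pairing strategy of the builder (Maker or Enforcer) in the unbiased $K_3$-game on $DD$: the builder claims $x$ on the first move and then, using $\Lambda$, guarantees at least one vertex from each natural pair, matching the hypothesis exactly.

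There is no real obstacle in the proof; the only content is the enumeration of triangles, and the observation is isolated here precisely to keep that small combinatorial fact separate from the game-theoretic arguments in the proofs of Theorems~\ref{thm:triangle11} and~\ref{thm:AEstrict}.
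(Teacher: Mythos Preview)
Your proof is correct and is precisely the straightforward verification the paper has in mind; the paper itself states this as an observation without proof, and your enumeration of the four triangles of $DD$ followed by the symmetry reduction is the natural way to see it.
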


\subsection{The $(H,b)$ graph deletion algorithm}

In this subsection we present the process of deleting vertices and edges from an arbitrary graph $G$ in a way that will allow us to extend any winning strategy on the resulting graph to a winning strategy on the original graph. This will be used in the proofs of the 0-statements of Theorems~\ref{thm:main} and~\ref{thm:CWsimple}.

Given a connected graph $H$ and a positive integer $b$, we describe the $(H,b)$ deletion algorithm, applicable to any graph $G$. We first need the following definitions.

\begin{definition}
Let $G,H$ be graphs, where $H$ is connected, and let $b$ be a
positive integer. All the following are defined with respect to $G$,
$H$ and $b$. A \emph{bad vertex} is a vertex $v \in V(G)$ which is
not contained in any $H$-copy in $G$. A \emph{bad edge} is an edge
$e \in E(G)$ which is not contained in any $H$-copy in $G$. A
\emph{bad set} is a set $U \sbst V(G)$ of size $2 \le |U| \le b+1$
such that there is no $H$-copy $\hat{H} \sbst G$ satisfying
$|V(\hat{H}) \cap U| = 1$. A bad set of size 2 is referred to as a \emph{bad pair}. A \emph{small component} is a connected component of
$G$ with at most $(b+1)(v(H)-1)$ vertices.
\end{definition}

\paragraph{\normalfont\bfseries{The algorithm}}
Begin with $G_0 = G$. For as long as possible, obtain $G_{i+1}$ from $G_i$ by performing one  \emph{deletion step}, that is, delete arbitrarily either a bad vertex, a bad edge, a bad set or a small component, where these are all defined with respect to the current graph $G_i$ and the fixed $H$ and $b$. The \emph{output} of the algorithm is a sequence $U = \{U_1,U_2,\dots\}$ of all bad sets
that were deleted during the process (if there were any) in the
order of deletion (that is, $U_i$ was the $i$th bad set to be
deleted), a sequence $W = \{W_1,W_2,\dots\}$ of the vertex sets of the
small components that were deleted (again, if there were any and in
the order of deletion), and the remaining (possibly empty) graph when no deletion can be
made anymore. Note that all of the sets in $U$ and $W$ are
clearly pairwise disjoint.

Given the $(H,b)$ graph deletion algorithm, and before further discussing it, we introduce some more terminology.

\begin{definition}
Given a connected graph $H$ and a positive integer $b$, an
$(H,b)$-stable graph is a graph for which no deletion step of the $(H,b)$
deletion algorithm can be made. For any graph $G$, the
$(H,b)$-core of $G$ is the union of all $(H,b)$-stable subgraphs of
$G$. Whenever $H$ and $b$ are clear from the context, we denote the
$(H,b)$-core of $G$ by $G^*$. For similar reasons, we also omit $b$,
or both $H$ and $b$, when talking about stability.
\end{definition}

\begin{remark}\label{rmk:1stable}
It is immediate to see that if a graph is $(H,b+1)$-stable, then it is also $(H,b)$-stable, and in particular every $(H,b)$-stable graph is also $(H,1)$-stable.
\end{remark}

We now present some properties of the deletion algorithm and its output graph, showing eventually why the spoiler wins the game played on a graph $G$ if he wins the game played on $G^*$.

\begin{claim}\label{cl:oneDeletion}
Let $G \sbst G_1$ be graphs such that $G$ is $(H,b)$-stable, and let $G_2$ be a graph obtained from $G_1$ by applying one deletion step of the $(H,b)$ deletion algorithm. Then $G \sbst G_2$.
\end{claim}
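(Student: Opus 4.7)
The plan is to consider each of the four possible types of deletion steps (a bad vertex, a bad edge, a bad set, or a small component of $G_1$) in turn, and show that the object being deleted is disjoint from $G$ — i.e., contains no vertex of $V(G)$ and no edge of $E(G)$. Once this is done, the conclusion $G\sbst G_2$ is immediate, since the only change from $G_1$ to $G_2$ is the removal of this object. The guiding principle in every case is the same: because $G\sbst G_1$, every $H$-copy in $G$ is also an $H$-copy in $G_1$, so ``badness'' in $G_1$ transfers to $G$, and then $(H,b)$-stability of $G$ yields a contradiction.

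For a bad vertex $v$ of $G_1$: if $v\in V(G)$, then no $H$-copy in $G$ (being also an $H$-copy in $G_1$) contains $v$, making $v$ a bad vertex of $G$, contradicting stability. Hence $v\notin V(G)$, and all edges removed with $v$ are incident to $v$, so none lie in $E(G)$. The bad edge case is entirely analogous: if $e\in E(G)$, any $H$-copy in $G$ containing $e$ would also sit in $G_1$, so $e$ would be bad in $G$, contradicting stability; therefore $e\notin E(G)$.

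The main subtlety, and the case I would expect to write most carefully, is the bad set case. Let $U\sbst V(G_1)$ be a bad set of size $k\in[2,b+1]$ deleted from $G_1$, and set $U':=U\cap V(G)$, $j:=|U'|$. I would argue that $j=0$ by ruling out all other possibilities. If $j=k$ then $U\sbst V(G)$; since $G\sbst G_1$, any $H$-copy of $G$ intersecting $U$ in exactly one vertex would also be such an $H$-copy in $G_1$, so there is none, making $U$ a bad set of $G$ — contradiction. If $1\le j<k$, I would observe that for any $H$-copy $\hat H\sbst G$ one has $V(\hat H)\cap(U\setminus U')=\emptyset$ (since $V(\hat H)\sbst V(G)$ while $U\setminus U'\sbst V(G_1)\setminus V(G)$), so $|V(\hat H)\cap U|=|V(\hat H)\cap U'|$; hence if some $H$-copy in $G$ met $U'$ in exactly one vertex, the same would hold for $U$ in $G_1$, contradicting $U$'s badness. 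Thus no $H$-copy in $G$ meets $U'$ in exactly one vertex, which makes $U'$ either a bad vertex (if $j=1$) or a bad set (if $2\le j<k$) of $G$ — again contradicting stability. So $U\cap V(G)=\emptyset$, and therefore no vertex or edge of $G$ is removed.

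Finally, for a small component $C$ of $G_1$ with $|V(C)|\le(b+1)(v(H)-1)$: since $G\sbst G_1$, the connected component in $G$ of any vertex $v\in V(C)\cap V(G)$ is contained in $V(C)$ and so has size at most $(b+1)(v(H)-1)$, making it a small component of $G$ and contradicting stability. Hence $V(C)\cap V(G)=\emptyset$, and removing $C$ does not affect $G$. In all four cases $G\sbst G_2$, finishing the proof. I don't anticipate any genuine obstacle beyond the careful case split for bad sets described above.
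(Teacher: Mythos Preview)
Your proof is correct and follows essentially the same approach as the paper: a case split over the four deletion types, showing in each case that the deleted object is disjoint from $G$, with the bad-set case handled via $U' = U \cap V(G)$ and the observation that any $H$-copy in $G$ meeting $U'$ in exactly one vertex also meets $U$ in exactly one vertex. The paper's version is slightly more compact (it treats all $|U'|\ge 1$ uniformly rather than splitting off $j=k$), but the logic is identical.
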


\begin{proof}
By the stability of $G$, for any $v \in V(G)$ there exists an
$H$-copy $\hat{H} \sbst G \sbst G_1$ containing $v$, therefore $v$
is not bad with respect to $G_1$. For the same reason, no edge $e \in E(G)$ is
bad with respect to $G_1$.
Next, every bad set $U\sbst V(G_1)$ must be vertex disjoint from $V(G)$. Otherwise, let $U' = U\cap V(G)$ and note that whether $|U'|=1$ or $2 \le |U'| \le b+1$ there exists an $H$-copy $\hat{H} \sbst G$ such that $|U' \cap V(\hat{H})| = 1$, since $G$ contains no bad vertices and no bad sets. But then $|U \cap V(\hat{H})| = 1$ as well, implying $U$ is not a bad set in contradiction.
Finally, every connected component $\Gamma \sbst G_1$ containing at least one vertex from $V(G)$ must contain a connected component of $G$, and therefore $\GG$ is not small.
We conclude that any deletion step applied to $G_1$ contains no vertices and no edges from $G$, and thus $G \sbst G_2$.
\end{proof}

\begin{corollary}\label{cor:MaxStbl}
Let $G,H$ be graphs where $H$ is connected, and let $b$ be a
positive integer. The $(H,b)$ deletion algorithm applied to $G$
terminates with the $(H,b)$-core of $G$, regardless of the arbitrary
choices made during the process. In particular, $G^*$ is $(H,b)$-stable.
\end{corollary}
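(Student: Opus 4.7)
The plan is to show that the algorithm's output equals $G^*$ by a standard two-sided inclusion, using Claim~\ref{cl:oneDeletion} as the only real tool. Let $\widetilde G$ denote the graph produced when the algorithm terminates (under some arbitrary fixed sequence of choices of deletion steps).

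First I would observe that $\widetilde G$ is itself $(H,b)$-stable: by the stopping condition of the algorithm, no bad vertex, bad edge, bad set, or small component exists in $\widetilde G$, so no further deletion step can be applied. Since $\widetilde G$ is an $(H,b)$-stable subgraph of $G$, the definition of $G^*$ as the union of all such subgraphs immediately gives $\widetilde G \subseteq G^*$.

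For the reverse inclusion, the idea is that Claim~\ref{cl:oneDeletion} guarantees that any $(H,b)$-stable subgraph survives intact through any single deletion step. Formally, let $G=G_0, G_1, \dots, G_k = \widetilde G$ be the sequence of graphs produced by the algorithm, and let $S$ be an arbitrary $(H,b)$-stable subgraph of $G$. I would argue by induction on $i$ that $S \subseteq G_i$: the base case $i=0$ is immediate, and the inductive step is precisely the content of Claim~\ref{cl:oneDeletion} applied with $G_1 = G_i$ and $G_2 = G_{i+1}$. Taking $i=k$ yields $S \subseteq \widetilde G$, and taking the union over all $(H,b)$-stable $S$ gives $G^* \subseteq \widetilde G$.

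Combining the two inclusions gives $\widetilde G = G^*$. Since $\widetilde G$ does not depend on which particular $\widetilde G$ was produced, this holds for every legal run of the algorithm, so the algorithm always terminates with $G^*$; and since $\widetilde G = G^*$ and $\widetilde G$ is $(H,b)$-stable, $G^*$ is $(H,b)$-stable. There is no genuine obstacle here — the one thing worth being careful about is that Claim~\ref{cl:oneDeletion} is stated for a single deletion step, so the induction on the length of the deletion sequence must be spelled out explicitly rather than waved through.
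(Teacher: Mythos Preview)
Your proof is correct and takes essentially the same approach as the paper: both establish $\widetilde G = G^*$ by the two-sided inclusion, using stability of the output for $\widetilde G \subseteq G^*$ and Claim~\ref{cl:oneDeletion} (iterated over the deletion sequence) for $G^* \subseteq \widetilde G$. Your version simply spells out the induction that the paper leaves implicit.
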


\begin{proof}
Let $\hat{G}$ be a graph received by applying the deletion algorithm
on $G$. Since $\hat{G}$ is stable, $\hat{G} \sbst G^*$ trivially
holds. On the other hand, by Claim~\ref{cl:oneDeletion} we
get that every stable subgraph of $G$ is contained in $\hat{G}$, and
thus $G^* \sbst \hat{G}$ as well.
\end{proof}

\begin{claim}\label{cl:G*}
Let $G,H$ be graphs where $H$ is connected, let $b$ be a positive integer, and let $U,W,G^*$ be the
output of an arbitrary application of the $(H,b)$ deletion algorithm
on $G$. Let $S \sbst V(G)$ be such that $S$ contains at most one vertex
from any set $U_i \in U$ and at most $v(H) - 1$ vertices from any
set $W_j \in W$. Then every $H$-copy contained in $G[S]$ is also
contained in $G^*$.
\end{claim}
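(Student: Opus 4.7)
The plan is to prove this by induction on the deletion steps of the algorithm. Let $G = G_0, G_1, \dots, G_k = G^*$ be the sequence of graphs produced by the particular run that yields $U$, $W$ and $G^*$ (we can invoke Corollary~\ref{cor:MaxStbl} to conclude that the final graph really is $G^*$). I fix an arbitrary $H$-copy $\hat H \subseteq G[S]$ and show by induction on $i$ that $\hat H \subseteq G_i$. The base case $i=0$ holds since $\hat H \subseteq G[S] \subseteq G$. The conclusion of the claim then follows at $i=k$.

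For the inductive step, assume $\hat H \subseteq G_i$ and look at the $(i+1)$-th deletion step. If it removes a bad vertex or a bad edge of $G_i$, then by definition it lies in no $H$-copy of $G_i$, so in particular it is not in $\hat H$, giving $\hat H \subseteq G_{i+1}$. The two interesting cases are bad sets and small components, and these are where the two constraints on $S$ are used.

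Suppose the step removes a bad set $U_j \in U$. Since $V(\hat H) \subseteq S$ and by hypothesis $|S \cap U_j| \le 1$, we have $|V(\hat H) \cap U_j| \le 1$. If this intersection is empty, then deleting $U_j$ does not touch $\hat H$. If it has size exactly $1$, then $\hat H$ is an $H$-copy in $G_i$ meeting $U_j$ in a single vertex, contradicting $U_j$ being a bad set. Either way $\hat H \subseteq G_{i+1}$.

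Suppose the step removes a small component with vertex set $W_j \in W$. Because $H$ is connected, $\hat H$ lies entirely within one connected component of $G_i$, so either $V(\hat H) \subseteq W_j$ or $V(\hat H) \cap W_j = \emptyset$. The first option would force $V(\hat H) \subseteq S \cap W_j$, which has size at most $v(H)-1$ by hypothesis, contradicting $|V(\hat H)| = v(H)$. Hence $V(\hat H) \cap W_j = \emptyset$ and $\hat H$ survives to $G_{i+1}$. Since no deletion step ever removes any vertex or edge of $\hat H$, we obtain $\hat H \subseteq G^*$, as desired. I do not anticipate a main obstacle here; the proof is essentially a careful bookkeeping argument, and the only subtlety is matching each of the four types of deletions to exactly one condition on $S$ (with connectivity of $H$ being the ingredient that converts the small-component bound into a contradiction).
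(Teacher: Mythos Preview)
Your proof is correct and essentially identical to the paper's. The paper phrases it contrapositively---take an $H$-copy $\hat H$ not in $G^*$, locate the first index $k$ with $\hat H \subseteq G_k$ but $\hat H \not\subseteq G_{k+1}$, and show this forces $V(\hat H)\not\subseteq S$---but the case analysis (bad vertex/edge cannot touch $\hat H$; bad set intersects $\hat H$ in $\ge 2$ vertices; small component contains all of $\hat H$ by connectivity) matches yours exactly.
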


\begin{proof}
If $G$ is $H$-free there is nothing to prove. Assume then that it is not, and let $\hat{H} \sbst G$ be an $H$-copy not contained in $G^*$. We need
to show that $\hat{H}$ is not contained in $G[S]$ either. By the assumption
on $\hat{H}$, when considering the deletion algorithm that was
applied on $G$, there exists an integer $k \ge 0$ such that $\hat{H}
\sbst G_k$ but $\hat{H} \not\sbst G_{k+1}$. Furthermore, this is due
to a removal of either a bad set $U_i$ or a small component
$\Gamma_j$, as all vertices and edges contained in $\hat{H}$ are
obviously not bad in $G_k$.

In the first case, it follows that $|V(\hat{H}) \cap U_i| \ge 2$,
since $V(\hat{H})$ must intersect $U_i$, and no $H$-copy in $G_k$
contains exactly one vertex from $U_i$ by definition of a bad set.
However, $S$ contains at most one vertex from every bad set, and so
$V(\hat{H}) \not\sbst S$. In the latter case, that is, a small
component $\Gamma_j$ with vertex set $W_j$ was removed, note that
$\hat{H} \sbst \Gamma_j$ by the connectivity of $H$. Since $S$ contains at most $v(H) - 1$
vertices of $W_j$, once again we get $V(\hat{H}) \not\sbst S$.
\end{proof}

\begin{lemma}\label{lem:BwinG'}
Let $G,H$ be graphs where $H$ is connected, let $b$ be a positive integer, and let $G^*$ be the
$(H,b)$-core of $G$. When playing the $(1:b)$ Maker-Breaker or Client-Waiter $H$-game, if the spoiler has a winning strategy when playing the game on $V(G^*)$, then he has a winning strategy when playing the game on $V(G)$.
\end{lemma}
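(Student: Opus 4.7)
The plan is to lift the given winning strategy $\mathcal{S}$ for the spoiler on $V(G^*)$ to a winning strategy on $V(G)$ by combining it with auxiliary pairing-type strategies on the structures removed during the $(H,b)$ deletion algorithm. Write $V(G) = V(G^*) \sqcup \left(\bigsqcup_i U_i\right) \sqcup \left(\bigsqcup_j W_j\right)$, where $(U_i)$ and $(W_j)$ denote, respectively, the bad sets and the vertex sets of the small components that were deleted in the run of the algorithm. By Claim~\ref{cl:G*}, it suffices to design a strategy enforcing three invariants on the builder's final vertex set $S$: $|S \cap U_i| \le 1$ for every $i$, $|S \cap W_j| \le v(H) - 1$ for every $j$, and $S \cap V(G^*)$ spans no $H$-copy in $G^*$. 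The third invariant is handed to us by $\mathcal{S}$, and the first two will be secured by pairing-type responses adapted to each game type.

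For the Client-Waiter game the strategy is direct. Waiter offers each bad set $U_i$ as a single move (a legal offer since $|U_i| \le b+1$), which forces $|C \cap U_i| \le 1$. For each small component $W_j$ he partitions its vertices into $\lceil |W_j|/(b+1) \rceil \le v(H) - 1$ blocks of size at most $b+1$ and offers each block as a single move, forcing $|C \cap W_j| \le v(H) - 1$. On the remaining vertices, which all lie in $V(G^*)$, Waiter follows $\mathcal{S}$; together with Claim~\ref{cl:G*} this is enough.

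For the Maker-Breaker game the strategy is analogous. Breaker responds to Maker's move $v$ in one of three ways: if $v \in U_i$ he claims all other free vertices of $U_i$ (at most $b$ of them), enforcing the first invariant; if $v \in W_j$ he claims up to $b$ other free vertices of $W_j$, so that each exchange in $W_j$ consumes $b+1$ vertices and depletes the component within $\lceil |W_j|/(b+1) \rceil \le v(H) - 1$ rounds, enforcing the second invariant; and if $v \in V(G^*)$ he plays the response prescribed by $\mathcal{S}$, maintaining the third invariant.

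The main obstacle is the bookkeeping of Breaker's exactly $b$ moves per round: the pairing or block response may use strictly fewer than $b$ moves (when $|U_i| < b+1$, when $W_j$ is nearly exhausted, or when $\mathcal{S}$ prescribes fewer than $b$ near the end of its play), and Breaker must spend his leftover moves without disturbing any invariant. The standard resolution is to route leftovers first to still-free vertices in the remaining bad sets and small components, which only tightens the first two invariants, and to $V(G^*)$ only once the rest of $V(G)$ has been exhausted. Any extra Breaker vertices in $V(G^*)$ can be absorbed into the shadow-game analysis by the monotonicity of Maker-Breaker games in Breaker's bias: the set of Maker's vertices in $V(G^*)$ at the end of the real game is contained in the set of Maker's vertices in the $(1:b)$ shadow game on $V(G^*)$ played under $\mathcal{S}$, and the latter spans no $H$-copy in $G^*$. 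Claim~\ref{cl:G*} then finishes the proof, since any $H$-copy in $G[S]$ would have to lie in $G^*[S \cap V(G^*)]$, contradicting the third invariant.
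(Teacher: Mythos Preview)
Your proof is correct and follows essentially the same approach as the paper: combine the winning strategy $\mathcal{S}$ on $V(G^*)$ with a pairing response on each bad set, a block-partition response on each small component, and then invoke Claim~\ref{cl:G*}. One small inaccuracy worth fixing: your decomposition $V(G) = V(G^*) \sqcup \bigsqcup_i U_i \sqcup \bigsqcup_j W_j$ omits the vertices removed as individual bad vertices during the algorithm, so your case split for Breaker (and your ``remaining vertices all lie in $V(G^*)$'' claim for Waiter) is incomplete --- but these vertices are harmless, as Claim~\ref{cl:G*} places no restriction on them and the spoiler can treat them arbitrarily, exactly as the paper does.
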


\begin{proof}
We start with the Maker-Breaker game, assume a winning strategy $\cS_B^*$ for Breaker for the game played on $V(G^*)$, and provide him with the following winning strategy $\cS_B$ for the game played on $V(G)$. First, Breaker
runs the deletion algorithm on $G$ and obtains an output $U,W,G^*$.
During every round of the game, denote the last vertex claimed by
Maker by $v$. Breaker responds according to the following cases.

\begin{enumerate} [$(1)$]
\item If $v \in V(G^*)$, Breaker plays according to $\cS_B^*$. If
there are less than $b$ free vertices in $V(G^*)$ before his move,
or if he is supposed to claim some vertices he already occupies, he
completes his move arbitrarily.
\item Otherwise, if $v \in U_i$ for some $U_i \in U$, Breaker claims all
the free vertices in $U_i$, and, if necessary, completes his move
arbitrarily.
\item Otherwise, if $v \in W_j$ for some $W_j \in W$, Breaker claims
$b$ arbitrary free vertices of $W_j$, and, if there were less than
$b$ free vertices in $W_j$, completes his move arbitrarily.
\item Otherwise, Breaker makes an arbitrary move.
\end{enumerate}

The strategy is well defined (recall that all the sets in $U$ and
$W$ are pairwise disjoint, and clearly they are disjoint from
$V(G^*)$ as well), and Breaker can follow it. Case~$(2)$ of $\cS_B$
ensures that Maker claims at most one vertex from every bad set, and
Case~$(3)$ of $\cS_B$ ensures that Maker claims at most
$\Bigl\lceil|W_j| / (b+1) \Bigr\rceil \le v(H) - 1$ vertices of any
$W_j \in W$. Therefore, by Claim~\ref{cl:G*}, Breaker wins the game
if he prevents Maker from claiming any $H$-copy $\hat{H} \sbst G^*$,
and this is guaranteed by $\cS_B^*$.

Now assume a winning strategy $\cS_W^*$ for Waiter in the game played on $V(G^*)$. Waiter uses an analogous strategy to $\cS_B$ in the
following way. He first runs the deletion algorithm and obtains the
list of all bad sets and small components which were deleted during
the process. He then offers all vertices of $G^*$ according to
$\cS_W^*$ (recall that Waiter may offer less than $b+1$ elements per move). Next, For every bad set that was removed he offers all of its vertices in one
move, ensuring that Client claims only one of them. For every small
component he offers arbitrary $b+1$ of its vertices repeatedly until
all vertices of the component have been offered (he may offer less
than $b+1$ vertices when he offers the last free vertices of the
component). By this he ensures that Client claims at most $v(H) - 1$
vertices of any small component. He offers all the remaining
vertices arbitrarily.

It is easy to see that Waiter can follow this strategy and win the
game played on $G$, by Claim~\ref{cl:G*} and by the fact that the
strategy $\cS_W^*$ ensures that by the end of the game Client cannot
fully claim any $H$-copy $\hat{H} \sbst G^*$.
\end{proof}

\subsection{The structure of $G^*$}
By Lemma~\ref{lem:BwinG'}, in order to provide the spoilers with winning strategies for their $(1:b)$ $H$-games played on the vertex set of a graph $G$, it suffices to provide them with winning strategies for the same games played on $V(G^*)$. Since $H$ is connected by assumption, they only need to prevent their opponents from claiming an $H$-copy in every connected component of $G^*$.
Since Waiter can avoid offering vertices from different components in the same round, and since Maker claims only one vertex per move, allowing Breaker to respond in the same component (and complete his move arbitrarily if necessary) we get the following observation.

\begin{observation}\label{obs:playSeparately}
Let $H$ be a connected graph. In any $(1:b)$ Maker-Breaker or Client-Waiter $H$-game played on the vertex set of a graph $G$, the spoiler has a winning strategy in the game if for every connected component $\GG \sbst G$, he has a winning strategy when playing the game on $V(\GG)$.
\end{observation}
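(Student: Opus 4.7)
The plan is to exploit the connectivity of $H$ in order to decouple the game on $G$ into independent local games on its connected components. Since $H$ is connected, every $H$-copy in $G$ lies inside a single connected component of $G$, so the builder (Maker or Client) claims an $H$-copy in $G$ if and only if he claims all vertices of an $H$-copy inside some component $\GG$. It therefore suffices to give the spoiler a strategy that simultaneously realises, on each component $\GG$ separately, the assumed winning strategy $\cS_\GG$ for the local $(1:b)$ $H$-game on $V(\GG)$.

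For Maker--Breaker, Breaker always responds in the same component as Maker's latest move. When Maker claims $v\in V(\GG)$, Breaker treats this as Maker's next move in the local game on $V(\GG)$ and answers with $b$ vertices of $V(\GG)$ prescribed by $\cS_\GG$; if fewer than $b$ free vertices remain in $\GG$ at that moment, he claims whatever is left there and completes his round arbitrarily outside $\GG$. The latter situation is precisely the last round of the local game on $\GG$, which $\cS_\GG$ already handles. Consequently, the sequence of moves restricted to any component $\GG$ is exactly a $(1:b)$ game on $V(\GG)$ played by Breaker according to $\cS_\GG$, and so Maker never completes an $H$-copy inside $\GG$.

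For Client--Waiter, Waiter uses the freedom afforded by the rules to offer fewer than $b+1$ vertices per round. He enumerates the components $\GG_1,\GG_2,\ldots$ and processes them one at a time: while $V(\GG_i)$ still has free vertices, he offers in each round only vertices of $V(\GG_i)$, following the local winning strategy $\cS_{\GG_i}$; once $V(\GG_i)$ is exhausted, he moves on to $\GG_{i+1}$. Each restricted sub-game is then literally a $(1:b)$ Client--Waiter $H$-game on $V(\GG_i)$ played according to $\cS_{\GG_i}$, so Client does not complete any $H$-copy inside $\GG_i$.

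There is no serious obstacle in the argument; the only point to verify is that the restricted play on each component really constitutes a legitimate $(1:b)$ local game. For Client--Waiter this is immediate from Waiter's flexibility in how many vertices he offers per round, and for Maker--Breaker it reduces to the remark above that Breaker's occasional forced under-play at the end of a component matches a legitimate final round of the local game on that component.
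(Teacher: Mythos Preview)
Your proof is correct and follows essentially the same approach as the paper, which treats this as a brief observation rather than a proved lemma: for Maker--Breaker, Breaker responds in the component of Maker's last move and completes arbitrarily if needed; for Client--Waiter, Waiter never mixes components in a single offer. Your write-up is simply a more detailed version of the same two-line argument the paper gives just before stating the observation.
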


For better understanding of the possible structures of the connected components
of $G^*$, we describe an exploration process that can be applied to
every stable component. This is done step by step, by starting with one
vertex and then slowly expand our view of the component by adding to
it one piece of structure at a time. We describe here the general
method, and then in
Sections~\ref{sec:clique},~\ref{sec:cycle} and~\ref{sec:others}
we go into more details according to the graph $H$ and the bias $b$
in discussion.

We first need some terminology. Let $H$ and $\Gamma$ be connected
graphs and let $\Gamma'$ be a connected subgraph of $\Gamma$. The
following definitions are made with respect to $H$, $\GG$ and
$\GG'$. An \emph{internal edge} is an edge
$uv \in E(\Gamma) \stm E(\Gamma')$ such that $u,v \in V(\Gamma')$.
For an $H$-copy $\hat{H} \sbst \Gamma$ such that $V(\hat{H})
\not\sbst V(\Gamma')$ let $U := V(\hat{H}) \cap V(\Gamma')$. We say
that $\hat{H}$ is an \emph{external} copy if $|U| = 1$; we say it is
an \emph{internal} copy if $|U| > 1$ and in addition $E(\hat{H}[U])
\sbst E(\GG')$, that is, $\hat{H}$ contains no internal edges. We do not consider any other kind of $H$-copies.
Whenever we say we add an internal edge $e$ or an $H$-copy $\hat{H}$
(either external or internal) to $\Gamma'$ we mean that we now
expand our view from $\Gamma'$ to $\GG''$, where $\GG'' = \Gamma' \cup \{e\}$ or $\GG''=\Gamma' \cup \hat{H}$, respectively. In either case we get a new connected subgraph of
$\Gamma$. With respect to the transition from $\GG'$ to $\GG''$ as above, a vertex $v \in V(\GG'')$ is called an \emph{existing} vertex if $v \in V(\GG')$ and a \emph{new} vertex otherwise. Existing and new edges are defined analogously.

We are now ready to describe the exploration process, applicable to any connected $(H,1)$-stable graph $\GG$ (and thus by Remark~\ref{rmk:1stable} applicable to any $(H,b)$-stable connected graphs). We start by setting $\Gamma_0 =
v$ for an arbitrary vertex $v \in V(\Gamma)$. Then, while $\Gamma_i
\neq \Gamma$, we expand $\Gamma_i$ to $\Gamma_{i+1}$ by adding
either an internal edge, an external $H$-copy or an internal
$H$-copy, with one restriction: if $\Gamma_i$ is obtained from
$\Gamma_{i-1}$ by adding an external $H$-copy, and if $\GG_{i+1}$ is obtained from $\GG_i$ by adding an $H$-copy $H'$ (either internal or external), then $V(H') \cap \big(V(\Gamma_i) \stm V(\Gamma_{i-1})\big)
\neq \emptyset$ must hold. In other words, after the addition of an external
$H$-copy, which added $v(H) - 1$ new vertices to the subgraph, we
are looking for either an $H$-copy containing at least one of them, or an arbitrary internal edge. Note that this means that adding $t$ consecutive external copies is in fact
adding an $H$-chain of length $t$ to the subgraph; we often use this terminology.

Since each vertex and each edge in an $H$-stable graph must be a part of some $H$-copy, it is evident we can explore every $H$-stable connected component by sequentially adding internal edges and $H$-copies. It only remains to show why any component can be explored under the aforementioned restriction. We use the following claim.

\begin{claim}\label{cl:externalCollapse}
Let $\GG$ be an $H$-stable connected graph. Suppose that during an arbitrary exploration of $\GG$, the $i$th step was the addition of an external $H$-copy $H_1$, consisting of an existing vertex $v$ and a set $U$ of new vertices. Then $\GG$ contains an $H$-copy $H_2 \not\sbst \GG_i$ such that $V(H_2) \cap U \neq \emptyset$.
\end{claim}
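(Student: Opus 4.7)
My plan is to argue by contradiction: I will assume that every $H$-copy in $\Gamma$ meeting $U$ is already contained in $\Gamma_i$ and derive a contradiction from the non-bad-pair property of $(H,1)$-stable graphs together with the 2-connectivity of $H$ (which, in the intended applications where $H$ is strictly 1-balanced with $v(H) \ge 3$, follows from Claim~\ref{cl:1balanced2connected}). Since $H_1$ is an external copy we have $|U| = v(H) - 1 \ge 2$, so I will pick two distinct vertices $u_1, u_2 \in U$. By $(H,1)$-stability, the pair $\{u_1, u_2\}$ is not bad, so some $H$-copy $H_2 \subseteq \Gamma$ contains exactly one of them; say $u_1 \in V(H_2)$ and $u_2 \notin V(H_2)$. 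Since $V(H_2) \cap U \ni u_1$, the contradiction hypothesis will force $H_2 \subseteq \Gamma_i$, and the remainder of the proof will show that this is impossible.

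The key structural observation I will use is that in $\Gamma_i$ the only edges incident to $U$ are the edges of $H_1$, because $U$ consists entirely of new vertices added in step $i$ and that step only introduces the edges of $H_1$. These edges join $v$ to $U$ or run between two vertices of $U$, so no edge of $\Gamma_i$ joins $U$ to $V(\Gamma_{i-1}) \setminus \{v\}$. I will partition $V(H_2) \setminus \{v\}$ as $A := V(H_2) \cap U$ (non-empty, containing $u_1$) and $B := V(H_2) \cap (V(\Gamma_{i-1}) \setminus \{v\})$; inside $H_2$, any connection between $A$ and $B$ must then pass through $v$.

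I will finish with a case split. If $B = \emptyset$, then $V(H_2) \subseteq \{v\} \cup U = V(H_1)$, and the equality $|V(H_2)| = v(H) = |V(H_1)|$ forces $V(H_2) = V(H_1)$, which is impossible since $u_2 \in V(H_1) \setminus V(H_2)$. If $B \neq \emptyset$, then, because $H_2$ is connected and the only possible bridge between $A$ and $B$ in $H_2$ is $v$, we must have $v \in V(H_2)$; removing $v$ from $H_2$ then disconnects $A$ from $B$, so $v$ is a cut vertex of $H_2$, contradicting the 2-connectivity of $H_2 \cong H$.

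The main subtlety I expect is the bookkeeping for which edges $\Gamma_i$ actually carries between $U$ and $V(\Gamma_{i-1}) \setminus \{v\}$: I will have to use the convention that $\Gamma_i$ is only what has been built so far by the exploration, so any edges of $\Gamma$ between $U$ and $V(\Gamma_{i-1}) \setminus \{v\}$ that have not yet been explicitly added are genuinely absent from $\Gamma_i$. Once that point is pinned down, both cases reduce to short structural contradictions.
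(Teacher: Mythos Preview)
Your proof is correct and follows essentially the same approach as the paper's: both use the $2$-connectivity of $H$ (from Claim~\ref{cl:1balanced2connected}) together with the absence of bad pairs in an $(H,1)$-stable graph, and the key structural fact that in $\Gamma_i$ the only edges incident to $U$ are those of $H_1$. The paper organizes the argument as a reduction---first observing that it suffices to show $H_1$ is the only $H$-copy in $\Gamma_i$ meeting $U$, then proving this via the cut-vertex contradiction---whereas you run the same contradiction directly with a slightly more explicit case split on whether $B = V(H_2) \cap (V(\Gamma_{i-1}) \setminus \{v\})$ is empty; the content is the same.
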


\begin{proof}
Note that $|U| \ge 2$, and that any two vertices in $U$ not contained in any other $H$-copy in $\GG$ but $H_1$ form a bad pair, which contradicts the stability of $\GG$. It therefore suffices to show that $H_1$ is the only $H$-copy in $\GG_i$ containing any vertices from $U$.

Indeed, recall that $H$ is 2-vertex-connected by Claim~\ref{cl:1balanced2connected},  and let $H' \sbst \GG_i$ be an $H$-copy satisfying $U_1 := V(H') \cap U \neq \emptyset$. Now, if $H' \neq H_1$ then $U_2 := V(H') \stm V(H_1) \neq \emptyset$. But since there are no edges between $U_1$ and $U_2$ in $\GG_i$, we have that $H' \stm \{v\}$ is disconnected, a contradiction.
\end{proof}

It is now immediate to see that the exploration process is well defined: in the terminology of Claim~\ref{cl:externalCollapse}, after the addition of $H_1$ we can either add $H_2$ if possible, or add an internal edge required for $H_2$ otherwise (the addition of an internal edge $e \not\in E(H_2)$ is also a legal step). Claim~\ref{cl:externalCollapse} also leads to the following useful corollary.

\begin{corollary}\label{cor:noExtrnlCps}
In every exploration of an $H$-stable connected graph $\Gamma$, the
last step cannot be an addition of an external $H$-copy. In
particular, if after step $i$ in the exploration of $\Gamma$ we
argue that we can only continue by adding external $H$-copies, then
$\Gamma = \Gamma_i$.
\end{corollary}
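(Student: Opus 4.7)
The plan is to prove both assertions as direct consequences of Claim~\ref{cl:externalCollapse}. Recall that the claim guarantees: whenever step $i$ of an exploration adds an external copy $H_1$ with new vertex set $U$, there exists some $H_2 \sbst \GG$ with $H_2 \not\sbst \GG_i$ and $V(H_2) \cap U \neq \emptyset$. My strategy is simply to observe that the mere existence of such an $H_2$ witnesses that the exploration cannot have ended at step $i$.

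For the first assertion, I would argue by contradiction. Suppose the exploration of $\GG$ terminates at some step $k$ whose addition is an external $H$-copy $H_1$ with new vertex set $U$. Being the final step, $\GG_k = \GG$. Applying Claim~\ref{cl:externalCollapse} to step $k$ produces an $H$-copy $H_2 \sbst \GG$ satisfying $H_2 \not\sbst \GG_k$ and $V(H_2) \cap U \neq \emptyset$. However, $H_2 \sbst \GG = \GG_k$ directly contradicts $H_2 \not\sbst \GG_k$, so no terminating external step can exist.

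For the ``in particular'' statement, suppose we have established that after step $i$ the only way to extend the exploration is by adding external $H$-copies (i.e., no internal edges and no internal $H$-copies are ever available as we continue from $\GG_i$). Then any hypothetical continuation of the exploration beyond step $i$ consists solely of external $H$-copy additions, so its final step would itself have to be an external addition. By the first part this is impossible, hence no step beyond $i$ can occur and $\GG = \GG_i$.

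Since everything reduces immediately to Claim~\ref{cl:externalCollapse}, I do not anticipate any real obstacle; the proof is essentially a one-line contradiction. The only point that warrants care is the reading of the hypothesis in the second statement: ``only external $H$-copies can be added'' has to be interpreted as a structural assertion persisting throughout any continuation from $\GG_i$, and not merely at step $i+1$ in isolation, so that the inductive argument reducing to the first part is legitimate.
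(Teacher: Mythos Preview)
Your proof is correct and matches the paper's approach exactly: the paper presents this corollary as an immediate consequence of Claim~\ref{cl:externalCollapse} without writing out any argument, and your contradiction for the first assertion together with the reduction for the second assertion is precisely the intended one-line reasoning. Your closing remark about the correct reading of the hypothesis in the second statement is apt and aligns with how the corollary is actually invoked later in the paper (where ``only external copies can be added'' always stems from having exhausted the allowed budget of internal edges and internal copies, a condition that persists under further external additions).
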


Our general approach in our analysis is to show that when
exploring a connected component of $G^*$, we may consider only those in which the number of times we can
add internal edges or internal $H$-copies is very limited, as we do not expect $G$ to contain any dense components. We then investigate the possible stable components that can be constructed under these restrictions. We therefore use the following classification of connected components.

\begin{definition}\label{def:Xqts}
For every connected graph $H$ and three non-negative integers
$q,t,s$,  let $X^H_{q,t,s}$ be the family of all connected graphs (not necessarily $H$-stable) which can be obtained by adding exactly $q$ internal edges, $t$
external $H$-copies, and $s$ internal $H$-copies during the exploration process.
When $H$ is clear from the context we abbreviate to $\xqts$. Throughout this paper we assume that $q,t,s$ are non-negative, even if we do not state that explicitly. Furthermore, we assume $t > 0$ as $\GG_1$ is always obtained from $\GG_0$ by the addition of an external $H$-copy, regardless of $H$, $q$, $t$ or $s$.
\end{definition}

Note that in general, a connected component $\Gamma$ could be
explored in many different ways (all of them terminating with
$\Gamma$, of course). The number of times each addition type is
used during the exploration may vary between different
explorations, so the families $\xqts$ are not pairwise
disjoint. However, when referring to a graph $\GG \in \xqts$, we
\textbf{always} consider an exploration of $\GG$ in such a way that each
addition type is applied exactly $q$, $t$, or $s$ times,
respectively.

When possible, it is extremely beneficial for the analysis to start the exploration of a given component with two $H$-copies whose edge sets intersect, as defined below (recall that the first step in any exploration, of any connected graph, is the addition of an external $H$-copy).

\begin{definition}\label{def:greedy}
A \emph{greedy} exploration of a connected component $\GG$ is any exploration in which the second step is the addition of an internal copy which contains at least one existing edge.
\end{definition}

Note that if there exists a greedy exploration for $\GG$, then $\GG$ must contain at least one dangerous edge. However, the existence of such an edge in $\GG$ does not guarantee the existence of greedy explorations. Indeed, even if $H_1$ and $H_2$ share an edge and the exploration starts with $H_1$, it is possible that internal edges need to be added prior to $H_2$. In addition, it is possible that there exist integers $q,t,s$ such that $\GG \in \xqts$ only for non-greedy explorations of $\GG$.

%
%

\subsection{Results for clique games}\label{sec:genCliques}
In this subsection we consider $K_k$-games where $k \ge 3$ is some fixed integer (this is implicit for the remainder of the section). In particular, since $m_1(K_k) = k/2$, we consider the random graph $\ggnp$ for $p = O(n^{-2/k})$. We provide general results which are later used in the analysis of several games in Sections~\ref{sec:clique} and~\ref{sec:others}.
We start by showing that any component whose exploration contains too many additions of internal edges and internal $K_k$-copies is unlikely to appear in $G$.

\begin{claim}\label{cl:qsBound}
Let $0 < a < 1$ (not necessarily a constant), let $\ggn {an^{-2/k}}$, let $q,t,s$ be three integers satisfying $2q + (k-2)s > k$, and let $\GG \in \xqts$. Then $\prgg \le n^{-1/k}$.
\end{claim}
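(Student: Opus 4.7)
The plan is to apply the standard first-moment / union-bound estimate $\prgg \le n^{v(\GG)} p^{e(\GG)}$, and then reduce everything to showing
\begin{equation*}
2e(\GG) - k v(\GG) \ge 1.
\end{equation*}
Once this inequality is in hand, since $a < 1$ we immediately get
$\prgg \le n^{v(\GG)} p^{e(\GG)} = a^{e(\GG)} n^{v(\GG) - 2e(\GG)/k} \le n^{-1/k}$,
as required.

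To verify the inequality, I would parameterize $v(\GG)$ and $e(\GG)$ by tracking the contribution of each step in an exploration witnessing $\GG \in \xqts$. Each internal edge contributes $(0,1)$ to $(v,e)$. Each external $K_k$-copy shares exactly one vertex with the current subgraph and hence contributes $(k-1, \binom{k}{2})$ (the shared vertex had no prior edges to the $k-1$ new ones). For the $i$-th internal $K_k$-copy, let $u_i$ be the number of overlap vertices; by definition $u_i \ge 2$ (otherwise it would be external) and $u_i \le k-1$ (otherwise no new vertices are added and it is not a step). Since an internal copy contains no internal edges, all $\binom{u_i}{2}$ edges among the overlap vertices are already present, so it contributes $(k-u_i,\ \binom{k}{2} - \binom{u_i}{2})$.

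Summing these contributions and expanding, the key identity is
\begin{equation*}
2e(\GG) - k v(\GG) = 2q - k + \sum_{i=1}^{s}(u_i - 1)(k - u_i),
\end{equation*}
which is a straightforward algebraic manipulation using the factorization $-u_i^2 + (k+1)u_i - k = (u_i - 1)(k - u_i)$. The crucial elementary estimate is then that $(u_i - 1)(k - u_i) \ge k - 2$ for every integer $u_i \in \{2,\dots,k-1\}$ (a downward-opening parabola in $u_i$, minimized at the endpoints). This gives
\begin{equation*}
2e(\GG) - k v(\GG) \ge 2q + (k-2)s - k,
\end{equation*}
and the hypothesis $2q + (k-2)s > k$ combined with integrality yields the desired bound of at least $1$.

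There is no real obstacle here: the proof is essentially a forced computation. The only point worth emphasizing is that the bounds $2 \le u_i \le k-1$ are built into the definition of an internal copy, and these are exactly what make the per-copy penalty $(u_i-1)(k-u_i)$ bounded below by $k-2$, matching the coefficient of $s$ in the hypothesis $2q + (k-2)s > k$.
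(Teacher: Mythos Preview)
Your proof is correct and essentially identical to the paper's argument: both track $v(\GG)$ and $e(\GG)$ through the exploration steps, parameterize internal copies by their number of overlap vertices (your $u_i$, the paper's $r_i$), and use the same key estimate $(u_i-1)(k-u_i)\ge k-2$ for $2\le u_i\le k-1$. The only cosmetic difference is that you isolate the integer quantity $2e(\GG)-kv(\GG)$ and invoke integrality, whereas the paper carries the exponent $1-\tfrac{1}{k}\bigl[2q+\sum_i (k-r_i)(r_i-1)\bigr]$ through directly; the content is the same.
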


\begin{proof}
Consider an arbitrary exploration of $\GG$, and for every $1 \le
i \le s$ let $r_i$ denote the number of existing vertices contained
in the $i$th internal clique added during the exploration. We have:
$$v(\Gamma) = 1 + t(k-1) + \sum_{i=1}^{s} (k - r_i)$$
and
$$ e(\Gamma) = q + t{k \choose 2} + \sum_{i=1}^{s} \left({k
\choose 2} - {r_i \choose 2} \right),$$ and therefore
\begin{align*}
\Pr [\Gamma \sbst G] & \le n^{v(\Gamma)} p^{e(\Gamma)}\\
& = a^{e(\Gamma)}n^ {1 + t(k-1) + \sum_{i=1}^{s} (k - r_i) - \frac{2}{k} \left[q + t{k \choose 2} + \sum_{i=1}^{s} \left({k \choose 2} - {r_i \choose 2}\right)\right]}\\
& = a^{e(\Gamma)}n^{1 - \frac{1}{k} \left[2q + \sum_{i=1}^{s} \left((kr_i - k^2) + (k^2 - k) - r_i(r_i - 1) \right) \right]}\\
& = a^{e(\Gamma)}n^{1 - \frac{1}{k} \left[2q + \sum_{i=1}^{s}
(k-r_i)(r_i-1) \right]}.
\end{align*}
By definition of an internal copy we have $2 \le r_i \le k-1$ for every $i$, and since
$(k-x)(x-1) \ge k-2$ for every $2 \le x \le k-1$ we obtain
\begin{equation} \label{eq:xqts}
\Pr [\Gamma \sbst G] \le a^{e(\Gamma)}n^{1 - \frac{1}{k} \left[2q
+ (k-2)s\right]},
\end{equation}
which completes the proof by the assumptions on $a$, $q$ and $s$.
\end{proof}

We next provide an upper bound on the maximal length of $K_k$-chains we expect $G$ to contain.

\begin{claim}\label{cl:NoChains}
Let $\ggn{cn^{-2/k}}$ for some constant $0 < c < 1$, and let $\GG$ be the $K_k$-chain of length $t = -\frac{1}{\ln c} \ln n$ (note that $\ln c$ is negative). Then \whp $G$ is $\GG$-free.
\end{claim}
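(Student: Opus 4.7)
The plan is to prove this by a straightforward first moment computation. Observe that the $K_k$-chain $\Gamma$ of length $t$ satisfies $v(\Gamma) = t(k-1) + 1$ and $e(\Gamma) = t\binom{k}{2}$. Since the number of subgraph copies of $\Gamma$ in $K_n$ is at most $n^{v(\Gamma)}$, by linearity of expectation and Markov's inequality we have
\[
\Pr[\Gamma \sbst G] \le n^{v(\Gamma)} p^{e(\Gamma)}.
\]

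I would then substitute $p = cn^{-2/k}$ and use the identity $2\binom{k}{2}/k = k-1$ to get
\[
n^{v(\Gamma)} p^{e(\Gamma)} = n^{t(k-1)+1} \cdot c^{t\binom{k}{2}} \cdot n^{-t(k-1)} = n \cdot c^{t\binom{k}{2}}.
\]
The exponents in $t(k-1)$ cancel exactly; this is the key arithmetic cancellation that makes the $K_k$-chain lie right at the threshold, and explains why $t$ is chosen logarithmically in $n$. Now plugging in $t = -\ln n / \ln c$ (which is positive since $\ln c < 0$) gives $c^{t\binom{k}{2}} = e^{-\binom{k}{2}\ln n} = n^{-\binom{k}{2}}$, hence
\[
\Pr[\Gamma \sbst G] \le n^{1 - \binom{k}{2}}.
\]

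Since $k \ge 3$ implies $\binom{k}{2} \ge 3$, this bound is at most $n^{-2} = o(1)$, so \whp $G$ is $\Gamma$-free. There is no real obstacle here: the only point worth being careful about is the sign of $\ln c$ (ensuring $t > 0$) and the exact cancellation in the exponent of $n$, which follows from $e(\Gamma)/v(\Gamma) \approx k/2 = m(K_k)$ — i.e., a $K_k$-chain has essentially the maximum density of $K_k$, so $p = cn^{-2/k}$ is exactly the density at which the first moment becomes delicate, and the extra factor $c^{t\binom{k}{2}}$ (coming from having many edges) is what drives the probability to zero for $t$ logarithmically large.
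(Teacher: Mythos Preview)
Your proof is correct and follows essentially the same approach as the paper: a first moment computation using $\Pr[\Gamma \sbst G] \le n^{v(\Gamma)}p^{e(\Gamma)}$, the exact cancellation $\frac{2}{k}\binom{k}{2} = k-1$ in the exponent of $n$, and the logarithmic choice of $t$ to make $c^{t\binom{k}{2}}$ a negative power of $n$. The only cosmetic difference is that the paper crudely bounds $c^{t\binom{k}{2}} \le c^{2t}$ (using $\binom{k}{2}\ge 2$) to arrive at $\Pr[\Gamma\sbst G] \le 1/n$, whereas you carry the exact exponent through to obtain the sharper $n^{1-\binom{k}{2}}$; both are $o(1)$ for $k\ge 3$.
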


\begin{proof}
By observing that $v(\Gamma) = 1 + t(k-1)$ and $e(\Gamma) = t\binom
k2$ we get
\begin{equation*}
\Pr[\Gamma \sbst G] \leq n^{v(\Gamma)} p^{e(\Gamma)} = n^{1 + t(k-1)
- \frac 2k t\binom k2} c^{t\binom k2} \leq nc^{-\frac{2}{\ln c} \ln
n} = \tfrac{1}{n} = o(1).\qedhere
\end{equation*}
\end{proof}

Claim~\ref{cl:NoChains} shows that we can limit our focus to
components containing no long $K_k$-chains. For every three integers
$q,t,s$, let $\yqts$ denote the family of all members of $\xqts$
containing no $K_k$-chains of length more than $-\frac{1}{\ln c} \ln
n$ as subgraphs, and let $Y_{q,s} = \bigcup_{t \ge 1} \yqts$.

\begin{claim}\label{cl:yqsIsSmall}
$|Y_{q,s}| = O\left((\ln n)^{3q + ks}\right)$ holds for any fixed integers $q,s$.
\end{claim}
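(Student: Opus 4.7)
The plan is a direct counting argument: I would upper-bound $|Y_{q,s}|$ by counting all explorations that produce its members, treating this as an over-count since a single graph may admit several explorations.

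The first step is to show that the number of external copies $t$ is itself bounded by $O(\ln n)$. Within any exploration, each maximal run of consecutive external-copy steps assembles a $K_k$-chain inside the resulting graph, and by the definition of $Y_{q,t,s}$ together with Claim~\ref{cl:NoChains}, no such graph contains a $K_k$-chain of length exceeding $L := -(\ln n)/\ln c$. Hence each such ``chain-phase'' has length at most $L$. Since the $q+s$ non-external (``special'') steps split the exploration into at most $q+s+1$ chain-phases, one obtains $t \leq (q+s+1)L = O(\ln n)$. Consequently, the running vertex count $N$ is always at most $1 + (t+s)(k-1) = O(\ln n)$.

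The second step is to enumerate the possible explorations. An exploration is specified by the relative ordering of the internal edges and internal copies among the $q+s$ special steps (contributing $\binom{q+s}{q} = O(1)$); the length of each of the $q+s+1$ chain-phases (each contributing $L+1$); the starting vertex of each non-initial chain-phase (each contributing at most $N$, since the first phase begins at the fixed vertex $v$); the pair of existing endpoints for each internal edge (contributing $O(N^2)$ per edge); and, for each internal copy, the subset of $2$ to $k-1$ existing vertices forming the attaching clique, together with the $O(1)$-many choices for how many new vertices it contributes (contributing $O(N^{k-1})$ per copy). Multiplying these factors and substituting $N, L = O(\ln n)$ yields a polylogarithmic bound.

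The main technical subtlety is the bookkeeping needed to land on the exact exponent $3q + ks$: a naive version of the argument gives only $(\ln n)^{O(q+s)}$, which in fact already suffices for the probabilistic estimates in which the claim is later used. To reach the precise exponent $3q+ks$ one absorbs the chain-phase degrees of freedom into the counts of the adjacent special steps, exploiting that a $K_k$-chain of fixed length is unique up to isomorphism, so that specifying a chain-phase requires only its length and its one attachment vertex (rather than a separate choice at every internal step of the chain).
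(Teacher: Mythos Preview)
Your approach is the same as the paper's: over-count $|Y_{q,s}|$ by enumerating explorations, decompose each into chain-phases separated by the $q+s$ ``special'' steps, and multiply out the polylogarithmic number of choices. The paper writes the exploration as $A_1B_1\cdots A_{q+s}B_{q+s}$, charges $O(\ln n)$ per chain $A_i$ (its length only, using that a $K_k$-chain of a given length is unique), $O((\ln n)^2)$ per internal edge and $O((\ln n)^{k-1})$ per internal copy, and multiplies to land exactly on $(\ln n)^{3q+ks}$.

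Your version is more careful in also charging an attachment vertex for each non-initial chain phase; this is a genuine degree of freedom (different attachment points can give non-isomorphic graphs) which the paper's write-up silently omits, and your proposed ``absorption'' into the adjacent special steps does not actually recover the exact exponent $3q+ks$ either. As you correctly point out, however, any fixed polylogarithmic exponent suffices for the only use of this bound (in Claim~\ref{cl:noNegativeExponent}), so the discrepancy is harmless.
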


\begin{proof}
Fix $q$ and $s$ and let $\GG \in Y_{q,s}$. In the exploration of
$\GG$ we start with a single vertex and then add elements
$A_1,B_1,A_2,B_2,\dots,A_{q+s},B_{q+s}$ where every $A_i$ is a
$K_k$-chain (possibly empty), and every $B_i$ is either an internal
edge or an internal $K_k$-copy. Since $v(A_i) = O(\ln n)$ and $v(B_i)
= O(1)$ for every $i$, and $q,s$ are fixed, we get $v(\GG) = O(\ln
n)$.

Now let us bound from above the number of different graphs we can
obtain in this way. For every $A_i$ there are $O(\lnn)$ options to
choose the length of the chain, and recall that there is only one type of
$K_k$-chain of each length. For each of the $q$ internal edges there
are at most $\binom {v(\GG)}{2} = O(\ln^2n)$ options to choose its
endpoints. For each of the $s$ internal $K_k$-copies there are at most
$\sum_{r = 2}^{k-1}\binom {v(\GG)}{r} = O((\ln n)^{k-1})$ options to
choose the existing vertices it contains. Multiplying all these
factors we get the desired result.
\end{proof}

Using the previous three claims we obtain our main result for clique games,
which we use extensively in the following sections.

\begin{claim}\label{cl:noNegativeExponent}
Let $\ggn{cn^{-2/k}}$ for some constant $0 < c < 1$. The following holds w.h.p.: all families $\xqts$ for which at least one of their members appears in $G$ satisfy $2q + (k-2)s \le k$.
\end{claim}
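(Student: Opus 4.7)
The plan is to prove Claim~\ref{cl:noNegativeExponent} by combining the three preceding claims in a two-level union bound. Concretely, I would first invoke Claim~\ref{cl:NoChains} to reduce to the case of graphs with no long $K_k$-chain: \whp $G$ contains no $K_k$-chain of length at least $-\frac{1}{\ln c}\ln n$, and on this event any subgraph $\GG \sbst G$ lying in some family $\xqts$ automatically lies in the subfamily $Y_{q,t,s}$, and hence in $Y_{q,s} = \bigcup_t Y_{q,t,s}$. It therefore suffices to show
$$\Pr\bigl[\exists\ (q,s)\text{ with }2q+(k-2)s > k,\ \exists\ \GG \in Y_{q,s}\text{ with } \GG \sbst G\bigr] = o(1).$$

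For a fixed such pair $(q,s)$, set $j := 2q+(k-2)s-k \geq 1$. Inequality~\eqref{eq:xqts} in the proof of Claim~\ref{cl:qsBound}, together with $a^{e(\GG)} \leq 1$ (valid since $a = c \in (0,1)$), actually yields the stronger estimate $\Pr[\GG \sbst G] \leq n^{-j/k}$, uniformly over $\GG \in Y_{q,s}$. Combining this with the bound $|Y_{q,s}| = O((\lnn)^{3q+ks})$ from Claim~\ref{cl:yqsIsSmall}, a first union bound over $\GG \in Y_{q,s}$ gives
$$\Pr\bigl[\exists\ \GG \in Y_{q,s},\ \GG \sbst G\bigr] = O\bigl((\lnn)^{3q+ks}\,n^{-j/k}\bigr).$$

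The remaining task is a second union bound, over all pairs $(q,s)$ with $2q+(k-2)s > k$. I would group these by the value of $j$: there are at most $O(j)$ pairs with $2q+(k-2)s = k+j$ (since $q \leq (k+j)/2$ and $s \leq (k+j)/(k-2)$), and for each of them $3q+ks \leq C_k\,(j+1)$ for some constant $C_k$ depending only on $k$. The total probability is then at most $\sum_{j \geq 1} O(j)\,(\lnn)^{C_k(j+1)}\,n^{-j/k}$, which is a geometric series in the $o(1)$ quantity $\eps_n := (\lnn)^{C_k}\,n^{-1/k}$ and hence itself $o(1)$.

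The main (and essentially only) subtlety is this last union bound: one has to verify that the polylogarithmic overhead $(\lnn)^{3q+ks}$ is dominated by the polynomial decay $n^{-j/k}$ uniformly in the now unbounded pair $(q,s)$. This is guaranteed by the linearity in $j$ of both $3q+ks$ and the number of pairs with $2q+(k-2)s=k+j$, which follow at once from $q \leq (k+j)/2$ and $s \leq (k+j)/(k-2)$. Once this is noted, the claim assembles cleanly from the three preceding claims with no further probabilistic input required.
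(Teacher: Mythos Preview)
Your approach --- a direct union bound over all $(q,s)$ with $2q+(k-2)s>k$ using the sharper estimate $n^{-j/k}$ from~\eqref{eq:xqts} --- is different from the paper's, but as written it has a genuine gap. The bound $|Y_{q,s}| = O((\ln n)^{3q+ks})$ of Claim~\ref{cl:yqsIsSmall} is stated and proved only for \emph{fixed} $q,s$; its implicit constant depends on $(q,s)$. Tracing through that proof, one sees $v(\GG) \le (q+s+1)\cdot O(\ln n)$, so the count of placements for the $q$ internal edges and the $s$ internal copies carries an extra factor of order $(q+s)^{2q+(k-1)s}$. In your sum over $j$ this contributes an additional $j^{\Theta(j)}$, and $\sum_{j\ge 1} j\cdot j^{\Theta(j)}(\ln n)^{\Theta(j)} n^{-j/k}$ is not a geometric series in a fixed $o(1)$ quantity: once $j$ exceeds a small power of $n$ the individual summands are no longer small, so the union bound gives nothing for that range.

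The paper sidesteps this entirely by a truncation argument. If some $\GG\in\xqts$ with $2q+(k-2)s>k$ appears in $G$, stop the exploration at the first step $i+1$ where $2q_{i+1}+(k-2)s_{i+1}>k$; since the preceding step satisfied $2q_i+(k-2)s_i\le k$, one has $q_{i+1}\le k/2+1\le k$ and $s_{i+1}\le 4$, and $\GG_{i+1}\sbst G$ lies in the \emph{finite} union $Y_k:=\bigcup\{Y_{q,s}: q\le k,\ s\le 4,\ 2q+(k-2)s>k\}$. Now Claim~\ref{cl:yqsIsSmall} applies with genuinely bounded parameters, giving $|Y_k|=O((\ln n)^{7k})$, and a single union bound with the $n^{-1/k}$ estimate from Claim~\ref{cl:qsBound} finishes. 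Inserting this truncation step would repair your argument --- but then the stronger $n^{-j/k}$ bound and the infinite sum over $j$ become unnecessary.
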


\begin{proof}
By Claim~\ref{cl:NoChains} we may assume that $G$ contains no
$K_k$-chains of length more than $-\frac{1}{\ln c} \ln n$, thus we
only consider the families $\yqts$ and $Y_{q,s}$. Let $q,t,s$ be
three integers such that $2q + (k-2)s > k$, let $\GG \in \yqts$, and
consider an arbitrary exploration of $\GG$. Denote by $q_j$ and $s_j$ the
number of internal edges and internal copies added during the first
$j$ steps of the process, respectively. Let $i$ be the maximal
integer such that after $i$ steps in the process $2q_i + (k-2)s_i
\le k$ holds (such an $i$ exists since $q_1=s_1=0$). It follows immediately that $q_i \le k/2$ and $s_i \le
k/(k-2) \le 3$. Let
$$Y_k = \bigcup_{q \le k,  s \le 4 \textrm{ s.t.} \atop  2q + (k-2)s > k} Y_{q,s}.$$
It follows that $\GG_{i+1} \in Y_k$, and so in order to prove the
claim it suffices to show that \whp $G$ is $Y_k$-free. By Claim~\ref{cl:qsBound} we have that $\prgg \le
n^{-\frac 1k}$ for every $\GG \in Y_k$. For any $q \le k$ and $s \le
4$ we get by Claim~\ref{cl:yqsIsSmall} that $|Y_{q,s}| =
O\left((\ln n)^{7k}\right)$, and thus $|Y_k| = O\left((\ln
n)^{7k}\right)$ for any fixed $k$. A simple union bound now yields
\begin{equation*}
\Pr[G \textrm{ is not $Y_k$-free}] \le \sum_{\GG
\in Y_k}\prgg \le |Y_k|n^{-\frac 1k} = o(1).\qedhere
\end{equation*}
\end{proof}

In the light of Claim~\ref{cl:noNegativeExponent}, we show the strong correlation between dangerous edges and stable graphs, when considering only the plausible subgraphs of the random graph.

\begin{lemma}\label{lem:noDang}
Let $q,t,s$ be integers satisfying $2q + (k-2)s \le k$, and let $\GG \in \xqts$ be a $(K_k,1)$-stable connected graph. Then either $\GG$ contains at least one dangerous edge, or $k=3$ and $\GG$ is a $K_3$-cycle of length at least four.
\end{lemma}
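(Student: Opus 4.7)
My plan is a proof by contradiction: I assume $\Gamma$ contains no dangerous edge, and aim to show that $k = 3$ and $\Gamma$ is a $K_3$-cycle of length at least four. Two structural observations fall out immediately. First, stability (no bad edges) forces every edge of $\Gamma$ to lie in at least one $K_k$-copy, while the no-dangerous-edge assumption forces it to lie in at most one; hence every edge is in \emph{exactly} one $K_k$-copy. Second, any two distinct $K_k$-copies share at most one vertex, since sharing two would give a shared edge, which would be dangerous.

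The engine of the proof is an edge double-count. Let $N$ be the total number of $K_k$-copies in $\Gamma$. The first observation gives $e(\Gamma) = N\binom{k}{2}$, while the exploration decomposition gives $e(\Gamma) = q + t\binom{k}{2} + \sum_{i=1}^{s}\bigl(\binom{k}{2} - \binom{r_i}{2}\bigr)$, yielding the key identity
\[
(N - t - s)\binom{k}{2} + \sum_{i=1}^{s}\binom{r_i}{2} \;=\; q.
\]
The $t + s$ cliques added during the exploration are pairwise distinct, each containing at least one vertex that is new at its introduction, so $N \geq t + s$. Combining with $r_i \geq 2$ and the hypothesis $2q + (k-2)s \leq k$, a short calculation forces $s \leq 1$ and $N = t + s$; in the case $s = 1$ it additionally forces $q = 1$ and $r_1 = 2$.

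In the case $s = 0$ the identity gives $q = 0$, so $\Gamma$ is a $K_k$-chain; this is either a small component (for $t = 1$) or has a bad pair from two non-shared vertices of an endpoint clique (for $t \geq 2$), contradicting stability. In the case $s = 1$ the exploration necessarily has the shape: a $K_k$-chain, followed by an internal edge $e^* = uv$ on two existing vertices, followed by the internal copy $\hat H$ on $\{u, v\} \cup W$ with $|W| = k - 2$ fresh vertices. The condition $N = t + 1$ forces $e^*$ to create no new $K_k$, so $u$ and $v$ cannot already have $k - 2$ common neighbors spanning a $K_{k-2}$ in the chain. For $k \geq 4$ this case collapses quickly: the fresh vertices $w_1, w_2 \in W$ lie only in $\hat H$, which contains both of them, so $\{w_1, w_2\}$ is a bad pair, contradicting stability.

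The remaining case $k = 3$ is where I expect the main technical obstacle. Here $\hat H = \{u, v, w\}$ with $w$ the single new vertex and $u, v$ non-adjacent chain vertices sharing no common neighbor in the chain. The $K_3$-chain always carries two endpoint bad pairs $\{v_1, w_1\}$ and $\{v_{t+1}, w_t\}$ (in the standard chain labeling), and $\hat H$ is the only new triangle, so it must separate both pairs. I plan to argue that $\{u, v\}$ must therefore meet each endpoint pair in exactly one vertex, which, combined with non-adjacency and the no-common-neighbor condition, restricts $\{u, v\}$ to one of the four pairs $\{v_1, v_{t+1}\}$, $\{v_1, w_t\}$, $\{w_1, v_{t+1}\}$, $\{w_1, w_t\}$, all requiring $t \geq 3$ (for $t = 2$ the vertex $v_2$ is a common neighbor of every such pair, forcing a new triangle and hence dangerous chain edges). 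A final check of the triangle-intersection structure then verifies that each of the four remaining configurations realizes $\Gamma$ as a $K_3$-cycle of length $t + 1 \geq 4$, completing the argument.
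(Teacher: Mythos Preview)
Your approach is correct and reaches the same structural endpoint as the paper, but the route to ``$q=s=1$ and the exploration looks like chain--edge--copy'' is genuinely different. The paper argues directly about the \emph{last} exploration step: it cannot be external (Corollary~\ref{cor:noExtrnlCps}), and it cannot be an internal edge (that edge would lie in a $K_k$-copy, which by the no-dangerous-edge hypothesis would force all $\binom{k}{2}$ edges of that clique to be internal edges, giving $q \ge \binom{k}{2} > k/2$). Hence the last step is an internal clique, and since its existing vertices must already span an edge that is not in any other clique, at least one internal edge preceded it, yielding $q \ge 1$, $s \ge 1$, hence $q = s = 1$. Your edge double-count $(N-t-s)\binom{k}{2} + \sum_i \binom{r_i}{2} = q$ gets there differently and in fact yields more for free: $N = t+s$, $r_1 = 2$, and the fact that $e^*$ creates no new clique all drop out of the identity, whereas the paper recovers these facts by separate small arguments. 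Both approaches then finish identically (bad pair among the $k-2$ fresh vertices for $k \ge 4$; endpoint bad-pair analysis forcing a $K_3$-cycle for $k=3$).

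One point in your write-up deserves explicit care: the sentence ``the exploration necessarily has the shape: a $K_k$-chain, followed by an internal edge $e^*$, followed by the internal copy $\hat H$'' is asserted rather than argued. You need to rule out other orderings. This is not hard: if the internal copy precedes $e^*$ (or if any external copy sits between $e^*$ and $\hat H$), then the existing edge of $\hat H$ is a chain edge and hence dangerous; and if externals follow $e^*$, the restriction that $\hat H$ must touch the new vertices of the last external contradicts $\{u,v\}$ already existing before $e^*$. The paper handles this via the ``last step'' framing; in your counting framework you should insert these two lines.
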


\begin{proof}
Assume that $\GG$ contains no dangerous edges and consider
the last addition in an arbitrary exploration of $\GG$. By
Corollary~\ref{cor:noExtrnlCps} it is not the addition of an
external $K_k$-copy. However, it cannot be an addition of an internal
edge $e$ either. Indeed, by stability $e$ must be a part of some
$K_k$-copy $\hat{H} \sbst \GG$.
By the assumption that $\GG$ contains no
dangerous edges, $\hat H$ cannot use any edges from other $K_k$-copies, and therefore all the edges of $\hat{H}$ must have been added to
$\GG$ previously as internal edges. But that would imply that $q \ge \binom k2
> k/2$, a contradiction. Hence, the last step of the
exploration must be an addition of an internal $K_k$-copy $\hat{H}$.
Since $\hat{H}$ contains at least two existing vertices by definition, and since $\GG$ contains no dangerous edges, it follows that at least
one internal edge $uv$ was added to $\GG$ prior to the addition of
$\hat{H}$. We therefore get $q \ge 1$ and $s \ge 1$, which by the restriction $2q + (k-2)s \le k$ leads to $q = s = 1$. That is, every addition during the exploration is of an external copy, except for $uv$ and $\hat H$.

Note that no external $K_k$-copies can be added between the additions of $uv$ and
$\hat{H}$.  Otherwise,  $\hat{H}$ must contain a vertex $w$ from the last external copy, and also use the edge $uv$. However, as $\hat H$ is a clique, the edges $uw$ and $vw$ also need to be added as internal edges, which is impossible since $q=1$.  We can now describe the entire exploration process: it
starts with a $K_k$-chain, then an internal edge $uv$ is added, then
an internal $K_k$-copy $\hat{H}$ is added, containing $u,v$ and $k-2$
new vertices, and then the exploration terminates.

If $k \ge 4$, the internal $K_k$-copy $\hat{H}$ contains $k - 2 \ge 2$
vertices which only belong to $\hat{H}$. Since any two of these new
vertices form a bad pair, $\GG$ is not stable, which leads to the
conclusion that there are no stable graphs with no dangerous edges
in this case.

For $k = 3$, note that just before the addition of $uv$ both the first and last triangles in the triangle chain contain a bad pair. By stability, the set $\{u,v\}$ contains exactly one
vertex from each of these pairs. The internal triangle $wuv$ that is
added in the last step (where $w$ is a new vertex) completes the
creation of a $K_3$-cycle. It is immediate to see that a
$K_3$-cycle contains no dangerous edges if and only if it is of
length at least four.
\end{proof}
%

\subsubsection{\normalfont\bfseries{Results for triangle games}}
For the remainder of the section we focus on the case $k=3$. Here the term $2q + (k-2)s$ translates to $2q + s$. We first observe that this term depends on $\GG$ alone and not on the way we explore it.

\begin{observation}\label{obs:k3invariant}
Let $\GG$ be a connected graph belonging to both $X^{K_3}_{q_1,t_1,s_1}$ and $X^{K_3}_{q_2,t_2,s_2}$. Then $2q_1 + s_1 = 2q_2 + s_2$.
\end{observation}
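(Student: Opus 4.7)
The plan is to show that $2q + s$ can be expressed purely in terms of $v(\GG)$ and $e(\GG)$, making it an invariant of $\GG$ independent of the exploration used.

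First I would compute how the number of vertices and edges changes with each type of step during an exploration of a connected graph with respect to $H = K_3$. Starting from a single vertex (so $v = 1$, $e = 0$), each external $K_3$-copy contributes $k-1 = 2$ new vertices and $\binom{k}{2} = 3$ new edges. Each internal edge contributes $0$ new vertices and exactly $1$ new edge. For each internal $K_3$-copy, the number $r$ of existing vertices it uses must satisfy $2 \le r \le k - 1 = 2$ by definition, so necessarily $r = 2$; hence every internal $K_3$-copy contributes exactly $1$ new vertex and $\binom{3}{2} - \binom{2}{2} = 2$ new edges. So for any exploration realizing $\GG \in X^{K_3}_{q,t,s}$ we obtain the identities
\begin{align*}
v(\GG) &= 1 + 2t + s, \\
e(\GG) &= 3t + q + 2s.
\end{align*}

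Next I would eliminate $t$ from these two equations. From the first identity, $s = v(\GG) - 1 - 2t$, and substituting into the second gives $q = e(\GG) + t - 2v(\GG) + 2$. Combining these yields
\[
2q + s = 2\bigl(e(\GG) + t - 2v(\GG) + 2\bigr) + \bigl(v(\GG) - 1 - 2t\bigr) = 2e(\GG) - 3v(\GG) + 3,
\]
where the variable $t$ cancels out. Since the right-hand side depends only on $\GG$, it follows immediately that if $\GG \in X^{K_3}_{q_1,t_1,s_1} \cap X^{K_3}_{q_2,t_2,s_2}$ then
\[
2q_1 + s_1 = 2e(\GG) - 3v(\GG) + 3 = 2q_2 + s_2,
\]
as desired.

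The argument is essentially a bookkeeping one and I do not anticipate a real obstacle; the only thing worth being careful about is the case analysis for the ``$r$'' parameter of an internal copy, which for $k = 3$ collapses to the single value $r = 2$ and makes the counting above completely deterministic. No analogous clean invariance statement is expected to hold for $k \ge 4$, since then an internal $K_k$-copy can use any number $r \in \{2, \dots, k-1\}$ of existing vertices and contribute different amounts of new vertices and new edges depending on $r$, which is precisely why the observation is stated only for triangles.
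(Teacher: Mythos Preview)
Your proof is correct and follows essentially the same approach as the paper: both derive the identities $v(\GG) = 1 + 2t + s$ and $e(\GG) = q + 3t + 2s$ from the observation that every internal triangle has exactly one new vertex and two new edges, and then eliminate variables to conclude that $2q+s$ depends only on $\GG$. Your presentation is slightly more direct, exhibiting the explicit formula $2q+s = 2e(\GG) - 3v(\GG) + 3$, whereas the paper equates two expressions for $t_1 - t_2$ and rearranges; the underlying argument is the same.
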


\begin{proof}
Since in any exploration of $\GG$ every internal triangle contains exactly one new vertex and two new edges, we get that if $\GG \in \xqts$ then $v(\GG) = 1 + 2t + s$ and $e(\GG) = q + 3t + 2s$. The first equation yields $t_1-t_2 = (s_2-s_1)/2$ and the second one yields $t_1-t_2 = (2s_2-2s_1 + q_2 - q_1)/3$. Comparing the right hand side of the last two equations, and rearranging, we get the desired result.
\end{proof}

As already mentioned, greedy explorations (whenever possible) significantly simplify the case analysis in our proofs. When $H$ is a triangle, the existence of a dangerous edge in the explored component guarantees the existence of greedy explorations, since we can always start the exploration with two triangles containing this edge (if $H$ is not a clique we might have to add internal edges first). Since we typically only investigate stable components, it is natural to define the following.

\begin{definition}\label{def:zqts}
For any three integers $q,t,s$, let $\zqts$ be the family of all $(K_3,1)$-stable graphs in $X^{K_3}_{q,t,s}$ obtained via a greedy exploration.
\end{definition}

We often perform case analysis on the possible connected graphs under some restriction on the value of $2q + s$. Observation~\ref{obs:k3invariant} and Definition~\ref{def:zqts} lead to the following, extremely useful, corollary.


\begin{corollary}\label{cor:exploreK3}
Let $\GG \in \xqts$ be a $(K_3,1)$-stable graph containing at least one dangerous edge. Then $\GG \in Z_{q',t's'}$ for some integers $q',t',s'$ satisfying $2q' + s' = 2q + s$.
\end{corollary}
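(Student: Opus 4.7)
The plan is to construct an explicit greedy exploration of $\GG$ starting from the dangerous edge, and then invoke Observation~\ref{obs:k3invariant} to conclude that $2q' + s'$ equals $2q + s$. Since $\GG$ has a dangerous edge $e=ab$, there are two distinct triangles $T_1 = abu$ and $T_2 = abw$ in $\GG$ containing $e$. I would set $\GG_0 = \{a\}$ and take step 1 to be the addition of the external $K_3$-copy $T_1$, contributing the new vertices $b$ and $u$. Step 2 is then the addition of $T_2$: its intersection with $V(\GG_1)$ is $\{a,b\}$ (so $|V(T_2)\cap V(\GG_1)| = 2$), the only edge of $T_2$ inside this intersection is $ab \in E(\GG_1)$, so $T_2$ qualifies as an internal copy. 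The restriction on the step following an external copy is met because $T_2$ contains $b$, one of the new vertices of $T_1$. Since step 2 adds an internal copy containing the pre-existing edge $ab$, the exploration is greedy in the sense of Definition~\ref{def:greedy}.

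Next I would extend this two-step partial exploration into a full exploration of $\GG$. The paper already establishes (in the paragraph preceding Corollary~\ref{cor:noExtrnlCps}) that the exploration process can always be continued as long as the current subgraph is a proper subset of $\GG$: using Claim~\ref{cl:externalCollapse} and the stability of $\GG$ (which guarantees no bad vertices or edges), one can always add either an internal edge, an external $H$-copy, or an internal $H$-copy while respecting the restriction after external-copy steps. Applying this iteratively yields a complete exploration. Say it performs $q'$ internal-edge additions, $t'$ external-copy additions, and $s'$ internal-copy additions in total (including the two initial steps). By construction $\GG \in X^{K_3}_{q',t',s'}$ and the exploration is greedy, so $\GG \in Z_{q',t',s'}$.

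Finally, since $\GG$ belongs to both $X^{K_3}_{q,t,s}$ and $X^{K_3}_{q',t',s'}$, Observation~\ref{obs:k3invariant} gives $2q' + s' = 2q + s$, which is the required identity. There is no real obstacle here: the only substantive content beyond the previously proved invariant is checking that steps 1 and 2 obey all the rules of the exploration process, which is immediate from the existence of the dangerous edge and the fact that $T_2$ touches a vertex (namely $b$) added in step 1.
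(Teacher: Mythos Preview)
Your proof is correct and follows essentially the same route as the paper, which presents the corollary as an immediate consequence of Observation~\ref{obs:k3invariant} and Definition~\ref{def:zqts} together with the remark (made just before the definition) that for triangles a dangerous edge allows one to start the exploration with two triangles sharing it. You have simply written out the verification of the exploration rules for the first two steps in more detail than the paper does.
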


Following Claim~\ref{cl:noNegativeExponent}, in Sections~\ref{sec:clique} and~\ref{sec:others} we investigate stable components $\GG \in \xqts$ for which $2q+s \le 3$, and we use the next two claims to narrow down the case analysis there. The first claim deals with greedy explorations when $2q+s = 3$, and the second deals with all explorations for which $2q+s < 3$.

\begin{claim}\label{cl:11is30}
$Z_{1,t+1,1} \sbst \z 03$ for every integer $t \ge 0$.
\end{claim}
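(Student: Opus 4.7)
The plan is to take an arbitrary $\Gamma\in Z_{1,t+1,1}$ with an associated greedy exploration $E_{\mathrm{old}}$ of type $(1,t+1,1)$ and reorganise it into a greedy exploration $E_{\mathrm{new}}$ of type $(0,t,3)$. Denote by $e_0=uv$ the internal edge of $E_{\mathrm{old}}$; by the stability of $\Gamma$, the edge $e_0$ lies in some triangle $T^{*}=uvw$ of $\Gamma$, and the first task is to pin down where $w$ sits in $E_{\mathrm{old}}$ and what useful structure this forces.

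First I would show that $w$ is already existing at the moment $e_0$ is added in $E_{\mathrm{old}}$. If $w$ were added later, then after its introduction both edges $uw$ and $vw$ would have to appear between already-existing vertices; this is impossible, because the only remaining internal edge is $e_0$ itself, and the unique internal copy $T_2$ of $E_{\mathrm{old}}$ (added at step~$2$, before $e_0$) cannot play the role of $T^{*}=uvw$ --- this would require $uv\in E(\Gamma')$ already at step~$2$, whereas $uv=e_0$ is introduced strictly later. Consequently, both $uw$ and $vw$ were contributed in $E_{\mathrm{old}}$ by two earlier copies $T^{(uw)}=uwz_1$ and $T^{(vw)}=vwz_2$, both distinct from $T^{*}$ (since $e_0$ is not an edge of any copy of $E_{\mathrm{old}}$). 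A short case analysis then rules out $z_1=z_2$: were they equal, $\Gamma$ would contain a $K_4$ on $\{u,v,w,z_1\}$, and inspecting all possible assignments of $T^{(uw)},T^{(vw)},T_2$ to the triangles of this $K_4$ would force one of its edges to enter $E_{\mathrm{old}}$ as a second internal edge, contradicting $q=1$.

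Having established this structure, I would build $E_{\mathrm{new}}$ by starting at $w$, taking $T^{*}$ as the first external copy (anchor $w$, new vertices $u,v$), taking $T^{(uw)}$ as the greedy second step (internal copy along the now-existing edge $uw$, introducing $z_1$), and then adding $T^{(vw)}$ as a further internal copy along $vw$ (introducing $z_2$). The remaining $t$ copies of $E_{\mathrm{old}}$ (those not in $\{T^{(uw)},T^{(vw)}\}$) are then reinserted in an order compatible with the vertex dependencies; a direct vertex/edge count forces exactly one of them to enter as an internal copy rather than external. An easy case check (ruling out the possibility $\{T^{(uw)},T^{(vw)}\}=\{T_1,T_2\}$, which forces $T^{*}=T_2$ and therefore a contradiction with $q=1$) shows that at least one of $T_1,T_2$ lies among these leftover copies, so it can be inserted as the third internal copy using the diamond edge it shares with its partner in $\{T_1,T_2\}$.

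The main obstacle is justifying this reinsertion uniformly: verifying, in each position of $T^{(uw)}$ and $T^{(vw)}$ within $E_{\mathrm{old}}$, that the required shared edges are present at the moment each leftover copy is added in $E_{\mathrm{new}}$. This reduces to a short case analysis on whether $T^{(uw)},T^{(vw)}$ coincide with $T_1,T_2$ or are among the later external copies of $E_{\mathrm{old}}$, and uses no ingredient beyond the dangerousness of $uw$ and $vw$ already established.
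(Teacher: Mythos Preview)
Your approach is workable in principle but it is considerably more elaborate than the paper's, and the proposal as written has a genuine gap.

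The paper's argument is a two-line local surgery. It first observes that in any greedy $(1,t+1,1)$ exploration the internal copy sits at step~2 and the internal edge $e_0$ must therefore be the \emph{last} step (nothing else can terminate the process). Stability of $\Gamma$ then pins $e_0$ down completely: the last external triangle $abc$ (with $a$ existing, $b,c$ new) leaves $\{b,c\}$ as a bad pair, so $e_0$ must create a triangle hitting exactly one of $b,c$; since $b,c$ have only $a$ and each other as neighbours, this forces $e_0=bw$ for some neighbour $w$ of $a$, and the new triangle is $abw$. Now one simply swaps the last two steps: instead of ``external $abc$ then edge $bw$'', do ``internal $abw$ then internal $abc$''. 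The first $t+1$ steps are untouched, the exploration is still greedy, and one reads off $\Gamma\in Z_{0,t,3}$ directly.

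You never establish this localisation of $e_0$, and this is why you are forced into a global rebuild of the exploration starting from $w$ and then ``reinserting'' the remaining $t$ copies --- a step you correctly flag as the main obstacle but then leave to an unspecified case analysis. More concretely, your argument for excluding $\{T^{(uw)},T^{(vw)}\}=\{T_1,T_2\}$ is wrong: that configuration does \emph{not} force $T^*=T_2$. If $T^{(uw)}=T_1=xyz_1$ and $T^{(vw)}=T_2=xyz_2$ with $w\in\{x,y\}$, one finds (after discarding cases where $e_0$ would duplicate an existing edge) that $e_0=z_1z_2$ and $T^*=wz_1z_2$, which is not $T_2$. The case is genuinely impossible, but for a reason you do not give: with $e_0=z_1z_2$ the new vertices $b,c$ of the last external triangle remain a bad pair, contradicting stability. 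This is exactly the paper's localisation argument, and once you make it you no longer need the global reinsertion at all.
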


\begin{proof}
Let $\GG \in Z_{1,t+1,1}$. By definition, we start its exploration with two triangles sharing an edge. By the fact that $q = s = 1$ and by Corollary~\ref{cor:noExtrnlCps}, the remainder of the exploration must be the addition of a $K_3$-chain of length $t$, followed by the addition of an internal edge as the final step. Clearly $t > 0$ as otherwise $\GG = K_4$, which is a small component and thus not stable. Let $abc$ be the last external triangle added in the exploration, where $a$ is its existing vertex. Since $\{b,c\}$ is a bad pair when $abc$ is added, the addition of the internal edge must create a new triangle containing exactly one of them. WLOG, this triangle must be $abw$ (the internal edge added is $bw$), where $w \neq c$ is some neighbor of $a$.

Therefore, we can explore $\GG$ in a slightly different way: we simply change the last two steps of the exploration. Instead of adding the external triangle $abc$ and then the edge $bw$, we first add the internal triangle $abw$ (in this triangle $b$ is the new vertex) and then the internal triangle $abc$ (here $c$ is the new vertex). Note that these two triangles are indeed internal with respect to this process and that the beginning of the exploration remains the same, and thus $\GG \in \z 03$, as we had to show.
\end{proof}


\begin{claim}\label{cl:2qsle2}
Let $q,t,s$ be integers satisfying $2q + s \le 2$ and let $\GG \in
\xqts$ be a $(K_3,1)$-stable graph. Then $\GG$ is either a {\ttt} or a $DD_t$.
\end{claim}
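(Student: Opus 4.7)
The plan is to reduce to a greedy exploration and then perform a short case analysis. By Lemma~\ref{lem:noDang}, any $(K_3,1)$-stable graph $\GG$ either contains a dangerous edge or is a $K_3$-cycle of length at least four. A $K_3$-cycle of length $\ell$ has $2\ell$ vertices and $3\ell$ edges, and the identities $v(\GG)=1+2t+s$, $e(\GG)=q+3t+2s$ (valid in any exploration) give $2q+s=3$ in that case, contradicting the hypothesis $2q+s\le 2$. Hence $\GG$ contains a dangerous edge, and Corollary~\ref{cor:exploreK3} supplies an exploration $\GG\in Z_{q',t',s'}$ with $2q'+s'=2q+s\le 2$. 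Since a greedy exploration has an internal copy at its second step, $s'\ge 1$, leaving $(q',s')\in\{(0,1),(0,2)\}$.

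For $(q',s')=(0,1)$, the first two steps of the greedy exploration form a diamond, and with $q'=0$, $s'=1$ no further internal step is available, while Corollary~\ref{cor:noExtrnlCps} forbids ending on an external one. Thus $t'=1$ and $\GG$ is a diamond on four vertices, which with $b=1$ is a small component of size $(b+1)(v(K_3)-1)=4$ and hence is deleted by the algorithm, contradicting stability. For $(q',s')=(0,2)$ the exploration opens with $T_1$ external and $T_2$ internal forming a first diamond $D_1$, and is completed by $t'-1$ external triangles and one further internal triangle, with the last step necessarily internal. For $t'=1$ a short enumeration of the possible third (internal) triangle, organized by which edge of $D_1$ it shares, yields $\ttt$ as the only stable outcome; the ``book'' alternative in which the three triangles share a common edge $ab$ leaves $\{a,b\}$ as a bad pair. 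For $t'\ge 2$ I plan to track bad pairs: each external $T_i$ with $i\ge 3$ contributes the pair of its two new vertices, which is bad in $\GG_{t'+1}$ unless some later triangle meets it in exactly one vertex. Since $q'=0$ leaves no edges between independently-attached branches of the construction, the external triangles $T_3,\ldots,T_{t'+1}$ must form a single $K_3$-chain rooted at $D_1$, and the terminal internal triangle must close a second diamond $D_2$ at the free edge of $T_{t'+1}$ --- precisely the structure of $DD_{t'}$.

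I expect the main obstacle to be the bad-pair bookkeeping that rules out both branching and premature closure in the $t'\ge 2$ sub-case. Concretely, if some $T_i$ ($i\ge 3$) attaches at a vertex not belonging to $T_{i-1}$, or if the final internal triangle has its two existing vertices outside $V(T_{t'+1})$, then $\GG_{t'+1}$ carries two bad pairs lying in edge-disjoint portions of the graph; but the two existing vertices of the single remaining internal triangle must span an existing edge, and with $q'=0$ no such edge crosses disjoint branches, so at least one of those bad pairs survives, contradicting stability. Once branching and premature closure are excluded, the resulting chain-with-two-closed-diamonds matches the definition of $DD_{t'}$ on the nose.
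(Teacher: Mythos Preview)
Your approach matches the paper's: rule out the $K_3$-cycle via the invariant $2q+s$, pass to a greedy exploration by Corollary~\ref{cor:exploreK3}, dispose of $(q',s')=(0,1)$ as a small component, and analyze $(q',s')=(0,2)$ by cases on $t'$. The $t'=1$ case is handled correctly.

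The $t'\ge 2$ case has two genuine gaps, and your stated ``main obstacle'' is aimed at the wrong targets. First, the exploration rules already force consecutive external copies to form a single chain (after an external copy, the next copy must meet its new vertices), so there is no branching to rule out; likewise the final internal triangle \emph{must} have both its existing vertices inside $V(T_{t'+1})$, since with $q'=0$ those two vertices span an existing edge and one of them is a new vertex of $T_{t'+1}$ (whose only neighbours lie in $T_{t'+1}$). So your two anticipated obstructions are vacuous. Second, the two points you actually need are missing: (i) the chain must attach to $D_1$ at one of its degree-$3$ vertices ($x$ or $y$), since otherwise $\{x,y\}$ remains a bad pair in $\GG$ --- every triangle through $x$ or $y$ is still one of $xyz_1,xyz_2$, and the final internal triangle, lying in $V(T_{t'+1})$, cannot touch $\{x,y\}$; and (ii) writing the last external triangle as $abc$ with $a$ existing, the final internal triangle must use the edge $ab$ or $ac$, not $bc$, since attaching on $bc$ leaves $\{b,c\}$ bad (both $abc$ and $bcw$ contain both). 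Only with (i) and (ii) do you land on $DD_{t'}$; the paper's proof makes exactly these two checks.
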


\begin{proof}
All the arguments in this proof are done with respect to graph
isomorphism. First observe that if $\GG$ does not contain dangerous
edges then it is a $K_3$-cycle by Lemma~\ref{lem:noDang}. However, the
only way to explore a $K_3$-cycle is to start with a $K_3$-chain,
then add an internal edge $e$ between the first and last triangles
in the chain, and terminate with an internal triangle containing
$e$. That would imply $q=s=1$, in contradiction to the
assumption.

It follows that $\GG$ must contain a dangerous edge, thus by Corollary~\ref{cor:exploreK3} we can explore $\GG$ greedily, that is, the explored subgraph after two steps $\GG_2$ consists of two triangles sharing an edge, $xyz_1$ and $xyz_2$. We have $s \ge 1$ by the fact that the triangle $xyz_2$ is internal and hence by the assumption we get $q = 0$ and $s \le 2$. By Corollary~\ref{cor:noExtrnlCps}, the remainder of the exploration must be the addition of a $K_3$-chain of length $t - 1$, followed by an addition of an internal triangle $T$ in the last step. We distinguish between two cases, keeping in mind that $\{x,y\}$ is a bad pair in $\GG_2$.

If $t = 1$, that is $\GG = \GG_2 \cup T$, then $T$ must contain an existing edge other than $xy$, thus $T = wxz_1$ (where $w$ is a new vertex), and $\GG$ is a $\ttt$.

If $t > 1$, denote the vertices of the last external triangle added by $a,b,c$, where $a$ is the existing vertex. Since $\{b,c\}$ is a bad pair when added, $T$ must contain an existing edge incident to exactly one of them by stability, and so $T = abw$, where $w$ is a new vertex. Furthermore, the $K_3$-chain must intersect $V(\GG_2)$ at $x$, otherwise $\{x,y\}$ remains a bad pair in $\GG$. Thus $\GG$ is a $DD_t$.
\end{proof}

We conclude this section with the following claim, which is fundamental for the analysis of unbiased triangle games involving random graph processes.

\begin{claim}\label{cl:ConComp}
Let $\F$ be the family of all graphs on less than 25 vertices with
density larger than $10/7$, and let $G$ be an $\F$-free, $K_3$-stable
graph. Then every connected component of $G$ is either a {\ttt} or a
$DD$.
\end{claim}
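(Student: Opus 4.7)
The plan is to analyze each connected component $\GG$ of $G$ separately, using that $\GG$ inherits $K_3$-stability from $G$ and, as a subgraph of $G$, must be $\F$-free. Any exploration realizes $\GG \in X^{K_3}_{q,t,s}$ for some $(q,t,s)$, and Observation~\ref{obs:k3invariant} ensures that $2q+s$ depends only on $\GG$. The argument splits on whether $2q+s \leq 2$ or $2q+s \geq 3$.

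In the case $2q+s \leq 2$, Claim~\ref{cl:2qsle2} forces $\GG \in \{\ttt\} \cup \{DD_t : t \geq 2\}$. Both $\ttt$, with density $7/5$, and $DD = DD_2$, with density exactly $10/7$, are $\F$-free and thus permitted. For $t \geq 3$ the graph $DD_t$ has $v = 2t+3$ and $e = 3t+4$, so its density $(3t+4)/(2t+3)$ strictly exceeds $10/7$. When $t \leq 10$ this gives $DD_t \in \F$ directly, since $v(DD_t) \leq 23 < 25$; when $t \geq 11$ the length-$t$ chain of triangles inside $DD_t$ contains a $K_3$-chain of length $11$, which has $23$ vertices, $33$ edges, density $33/23 > 10/7$, and hence lies in $\F$. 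Either subcase contradicts $\F$-freeness, leaving only $\ttt$ and $DD$.

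For the case $2q+s \geq 3$, I would introduce the potential $f(\GG_i) := 7e(\GG_i) - 10v(\GG_i)$, which satisfies $f(\GG_i) \geq 1$ precisely when $e(\GG_i)/v(\GG_i) > 10/7$. We have $f(\GG_0) = -10$, and each exploration step changes $(v,e)$ by $(0,1)$, $(1,2)$, or $(2,3)$ for an internal edge, internal triangle, or external triangle respectively, so $f$ increases by $+7$, $+4$, or $+1$, always by at least one. A direct computation gives $f(\GG_N) = 7q + t + 4s - 10$, and the constraints $t \geq 1$, $q,s \geq 0$ together with $2q+s \geq 3$ force $7q + 4s \geq 11$ (the minimum $11$ being attained at $(q,s)=(1,1)$), yielding $f(\GG_N) \geq 2$. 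Let $i$ be the first step with $f(\GG_i) \geq 1$; since $f$ grows by at least $1$ per step from $f(\GG_0) = -10$, one has $i \leq 11$, and since each step adds at most $2$ to $v$, also $v(\GG_i) \leq 1 + 2i \leq 23 < 25$. Thus $\GG_i$ is a subgraph of $\GG$ with density exceeding $10/7$ on fewer than $25$ vertices, so $\GG_i \in \F$, contradicting the $\F$-freeness of $G$.

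The main obstacle is calibrating the potential argument so that it applies uniformly to every stable component, including $K_3$-cycles, which have $2q+s=3$ but no dangerous edges and therefore no greedy exploration. Phrasing the calculation for an \emph{arbitrary} exploration, rather than invoking Corollary~\ref{cor:exploreK3} or Lemma~\ref{lem:noDang}, is what achieves this; the bound $i \leq 11$ on the first crossover of $f$ above $0$ is exactly what aligns with the threshold $25$ in the definition of $\F$, and the inequality $7q + 4s \geq 11$ is precisely what forces $f$ to become positive in time.
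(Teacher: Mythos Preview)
Your proof is correct and follows essentially the same approach as the paper: both use the potential $7e - 10v$ (equivalently, the inequality $7q + t + 4s \le 10$) to bound the number of exploration steps by eleven, and then invoke Claim~\ref{cl:2qsle2}. The paper's version is slightly more streamlined in that it establishes $7q + t + 4s \le 10$ once for every component, which simultaneously forces $2q + s \le 2$ and, with $s=2$, rules out $DD_t$ for $t \ge 3$; you instead split on $2q+s$ first and eliminate large $DD_t$ by a separate (but equally valid) density check on $DD_t$ and on the length-$11$ $K_3$-chain.
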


\begin{proof}
Let $\GG$ be a connected component of $G$ and let $q,t,s$ such that $\GG \in \xqts$. Observe that $v(\GG) = 1 + 2t + s$ and $e(\GG) = q + 3t + 2s$, and that $d(\GG) = e(\GG)/v(\GG) \le 10/7$ if and only if $7q + t + 4s
\le 10$. It follows that when exploring $\GG$ there can be at most
ten addition steps as after eleven steps we get a component
$\GG_{11} \sbst \GG$ on at most 23 vertices and of density larger
than $10/7$ in contradiction to the assumption. It means that
\textbf{every} connected component of $G$ has at most 21 vertices
and therefore density at most $10/7$. Thus, if $\GG \in \xqts$ is a
component of $G$ then $7q + t + 4s \le 10$ must hold, and in
particular $2q + s \le 2$ must hold as well (since either $q = 1$
and $s = 0$ or $q = 0$ and $s \le 2$).

We can therefore apply Claim~\ref{cl:2qsle2} and deduce that every
component in $G$ is either a ${\ttt}$ or a $DD_t$. In the latter case we
have $s = 2$ and $t \ge 2$, and by the restriction $7q + t + 4s \le
10$ the $DD_t$ must be of length 2, i.e., a $DD$.
\end{proof}

\section{Maker-Breaker clique games}\label{sec:clique}
In this section we deal with the various cases of Maker-Breaker $(1:b)$ $H$-games involving cliques. We first prove the $0$-statement of Theorem~\ref{thm:main} for $H' = K_k$ (recall that $H'$ is a subgraph of $H$ of maximal 1-density). We divide the proof into the two cases $k \ge 4,~b \ge 1$ (large cliques) and $k = 3,~b \ge 2$ (biased triangle games). We then prove Theorem~\ref{thm:triangle11} which covers the remaining case $k=3,~b=1$. Finally, we focus on the constants $c,C$ appearing in Theorem~\ref{thm:main} for the case $H'=K_k$.

\subsection{Cliques of size $k\geq 4$}\label{sec:largeCliques}
Here $H' = K_k$ for some fixed $k \ge 4$. If Breaker prevents Maker from claiming a $K_k$-copy then Maker cannot occupy any $H$-copy, hence it is enough to prove Breaker's side in the $K_k$-game. By bias monotonicity, it is enough to prove Breaker's
side for $b=1$. Now let $0 < c < 1$ be a constant and let $G\sim G(n,cn^{-2/k})$. By Lemma~\ref{lem:BwinG'} it suffices to show that \whp Breaker can win the game played on the vertex set of $G^*$, the $(K_k,1)$-core of $G$. Clearly, is suffices to show that \whp $G^*$ is empty. Recall that Claim~\ref{cl:noNegativeExponent} states that \whp $G$ (and therefore $G^*$) contains only members of families $\xqts$ for which $2q + (k-2)s \le k$. We are therefore done by the following lemma.

\begin{lemma}\label{lem:GisB'swinKk}
Let $k \ge 4$ and let $\GG$ be a non-empty, $(K_k,1)$-stable, connected graph. Then there exist integers $q,t,s$ such that $\GG \in \xqts$ and $2q + (k-2)s > k$.
\end{lemma}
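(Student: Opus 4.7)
The plan is to argue by contrapositive: assume $\GG$ is $(K_k,1)$-stable and connected and that every exploration $\GG\in \xqts$ satisfies $2q+(k-2)s\le k$; I will show that $\GG$ must be empty. The first step is to apply Lemma~\ref{lem:noDang} to an arbitrary exploration. Since $k\ge 4$ excludes the $K_3$-cycle alternative, stability forces the existence of a dangerous edge in $\GG$. We may therefore adopt a greedy exploration, whose first two steps add two $K_k$-copies $H_1,H_2$ sharing an edge $uv$, contributing $(t,s,q)=(1,1,0)$ after two steps and introducing $k-2\ge 2$ new vertices of $H_2$ that lie initially only in $H_2$.

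Next, I would enumerate the feasible pairs $(q,s)$. The inequality $2q+(k-2)s\le k$ rules out $s\ge 3$ immediately, since $3(k-2)>k$ for $k\ge 4$, and among the remaining possibilities forces $s=2\Rightarrow q=0$ and $k=4$; or $s=1\Rightarrow q\le 1$; or $s=0\Rightarrow q\le k/2$. In each case I would invoke the basic stability consequence that every vertex of $\GG$ lies in at least two $K_k$-copies (otherwise it would form a bad pair with any other vertex of its unique copy). For $s=0$, $\GG$ is a $K_k$-chain decorated by at most $k/2$ internal edges, and one verifies that non-connecting chain vertices outside the support of those edges belong to only one $K_k$-copy. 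For $s=1$ with the internal copy $\hat H$ having arity $r\in\{2,\dots,k-1\}$: if $r\le k-2$ then the $k-r\ge 2$ new vertices of $\hat H$ form an immediate bad pair, while if $r=k-1$ only one new vertex is added and, under $q\le 1$, the rest of $\GG$ is still essentially a $K_k$-chain exhibiting the same obstruction. The case $(q,s,k)=(0,2,4)$ reduces, after forcing both internal copies to have arity $3$ (else an immediate bad pair as in the previous case), to the same chain obstruction.

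The main obstacle will be making the ``essentially a chain'' arguments precise when $q=1$, since a single internal edge can create extra $K_k$-copies as by-products through shared neighbourhoods. The key is to localise these by-product copies: their vertex sets must lie inside one of the previously added $K_k$-copies enlarged by one endpoint of the internal edge, so they cannot rescue non-connecting chain vertices sitting far from that edge. Combining this localisation with Corollary~\ref{cor:noExtrnlCps}, which guarantees that the exploration does not terminate in an external addition and hence forces enough external $K_k$-copies for such far-away vertices to exist, one obtains a bad pair in every case, contradicting stability. This contradiction shows that some exploration of $\GG$ must satisfy $2q+(k-2)s>k$, as required.
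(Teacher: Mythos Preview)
Your overall architecture---contrapositive, invoke Lemma~\ref{lem:noDang} to force a dangerous edge (since $k\ge 4$ rules out the $K_3$-cycle alternative), start a greedy exploration with $H_1,H_2$ sharing an edge, then case-split on the few admissible $(q,s)$---is exactly the paper's approach. However, two concrete errors need fixing before the plan can be carried out.

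First, your ``basic stability consequence'' is false as stated. A vertex $v$ lying in a unique $K_k$-copy $H$ does \emph{not} form a bad pair with an arbitrary $w\in V(H)$: if $w$ also lies in some other copy $H'$, then $H'$ contains exactly one of $\{v,w\}$, so the pair is not bad. What stability actually gives you is that \emph{two} vertices both confined to the same single copy form a bad pair. This is precisely the lever the paper pulls, and your sketched case analysis implicitly relies on the wrong version in several places (e.g.\ ``non-connecting chain vertices outside the support of those edges belong to only one $K_k$-copy'' is not by itself a contradiction).

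Second, your case list is off. After the greedy start you already have $s\ge 1$, so the $s=0$ branch is vacuous; and you omit the easy but necessary observation that $(q,s)=(0,1)$ is impossible because then $\GG=\GG_2$, which has at most $2k-2=(b+1)(v(H)-1)$ vertices and is therefore a small component. This leaves exactly the paper's two cases: $q=s=1$, or $q=0,\,s=2,\,k=4$. In each, the exploration after $\GG_2$ is forced to be a (possibly empty) $K_k$-chain followed by a single terminal step (edge or internal copy). The paper then locates the bad pair among the $k-1$ new vertices of the \emph{last external} copy: the terminal step can touch at most one of them (an edge has two endpoints; for $k=4$ one shows the terminal internal copy must have three existing vertices), so at least $k-2\ge 2$ remain with degree $k-1$ and hence lie in a unique copy, yielding the contradiction. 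Your ``localisation of by-product copies from a single internal edge'' can be made to work, but it is doing more than necessary; the paper's argument avoids it entirely by focusing on the final external copy rather than on $H_2$.
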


\begin{proof}
Assume for contradiction that in every exploration of $\GG$ we have $2q + (k-2)s \leq k$.
Lemma~\ref{lem:noDang} implies that $\GG$ contains a dangerous edge and therefore we can explore $\GG$ greedily. Let $H_1, H_2$ be two $K_k$-copies in $\GG$ sharing at least one edge and let $v\in V(H_1)$.
We start the exploration of $\GG$ with $\GG_0 = \{v\}$, $\GG_1 = H_1$, and $\GG_2 = H_1\cup H_2$. Since we add $H_2$ as an internal copy we have $s\geq 1$. Note that $v(\GG_2) \le 2k - 2$, i.e. $\GG_2$ is a small component, and so $\GG_2 \neq \GG$. By Corollary~\ref{cor:noExtrnlCps} the exploration of $\GG$ cannot end with the addition of an external copy. Hence, by the assumption $2q + (k-2)s \leq k$, and since $q=0$ and $s=1$ is not possible, we only have to consider the following two cases.
\begin{enumerate}[1.]
\item $q = s = 1$.
\item $q=0$, $s=2$ and $k=4$.
\end{enumerate}
That is, after the addition of $H_2$ we must continue with the addition of a $K_k$-chain of length $t-1$ (recall that $H_1$ was an external copy), followed by the addition of either an internal edge $e$ or an internal copy $H_3$ as the last step of the exploration, where the latter case is only possible if $k=4$. We analyze each case separately, showing that none of them is possible, thus completing the proof.

\case{1}{q=s=1}
In this case we have $t > 1$ as otherwise $V(\GG) = V(\GG_2)$ and $\GG$ is a small component. Let $\hat H$ be the last external $K_k$-copy added in the exploration. Since it is a clique, the edge $e$ (which completes the exploration of $\GG$) contains at most one of its vertices. It follows that there is a set of $k-2\geq 2$ vertices of $\hat H$ (that is, all of the vertices of $\hat H$ but at most two) not contained in any other $K_k$-copy in $\GG$, as they each have degree $k-1$. Each pair in this set forms a bad pair, in contradiction to the stability of $\GG$.

\case{2}{q=0,~s=2,~k = 4}
Here we have $H = K_4$. Observe first that whenever an $H$-copy $H'$ is added during the exploration, no new $H$-copies appear in the explored component other than $H'$. Indeed, every new vertex of $H'$ has degree 3 and so it belongs only to $H'$, and all new edges are incident to new vertices, so none of them can be a part of any other $H$-copy as well. It follows that $H_3$, the $K_4$-copy added in the last step of the exploration, contains exactly one new vertex, as two new vertices would form a bad pair. Furthermore, the existing three vertices must form a triangle prior to the addition of $H_3$.

Now, assume that $t > 1$ and let $\hat H$ be the external $H$-copy added just before $H_3$. Since $H_3$ must contain a new vertex of $\hat H$, and two of its neighbors, $H_3$ in fact contains (at least) two of the new vertices of $\hat H$. These two vertices form a bad pair in $\GG$ as they both belong to the same set of $H$-copies (namely, $\hat H$ and $H_3$), in contradiction the stability of $\GG$.

We thus have $t = 1$, and $\Gamma = \Gamma_3 = H_1\cup H_2\cup H_3$.
Recall that $v(\GG_2) \le 6$. Since $H_3$ adds one new vertex to $\GG$, and since $v(\GG) > 6$ or otherwise it would be a small component, we conclude that $v(\GG_2) = 6$, and we can write
$V(H_1) = \{v_1, v_2, v_3, v_4\}$ and $V(H_2) = \{v_3, v_4, v_5, v_6\}$.
Note that in $\GG_2$ there is no edge between the pairs $\{v_1, v_2\}$ and $\{v_5, v_6\}$, so $H_3$ contains vertices from only one of these pairs. However, both pairs are bad pairs in $\GG_2$, so at least one of them remains a bad pair in $\GG$, which once again contradicts its stability.
\end{proof}

\subsection{Biased Maker-Breaker triangle games}\label{sec:(1:2)triangle}
Here we have $H' = K_3$ and $b \ge 2$. Similarly to the proof in Section~\ref{sec:largeCliques}, it suffices to prove that \whp Breaker wins in the $(1:2)$ triangle-game played on the vertex set of $G\sim G(n,cn^{-2/3})$ where $0 < c < 1$ is an arbitrary constant. Once again, by Lemma~\ref{lem:BwinG'} it suffices to show that \whp $G^*$, the $(K_3,2)$-core of $G$, is empty. The proof is complete by Claim~\ref{cl:noNegativeExponent} and the following lemma.

\begin{lemma}\label{lem:GisB'swinK3}
Let $\GG \in \xqts$ be a $(K_3,2)$-stable graph. Then $2q + s > 3$.
\end{lemma}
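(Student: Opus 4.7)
The plan is a proof by contradiction: assume $\Gamma\in X_{q,t,s}$ is $(K_3,2)$-stable with $2q+s\le 3$. Since $(K_3,2)$-stability implies $(K_3,1)$-stability by Remark~\ref{rmk:1stable}, I may invoke Claim~\ref{cl:2qsle2}, Claim~\ref{cl:11is30}, and Corollary~\ref{cor:exploreK3}.

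For the subcase $2q+s\le 2$, Claim~\ref{cl:2qsle2} forces $\Gamma$ to be either $TT$ or some $DD_t$. Since $v(TT)=5\le 6=(b+1)(v(K_3)-1)$, $TT$ is a small component for $b=2$, a contradiction. For $DD_t$, I would exhibit the bad triple $U=\{a_1,b_1,y\}$: with the labeling of $DD_t$, the two triangles $a_1b_1c_1$ and $a_1c_1y$ of the first diamond each meet $U$ in exactly two vertices, while every other triangle lies in the chain $c_1b_2c_2,\dots,c_{t-1}c_tx$ and is disjoint from $U$; hence no triangle meets $U$ in exactly one vertex, contradicting $(K_3,2)$-stability.

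For $2q+s=3$ I split on whether $\Gamma$ contains a dangerous edge. If not, Lemma~\ref{lem:noDang} forces $\Gamma$ to be a $K_3$-cycle of length at least $4$ (length $3$ having only $6$ vertices and hence being small). Using the cyclic labeling from Definition~\ref{def:K3cyclePairing}, the triple $\{a_i,b_{i-1},b_i\}$ is bad: only the two triangles $T_{i-1}$ and $T_i$ incident to $a_i$ meet it, each in exactly two vertices, and every other triangle of the $K_3$-cycle is disjoint from it. If $\Gamma$ does contain a dangerous edge, Corollary~\ref{cor:exploreK3} places $\Gamma\in Z_{q',t',s'}$ with $2q'+s'=3$, so $(q',s')\in\{(0,3),(1,1)\}$; Claim~\ref{cl:11is30} collapses the $(1,1)$ case into the $(0,3)$ case, so I may assume $\Gamma\in Z_{0,t,3}$.

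The heart of the proof is then the analysis of $Z_{0,t,3}$. Here the greedy exploration starts with a diamond $\Gamma_2$ on $\{x,y,z_1,z_2\}$ with triangles $xyz_1$ and $xyz_2$, and then adds $t-1$ external and $2$ further internal copies without ever adding an internal edge, so no new edges appear inside $V(\Gamma_2)$. Each of the four $3$-subsets $\{x,y,z_1\}$, $\{x,y,z_2\}$, $\{x,z_1,z_2\}$, $\{y,z_1,z_2\}$ of $V(\Gamma_2)$ is already a bad triple inside $\Gamma_2$, and the task is to show that at least one of them remains bad in $\Gamma$. A triangle in $\Gamma\setminus\Gamma_2$ can disrupt such a candidate only by meeting $V(\Gamma_2)$ in one vertex (the anchor of an external copy) or in two adjacent vertices (an edge of $\Gamma_2$ reused by an internal copy, or an accidental triangle built from later-added edges). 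The case $t=1$ gives $v(\Gamma)=2t+4=6$, a small component; for $t\ge 2$, a case analysis on the possible ``landing patterns'' in $V(\Gamma_2)$ of the at most $t-1$ external copies and the two further internal copies shows that any attempt to simultaneously disrupt all four candidates either fails to cover one of them or produces a secondary bad pair or triple at the degree-two new vertex introduced by the last internal copy, by the same mechanism that yielded the bad triple in $DD_t$.

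The main obstacle is the bookkeeping in this final subcase: controlling the ``accidental'' triangles that appear when two later copies happen to share a vertex or an edge, and showing that the tight budget of $s=3$ internal copies (and no internal edges) cannot kill all four $\Gamma_2$-candidates while also resolving every auxiliary bad pair or triple. The strategy is to exploit the scarcity of this internal budget to pigeonhole at least one of the four $\Gamma_2$-triples into surviving in $\Gamma$, thereby completing the contradiction.
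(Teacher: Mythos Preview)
Your handling of the preliminary cases is correct and essentially matches the paper: for $2q+s\le 2$ you invoke Claim~\ref{cl:2qsle2} and exhibit explicit bad triples in $TT$ and $DD_t$; for the $K_3$-cycle you give the same triple the paper uses (a degree-4 vertex and its two degree-2 neighbours); and you correctly reduce the dangerous-edge case to $Z_{0,t,3}$ via Corollary~\ref{cor:exploreK3} and Claim~\ref{cl:11is30}.

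The gap is in the $Z_{0,t,3}$ analysis. Your plan is to track the four $3$-subsets of $V(\Gamma_2)$ and argue that one survives, with a fallback ``secondary bad triple at the new vertex of the last internal copy''. This does not work. First, you have not established the structure of the exploration, which the paper does: since the step after the last external copy $H'$ must be an internal copy $H_3$ using an edge of $H'$, and since the resulting triple $\{x,y,z\}$ of new vertices is bad (Observation~\ref{obs:badTriplet}), a further internal copy $H_4$ is required; this forces $s=3$ and shows that \emph{all} $t-1$ externals form a single chain from one vertex $w\in V(\Gamma_2)$, followed by $H_3$ and $H_4$. Second, once the chain is attached at $w$, its first triangle already meets every $3$-subset of $V(\Gamma_2)$ containing $w$ in exactly one vertex, killing three of your four candidates; only $V(\Gamma_2)\setminus\{w\}$ survives. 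Third, if $H_4$ uses an edge $wu$ with $u\in V(\Gamma_2)\setminus\{w\}$, that last candidate is killed too, and there is \emph{no} bad triple containing the new vertex $z'$ of $H_4$: for instance with $w=p$ in the diamond $\{p,q,r_1,r_2\}$ and $H_4=pr_1z'$, the triangle $pqr_1$ meets $\{z',r_1,q\}$ in exactly one vertex, and similar checks rule out all other triples through $z'$.

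The paper's remedy is to look at the \emph{other end} of the structure. The three new vertices $\{x,y,z\}$ produced by $H'$ and $H_3$ each have at most one neighbour outside this set, so by Observation~\ref{obs:badTriplet} they form a bad triple disjoint from $V(\Gamma_2)\setminus\{w\}$, with no edges between the two triples. Since $H_4$ must use an existing edge, it can meet at most one of these two triples, and the other survives as a bad set in $\Gamma$. Your ``same mechanism as in $DD_t$'' intuition is pointing in the right direction --- $DD_t$ has bad triples at \emph{both} diamond ends --- but the secondary triple you need is $\{x,y,z\}$ at the far end (the new vertex of $H_3$), not anything involving the new vertex of $H_4$.
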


\begin{proof}
Assume for contradiction that $2q + s \le 3$. If $\GG$ does not contain any dangerous edges then it is a $K_3$-cycle of length at least four by Lemma~\ref{lem:noDang}. However, this graph is not stable: any vertex of degree 4 and its two neighbors of degree 2 form a bad set. Assume then that $\GG$ contains a dangerous edge. By Remark~\ref{rmk:1stable} and Corollary~\ref{cor:exploreK3} we may consider only greedy explorations of $\GG$. It follows that $s\ge 1$, and therefore either $q=s=1$ or $q=0,~s \le 3$. Claim~\ref{cl:11is30} then implies that we may consider only the latter case. We make use of the following observation.

\begin{observation}\label{obs:badTriplet}
If $U \sbst V(\GG)$ is a set of size 3, such that every vertex in $U$ has at most one neighbor outside $U$, then $U$ is a bad set. Indeed, every triangle containing a member of $U$ must contain two of its neighbors, and at least one of them belongs to $U$ as well.
\end{observation}

The exploration of $\GG$ starts with two triangles $H_1$ and $H_2$ sharing an edge, that is, $\GG_2$ is a diamond. Note that $v(\GG)  = 1+2t+s > 6$ or otherwise $\GG$ would be a small component. Since $s \le 3$ we get $t > 1$, meaning there is an addition of an external copy after $\GG_2$. Let $H'$ be the last external $K_3$-copy added in the exploration, and let $x,y$ be its new vertices. By Corollary~\ref{cor:noExtrnlCps} and the rules of the exploration, we must continue with the addition of an internal $K_3$-copy $H_3$, containing one new vertex $z$, and an existing edge $e \in E(H')$. By Observation~\ref{obs:badTriplet} the set $\{x,y,z\}$ is a bad set, thus the exploration cannot end after the addition of $H_3$.

Since at this point only internal copies can be added, and since $s \le 3$, we conclude that the remainder of the exploration after $\GG_2$ is the addition of a nonempty $K_3$-chain, originating at some $w \in V(\GG_2)$, followed by the addition of internal copies $H_3$ and $H_4$ in the last two steps. Note that before the last step both $\{x,y,z\}$ and $V(\GG_2) \stm \{w\}$ are bad sets by Observation~\ref{obs:badTriplet}, and that there are no edges between these triplets. Since $H_4$ must contain an existing edge when added, it cannot contain vertices from both triplets, so at least one set remains bad in $\GG$, a contradiction.
\end{proof}

\subsection{The unbiased Maker-Breaker triangle game}\label{sec:(1:1)triangle}
\begin{proof}[{Proof of Theorem~\ref{thm:triangle11}}]
%
    We begin our proof with two simple sufficient conditions for Maker's
    win in the $(1:1)$ triangle game played on the vertex set of an
    arbitrary graph $G$ -- one for the case he is the first player and
    one for the case he is the second player. If $G$ contains a
    $DD$-copy $\D$ and Maker plays first, he can claim the center of
    $\D$ in his first move and apply the natural pairing strategy on the
    remaining vertices of $\D$, thus ensuring having at least one vertex
    from each pair. This is indeed a winning strategy by
    Observation~\ref{obs:DDpairing}. Similarly, if $G$ contains two
    vertex disjoint $DD$-copies then Maker (even as a second player) can
    win by applying a pairing strategy on the two centers of these
    copies and on all of their natural pairs.

    In fact, Maker can play straightforward and claim $x$, $y_i$ and $z_j$ in his first three moves (in this order, and using the labeling of Figure~\ref{fig:DD}), where $i\in [2]$ and $j \in [4]$ are possibly determined by Breaker's moves. If Maker plays second, the $DD$-copy in which he claims his vertices is a copy whose all its vertices are still free after Breaker's first move. We later use the fact that Maker can achieve his goal already in his third move in the proof of Theorem~\ref{thm:infamily}.

    For the main part of the proof, let $\tilde G = \{G_i \}$ be a
    random graph process, let $i_1 = \tau (\tilde G, \mathcal
    G_{DD} ) - 1$ and $i_2 = \tau (\tilde G, \G_{2DD} )
    - 1$, and let $G^*_1$ and $G^*_2$ be the $(K_3, 1)$-cores of
    $G_{i_1}$ and $G_{i_2}$, respectively. Maker, as a first
    player, has a winning strategy for the game played on
    $V(G_{i_1+1})$. Note that since $DD$ is strictly balanced, the first
    two $DD$-copies appearing during the random graph process are \whp
    vertex disjoint by Corollary~\ref{cor:balancedDisjoint}, and
    therefore \whp Maker, as a second player, has a winning strategy for
    the game played on $V(G_{i_2+1})$. Since ``being Breaker's win" is a
    monotone decreasing graph property, in order to complete the proof
    of the theorem it is enough to show that \whp Breaker has a winning
    strategy as a second player in the game played on $V(G_{i_1})$, and
    as a first player in the game played on $V(G_{i_2})$. By
    Lemma~\ref{lem:BwinG'} it is enough to show that w.h.p., Breaker has a
    winning strategy as a second player for the game played on
    $V(G^*_1)$, and as a first player for the game
    played on $V(G^*_2)$.

    The key ingredient for the proof is Claim~\ref{cl:ConComp}.
    Recall the family $\F$ defined in that claim, of all graphs on less than 25 vertices with
    density larger than $10/7$.
    Since $\F$ is finite, and as $m(DD) = 10/7$, the graph $G_{i_2}$ is \whp $\F$-free by
    Claim~\ref{cl:sparseBeforeDense} and by definition of $i_2$. We therefore finish the proof by showing that
    if $G_{i_2}$ is indeed $\F$-free, then Breaker has winning strategies for
    the two games in discussion.

    Assume then that $G_{i_2}$ is $\mathcal F$-free, and note that by containment $G^*_1$ and $G^*_2$ are $\F$-free
    as well. Since $G^*_1$ is in addition $DD$-free, and $K_3$-stable by
    definition, Claim~\ref{cl:ConComp} implies that every connected
    component in $G^*_1$ is \whp a $\ttt$-copy, for which there exists a
    natural pairing strategy (given in Definition~\ref{def:trioPairing}). Similarly, $G_{i_2}$ has exactly one
    $DD$-copy $\D$, and so every connected component in $G^*_2$ other
    than $\D$ is a $\ttt$-copy. Breaker, as a first player, can claim the
    center of $\D$ in his first move (thus ensuring that Maker cannot
    claim any triangle contained in $\D$), and then apply the natural
    pairing strategy on all remaining components.
\end{proof}

\subsection{Estimating the constants in Theorem~\ref{thm:main}}\label{sec:constants}
We conclude this section with a short discussion about the two constants $c$ and $C$ appearing in Theorem~\ref{thm:main} for the case $H'=K_k$. We later refer to this discussion in Section~\ref{sec:concluding}. The proofs of the two parts of the 0-statement given in Sections~\ref{sec:largeCliques} and~\ref{sec:(1:2)triangle} work for any $c < 1$. It turns out that for any given $C > 1$, the 1-statement holds for any $k \ge k_0$, where $k_0$ is determined by $b$ and $C$.

More precisely, recall that the proof of the 1-statement of Theorem~\ref{thm:main} is based on the argument that  for some constant $C = C(b,k)$, if $p \ge Cn^{-2/k}$, then \whp in $\ggnp$ every $\frac
1{b+1}$-fraction of $V(G)$ induces a $K_k$-copy in $G$ and thus \whp Maker wins
the $(1:b)$ $K_k$-game no matter how he plays in this case. Let us
examine this constant $C$ a bit more carefully. Although not very
complicated, we omit most of the technical details below. In the proof
of Theorem~\ref{thm:LRV} the authors of~\cite{LRV} used the following
theorem, which was originally stated in a more general form.

\begin{theorem}[\cite{JLR}]\label{thm:JLR}
For every strictly balanced graph $H$ there exists a constant $c(H)$
such that for $\ggnp$ the probability that $G$ is $H$-free is at
most $e^{-c(H)\mu(H)}$, where $\mu(H)$ is the expected number of
$H$-copies in $G$.
\end{theorem}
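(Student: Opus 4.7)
The plan is to apply Janson's inequality. Index the potential copies of $H$ in $K_n$ as $\{H_\alpha\}_{\alpha \in I}$, let $A_\alpha$ be the event that $H_\alpha \subseteq G$ where $G \sim G(n,p)$, and set $X = \sum_\alpha \mathbf{1}_{A_\alpha}$, so $\mu = \mu(H) = \mathbb{E}[X]$. Writing $\alpha \sim \beta$ when $\alpha \neq \beta$ and $E(H_\alpha) \cap E(H_\beta) \neq \emptyset$, and letting
$$\Delta = \sum_{\alpha \sim \beta} \Pr[A_\alpha \cap A_\beta],$$
Janson's inequality gives
$$\Pr[X = 0] \;\leq\; \exp\!\left(-\frac{\mu^2}{\mu + \Delta}\right).$$
Thus it suffices to exhibit a constant $M = M(H)$ with $\Delta \leq M\mu$, since then one may take $c(H) = 1/(1+M)$.

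To bound $\Delta$, I would group the ordered pairs $(\alpha,\beta)$ by the isomorphism type of the intersection $J := H_\alpha \cap H_\beta$, a subgraph of $H$ with $v(J) \geq 1$ and $e(J) \geq 1$ (pairs with $e(J)=0$ contribute independent copies and do not belong to $\Delta$). The count of ordered pairs realizing a given $J$ is $\Theta(n^{2v(H) - v(J)})$ and each such pair has $\Pr[A_\alpha \cap A_\beta] = p^{2e(H) - e(J)}$. Consequently
$$\frac{\Delta}{\mu} \;=\; O\!\left(\sum_{J \subseteq H,\; e(J) \geq 1} n^{v(H)-v(J)}\, p^{e(H)-e(J)}\right),$$
a sum with a number of terms bounded in terms of $H$ alone.

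Now strict balance enters. The term $J = H$ contributes $\Theta(1)$, which is harmless. For every proper $J \subsetneq H$, strict balance says $e(J)/v(J) < e(H)/v(H)$, which rearranges to the positivity of the exponent
$$\eta_J \;:=\; (e(H)-e(J))\cdot\frac{v(H)}{e(H)} \;-\; \bigl(v(H)-v(J)\bigr) \;>\; 0.$$
So at $p = \Theta\bigl(n^{-v(H)/e(H)}\bigr)$ (the threshold, and up to it) each such summand is $\Theta\bigl(n^{-\eta_J}\bigr) = o(1)$. Summing the finitely many contributions yields $\Delta/\mu = O(1)$, so the plug-in to Janson produces $\Pr[X = 0] \leq \exp\bigl(-c(H)\mu\bigr)$ for some positive constant $c(H)$, as required.

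The main obstacle is that the clean bound $\Delta = O(\mu)$ derived above is sharp only up to and near the threshold $p_0 = n^{-1/m(H)}$; for $p \gg p_0$ individual terms $n^{v(H)-v(J)} p^{e(H)-e(J)}$ can blow up, so Janson in the form $\mu^2/(\mu+\Delta)$ no longer delivers $e^{-\Omega(\mu)}$ directly. To cover all $p$ with a single constant $c(H)$, I would combine three observations: for $\mu = O(1)$ the bound is trivial by choosing $c(H)$ small; for $p$ up to a large constant multiple of $p_0$ the argument above works as stated; and for $p$ far above threshold one instead uses the additive Janson bound $\Pr[X = 0] \leq e^{-\mu + \Delta/2}$ together with the monotonicity of $\Pr[X=0]$ in $p$ (coupling $G(n,p)$ and $G(n,p')$ for $p \leq p'$) to transfer the bound proved at a reference value of $p$ to all larger $p$. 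The only real book-keeping is verifying that these three regimes can be merged into a single positive constant depending on $H$.
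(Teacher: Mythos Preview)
The paper does not prove this theorem; it is quoted from \cite{JLR} and used as a black box (with the remark that near $p = \Theta(n^{-1/m_1(H)})$ the constant can be taken close to $1$). Your Janson-inequality approach is exactly the standard proof, and your analysis for $p = O(n^{-1/m(H)})$ is correct: strict balance forces every proper-subgraph term in $\Delta/\mu$ to be $o(1)$, so $\Delta \le M\mu$ and Janson yields $\Pr[X=0] \le e^{-c(H)\mu}$.

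There is, however, a genuine gap in your treatment of large $p$, and it cannot be repaired as you propose. Monotonicity only gives $\Pr[\text{$H$-free at }p] \le e^{-c\,\mu(p_0)}$ for $p \ge p_0$, whereas the stated bound demands $e^{-c\,\mu(p)}$, which is much smaller since $\mu(p)$ grows with $p$. In fact the statement as written is \emph{false} for $p$ well above the threshold: already for $H = K_3$ and $p = n^{-1/3}$ one has $\mu = \Theta(n^2)$ but $\Pr[\text{triangle-free}] = e^{-\Theta(n^{5/3})}$, not $e^{-\Theta(n^2)}$. The correct general exponent is $\Phi_H := \min_{H' \subseteq H,\, e(H')\ge 1} n^{v(H')}p^{e(H')}$, which coincides with $\Theta(\mu)$ only while $p = O(n^{-1/m(H)})$. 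The paper's statement should be read in that regime (which is the only one it is applied in), and your argument is complete there; drop the attempted extension to all $p$.
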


One can verify that if $H$ is strictly 1-balanced and $p =
\Theta(n^{-1/m_1(H)})$, then the constant $c(H)$ from Theorem~\ref{thm:JLR} can be set
arbitrarily close to 1 from below (this follows from Janson's
inequality). Using this fact and conducting the calculations in the
proof of Theorem~\ref{thm:LRV}  more carefully, one obtains that for
$p \ge Cn^{-2/k}$ (where $C$ is undetermined yet), the probability
that there exists a $K_k$-free subgraph of $G$ induced by a $\frac
1{b+1}$-fraction of its vertices is at most $\exp\left(n\left(1 -
(bk)^{-k}C^{\binom k2}\right)\right)$, i.e.~this
probability is exponentially small whenever $C >
\left(bk\right)^{\frac{2}{k-1}}$. For any fixed $b$, the right hand side of the last inequality is a
monotone decreasing function of $k$, tending to 1 as $k$ tends to
infinity. It follows that indeed, for any fixed $C > 1$ and $b \ge 1$, the 1-statement holds if $k$ is large enough.

\section{Maker-Breaker cycle games}\label{sec:cycle}
In this section we prove the 0-statement of Theorem~\ref{thm:main} for $H'=C_k,~k \ge 4$. At the beginning we follow the ideas of Section~\ref{sec:genCliques} while paying attention to the computational and structural differences between the clique game and the cycle game, showing eventually that the stable components which are likely to appear in the random graph are quite limited in their structure. The analysis is much more delicate in this case though. From now on, fix an integer $k \ge 4$, let  $c < k^{-1/k}$ be a constant, and consider the random graph $\ggnp$ for $p = cn^{-1/m_1(C_k)} = cn^{-(k-1)/k}$.

We start with an equivalent of Claim~\ref{cl:NoChains}, showing that \whp $G$ does not contain any $C_k$-chain of length $d\lnn$ for $d = -2/\ln\left(kc^k\right)$ (note that $d$ is a positive constant as $kc^k < 1$).
First observe that for any given $t$ there are less than $k^t$ different $C_k$-chains of length $t$.
Indeed, given $t$, if for every $2 \le i \le t-1$ we denote by $d_i$ the distance on the $i$th cycle between the two vertices it shares with the previous and next cycles in the chain, then the structure of the chain is determined by $d_2,\dots,d_{t-1}$.
Since there are $\lfloor k/2 \rfloor$ options for each $d_i$ the upper bound on the number of different chains is established.
Now, if $\GG$ is a $C_k$-chain of length $t$ than we have $v(\GG) = 1 + t(k-1)$ and $e(\GG) = kt$.
Therefore, if $\T$ denotes the family of all $C_k$-chains of length $t = d\lnn$ we get
\begin{align*}
	\Pr[\textrm{$G$ is not $\T$-free}] & \le \sum_{\GG \in \T}\prgg \le \sum_{\GG \in \T}n^{v(\GG)}p^{e(\GG)} \\& \le k^tn^{1 + t(k-1)}\left(cn^{-(k-1)/k}\right)^{kt} = n\left(kc^k\right)^{d\lnn} = 1/n = o(1).
\end{align*}

Next, for fixed $q$ and $s$ we wish to bound the size of the families $\xqts$ containing no $C_k$-chains of length $d\lnn$, which are once again denoted by $\yqts$. This is done similarly to Claim~\ref{cl:yqsIsSmall}, but with a few differences. First, as already argued, determining the length of each of the $C_k$-chains in a component $\GG \in \yqts$ is not enough and we have to account for the number of all different choices for their structures. For a given $t$ (that is, the sum of lengths of all $C_k$ chains in the exploration), by using basically the same argument as above, we can upper bound this number by $k^t$. Consequently, we bound separately the size of each family $\yqts$, and not the size of their union over all values of $t$. However, while $q$ and $s$ are fixed, we allow $t$ to grow to infinity with $n$. Lastly, when choosing existing vertices for an internal cycle we also have to choose their location on the cycle. This results in a factor of at most $(k!)^s$.
For all other considerations, almost identical calculations to those in Claim~\ref{cl:yqsIsSmall} yield the factor $O\left((\ln n)^{3q + ks}\right)$.
All in all we get that
\begin{equation}\label{eq:yqtscycle}
	|\yqts| = O\left(k^t(\ln n)^{3q + ks}\right)
\end{equation} holds for any two fixed integers $q,s$ and any integer $t:=t(n)$.

When considering the cycle game, knowing that a component belongs to $\xqts$ is not enough for us and we need more information.
For every non-negative integer $s$ and for every $s$-tuple $\sss = ((v_1,e_1),\dots,(v_s,e_s))$ (including the empty tuple if $s = 0$), let $\xqtss$ be the subset of all graphs in $\xqts$ with the additional condition that the $i$th internal $C_k$-copy added during the exploration process contains exactly $v_i$ existing vertices and $e_i$ existing edges. The families $\yqtss$ are defined in a similar fashion. Mind the abuse of notation here, where in order to reduce the number of different variables used, the integer $s$ denotes the length of $\sss$. Whenever discussing such an $s$-tuple, we assume it is ``legal". In particular, for every $i \in [s]$ we have $1 < v_i < k$, since it represents an internal copy, and $v_i > e_i$, since the existing vertices and edges of each internal $C_k$-copy always form vertex-disjoint paths.

%

Now we wish to prove an analogue of Claim~\ref{cl:noNegativeExponent} that characterizes the families $\xqtss$ which are likely to appear in $G$. Once again, we do this by showing that there exists a family of relatively small size, such that $G$ \whp does not contain any of its members, but each graph obtained via an exploration process which does not meet some condition on $q$ and $\sss$ must contain one of its members as a subgraph. In order to formulate this condition we define $$f(\sss) := \sum_{i=1}^s (k(v_i-e_i-1) + e_i).$$
Note that we cannot have $v_i=1,~e_i=0$ for the same $i\in[s]$, as these values represent the addition of an \emph{external} copy, and thus for any given $\sss$, every summand in $f(\sss)$ is a positive integer.


\begin{claim}\label{Ap:cl:noNegativeExponentCk}
	The following holds w.h.p.: all families $\xqtss$ for which at least one of their members appears in $G$ satisfy $(k-1)q + f(\sss) \leq k$.
\end{claim}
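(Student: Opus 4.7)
My plan is to mirror the minimal-violator argument of Claim~\ref{cl:noNegativeExponent} in the cycle setting. The three ingredients are a probability bound for a fixed $\Gamma \in \xqtss$, the combinatorial estimate~\eqref{eq:yqtscycle} on $|\yqtss|$, and a convergent geometric series supplied by the hypothesis $c < k^{-1/k}$.

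First I would derive the probability estimate. For $\Gamma \in \xqtss$ the exploration gives $v(\Gamma) = 1 + t(k-1) + \sum_{i=1}^{s}(k-v_i)$ and $e(\Gamma) = q + tk + \sum_{i=1}^{s}(k-e_i)$, and a direct calculation (the cycle analogue of~\eqref{eq:xqts}) yields
\[
v(\Gamma) - \tfrac{k-1}{k} e(\Gamma) = 1 - \tfrac{1}{k}\bigl[(k-1)q + f(\sss)\bigr],
\]
so that
\[
\Pr[\Gamma \sbst G] \le c^{e(\Gamma)}\, n^{1 - \frac{1}{k}[(k-1)q + f(\sss)]}.
\]
When $(k-1)q + f(\sss) \ge k+1$, this simplifies using $e(\Gamma) \ge tk$ and $c < 1$ to the bound $\Pr[\Gamma \sbst G] \le c^{tk} n^{-1/k}$.

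Next I would set up the minimal-violator reduction. Every summand $k(v_i - e_i - 1) + e_i$ of $f(\sss)$ is a positive integer since $v_i \in \{2, \ldots, k-1\}$ and $0 \le e_i \le v_i - 1$. Along any exploration, $(k-1)q_j + f(\ssi{j})$ increases by $k-1$ at each internal-edge step and by at most $k(k-2)$ at each internal-$C_k$ step; hence the increment per step is at most $k(k-1)$. Thus, if $\Gamma \in \xqtss$ satisfies $(k-1)q + f(\sss) > k$, the first step $j$ at which this quantity exceeds $k$ yields $\Gamma_j \sbst \Gamma$ in $X_{q', t', \sss'}$ with $k < (k-1)q' + f(\sss') \le k(k-1)$, which forces $q' \le k$ and $s' := |\sss'| \le k(k-1)$. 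The set $\Pi$ of admissible pairs $(q', \sss')$ is therefore finite and depends only on $k$.

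Finally I would execute the union bound. For each $(q', \sss') \in \Pi$, by~\eqref{eq:yqtscycle} and the probability estimate,
\begin{align*}
\sum_{t' \ge 1} \sum_{\Gamma \in Y_{q', t', \sss'}} \Pr[\Gamma \sbst G]
&\le O\bigl((\ln n)^{3q' + ks'}\bigr)\, n^{-1/k} \sum_{t' \ge 1} (kc^k)^{t'}\\
&= O\bigl((\ln n)^{3q' + ks'}\bigr)\, n^{-1/k} = o(1),
\end{align*}
since $kc^k < 1$ by $c < k^{-1/k}$. A union bound over the finite set $\Pi$ shows that whp $G$ contains no element of any $Y_{q', t', \sss'}$ with $(q', \sss') \in \Pi$. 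Combined with the already-established fact that whp $G$ is free of $C_k$-chains of length at least $d \ln n$, we conclude: if some $\Gamma \sbst G$ violated the stated inequality, its minimal violator $\Gamma_j \sbst G$ would lie in $X_{q', t', \sss'}$ for some $(q', \sss') \in \Pi$, and chain-freeness would upgrade this to membership in $Y_{q', t', \sss'}$, contradicting the union-bound conclusion. The main obstacle is the bookkeeping in the first two steps, particularly controlling the per-step increment so that $\Pi$ is finite; after that, the new factor $k^{t'}$ in~\eqref{eq:yqtscycle} (absent in the clique case) is absorbed precisely by the hypothesis $c < k^{-1/k}$ through the convergent geometric sum $\sum_{t'} (kc^k)^{t'}$.
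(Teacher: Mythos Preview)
Your proposal is correct and follows essentially the same architecture as the paper: the same probability estimate (the paper's display~\eqref{eq:cycleBound}), the same minimal-violator reduction, and the same union bound using~\eqref{eq:yqtscycle}. Two small differences are worth noting. First, the paper obtains the sharper constants $q' \le 2$ and $s' \le k+1$ for the minimal violator (by observing that before crossing one already has $q_i \le 1$ and $\gamma_i \le k$), whereas you only need finiteness of $\Pi$ and settle for looser bounds; either suffices. Second, and more interestingly, the paper handles the sum over $t'$ by truncating at $t' \le (q'+s')\,d\ln n$ via chain-freeness, yielding a polylogarithmic factor; you instead sum the full geometric series $\sum_{t'\ge 1}(kc^k)^{t'}$, which converges outright by $c < k^{-1/k}$. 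Your route is arguably cleaner here, since chain-freeness is then only needed for the passage from $X$ to $Y$, not for the summation itself. One tiny slip: your stated per-step bound ``at most $k(k-1)$'' should read $k(k-2)$, which is exactly what gives $(k-1)q' + f(\sss') \le k + k(k-2) = k(k-1)$; the conclusion is unaffected.
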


\begin{proof}
	First, we consider only the families $\yqtss$, since \whp there exist no long $C_k$-chains in $G$.
	Now, for every non-negative integer $s$ and a corresponding $s$-tuple $\sss$, and for every $\GG \in \yqtss$, we have $v(\GG) = 1 + t(k-1) + \sum_{i=1}^s (k-v_i)$ and $e(\GG) = tk + q + \sum_{i=1}^s (k-e_i)$.
	Hence,
	\begin{align}\label{eq:cycleBound}
		\prgg & \le  n^{v(\GG)}p^{e(\GG)} \nonumber \\
		& =  c^{e(\GG)}n^{1+t(k-1)+\sum_{i=1}^s (k-v_i) - \frac{k-1}{k}\left(tk + q + \sum_{i=1}^s k-e_i \right)} \nonumber \\
		& \le  c^{kt}n^{1-\frac 1k\left[(k-1)q + \sum_{i=1}^s ((k-1)(k-e_i)-k(k-v_i))\right]} \nonumber \\
		& =  c^{kt}n^{1-\frac 1k\left[(k-1)q + f(\sss)\right]}.
	\end{align}
	
	Let $q$, $t$ and $\sss  = ((v_1,e_1),\dots,(v_s,e_s))$ such that $(k-1)q + f(\sss) > k$, and let  $\GG \in \yqtss$.
	For every $i \in [s]$ let $\ssi{i} = ((v_1,e_1),\dots,(v_i,e_i))$ be the $i$th prefix of $\sss$.
	Consider an arbitrary exploration of $\GG$, and let $q_j$ and $\gamma_j$ be the number of internal edges and internal copies added during the first $j$ steps of the exploration, respectively.
	Let $i$ be the maximal integer such that $(k-1)q_i + f(\ssi{\gamma_i}) \le k$. Such an $i$ exists since the first step in any exploration is the addition of an external copy. 
	Clearly we have $q_i \le 1$ and $f(\ssi{\gamma_i}) \le k$. Recalling that every summand in $f(\sss)$ is a positive integer, the latter inequality implies $\gamma_i \le k$.
	By these observations and the definition of $i$, we have $(k-1)q_{i+1} + f(\ssi{\gamma_{i+1}}) > k$, while $q_{i+1} \le 2$ and $\gamma_{i+1} \le k+1$.
	Let
	$$Y_{k,t} = \bigcup_{\substack{q, \sss \textrm{~s.t.~} q \le 2, s \le k+1 \\ \textrm{and~} (k-1)q + f(\sss) > k}} \yqtss$$
	and
	$$Y_k = \bigcup_{t \ge 1} Y_{k,t}.$$
	Then $\GG_{i+1} \in Y_k$, and so it suffices to show that \whp $G$ is $Y_k$-free.
	Clearly, for every $t$ we have
	$$Y_{k,t} \sbst \bigcup_{q \le 2,\ s\le k+1} \yqts,$$
	and so $|Y_{k,t}| \le k^t(\ln n)^{k^3}$ by~(\ref{eq:yqtscycle}). Note also that $Y_{k,t} = \emptyset$ for every $t > (q+s)d\lnn$ by the restriction on the maximal length of a $C_k$-chain.
	By~(\ref{eq:cycleBound}) we have that $\prgg \le c^{kt}n^{-1/k}$ for every $\GG \in Y_{k,t}$. Applying the union bound  and using the fact that $kc^k < 1$ we obtain
	$$\Pr[\textrm{$G$ is not $Y_k$-free}]
	\le \sum_{t \ge 1}\sum_{\GG \in Y_{k,t}}\prgg \le \sum_{t =
		1}^{(q+s)d\lnn}(\lnn)^{k^3}k^tc^{kt}n^{-\frac 1k}
	\le (\lnn)^{k^4}n^{-\frac 1k} = o(1),$$
	which completes the proof.
\end{proof}

In the light of Claim~\ref{Ap:cl:noNegativeExponentCk} we define the following.

\begin{definition}\label{Ap:def:feasible}
	A nonempty connected component $\GG$ is called \emph{feasible} if it is $(C_k,1)$-stable, and any $q,t,\sss$ for which $\GG \in \xqtss$ satisfy $(k-1)q + f(\sss) \leq k$.
\end{definition}

Now, in a similar fashion to that of the proof given in Section~\ref{sec:largeCliques}, we focus on the unbiased $C_k$-game for $k\geq 4$ played on a feasible component, and from this we eventually deduce the more general result. Our main and most basic tool is the fact that stable components cannot contain any bad pairs. In our analysis we encounter two main types of bad pairs, the first is defined as follows.

\begin{definition}
	Let $G$ be a graph.	Two vertices $u,v \in V(G)$ are called \emph{evil twins} if $uv \in E(G)$ and $d_G(u)=d_G(v)=2$.
\end{definition}

It is easy to see that if $u,v$ are evil twins then they indeed form a bad pair: any $C_k$-copy containing one of these vertices must use the two edges incident to it, and specifically the edge $uv$, implying that no $C_k$-copy can contain exactly one of $u,v$. Note that evil twins remain such as long as no edge incident to one of them is added to the graph, which is not the case for bad pairs in general. Let $\GG_i \sbst \GG_j$ be two graphs obtained during an exploration of a feasible component $\GG$ for some $i<j$. If $\GG_i$ contains a pair of evil twins, which remain evil twins in $\GG_j$, we say that the pair \emph{survives} in $\GG_j$.

The other type of bad pairs we wish to introduce is described in the following claim.

\begin{claim}\label{Ap:cl:generalizedTwins}
	Let $G$ be a graph and let $x,y \in V(G)$ be two vertices of degree 3, such that $G$ contains two distinct paths $P_1,P_2$ between $x$ and $y$, and none of these paths contains an internal vertex of degree larger than 2. Then $\{x,y\}$ is a bad pair.
\end{claim}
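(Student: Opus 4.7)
I would argue by contradiction: assume a $C_k$-copy $C \sbst G$ contains exactly one of $x,y$, say $x \in V(C)$ but $y \notin V(C)$. The idea is that the three edges incident to $x$ each ``force'' the traversal of $C$ to reach $y$, contradicting $y \notin V(C)$.

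More precisely, since $x$ has degree $3$ in $G$, its three incident edges are $xa$, $xb$, and $xc$, where $a$ is the neighbor of $x$ along $P_1$, $b$ is the neighbor of $x$ along $P_2$, and $c$ is the third neighbor. First I would observe that $P_1$ and $P_2$ must be internally vertex-disjoint: any shared internal vertex would have degree at least $4$, violating the hypothesis. (This also guarantees $a \neq b$.) Then $C$ meets $x$ and therefore uses exactly two edges incident to it, chosen from $\{xa,xb,xc\}$. In particular, $C$ uses at least one of the edges $xa$, $xb$; WLOG say $xa \in E(C)$.

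Now I would show that $C$ is forced to traverse $P_1$ in its entirety, arriving at $y$. If $a = y$ (i.e.~$P_1$ is the single edge $xy$), this is immediate. Otherwise, $a$ is an internal vertex of $P_1$ and has degree exactly $2$ in $G$, so its only two incident edges are both in $P_1$; hence $C$ must leave $a$ via the second edge of $P_1$. Iterating this argument along $P_1$ (every internal vertex has degree $2$ and so offers $C$ no choice), the cycle $C$ must traverse $P_1$ until it exits at $y$, forcing $y \in V(C)$, a contradiction.

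The only subtlety — and it is small — is keeping the case $a = y$ (or $b = y$) in view, since then $P_1$ has no internal vertices and the ``forcing along $P_1$'' argument degenerates to a one-step observation. Once this is handled, the symmetric argument for $xb$ (or for any labeling in which $xa \in E(C)$) closes the proof, since in each case one of the two $C$-edges at $x$ leads directly into a degree-$2$ path ending at $y$. No further machinery is needed; the claim is essentially a routine rigidity observation about degree-$2$ paths, and I expect no genuine obstacle.
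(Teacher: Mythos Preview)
Your proof is correct and follows essentially the same approach as the paper: both argue that any $C_k$-copy through $x$ must use at least one of the two edges leading into $P_1$ or $P_2$, and the degree-$2$ constraint on internal vertices then forces the cycle to traverse that entire path and hit $y$. One small imprecision: a shared internal vertex of $P_1$ and $P_2$ need only have degree at least $3$, not $4$ (it could lie on both paths with one common neighbor), but since the hypothesis caps internal degrees at $2$ this still yields the contradiction you need, and in any case only the weaker conclusion $a \neq b$ is actually used downstream.
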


\begin{proof}
	Every $C_k$-copy containing either $x$ or $y$ contains two of the three edges incident to that vertex, and so must contain at least one edge belonging to either $P_1$ or $P_2$. Since that path is either a single edge or contains only internal vertices of degree 2, the $C_k$-copy must contain the entire path, and in particular both its endpoints $x$ and $y$. It means that no $C_k$-copy contains exactly one of $\{x,y\}$, which is what we had to show.
\end{proof}

We now begin with a series of claims and corollaries, gradually revealing more and more information about the possible structures of feasible components and the restrictions applied to them. First we wish to make the restriction $(k-1)q + f(\sss) \leq k$ much more explicit and convenient.

\begin{claim}\label{Ap:cl:sgeq1}
	Let $\GG$ be a feasible component. In any exploration of $\GG$ in which $s\geq 1$, exactly one of the following holds.
	\begin{enumerate}[(i)]
		\item $q=0,~s=1,~v_1=2,~e_1=0$.
		\item $q=0,~\sum_{i=1}^s e_i \leq k$, and $e_i = v_i-1 \ge 1$ for every $i\in [s]$.
		\item $q=1,~s=1,~v_1=2,~e_1=1$.
	\end{enumerate}
\end{claim}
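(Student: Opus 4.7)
My plan is to treat this as a purely arithmetic claim derived from the feasibility inequality $(k-1)q + f(\sss) \leq k$; the stability half of feasibility plays no role here. The key observation is that every summand $k(v_i - e_i - 1) + e_i$ of $f(\sss)$ is a positive integer (as noted just before the claim), and moreover it exhibits a sharp jump: when $v_i - e_i = 1$ the summand equals $e_i \in \{1,\dots,k-2\}$, whereas when $v_i - e_i \geq 2$ the summand is at least $k$. This dichotomy drives the case split.

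First I would dispose of $q \geq 2$: since $k \geq 4$, we have $(k-1)q \geq 2k - 2 > k$, contradicting feasibility. Next, for $q = 1$, the inequality forces $f(\sss) \leq 1$; combined with $s \geq 1$ and the positivity of each summand, this forces $s = 1$ and the unique summand to equal $1$. Solving $k(v_1 - e_1 - 1) + e_1 = 1$ subject to $1 < v_1 < k$ and $0 \leq e_1 < v_1$ immediately yields $v_1 - e_1 - 1 = 0$ and $e_1 = 1$, i.e.\ case (iii).

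It remains to handle $q = 0$, where $f(\sss) \leq k$. If some index $j$ has $v_j - e_j \geq 2$, its summand is at least $k$, so feasibility forces that summand to equal exactly $k$ (which gives $v_j = 2$ and $e_j = 0$) and every other summand to vanish; since each summand is a positive integer, this compels $s = 1$, yielding case (i). Otherwise every index satisfies $v_i - e_i = 1$, so $e_i = v_i - 1 \geq 1$ for all $i$; the $i$th summand then simplifies to $e_i$, and so $\sum_{i=1}^s e_i = f(\sss) \leq k$, which is case (ii). The three cases are mutually exclusive, since (i) requires $e_1 = 0$ while (ii) requires all $e_i \geq 1$, and (iii) is the only one with $q = 1$.

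There is no real obstacle here beyond careful bookkeeping: the core content is recognizing the threshold behaviour of the summands of $f(\sss)$ as $v_i - e_i$ crosses from $1$ to $2$, and invoking $k \geq 4$ to rule out $q \geq 2$.
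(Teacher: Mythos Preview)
Your proof is correct and follows essentially the same argument as the paper's: both exploit the dichotomy that each summand of $f(\sss)$ is either $e_i \leq k-2$ (when $v_i - e_i = 1$) or at least $k$ (when $v_i - e_i \geq 2$), together with positivity of the summands and the feasibility inequality. The only difference is organizational: you split first on the value of $q$ and then on the $v_i - e_i$ dichotomy, whereas the paper splits on the dichotomy first and then on $q$; the content is the same.
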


\begin{proof}
	Recall that $f(\sss) = \sum_{i=1}^s (k(v_i - e_i - 1) + e_i)$.
	If $v_{i_0} > e_{i_0}+1$ for some $i_0\in [s]$, then $f(\sss) \geq k(v_{i_0} - e_{i_0}-1) \geq k$. Since $\GG$ is feasible and therefore $(k-1)q + f(\sss) \le k$, it follows that in this case $q=0$ and $f(\sss)=k$, and thus by the positivity of the summands of $f(\sss)$ we conclude that $s=1,~v_1=2,~e_1=0$, which is exactly Option~$(i)$.
	
	Assume now that $v_i = e_i + 1$ for every $i\in [s]$, and therefore $f(\sss) = \sum_{i=1}^{s} e_i$, and recall that $e_i \ge 1$ for every $i \in [s]$ in this case.
	Since $(k-1)q + \sum_{i=1}^{s} e_i \leq k$ holds there are only two possibilities.
	If $q=0$ then the restriction $\sum_{i=1}^s e_i \leq k$ remains as is, and we got Option~$(ii)$.
	If $q=1$ then $\sum_{i=1}^s e_i \leq 1$, which can only happen if $s=1,~v_1 = 2,~e_1=1$, that is, Option~$(iii)$.
\end{proof}

Our next step is to show that not surprisingly, dangerous edges are crucial for stability.

\begin{claim}\label{Ap:cl:CycleDanEdge}
	Any feasible component contains at least one dangerous edge.
\end{claim}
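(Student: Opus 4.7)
The plan is to argue by contradiction: assume $\Gamma$ is a feasible connected component containing no dangerous edges, and derive a structural contradiction. By stability, every edge of $\Gamma$ lies in at least one $C_k$-copy; together with the no-dangerous-edge hypothesis, each edge lies in \emph{exactly} one $C_k$-copy, so the $C_k$-copies of $\Gamma$ partition $E(\Gamma)$. Fix an arbitrary exploration of $\Gamma$. If some internal copy $H'$ had $e_i \ge 1$ existing edges, each such edge already belonged to an earlier $C_k$-copy (the unique one containing it in $\Gamma$) and now also to $H'$, so it would be dangerous; hence every internal copy in the exploration has $e_i = 0$. Applying Claim~\ref{Ap:cl:sgeq1} rules out cases (ii) and (iii) (both of which force some $e_i \ge 1$), so the only option with $s \ge 1$ is case (i): $q = 0$, $s = 1$, $v_1 = 2$, $e_1 = 0$.

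Next I will rule out the alternative $s = 0$. By Corollary~\ref{cor:noExtrnlCps} the exploration cannot end with an external copy, so its last step is an internal edge $e$. By stability of $\Gamma$, $e$ lies in some $C_k$-copy $H'$, and the remaining $k-1$ edges of $H'$ were already present. If any of these was added as part of an external copy different from $H'$, that edge lies in two distinct $C_k$-copies and is dangerous, contradiction; if instead all $k-1$ were added together as an external copy equal to $H'$, then $H'$ itself would have been added in that earlier step, so $e$ would already have been present and could not have been the last internal edge. Therefore all $k$ edges of $H'$ were added as internal edges, forcing $q \ge k$; but $s = 0$ and feasibility give $(k-1)q \le k$, i.e.\ $q \le 1$ for $k \ge 4$, a contradiction. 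Combining both reductions, $\Gamma$ must be a $C_k$-chain $H_1,\dots,H_t$ together with one internal $C_k$-copy $H'$ meeting the chain in exactly two non-adjacent vertices $u,v$ and no existing edges; the small-component check then forces $t \ge 2$.

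The main obstacle is the final step: showing that every graph of this form contains a bad pair, contradicting stability. When $k \ge 5$, the $k-2 \ge 3$ new vertices of $H'$ are split by $u,v$ into two arcs of total length $k-2$; at least one arc contains two consecutive new vertices, both of degree $2$ in $\Gamma$, forming evil twins. When $k = 4$, one has to work inside the chain. If $u,v$ are adjacent in $H'$, the two new vertices of $H'$ are adjacent and of degree $2$, hence evil twins; so we may assume $u,v$ are opposite in $H'$. Then a case analysis over each cycle $H_i$ of the chain produces a bad pair: in an end cycle whose three exclusive vertices $a_1,a_2,a_3$ form a path, the position of any promoted vertex in the arc yields either two adjacent degree-$2$ vertices (evil twins) or the bad pair $\{a_1,a_3\}$ via the argument behind Claim~\ref{Ap:cl:generalizedTwins} (both endpoints of the arc have degree $2$ and each lies in the unique $C_4$ that happens to contain the other); in an internal cycle with two exclusive vertices $b_1,b_2$ untouched by $\{u,v\}$, either $b_1,b_2$ are adjacent (evil twins) or they are opposite and $\{b_1,b_2\}$ is a bad pair because the only $C_4$ through either of them is $H_i$ itself. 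Since $\{u,v\}$ meets at most a bounded number of chain cycles while $t \ge 2$, some cycle always supplies such a bad pair. The delicate $k=4$ case analysis — simultaneously tracking the adjacency of $u,v$ inside $H'$, their placement inside a single cycle of the chain, and the possibility that one of them is a shared vertex — is the core technical difficulty.
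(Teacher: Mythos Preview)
Your argument that every internal copy must have $e_i = 0$ has a gap. You claim an existing edge of an internal copy $H'$ ``already belonged to an earlier $C_k$-copy (the unique one containing it in $\Gamma$)'', but the unique $C_k$-copy in $\Gamma$ containing that edge is $H'$ itself. What you actually need is that the edge was \emph{added} during the exploration as part of some earlier $C_k$-copy; however, it could instead have been added as an internal edge, and then there is no second copy and no contradiction. Thus case~(iii) of Claim~\ref{Ap:cl:sgeq1} ($q=1$, $s=1$, $v_1=2$, $e_1=1$, where the single internal edge later serves as the one existing edge of $H'$) is not excluded by your reasoning. The paper's argument at this point is that if some $e_i \ge 1$ then that existing edge must have been an internal edge (otherwise it would be dangerous), forcing $q \ge 1$; this rules out only option~(ii) and leaves both~(i) and~(iii), which share the conclusion $s=1$, $v_1=2$.

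Your final step is also much harder than it needs to be, and the difficulty comes from not exploiting the no-dangerous-edge hypothesis at the right moment. Once the last added copy $H'$ is known to have $v_1=2$ existing vertices and hence $k-2 \ge 2$ new vertices, take any two of them, say $u,v$: each has degree~$2$ in $\Gamma$ and both incident edges lie in $H'$. Since no edge of $\Gamma$ is dangerous, the only $C_k$-copy through any of these edges is $H'$; hence $u$ and $v$ each lie in $H'$ and in no other $C_k$-copy, so $\{u,v\}$ is a bad pair. This works uniformly for all $k \ge 4$ and simultaneously handles cases~(i) and~(iii); your separate $k=4$ chain analysis (and the hedged appeal to ``the argument behind Claim~\ref{Ap:cl:generalizedTwins}'', which as stated requires degree~$3$) is then unnecessary.
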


\begin{proof}
	Assume for contradiction that $\GG$ is a feasible component containing no dangerous edges.
	Recall that by Corollary~\ref{cor:noExtrnlCps} the last step of the exploration cannot be an addition of an external $C_k$-copy.
	However, it cannot be an addition of an internal edge $e$ either.
	Indeed, by stability $e$ must be a part of some $C_k$-copy, which contains an existing edge $e'$. Since  $q\leq 1$ by the feasibility of $\GG$, the edge $e'$ must have been added to $\GG$ as a new edge with some previous $C_k$-copy. This makes $e'$ a dangerous edge, a contradiction.
	
	The exploration therefore terminates with the addition of an internal $C_k$-copy $H'$, and in particular we get $s\geq 1$.
	Moreover, when adding an internal $C_k$-copy, every existing edge that this copy contains must have been added previously as an internal edge, or otherwise we get a dangerous edge.
	Hence, if $e_i \geq 1$ for some $i\in [s]$, then $q\geq 1$ must hold as well.
	Claim~\ref{Ap:cl:sgeq1} therefore implies that $s=1$ and $v_1=2$, since Option~$(ii)$ of the claim leads to a contradiction.
	
	It follows that $H'$ which is added as the last step of the exploration contains $k-2$ new vertices. Let $u,v$ be two of these vertices, and note that they both have degree 2 in $\GG$. The two edges incident to each of them belong to $H'$ and to no other $H$-copy in $\GG$ by assumption, implying that the same holds for $u$ and $v$ as well, which makes them a bad pair in contradiction.
\end{proof}

The existence of dangerous edges in feasible components suggests we may focus on greedy explorations, in which the second exploration step is the addition of an internal $C_k$-copy containing at least one existing edge (recall Definition~\ref{def:greedy}). For this we need the following claim.




\begin{claim}\label{Ap:cl:greedy}
	There exists a greedy exploration for any feasible component.
\end{claim}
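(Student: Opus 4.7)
The plan is to leverage Claim~\ref{Ap:cl:CycleDanEdge}, which guarantees that every feasible component $\GG$ contains at least one dangerous edge. Let $e = uv$ be such an edge, lying in two distinct $C_k$-copies $H_1$ and $H_2$ in $\GG$. I would initialise the exploration with $\GG_0 = \{u\}$ and $\GG_1 = H_1$, and then attempt to add $H_2$ (or a suitably chosen alternative $C_k$-copy through $e$) as the second step in the form of an internal copy, thereby producing a greedy exploration.

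For $H_2$ to qualify as an internal copy in the second step, we need $E(H_2[V(H_1)]) \sbst E(H_1)$: every edge of $H_2$ whose endpoints both lie in $V(H_1)$ must already be an edge of $H_1$. This holds automatically in the favourable scenario $V(H_1)\cap V(H_2) = \{u,v\}$, since the only such edge is $uv$ itself; here $H_2$ is added with parameters $v_1 = 2$ and $e_1 = 1$. I would therefore choose, among all pairs of $C_k$-copies sharing an edge, one that minimises $|V(H_1) \cap V(H_2)|$, and then aim to show that this minimum must in fact equal $2$.

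Once the greedy start $\GG_0 \sbst \GG_1 \sbst \GG_2$ is in place, the exploration is extended to the whole of $\GG$ by repeatedly appending external $C_k$-copies, internal $C_k$-copies, or internal edges. Such an extension is always possible because $\GG$ is connected and $(C_k,1)$-stable: each remaining vertex and edge must lie in some $C_k$-copy, and these can be incorporated step by step until $\GG_i = \GG$.

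The main obstacle will be the case $|V(H_1) \cap V(H_2)| > 2$. Here $H_2$ may use chords of $H_1$, i.e.\ edges of $H_2$ between vertices of $V(H_1)$ that are not present in $E(H_1)$, which would prevent $H_2$ from being added as internal in a single step. My plan in this situation is to derive a contradiction with feasibility: any exploration of $\GG$ that has to navigate around such chords must either insert extra internal edges (inflating $q$) or include internal copies with $v_i > e_i + 1$ (boosting $f(\sss)$), and both outcomes conflict with the inequality $(k-1)q + f(\sss) \leq k$. I expect the precise argument to combine a careful analysis of the possible intersection patterns of two $C_k$-copies sharing $e$ with the structural classification already furnished by Claim~\ref{Ap:cl:sgeq1}.
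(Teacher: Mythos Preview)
Your overall plan is sound and in fact lands on the same mechanism the paper uses, but you take an unnecessary detour and slightly misidentify where the contradiction comes from. The paper does not attempt to minimise $|V(H_1)\cap V(H_2)|$ or argue that it equals $2$; it simply picks any two $C_k$-copies $H_1,H_2$ through the dangerous edge, starts with $H_1$, and asks whether $H_2$ can be added immediately as an internal copy. If not, at least one internal edge must be inserted first, so this particular exploration has $q\ge 1$ and $s\ge 1$. Feasibility then forces option~$(iii)$ of Claim~\ref{Ap:cl:sgeq1}, i.e.\ $q=s=1$. The punchline is not that this already violates $(k-1)q+f(\sss)\le k$ --- it does not, since option~$(iii)$ is precisely the boundary case --- but rather that with $q=s=1$ the exploration must terminate at $\GG_3=H_1\cup\{\text{edge}\}\cup H_2$ (nothing further can be added except external copies, which cannot be last), and $\GG_3$ has at most $2k-2$ vertices and is therefore a small component, contradicting stability.

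So: drop the intersection-minimisation step entirely, and replace your final sentence ``both outcomes conflict with the inequality'' with the small-component argument above. What you wrote about extending the greedy start to a full exploration is fine and needs no further justification.
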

\begin{proof}
	Let $\GG$ be a feasible component. Since $\GG$ contains a dangerous edge by Claim~\ref{Ap:cl:CycleDanEdge}, we can start the exploration with two $C_k$-copies $H_1,H_2$ containing this edge. It only remains to verify that no internal edges need to be added prior to $H_2$. Assume otherwise, and note that in this case we must have $q=s=1$ by Claim~\ref{Ap:cl:sgeq1}. That is, one internal edge is added to $H_1$, and immediately afterwards $H_2$ is added. But at this point only external copies may be added, implying that $\GG_3 = \GG$, which is impossible since $\GG_3$ is a small component.
\end{proof}

Being restricted to greedy explorations may affect the possible values of $q,t,s$, but since we consider only feasible components, and all of them, this restriction does not affect the analysis. Therefore, from now on we only consider greedy explorations. Since the second step ensures that $s\geq 1$ and $e_1\geq 1$, Option~$(i)$ of Claim~\ref{Ap:cl:sgeq1} is not possible. It turns out that the same holds for Option~$(iii)$.

\begin{claim}\label{Ap:cl:Ckq=0}
	Let $\GG$ be a feasible component. Then $q=0$ holds for any greedy exploration of $\GG$.
\end{claim}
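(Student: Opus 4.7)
The plan is to rule out Option~(iii) of Claim~\ref{Ap:cl:sgeq1}, since this is the only one with $q>0$ that is compatible with a greedy exploration (recall that greediness already forces $s\ge 1$ and $e_1\ge 1$, ruling out Option~(i), while Option~(ii) has $q=0$ to begin with). Under Option~(iii) we have $q=1, s=1, v_1=2, e_1=1$, which means $\GG_2=H_1\cup H_2$ consists of two $C_k$-copies sharing a single edge $uv$, and the remainder of the exploration adds $t-1$ external copies (each attached at one existing vertex, producing a cactus-like superstructure on top of $\GG_2$) plus exactly one internal edge $e$. The small-component condition forces $t\ge 2$. I aim to derive a contradiction via a vertex-cover/disturbance-counting argument, completed for the delicate case $k=4$ by a short case analysis.

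Call a vertex \emph{disturbed} in $\GG$ if its final degree exceeds its initial degree of $2$ (as an internal vertex of $\GG_2$ or as a new vertex of an external copy). In $\GG_2$ the vertices $u,v$ have degree $3$, while on each of the two $uv$-paths $P_1,P_2$ through $H_1,H_2$ the $k-2$ internal vertices have degree $2$ and form a subpath with $k-3$ edges; any pair of adjacent undisturbed vertices on such a subpath would be evil twins and hence a bad pair of $\GG$, so a vertex cover's worth of each subpath must be disturbed, giving a total requirement of $2\lceil(k-3)/2\rceil$ disturbances on $\GG_2$. Each external copy added later contributes $k-1$ new degree-$2$ vertices on a subpath with $k-2$ evil-twin edges, demanding $\lceil(k-2)/2\rceil$ additional disturbances. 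On the supply side, the $t-1$ chain attachments disturb at most one vertex each (their attachment point) and $e$ disturbs at most two, for a total of at most $t+1$ disturbed vertices. A direct comparison shows that for every $k\ge 5$ and $t\ge 2$ the required count strictly exceeds $t+1$, yielding the contradiction immediately.

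The case $k=4$ is tight at exactly $t+1$ and must be finished by a case analysis. Here ``no waste'' pins the disturbed set down to exactly one vertex of $\{a_1,a_2\}$, one of $\{b_1,b_2\}$, and the middle vertex of every external copy; no disturbance may fall on $u$, $v$ or on any non-middle vertex of an external copy, and no vertex may be disturbed twice. Letting $\ell_1$ denote the number of chain attachments in the interior of $\GG_2$, the case $\ell_1=0$ is impossible since the first external copy must attach in $\GG_2$ and cannot land on $u$ or $v$ without wasting. For $\ell_1=1$ the no-waste constraint forces the chain to be a single linear ladder rooted at some $a^*\in\{a_1,a_2\}$, and $e$ must have endpoints $(b^*,\text{ the final middle})$; for $\ell_1=2$ the chains form two linear sub-ladders rooted at $a^*$ and $b^*$, and $e$ must join the two ``leaf'' middles. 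In either case, traversing the cactus structure of $\GG\setminus\{e\}$ shows that the distance between the endpoints of $e$ is at least $4$ (entering a chain from its attachment vertex already costs $2$, and reaching the other endpoint then costs at least $2$ more, possibly with an extra $\GG_2$-step), so no $C_4$-copy can contain $e$. Hence $e$ is a bad edge, contradicting stability. The main obstacle is precisely this structural distance argument in the $k=4$ regime, where one must verify that the forced cactus topology rules out the length-$3$ connection that would be needed to realize a $C_4$-copy through $e$.
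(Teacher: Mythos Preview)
Your proof is correct, but you do more work than necessary because you miss a structural simplification. Since the exploration cannot terminate with an external copy (Corollary~\ref{cor:noExtrnlCps}) and the only non-external step remaining after $\GG_2$ is the single internal edge $e$, that edge must be the \emph{last} step of the exploration. Consequently the $t-1$ external copies are added consecutively and form a single $C_k$-chain attached at one vertex $w\in V(\GG_2)$; your case $\ell_1=2$ is therefore vacuous, and $\ell_1=1$ is automatic once $t\ge 2$.

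With this single-chain structure in hand, the paper's argument is shorter and more direct than your counting. It simply notes that each of $V_1,V_2$ (the degree-$2$ vertices of $H_1,H_2$ in $\GG_2$) contains evil twins; since the chain meets $\GG_2$ only at $w$, at least one of these sets, say $V_1$, still has evil twins in $\GG':=\GG\setminus\{e\}$, and so does $V_3$ (the new vertices of the final external copy $H_3$). Hence $e$ must join $V_1$ to $V_3$, and for the $V_3$-twins to disappear one is forced to $k=4$ with the $V_3$-endpoint being the non-neighbour of the existing vertex $z$ of $H_3$. A $C_4$ through $e$ would then require a length-$3$ path in $\GG'$ between the two endpoints of $e$, but any such path must pass through the cut vertex $z$, which is at distance at least $2$ from each endpoint --- contradiction.

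Your disturbance/vertex-cover count is a pleasant uniform way to kill $k\ge 5$ in one line, and for $k=4$ your ``no waste'' analysis recovers exactly the forced structure above, after which your distance argument is the same cut-vertex observation phrased differently. So the two proofs coincide in substance; yours trades a short structural observation (the edge comes last, hence one chain) for a counting framework, at the cost of the spurious $\ell_1=2$ case.
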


\begin{proof}
	Assume otherwise for contradiction. Then $q=1, s=1, v_1=2, e_1=1$ must hold by Claim~\ref{Ap:cl:sgeq1}, and $t>1$ must hold as otherwise $\GG$ would be a small component. By definition of greedy explorations, and since no exploration can end with the addition of an external copy, we conclude that any greedy exploration of $\GG$ with $q>0$ must go as follows. First the external copy $H_1$ is added, then the internal copy $H_2$ is added, sharing exactly one edge and its two endpoints with $H_1$. Then a non-empty $C_k$-chain $\C$ is added, ending with an external copy $H_3$, and finally an internal edge $e$ is added. Recall that $\GG_2 = H_1 \cup H_2$, and let $\GG'$ be the explored graph just before the addition of $e$.
	
	For $i=1,2$, let $V_i \sbst V(H_i)$ be the set of $k-2$ vertices of degree 2 in $\GG_2$, and let $z$ and $V_3$ be the existing vertex and the set of new vertices added by $H_3$, respectively. Note that both $V_1$ and $V_2$ contain evil twins in $\GG_2$, and since the intersection of $\C$ and $\GG_2$ is a single vertex $w$ (satisfying $w=z$ in case $t=2$), at least one of these sets --- assume WLOG it is $V_1$ --- still contains evil twins in $\GG'$. Since $V_3$ also contains evil twins in $\GG'$, and since $\GG$ cannot contain any evil twins, it follows that $w \in V_2$, and that $e = uv$ for some $u \in V_1$ and $v \in V_3$. Moreover, we may assume that $k=4$ and that $v$ is the non-neighbor of $z$ in $H_3$, as otherwise $V_3$ still contains evil twins in $\GG$.
	
	But now we have a contradiction: by stability $uv$ must be part of a $C_4$-copy, meaning that in $\GG'$ there exists a path of length 3 between $u$ and $v$. However, any such path must contain $z$, which is of distance at least two from both $u$ and $v$ (since there are no edges between $V_1$ and $V_2$, and therefore no edges between $u$ and any vertex of $\C$).
\end{proof}


It remains to analyze greedy explorations in which $q=0,~\sum_{i=1}^s e_i \leq k$, and $e_i = v_i-1$ for every $i\in [s]$. The fact that $q=0$ means that each step of the exploration is the addition of a $C_k$-copy, either external or internal. The restriction $e_i=v_i-1$ means that the existing part of each internal copy is a single path, which allows us to describe the addition of internal copies very precisely. Suppose that the $i$th internal $C_k$-copy $H'$ is added during the exploration to $\GG' \sbst \GG$. This must be done by adding a path of $k-e_i \ge 2$ new edges (recall that $e_i <v_i < k$), where all internal vertices of the path are new vertices, and its two endpoints $x,y$ are existing vertices such that $\GG'$ contains a path of length $e_i$ between them. We say that $H'$ is \emph{attached} to $x$ and $y$. As an immediate corollary of Claim~\ref{Ap:cl:generalizedTwins}, we get the following restriction on the vertices the last added copy is attached to.

\begin{corollary}\label{Ap:cor:attachedTo1}
	Let $\GG$ be a feasible component, and let $H'$ and $H''$ be the penultimate and last $C_k$-copies added in a greedy exploration of it, respectively. Then $H''$ is attached to at most one new vertex of $H'$.
\end{corollary}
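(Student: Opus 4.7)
The plan is to argue by contradiction: if $H''$ is attached to two new vertices of $H'$, then those two vertices satisfy the hypotheses of Claim~\ref{Ap:cl:generalizedTwins}, making them a bad pair and contradicting the stability of $\GG$. The setup uses two previously established facts about a greedy exploration of a feasible component: $q=0$ by Claim~\ref{Ap:cl:Ckq=0}, and every internal copy added satisfies $e_i = v_i - 1$ by Claim~\ref{Ap:cl:sgeq1}. Consequently the new vertices of each $C_k$-copy form a path whose endpoints are attached to existing vertices by single edges, and in particular $H''$ is attached to the two endpoints $x,y$ of its own existing path.

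Assume then that both $x$ and $y$ are new vertices of $H'$. First I would verify that $d_\GG(x) = d_\GG(y) = 3$. Immediately after $H'$ is added, every new vertex of $H'$ has degree exactly $2$: this holds whether $H'$ is external or internal, because the new vertices of any $C_k$-copy form a path whose interior vertices have degree $2$ and whose endpoints are attached to existing vertices by a single edge each. Since $H'$ and $H''$ are consecutive additions in the exploration, no further edges touch $x$ or $y$ between them. When $H''$ is added, each of $x$ and $y$ receives exactly one new incident edge as an endpoint of the new path of $H''$, giving them degree $3$ in $\GG$.

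Next I would exhibit two distinct $x$--$y$ paths in $\GG$ whose internal vertices all have degree $2$. The first path $P_1$ lies inside $H'$: if $H'$ is external, take the arc of the cycle $H'$ from $x$ to $y$ avoiding the unique existing vertex of $H'$; if $H'$ is internal, take the subpath of the new path of $H'$ joining $x$ to $y$. In both cases the internal vertices of $P_1$ are new vertices of $H'$ distinct from $x,y$, and they retain degree $2$ in $\GG$ because $H''$ is attached only at $x$ and $y$. The second path $P_2$ runs through the new vertices of $H''$, whose interior vertices again have degree $2$. The interiors of $P_1$ and $P_2$ are disjoint, so the paths are distinct; applying Claim~\ref{Ap:cl:generalizedTwins} to $x,y,P_1,P_2$ yields that $\{x,y\}$ is a bad pair of $\GG$, contradicting its stability. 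The only step requiring care is the construction of $P_1$ when $H'$ is internal, where one must use the fact that since $e_{j}=v_{j}-1$ for $H'$ too, its new vertices form a single path, allowing one to connect any two of them within that path.
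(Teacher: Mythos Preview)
Your argument is correct and follows the same approach as the paper's proof: assume $H''$ is attached to two new vertices $x,y$ of $H'$, verify $d_\GG(x)=d_\GG(y)=3$, exhibit two $x$--$y$ paths with interior vertices of degree $2$ (one inside $H'$, one through the new vertices of $H''$), and invoke Claim~\ref{Ap:cl:generalizedTwins} to obtain a bad pair. You simply supply more detail than the paper, in particular the case split on whether $H'$ is external or internal and the verification that the interior vertices of $P_1$ retain degree $2$ because $H''$ adds new edges only at its attachment points.
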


\begin{proof}
	Assume otherwise, and let $x,y$ be the two vertices $H''$ is attached to. Then $d_\GG(x) = d_\GG(y) = 3$, and there exist two paths $P_1,P_2 \sbst \GG$ between $x$ and $y$ with no internal vertices of degree larger than~2, the one added by $H'$ and the one added by $H''$. Thus, the conditions for Claim~\ref{Ap:cl:generalizedTwins} hold, and $\{x,y\}$ is a bad pair, a contradiction.
\end{proof}

By the general restrictions of the exploration process, we have that in any greedy exploration the first step is the addition of an external copy, while the second step and the last step (which must be two different steps because otherwise $\GG = \GG_2$ is a small component) are the additions of internal copies, implying in particular $s>1$. Given the above characterization of internal copy additions, we immediately deduce that $e_s = k-2$, that is, only one new vertex is added in the last step of the exploration, as otherwise this step will add evil twins to the graph (in any step of the exploration, each pair of adjacent new vertices is a pair of evil twins). It follows that $\sum_{i=1}^{s-1} e_i \leq 2$, leaving no much options for $\sss$.

\begin{corollary}\label{Ap:cor:3cases}
	In any greedy exploration of a feasible component exactly one of the following holds.
	\begin{enumerate}[$(a)$]
		\item $s=2,~e_1 = 2,~e_2 = k-2,~t>1$.
		\item $s=2,~e_1 = 1,~e_2 = k-2$.
		\item $s=3,~e_1 = 1,~e_2 = 1,~e_3 = k-2$.
	\end{enumerate}
\end{corollary}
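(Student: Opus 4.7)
The plan is to parlay the constraints from the preceding claims into a finite case analysis on $s$ and on the vector $\sss$. By Claim~\ref{Ap:cl:Ckq=0} I already have $q=0$, and by Option~$(ii)$ of Claim~\ref{Ap:cl:sgeq1} I know that $e_i = v_i - 1 \geq 1$ for every $i \in [s]$ and that $\sum_{i=1}^s e_i \leq k$. Together with the fact that the first exploration step adds an external copy, the entire structure of the exploration is then encoded by the integers $s$, $(e_1,\ldots,e_s)$, and $t$, and the three listed cases are exactly a partition of the feasible combinations of these integers.

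First I would establish $s \geq 2$. Greediness forces the second exploration step to be an internal copy, so $s \geq 1$. If $s = 1$, then no further internal copies or edges can ever be added, and since the exploration cannot terminate with an external copy by Corollary~\ref{cor:noExtrnlCps}, we must have $\GG = \GG_2$. But $v(\GG_2) = 1 + (k-1) + (k-v_1) \leq 2(k-1)$ (using $v_1 \geq 2$), making $\GG_2$ a small component in the sense of the $(C_k,1)$ deletion algorithm, which contradicts stability. Next I would pin down $e_s$ via the evil-twin observation already isolated in the text before the corollary: the last internal copy attaches a path with $k - e_s$ new edges and hence $k - e_s - 1$ new internal vertices, and since no edges incident to those vertices are added afterwards they retain degree $2$ in $\GG$. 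If two such new internal vertices were adjacent, they would be a pair of evil twins surviving in $\GG$, contradicting stability; hence the path contains at most one new internal vertex, i.e.\ $k - e_s - 1 \leq 1$, and since $e_s \leq k-2$ trivially (as $v_s < k$), we conclude $e_s = k-2$.

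Substituting back gives $\sum_{i=1}^{s-1} e_i \leq k - (k-2) = 2$, and combined with $e_i \geq 1$ this forces either $s = 2$ with $e_1 \in \{1,2\}$, or $s = 3$ with $e_1 = e_2 = 1$. These three possibilities match cases $(a)$, $(b)$, $(c)$ respectively. The only remaining loose end is the bound $t > 1$ in case $(a)$: there $v_1 = 3$ and $v_2 = k-1$, so a direct count gives $v(\GG) = 1 + t(k-1) + (k-3) + 1 = (t+1)(k-1)$, and if $t = 1$ this equals $2(k-1)$, again making $\GG$ a small component and contradicting stability; hence $t \geq 2$.

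I do not foresee a genuine obstacle: the heavy lifting was already done in Claims~\ref{Ap:cl:sgeq1} through~\ref{Ap:cl:Ckq=0}, together with the pre-corollary paragraph that isolates the $e_s = k-2$ observation. The most delicate point is ensuring that the evil-twin argument genuinely applies only at the \emph{last} exploration step (new internal vertices added earlier might lose their degree-$2$ status later), but this is exactly why the argument is deployed against $e_s$ alone and not against the earlier $e_i$'s, where values greater than $1$ are harmless and indeed occur in case $(a)$.
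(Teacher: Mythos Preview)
Your proposal is correct and follows essentially the same approach as the paper. The paper places the arguments establishing $s\ge 2$ and $e_s=k-2$ in the paragraph immediately preceding the corollary (so its formal proof only handles the $t>1$ bound in case~$(a)$), but your recapitulation of those steps and your vertex count $(t+1)(k-1)$ for case~$(a)$ match the paper's reasoning exactly.
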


\begin{proof}
	We already know that only Option~$(ii)$ of Claim~\ref{Ap:cl:sgeq1} is possible, and that $e_s = k-2$. Thus it only remains to explain why $t>1$ in Case~$(a)$. If not, then the exploration lasts only three steps, where the second step adds $k-3$ new vertices and the third step adds just one, resulting in a small component in contradiction.
\end{proof}

The significant next claim narrows down our analysis to the case $k=4$.

\begin{claim}\label{Ap:cl:noFeasible}
	There exist no feasible components for $k\geq 5$.
\end{claim}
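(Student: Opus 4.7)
The plan is to assume for contradiction that $\GG$ is a non-empty feasible component with $k \geq 5$, fix a greedy exploration of $\GG$ (Claim~\ref{Ap:cl:greedy}), and exhibit in $\GG$ either an evil-twin pair or a pair satisfying the hypotheses of Claim~\ref{Ap:cl:generalizedTwins}, contradicting stability. By Corollary~\ref{Ap:cor:3cases} the exploration falls into one of three cases $(a), (b), (c)$, and I would analyse each of them separately.

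In each case, I would first describe $\GG_2$ explicitly. In cases $(b)$ and $(c)$ two $C_k$-copies share a single edge $uv$, so $\GG_2$ has exactly two degree-$3$ vertices $u, v$ joined by two internally disjoint arcs of length $k - 1$, each containing $k - 2 \geq 3$ consecutive internal degree-$2$ vertices. In case $(a)$ two $C_k$-copies share a path of two edges $x\text{-}y\text{-}z$, forming a theta graph whose two degree-$3$ vertices $x, z$ are joined by three arcs of lengths $2, k-2, k-2$. In every case, because $k \geq 5$, each long arc contains $\geq 2$ consecutive evil-twin pairs, and by Claim~\ref{Ap:cl:generalizedTwins} the pair of degree-$3$ anchors is already a generalised bad pair (two of the arcs have only internal degree-$2$ vertices). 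So to avoid all bad pairs in $\GG$, subsequent exploration steps must simultaneously promote (to degree $\geq 3$) enough interior arc vertices to kill every evil-twin pair, and raise the degree of at least one anchor to $\geq 4$.

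I would then perform a ``promotion accounting''. Every external $C_k$-copy raises the degree of exactly one existing vertex by $2$ and adds $k - 1$ new degree-$2$ vertices. The final internal copy, which by Claim~\ref{Ap:cl:sgeq1} has $e_s = k - 2$, raises the degree of exactly two existing vertices by $1$ and introduces only one new vertex $w$ of degree $2$, whose two neighbours are the attachment points. In case $(c)$ there is in addition an intermediate internal copy with $e_2 = 1$, which raises the degree of two existing vertices by $1$ but also introduces a fresh long arc of $k - 2$ consecutive new degree-$2$ vertices --- bringing its own quota of evil-twin pairs. Corollary~\ref{Ap:cor:attachedTo1} further restricts the attachment of the final internal copy (and, in case $(c)$, of the intermediate one). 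Noting that in order to destroy all evil twins on a path of $m$ consecutive degree-$2$ vertices one must promote at least $\lceil (m-1)/2 \rceil$ of its internal vertices, and that a single external-copy attachment can promote at most one interior arc vertex, I would compare the required number of interior-arc promotions against the available increments. For $k \geq 5$ this comparison fails in every case, so some long arc retains two adjacent degree-$2$ vertices, yielding an evil twin in $\GG$, or else the original generalised bad pair of anchor vertices survives.

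The hard part will be case $(c)$, since the extra intermediate internal copy both enlarges the promotion budget and creates a third long arc that must itself be resolved. I would handle it by distinguishing where the intermediate copy is attached (inside an arc of $\GG_2$ versus at an anchor vertex), invoking Corollary~\ref{Ap:cor:attachedTo1} between the intermediate and final internal copies, and then applying a pigeonhole argument over the three long arcs in $\GG$. This still forces some arc to keep a consecutive pair of degree-$2$ vertices for $k \geq 5$, which supplies the desired contradiction and completes the proof.
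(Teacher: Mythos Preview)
Your overall plan is on the right track, but the bare ``promotion accounting'' you describe does not close every case. Consider case~$(b)$ with $k=5$ and $t=1$: each long arc of $\GG_2$ has $k-2=3$ interior degree-$2$ vertices, so your bound $\lceil(m-1)/2\rceil$ asks for only one promotion per arc, while the final internal copy supplies exactly two promotions (its two attachment points). If those two attachments landed on the middle vertex of each arc, all evil twins in $\GG_2$ would vanish, \emph{and} each arc would now contain an internal degree-$3$ vertex, so the generalised bad pair $\{u,v\}$ from Claim~\ref{Ap:cl:generalizedTwins} would also be destroyed. What actually rules this out is a constraint your sketch never invokes: the two attachment points of the last internal copy must lie at distance exactly $e_s=k-2$ in the current graph, and the two arc-middles are farther apart than that. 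Similarly, your treatment of $t>1$ is too loose: only the first external copy of a chain can touch $V(\GG_2)$, and every subsequent external copy creates a fresh run of $k-1$ new degree-$2$ vertices that must themselves be resolved, so the balance is not simply ``$(t-1)+2$ promotions against the two long arcs of $\GG_2$.''

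The paper bypasses all of this with a short argument focused on the \emph{last two} copies $H',H''$. Case~$(a)$ is immediate: there $H'$ is external and contributes $k-1\ge 4$ new vertices, and by Corollary~\ref{Ap:cor:attachedTo1} $H''$ is attached to at most one of them, so evil twins survive among the new vertices of $H'$. The key step is for cases~$(b)$ and~$(c)$: since both have $e_1=1$, \emph{every} greedy exploration of $\GG$ has $e_1=1$, and therefore $H'$ and $H''$ can share at most one edge --- otherwise one could start a fresh greedy exploration with $H',H''$ (legitimate by Claim~\ref{Ap:cl:Ckq=0}) and obtain $e_1\ge 2$. Combining this one-edge bound with Corollary~\ref{Ap:cor:attachedTo1} and $e_{s-1}=1$ shows that $H'$ contributes a path of at least $k-2\ge 3$ new vertices, and if $H''$ is attached to one of them at all it must be one adjacent to an existing vertex of $H'$, i.e.\ an endpoint of that path. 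Hence at least $k-3\ge 2$ consecutive new vertices of $H'$ keep degree $2$, and an evil-twin pair survives. This re-exploration trick is the idea your proposal is missing; with it there is no need for any chain bookkeeping or arc-by-arc pigeonhole.
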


\begin{proof}
	Let $k \ge 5$, assume for contradiction that $\GG$ is a feasible component and consider a greedy exploration of $\GG$. Let $H'$ and $H''$ be the penultimate and last $C_k$-copies added, respectively. First observe that Case~$(a)$ of Corollary~\ref{Ap:cor:3cases} can be excluded. Indeed, in this case $H'$ is an external copy, meaning it adds a path of $k-1 \ge 4$ new vertices. By Corollary~\ref{Ap:cor:attachedTo1} the last copy $H''$ is attached to at most one of them, implying there remain evil twins in $\GG$.
	
	Since only Cases~$(b)$ and~$(c)$ are possible, and in both $e_1 = 1$, it follows that in \emph{any} greedy exploration of $\GG$, the first $C_k$-copies $H_1$ and $H_2$ share exactly one edge. Hence, $H'$ and $H''$ also share at most one edge, as otherwise we could start the exploration with these two copies (this is guaranteed by Claim~\ref{Ap:cl:Ckq=0}). Using this fact, Corollary~\ref{Ap:cor:attachedTo1}, and the fact that $e_{s-1} = 1$, we get that $H'$ adds a path of at least $k-2 \ge 3$ new vertices, and either $H''$ is attached to none of them, or it is attached to a new vertex adjacent to one of the existing vertices of $H'$. In either case we get that at least one pair of evil twins added by $H'$ survives in $\GG$, a contradiction.
\end{proof}

Before performing a detailed case analysis for $k=4$, and in order to simplify it, we first consider the endings of greedy explorations in the following two simple claims.

\begin{claim}\label{Ap:cl:penExternal}
	Let $\GG$ be a feasible component. Suppose that in a greedy exploration of $\GG$ the penultimate step is the addition of an external $C_4$-copy $H'$. Let $x$ be the existing vertex of $H'$, and let $y$ be the new vertex of $H'$ who is not adjacent to $x$. Then the internal $C_4$-copy which is added in the last step is attached to $x$ and $y$.
\end{claim}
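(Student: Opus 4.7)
The plan is to set coordinates on the external copy by writing $H' = x\text{-}a\text{-}y\text{-}b\text{-}x$, so that $a,b$ are the two new neighbors of $x$ in $H'$ and $y$ is the new vertex opposite $x$. Writing $\GG''$ for the explored graph just after the addition of $H'$, the key structural observation is that $N_{\GG''}(y) = \{a,b\}$ and $N_{\GG''}(a) = N_{\GG''}(b) = \{x,y\}$. In particular each of $a,b,y$ has $\GG''$-degree~$2$, and both $\{a,y\}$ and $\{b,y\}$ are evil twin pairs in $\GG''$; these pairs will serve as the ``witnesses'' of contradiction in the bad cases.

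Next, Corollary~\ref{Ap:cor:3cases} gives $e_s = k-2 = 2$ and (via $v_s = e_s+1$) $v_s = 3$, so the last internal $C_4$-copy $H''$ has the form $u\text{-}p\text{-}v\text{-}w\text{-}u$, where $u,v$ are the two existing vertices to which $H''$ is attached, $p$ is their common $\GG''$-neighbor along the existing length-$2$ path, and $w$ is the single new vertex (with new edges $uw$ and $wv$). Corollary~\ref{Ap:cor:attachedTo1} then allows at most one of $u,v$ to be a new vertex of $H'$, which leaves only two broad cases to analyze: either neither of $u,v$ lies in $\{a,b,y\}$, or exactly one of them does.

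In the first case, one checks that $p$ must also avoid $\{a,b,y\}$: if $p\in\{a,b\}$ then $\{u,v\} \sbst N_{\GG''}(p) = \{x,y\}$, forcing $y\in\{u,v\}$; and if $p=y$ then $\{u,v\}\sbst\{a,b\}$, again contradicting the case assumption. Hence $H''$ is vertex-disjoint from $\{a,b,y\}$, so $\{a,y\}$ survives as an evil twin pair in $\GG$, contradicting stability. In the second case, WLOG $u \in \{a,b,y\}$ and $v \in V(\GG')$. If $u=a$, then $p\in N_{\GG''}(a)=\{x,y\}$, but $v\in V(\GG')$ cannot be adjacent to $y$ in $\GG''$ since $N_{\GG''}(y)=\{a,b\}$ lies outside $V(\GG')$; hence $p=x$, and the new edges $aw,wv$ miss both $b$ and $y$, leaving $\{b,y\}$ as an evil twin pair in $\GG$, contradicting stability. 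The subcase $u=b$ is symmetric (with $\{a,y\}$ surviving instead). The remaining subcase is $u=y$: now $p\in N_{\GG''}(y)=\{a,b\}$ and $v\in V(\GG')$ must be a $\GG''$-neighbor of $p$, but $N_{\GG''}(a)=N_{\GG''}(b)=\{x,y\}$ forces $v=x$, so $\{u,v\}=\{x,y\}$ as required. The only delicate step is the bookkeeping in the second case, and it becomes routine once one exploits the tight constraints on the $\GG''$-neighborhoods of the new vertices of $H'$.
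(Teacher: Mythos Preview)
Your proof is correct and follows essentially the same approach as the paper: both rely on Corollary~\ref{Ap:cor:attachedTo1} to restrict attachment to at most one new vertex of $H'$, then use the surviving evil-twin pair among $\{a,y\},\{b,y\}$ to force the attached new vertex to be $y$, and finally use the length-$2$ path constraint to pin down the other attachment as $x$. The paper compresses your case analysis into two sentences (``unless $H''$ is attached to $y$, evil twins remain; and $x$ is the only vertex at distance $2$ from $y$''), whereas you spell out the neighborhood bookkeeping explicitly --- but the underlying argument is identical.
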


\begin{proof}
	By Corollary~\ref{Ap:cor:attachedTo1}, the last copy added $H''$ is attached to at most one of the new vertices of $H'$, and so unless $H''$ is attached to $y$, evil twins will remain in $\GG$ (the vertex $y$ and one of its neighbors). Additionally, before $H''$ is added there must be a path of length 2 between $y$ and the other vertex $H''$ is attached to, and $x$ is the only vertex meeting this requirement.
\end{proof}

\begin{claim}\label{Ap:cl:sameDegree}
	Let $\GG$ be a feasible component. Suppose that $\GG' \sbst \GG$ is a graph obtained during a greedy exploration of $\GG$, such that the remainder of the exploration is the addition of a nonempty $C_4$-chain and then the addition of an internal $C_4$-copy. Let $u$ be the existing vertex of the first external copy in the chain. Then $d_{\GG}(v) = d_{\GG'}(v)$ for every $v \in V(\GG') \stm \{u\}$.
\end{claim}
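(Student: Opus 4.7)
The plan is to argue that no edge added after the stage $\GG'$ of the exploration can be incident to a vertex of $V(\GG') \setminus \{u\}$. Since vertices and edges are never removed during exploration, this immediately yields the desired degree equality.

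First I would examine the $C_4$-chain additions. By definition of an external copy, each $C_4$-copy $H_i$ in the chain attaches to the previously explored graph at precisely one existing vertex, and the three new edges it contributes are all incident either to that attachment vertex or to the three new vertices it introduces. For $i = 1$ the attachment vertex is $u$ by hypothesis, and for $i > 1$ it is a new vertex introduced by $H_{i-1}$, hence outside $V(\GG')$. Therefore no edge of the chain is incident to $V(\GG') \setminus \{u\}$.

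Next I would pin down where the final internal $C_4$-copy $H''$ attaches. The penultimate step of the exploration is the addition of the last chain-copy $H_t$, which is external, so Claim~\ref{Ap:cl:penExternal} applies and forces $H''$ to be attached to the existing vertex $x$ of $H_t$ together with the unique new vertex $y$ of $H_t$ that is not adjacent to $x$. The edges contributed by $H''$ are incident only to $x$, $y$, or the single new vertex $H''$ introduces. If $t \geq 2$, then $x$ is the vertex shared by $H_{t-1}$ and $H_t$, hence a new vertex introduced by $H_{t-1}$, so $x \notin V(\GG')$; if $t = 1$, then $x = u$. In either case $y \notin V(\GG')$, so no edge of $H''$ is incident to $V(\GG') \setminus \{u\}$.

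Combining the two parts, every edge of $\GG$ that is not already in $\GG'$ avoids $V(\GG') \setminus \{u\}$, which gives $d_\GG(v) = d_{\GG'}(v)$ for every $v \in V(\GG') \setminus \{u\}$. The only point requiring care is confirming the applicability of Claim~\ref{Ap:cl:penExternal}: this is automatic since, throughout the analysis of feasible components, we restrict to greedy explorations (Claim~\ref{Ap:cl:greedy}), and the penultimate step is explicitly an external copy by the description of the remainder.
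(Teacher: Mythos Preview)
Your proposal is correct and follows essentially the same approach as the paper's proof: show that the chain only touches $V(\GG')$ at $u$, then invoke Claim~\ref{Ap:cl:penExternal} to see that the final internal copy attaches within the last external copy of the chain. The paper's version is terser (it does not spell out the $t=1$ versus $t\ge 2$ distinction for the attachment point $x$), but the argument is the same.
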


\begin{proof}
	Clearly $u$ is the only vertex in $V(\GG')$ whose degree is increased by the addition of the $C_4$-chain. By Claim~\ref{Ap:cl:penExternal}, the internal $C_4$-copy added last is attached to two vertices belonging to the vertex set of the last external copy in the chain. So except perhaps $u$, no vertex of $V(\GG')$ is affected by the last step as well.
\end{proof}

We have come to the last claim of this section, showing that although feasible components do exist for $k=4$, this is no obstacle for Breaker.

\begin{claim}\label{Ap:cl:BwinC4}
	Breaker wins the unbiased $C_4$-game played on any feasible component.
\end{claim}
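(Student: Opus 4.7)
The plan is to perform a case analysis guided by the structural results already established. By Claim~\ref{Ap:cl:Ckq=0} and Corollary~\ref{Ap:cor:3cases}, every greedy exploration of a feasible component $\GG$ satisfies $q = 0$ and $(s, e_1, \dots, e_s)$ is one of the three explicit tuples listed. Together with Corollary~\ref{Ap:cor:attachedTo1} and Claim~\ref{Ap:cl:penExternal}, these constraints pin down $\GG$ up to the length of a $C_4$-chain sitting between a \emph{head} consisting of the first two or three $C_4$-copies $H_1, H_2$ (and possibly $H_3$ in case (c)) and a \emph{tail} consisting of the penultimate external copy together with the final internal copy, whose attachment is forced. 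In each case I will describe $\GG$ explicitly, so that every $C_4 \sbst \GG$ is accounted for.

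Breaker's strategy will be the natural pairing strategy: I exhibit a collection $\Lambda$ of pairwise disjoint pairs of vertices of $\GG$ such that every $C_4 \sbst \GG$ contains both vertices of some pair in $\Lambda$. Whenever Maker takes a vertex lying in a pair of $\Lambda$, Breaker responds by taking its partner (and plays arbitrarily otherwise); this guarantees Breaker owns at least one vertex of every $C_4$, so Maker cannot complete a $C_4$ and Breaker wins. To build $\Lambda$ I exploit the chain structure: for each external $C_4$ added during the chain phase I select a pair of its vertices that covers every $C_4$ of that link. The typical choice is a surviving pair of evil twins, whose existence in $\GG$ (and not merely in the intermediate graph) follows from Claim~\ref{Ap:cl:sameDegree} combined with Corollary~\ref{Ap:cor:attachedTo1}; in the attachment patterns where no evil-twin pair survives, a different pair within the same cycle works instead, and a short sub-case analysis shows such a pair can always be chosen disjointly from those for the neighbouring cycles. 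At the head and tail of $\GG$ I add a bounded number of extra pairs, for example the two endpoints of the edge shared by $H_1$ and $H_2$ in case~(b) (which lie in both cycles) and the pair of vertices $\{x,y\}$ of Claim~\ref{Ap:cl:penExternal} at the tail (which lie in both the penultimate external copy and the terminal internal copy).

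The main obstacle is to verify that $\Lambda$ really hits \emph{every} $C_4$ in $\GG$, including any $C_4$ that might arise incidentally from edges contributed by different parts of the exploration rather than from a single added copy. Here the restriction $q = 0$ from Claim~\ref{Ap:cl:Ckq=0} is essential — it rules out stray internal edges that could close unexpected $4$-cycles — as is Corollary~\ref{Ap:cor:attachedTo1}, which prevents the terminal internal copy from sharing two adjacent new vertices with the penultimate copy and thereby creating a hidden $C_4$. A short direct check in each case, examining how two edges originating in distinct exploration steps could conceivably form a $4$-cycle, confirms that the only $C_4$'s in $\GG$ are those added during the exploration, so the pairings constructed above block all of them, and Breaker wins.
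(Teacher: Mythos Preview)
Your proposal rests on a structural picture that is not correct, and the mistake is self-defeating. You treat a feasible component $\GG$ as a head, a $C_4$-chain of unrestricted length, and a tail, and then propose to pair each chain link using a ``surviving pair of evil twins'' in $\GG$. But a feasible component is by definition $(C_4,1)$-stable, and evil twins are a bad pair; hence $\GG$ contains \emph{no} evil twins at all. The very claim (via Claim~\ref{Ap:cl:sameDegree}) that evil twins from an intermediate chain link survive in $\GG$ is a proof that such a $\GG$ is not feasible, not a source of pairs for Breaker. In other words, the argument you invoke to produce the pairing is exactly the argument that rules out the long chains you are trying to handle.

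The paper's proof uses this observation in the opposite direction: in each of the three cases of Corollary~\ref{Ap:cor:3cases} it shows, using Claim~\ref{Ap:cl:sameDegree}, Claim~\ref{Ap:cl:penExternal} and Corollary~\ref{Ap:cor:attachedTo1}, that the absence of evil twins (and of the bad pair of Claim~\ref{Ap:cl:generalizedTwins}) forces $t$ to be $2$ in case~(a) and $1$ in cases~(b) and~(c), and moreover pins down the attachment points uniquely. Thus there are exactly three feasible components $\GG^{(a)},\GG^{(b)},\GG^{(c)}$, each on at most nine vertices, and for each one an explicit pairing $\Lambda$ of two or three disjoint pairs is exhibited. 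Your outline skips the step that bounds $t$, and without it neither the description of $\GG$ nor the proposed pairing goes through.
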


\begin{proof}
	Let $k=4$ and let $\GG$ be a feasible component. We consider all possible greedy explorations of $\GG$ and separate the proof according to the three cases of Corollary~\ref{Ap:cor:3cases}. We show that there exists a unique feasible component for each case, which is an easy win for Breaker via a pairing strategy. Recall that
	$H_1$ is the external copy initiating the exploration, $H_2$ is the internal copy added in the second step, and $\GG_2=H_1 \cup H_2$. Let $H_3$ be the second internal copy added during the exploration, and in case $s=3$ (that is, Case~$(c)$), let $H_4$ be the third internal copy added (and finishes the exploration). Keep in mind that $e_s=k-2=2$.
	
	\case{(a)}{s=2,~e_1 = e_2 = 2,~t>1}
	Since $t>1$ and $s=2$, and the exploration terminates with the addition of the internal copy $H_3$, an external $C_4$-copy $H'$ is added in the third step of the exploration. Let $w$ be the existing vertex of $H'$ and let $u,v$ be the two vertices of degree 3 in $\GG_2$. First observe that $w \in \{u,v\}$. Indeed, $\GG_2$ consists of three paths of length 2 between $u$ and $v$, where the internal vertex in each path is of degree 2. By Claim~\ref{Ap:cl:sameDegree}, every vertex in $V(\GG_2) \stm \{w\}$ remains with the same degree in $\GG$. If $w \not\in \{u,v\}$, then in $\GG$ all conditions of Claim~\ref{Ap:cl:generalizedTwins} hold, and $\{u,v\}$ is a bad pair, a contradiction.
	%
	%
	%
	%
	
	Next, assume for contradiction that $t>2$ and thus an external copy $H''$ is added in the fourth step. By the restrictions of the exploration process, the existing vertex of $H''$ is one of the new vertices of $H'$. Let $V(H') = \{w,x,y,z\}$, where $w$ and $z$ denote the existing vertices of $H'$ and $H''$, respectively. By Claim~\ref{Ap:cl:sameDegree} we have $d_\GG(x) = d_\GG(y) = 2$, making $x$ and $y$ evil twins if they are adjacent. If they are not, then every $C_4$-copy containing one of them must contain its only two neighbors $w$ and $z$ as well. On the other hand, as $wz \not\in E(\GG)$ in this case, any $C_4$-copy containing both $w$ and $z$ must contain their only two common neighbors $x$ and $y$. It follows that both $x$ and $y$ belong to no other $C_4$-copy but $H'$, which makes them a bad pair.
	
	In conclusion, if an external copy is added after $H'$ then two of the new vertices of $H'$ will be a bad pair in $\GG$, a contradiction. Thus $t=2$ and $H_3$ is added in the fourth and last step, and by Claim~\ref{Ap:cl:penExternal} it is attached to the existing vertex $w$ of $H'$ and to the non-neighbor of $w$ in $H'$. The resulting graph is $\GG^{(a)}$ shown in Figure~\ref{fig:feasible}. Breaker can win the game played on this graph by following the pairing strategy with respect to the pairs $\Lambda = \left\{\{x_1, x_2\}, \{y_1, y_2\}, \{z_1, z_2\} \right\}$.
	
	\case{(b)}{s=2,~e_1 = 1,~e_2 = 2}
	Since $e_1=1$, in both $H_1$ and $H_2$ the two vertices not in the intersection are evil twins. By Claim~\ref{Ap:cl:sameDegree}, if the third exploration step is the addition of an external $C_4$-copy, then at least one of these pairs survives in $\GG$. Thus, in the third and last step $H_3$ is added, and in such a way it is attached to one vertex from each pair of evil twins. Clearly the only possible option is $\GG^{(b)}$ shown in Figure~\ref{fig:feasible}.
	Breaker can win the game by following the pairing strategy with respect to the pairs $\Lambda = \left\{\{x_1, x_2\}, \{y_1, y_2\} \right\}$.
	
	\case{(c)}{s=3,~e_1 = e_2 = 1,~e_3 = 2}
	As in Case~$(b)$, in $\GG_2$ there exist two pairs of evil twins, $T_1$ and $T_2$, which are the two vertices of degree 2 in $H_1$ and $H_2$, respectively. At least one of these pairs survives in $\GG'$, the graph obtained after the addition of $H_3$. Indeed, $e_2 = 1$ and so $H_3$ is attached to two adjacent vertices. Now, if $H_3$ is added in the third step it cannot be attached to one vertex from each $T_i$. Otherwise, $H_3$ is added after an external copy $H'$, and by the restriction of the exploration must be attached to a new vertex of $H'$ and one of its neighbors, which obviously must also belong to $V(H')$. Thus, in this case only one vertex in $V(\GG_2)$ has larger degree in $\GG'$ than in $\GG_2$.
	
	Assume then WLOG that the pair $T_1$ survives in $\GG'$, and note that the two new vertices added by $H_3$ are evil twins in $\GG'$ as well. Denote this pair by $T_3$. By Claim~\ref{Ap:cl:sameDegree}, if the next addition to $\GG'$ is of an external $C_4$-copy, then at least one of the pairs $T_1$ and $T_3$ survives in $\GG$. It follows that $H_4$ is added immediately after $H_3$, and must be attached to one vertex from each of the pairs $T_1$ and $T_3$. Hence, there must be a path of length 2 in $\GG'$ between these two vertices, that is, they must have a common neighbor. This common neighbor must belong to $V(H_1)\cap V(H_3)$, since for $i=1,3$, the vertices of $T_i$ do not have any neighbors outside $H_i$.
	
	The only way to meet these requirements, as well as the requirement that $T_2$ does not survive in $\GG$, is to add $H_3$ in the third step, and to attach it to a vertex of $T_2$ and to its neighbor in $V(H_1)\cap V(H_2)$. This yields the graph $\GG^{(c)}$ as shown in Figure~\ref{fig:feasible}. Breaker can win the game by following the pairing strategy with respect to the pairs $\Lambda = \left\{\{x_1, x_2\}, \{y_1, y_2\}, \{z_1, z_2\} \right\}$.
\end{proof}

\iffigure
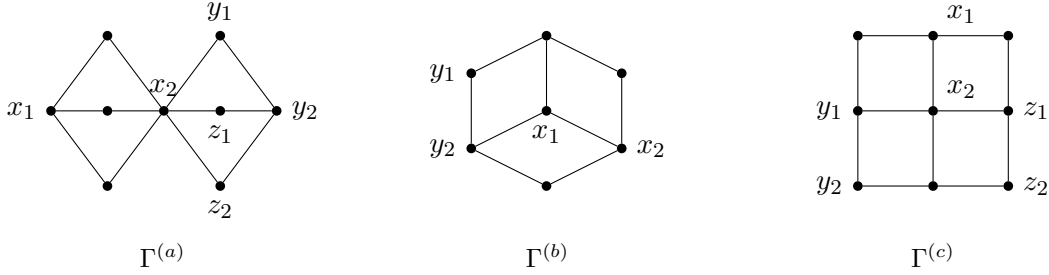
\begin{figure}
		\centering
	\begin{subfigure}[t]{0.3\textwidth}
			\centering
		\begin{tikzpicture}[auto, vertex/.style={circle,draw=black!100,fill=black!100, thick,
			inner sep=0pt,minimum size=1mm}]
		\node (x1) at ( 0.75,1) [vertex] {};
		\node (v1) at ( 0,0) [vertex,label=left:$x_1$] {};
		\node (x2) at ( 0.75,0) [vertex] {};
		\node (v2) at ( 1.5,0) [vertex,label=above:$x_2$] {};
		\node (z) at ( 2.25,0) [vertex,label=below:$z_1$] {};
		\node (x3) at ( 0.75,-1) [vertex] {};
		\node (x) at ( 2.25,1) [vertex,label=above:$y_1$] {};
		\node (w) at ( 3,0) [vertex,label=right:$y_2$] {};
		\node (y) at ( 2.25,-1) [vertex,label=below:$z_2$] {};
		
		\draw [-] (x1) --node[inner sep=0pt,swap]{} (v1);
		\draw [-] (v1) --node[inner sep=2pt,swap]{} (x2);
		\draw [-] (x2) --node[inner sep=2pt,swap]{} (v2);
		\draw [-] (x1) --node[inner sep=0pt,swap]{} (v2);
		\draw [-] (v1) --node[inner sep=0pt,swap]{} (x3);
		\draw [-] (x3) --node[inner sep=0pt,swap]{} (v2);
		\draw [-] (x) --node[inner sep=0pt,swap]{} (v2);
		\draw [-] (y) --node[inner sep=0pt,swap]{} (v2);
		\draw [-] (w) --node[inner sep=0pt,swap]{} (z);
		\draw [-] (z) --node[inner sep=0pt,swap]{} (v2);
		\draw [-] (x) --node[inner sep=0pt,swap]{} (w);
		\draw [-] (y) --node[inner sep=0pt,swap]{} (w);
		\end{tikzpicture}
		\caption*{$\GG^{(a)}$}
	\end{subfigure}
	\begin{subfigure}[t]{0.3\textwidth}
		\centering
		\begin{tikzpicture}[auto, vertex/.style={circle,draw=black!100,fill=black!100, thick,
			inner sep=0pt,minimum size=1mm}]
		\node (v0) at (0,0) [vertex,label=below:$x_1$] {};
		\node (v1) at (0,1) [vertex] {};
		\node (v2) at (1,0.5) [vertex] {};
		\node (v3) at (1,-0.5) [vertex,label=right:$x_2$] {};
		\node (v4) at (0,-1) [vertex,label=below:\phantom{$x_3$}] {};
		\node (v5) at (-1,-0.5) [vertex,label=left:$y_2$] {};
		\node (v6) at (-1,0.5) [vertex,label=left:$y_1$] {};
		
		\draw [-] (v0) --node[inner sep=0pt,swap]{} (v1);
		\draw [-] (v0) --node[inner sep=0pt,swap]{} (v3);
		\draw [-] (v0) --node[inner sep=0pt,swap]{} (v5);
		\draw [-] (v1) --node[inner sep=0pt,swap]{} (v2);
		\draw [-] (v2) --node[inner sep=0pt,swap]{} (v3);
		\draw [-] (v3) --node[inner sep=0pt,swap]{} (v4);
		\draw [-] (v4) --node[inner sep=0pt,swap]{} (v5);
		\draw [-] (v5) --node[inner sep=0pt,swap]{} (v6);
		\draw [-] (v6) --node[inner sep=0pt,swap]{} (v1);
		\end{tikzpicture}
		\caption*{$\GG^{(b)}$}
	\end{subfigure}
	\begin{subfigure}[t]{0.3\textwidth}
		\centering
		\begin{tikzpicture}[auto, vertex/.style={circle,draw=black!100,fill=black!100, thick,
			inner sep=0pt,minimum size=1mm}]
		\node (v1) at ( -1,1) [vertex] {};
		\node (v2) at ( 0,1) [vertex,label=400:$x_1$] {};
		\node (v3) at ( 1,1) [vertex] {};
		\node (v4) at ( -1,0) [vertex,label=left:$y_1$] {};
		\node (v5) at ( 0,0) [vertex,label=400:$x_2$] {};
		\node (v6) at ( 1,0) [vertex,label=right:$z_1$] {};
		\node (v7) at ( -1,-1) [vertex,label=left:$y_2$] {};
		\node (v8) at ( 0,-1) [vertex,label=below:\phantom{$x_3$}] {};
		\node (v9) at ( 1,-1) [vertex,label=right:$z_2$] {};
		
		\draw [-] (v1) --node[inner sep=0pt,swap]{} (v2);
		\draw [-] (v1) --node[inner sep=2pt,swap]{} (v4);
		\draw [-] (v2) --node[inner sep=0pt,swap]{} (v3);
		\draw [-] (v2) --node[inner sep=0pt,swap]{} (v5);
		\draw [-] (v3) --node[inner sep=0pt,swap]{} (v6);
		\draw [-] (v4) --node[inner sep=0pt,swap]{} (v5);
		\draw [-] (v4) --node[inner sep=0pt,swap]{} (v7);
		\draw [-] (v5) --node[inner sep=0pt,swap]{} (v6);
		\draw [-] (v5) --node[inner sep=0pt,swap]{} (v8);
		\draw [-] (v6) --node[inner sep=0pt,swap]{} (v9);
		\draw [-] (v7) --node[inner sep=0pt,swap]{} (v8);
		\draw [-] (v8) --node[inner sep=0pt,swap]{} (v9);
		\end{tikzpicture}
		\caption*{$\GG^{(c)}$}
	\end{subfigure}
\caption{The three feasible components for $k=4$}\label{fig:feasible}
\end{figure}
\fi

\vspace{3mm}

Claim~\ref{Ap:cl:BwinC4} provides the last missing piece of the puzzle and we can finally prove formally the main result of this section.

\begin{proof}[Proof of the $0$-statement of Theorem~\ref{thm:main} for $H' = C_k,~k \ge 4$]
	Recall that we consider $(1:b)$ $H$-games, and that $H'$ is a subgraph of $H$ of maximal 1-density.
	It suffices to show that breaker can win the $C_k$-game, as this ensures his win in the $H$-game as well.
	By bias monotonicity, it is also sufficient to prove Breaker's win for the case $b=1$.
	Fix a positive constant $c < k^{-1/k}$ and let $\ggn{cn^{-(k-1)/k}}$. By Lemma~\ref{lem:BwinG'} it suffices to show that \whp Breaker can win the game played on the vertex set of $G^*$, the $(C_k,1)$-core of $G$.
	By Claim~\ref{Ap:cl:noNegativeExponentCk} and Definition~\ref{Ap:def:feasible}, \whp $G^*$ is either empty or contains only feasible components. Claim~\ref{Ap:cl:noFeasible} implies that for $k \ge 5$ \whp $G^*$ is empty, and thus Breaker wins trivially. Claim~\ref{Ap:cl:BwinC4} shows that for $k=4$ \whp Breaker has a winning strategy for the game played on any component of $G^*$, which implies Breaker's win in the game played on $G^*$ itself by Observation~\ref{obs:playSeparately}.
\end{proof}

\section{Trees and Forests}\label{sec:forests}
We begin with the proofs of Theorems~\ref{thm:MBforest} and~\ref{thm:CWforest}.
%
%
%
\begin{proof}[Proof of Theorem~\ref{thm:MBforest}]
Note that we only need the first part of the theorem in order to state the second part. That is, to ensure the existence of the trees $T_{\min}^{(i)}$. It is therefore suffices to prove the first part under the assumption that Maker plays first. We do not make this assumption in the proof of the second part.
\begin{enumerate}
\item We do not attempt to optimize the size of $T$. Let $\Delta = \Delta(H)$, let $h = v(H)$, let $d = 2b\Delta^h$, and let $T$ be the $d$-ary tree with $h$ levels rooted at $r\in V(T)$.
Maker's strategy when playing on the vertices of $T$ is as follows.
He begins by claiming $r$, then in his next $\Delta$ moves he claims
$\Delta$ arbitrary children of $r$, then $\Delta$ arbitrary children
for each of them and so on until he reaches the leaves level. When
he finishes he owns a $\Delta$-ary tree with $h$ levels rooted at
$r$, which clearly contains an $H$-copy. It only remains to observe
that by the time Maker finishes to claim his $\Delta$-ary tree,
Breaker claims $b\sum_{i=0}^{h-1}\Delta^i \le b\Delta^h$ vertices,
and so Maker has enough children to claim for each vertex in his
tree at any point of the game, and he can follow this strategy.

\item Let $i$ such that $T_{\max} = T_{\min}^{(i)}$ and denote $v = v(T_{\max})$, $e = e(T_{\max})$. If $p =
o\left(n^{-v/e}\right)$ then by Claim~\ref{cl:smallTrees} every
connected component of $\ggnp$ is \whp a tree with less than $v$
vertices. By assumption, Breaker can prevent Maker from claiming a
$T_i$-copy on any such component and thus Maker cannot claim an
$H$-copy and loses the game.

Let $\F = \big\{T_{\min}^{(i)}\big\}_{i=1}^{k}$ and let $F$ be the graph consisting of $2bkv$ copies of each $T \in \F$, where all copies of all trees are vertex disjoint. Since $\F$ is finite
and every $T \in \F$ satisfies $m(T) \le e/v$, it follows by Theorem~\ref{thm:HinGnp} that if $p = \omega\left(n^{-v/e}\right)$ then \whp $\ggnp$ contains a copy of the (fixed) graph $F$. Maker can claim all trees $T_i$, one at the time, by playing $k$ separate
games. For each $i \in [k]$ he plays the $(1:b)$ $T_i$-game on the
vertex set of some $T_{\min}^{(i)}$-copy whose all its vertices are
still free at the moment Maker starts. Since Maker can win each such
game, and in no more then $v$ moves, by the time he finishes
claiming a copy of $H$, Breaker claims at most $bkv$ vertices, and so
for every $i$ Maker can always find a $T_{\min}^{(i)}$-copy with all
of its vertices still free and he can follow this strategy. \qedhere
\end{enumerate}
\end{proof}

%

\begin{proof}[Proof of Theorem~\ref{thm:CWforest}]
As in Theorem~\ref{thm:MBforest}, we only need the first part of the theorem in order to state the second part, and do not attempt to optimize the size of $T$ in the proof of the first part.
\begin{enumerate}
  \item Dean and Krivelevich showed in the proof of Proposition~9
    in~\cite{DK} that for any two integers $b$ and $k$, Client has a
    strategy to build a copy of the $k$-ary tree of height $k$ when
    playing the Client-Waiter $(1:b)$ game on the edge set of the
    $m$-ary tree of height $k$, where $m = (k(b+1))^2$. Client can use
    the same strategy in the vertex version by replacing each
    parent-child edge with the vertex of the child (and ignore the root vertex) and build a $k$-ary
    tree of height $k-1$. Clearly building
    such a tree for $k = v(H)$ is a winning strategy for Client in the
    $H$-game.
  \item As in the proof of Theorem~\ref{thm:MBforest}, let $i$ such that
    $T_{\max} = T_{\min}^{(i)}$, denote $v = v(T_{\max})$, $e = e(T_{\max})$, and note that if $p = o\left(n^{-v/e}\right)$ then by Claim~\ref{cl:smallTrees} every connected
    component of $\ggnp$ is \whp a tree with less than $v$ vertices. By
    offering in every move vertices from the same component, Waiter can
    prevent Client from claiming a $T_i$-copy, and thus Client cannot
    claim an $H$-copy and loses the game.

    For every $i \in [k]$ let $n_i = \left((bv)^2 + 1\right)^{i-1}$, let
    $F_i$ be the forest consisting of $n_i$ vertex disjoint copies of
    $\tmi$, denoted by $\big\{\tmij\big\}_{j = 1}^{n_i}$, let $F =
    \bigcup_{i \in [k]} F_i$ and finally let $F_{<i} = \bigcup_{\ell < i} F_\ell$. We
    now show that every graph containing an $F$-copy is Client's win,
    which completes the proof by Theorem~\ref{thm:HinGnp} and the fact that $F$ is a fixed
    graph satisfying $m(F) = e/v$.

    Given a graph $G$ containing $F$, Client plays as follows. He
    ignores all vertices of $V(G) \stm V(F)$ (if he is offered only
    vertices of this sort he chooses one arbitrarily). Whenever he is
    offered a set of vertices $U$ intersecting $F$, he considers only
    the vertices belonging to $\tmij$ for the minimal pair $(i,j)$ which
    appears in $U$, where he uses the natural lexicographical ordering
    of pairs: $(i,j) < (i',j')$ if either $i < i'$ or $i = i'$ and $j <
    j'$. He chooses a vertex from that tree according to his winning
    strategy in the $T_i$-game when playing on $V\left(\tmi\right)$, and
    deletes from $F$ all other trees which intersect $U$.

    Note that this strategy ensures that for every $i$ and $j$, whenever
    Waiter offers a vertex of $\tmij$, either Client claims a vertex in
    that tree according to his winning strategy on it, or he deletes it
    from $F$. It follows that at the end of the game, if a tree $\tmij$
    is still in $F$, then Client has claimed a $T_i$-copy in it. It
    remains to show that for every $i$ at least one $\tmij$ survives in
    $F$ until the end of the game.

    Now let $i \in [k]$, and note that a tree $\tmij$ is deleted from $F$ only if one of its
    vertices is offered to Client in a set containing a vertex from
    $T_{\min}^{(i',j')}$ for some pair $(i',j') < (i,j)$. Since all
    trees in $F$ have at most $v$ vertices, every tree in $F_{<i}$ causes the deletion of at most $bv$ trees in $F_i$. Furthermore, each deleted tree in $F_i$ can cause the deletion of at most $b(v-1)$ additional
    trees from $F_i$ (one of its vertices is the one causing
    its own deletion). Therefore, the number of trees deleted from $F_i$
    throughout the game is at most
    $$bv(1 + b(v-1))\sum_{\ell = 1}^{i - 1}n_\ell \le
    (bv)^2\sum_{\ell = 1}^{i - 1}\left((bv)^2 + 1\right)^{\ell-1} =
    (bv)^2\frac{\left((bv)^2 + 1\right)^{i-1} - 1}{\left((bv)^2 +
        1\right) - 1} = n_i - 1,$$ meaning that at least one tree survives
    in each $F_i$ until the end of the game, and thus Client wins. \qedhere
\end{enumerate}
\end{proof}

\begin{remark}\label{rmk:milos}
In~\cite[Lemma 36]{Sthesis}, Stojakovi\'{c} analyzed the unbiased Maker-Breaker $H$-game played on the edge set of $\ggnp$, when $H$ is a tree. So did Dean and Krivelevich for the biased Client-Waiter game in~\cite[Proposition 9]{DK}. It is immediate to see that their results can be generalized to biased games on forests by using essentially the proofs of Theorems~\ref{thm:MBforest} and~\ref{thm:CWforest}, with slight modifications.
\end{remark}

\begin{remark}
It is immediate to see from the proofs of Theorems~\ref{thm:MBforest} and~\ref{thm:CWforest} that if $H$ is a tree, then the second part in each theorem may be refined to a hitting time result (assuming Maker plays first in Theorem~\ref{thm:MBforest}). That is, if $\F$ is the family of all minimal sized trees on which the builder wins the game (these families may be different for Maker and Client), then in the random graph process \whp the graph becomes the builder's win at the same moment the first member of $\F$ appears. The same is true for the edge versions of these games.
\end{remark}

We finish this section by showing some more equivalencies between the edge and vertex versions of $H$-games where $H$ is either a path or a star. We follow the terminology of Theorems~\ref{thm:MBforest} and~\ref{thm:CWforest} and assume for simplicity Maker plays first. For the unbiased $P_\ell$-game, by using an almost identical proof to that of~\cite[Proposition~37]{Sthesis}, one can show that $v(T_{\min}) = \Theta\left(2^{\ell/2}\right)$ when considering the Maker-Breaker game. The argument in the proof of~\cite[Claim~33]{DK}
shows that $v(T_{\min}) = \Omega\left(2^{\ell/2}\right)$ when
considering the Client-Waiter game. 

We continue with the case $H=S_d$, the star with $d$ edges. Here,~\cite[Proposition~38]{Sthesis} (for the Maker-Breaker
game) and~\cite[Claim~32]{DK} (for the Client-Waiter game) show
that $S_{2d-1}$ is a minimal sized tree for which the builder of the game
has a winning strategy in the unbiased edge
$S_d$-game. It is easy to see that if we consider the $(1:b)$ versions of these games, $S_{(d-1)(b+1)+1}$ is a minimal sized tree required for the builder to win.

Arguments of the same sort show that for any two
positive integers $b$ and $d$, the star $S_{d(b+1)}$ is a minimal sized tree required for the builder in the $(1:b)$ vertex $S_{d}$-game, for both Maker and Client. More explicitly, the builder has at most $d$ vertices at the end of the game when playing on any tree with at most $d(b+1)$ vertices and trivially loses; when playing on $S_{d(b+1)}$, Maker claims the star center in his first move and continues arbitrarily, and Client claims the star center whenever it is
offered to him, and plays arbitrarily in any other
move. In particular, by Theorems~\ref{thm:MBforest} and~\ref{thm:CWforest}, both the Maker-Breaker and
Client-Waiter $(1:b)$ vertex $S_d$-games have a threshold at $p = n^{-\frac {d(b+1)+1}{d(b+1)}}$.

Note that the difference in the size of the star required for the builder between the edge and vertex versions comes from the fact that in the edge version the builder has to claim $d$ edges, while in the vertex version he has to claim $d+1$ vertices. In general, if the builder has to claim $t$ elements (either edges or vertices) in order to build $S_d$, then a minimal sized tree on which he wins is the star with $(t-1)(b+1)+1$ elements of the same type.
\section{Graphs containing a triangle}\label{sec:infamily}
\begin{proof}[Proof of Theorem~\ref{thm:infamily}]
Throughout this section $G$ stands for the random graph $\gnp$, where the values of $p$ vary according to the different parts of the theorem.

We begin the proof with the 0-statements. First, if $\alpha \leq 10/7$ and $p = o(n^{-7/10})$, then by Corollary~\ref{cor:triangle11} Breaker \whp can prevent Maker from claiming a triangle, and thus wins the $H$-game in this case. Next, if $\alpha > 10/7$ and $p \le cn^{-7/10}$, then the assertion of Conjecture~\ref{conj:main} implies that Breaker can prevent Maker from claiming an $H'$-copy, and once again win the $H$-game.

Moving to the 1-statements, if $\alpha \ge 3/2$ then Maker's win follows immediately from Remark~\ref{rmk:MakerSideConj}. Assume then that $\alpha < 3/2$, set $C$ to be the constant guaranteed in Theorem~\ref{thm:RamseyVertex} for $H'$ and $r = 20$, and note that whether $\alpha > 10/7$ or not, we have the following: $p = \omega(n^{-7/10})$, thus $G$ \whp contains two vertex disjoint $DD$-copies by Theorem~\ref{thm:HinGnp}; and $p \ge Cn^{-1/\alpha}$, thus \whp every induced subgraph of $G$ with $n/20$ vertices contains an $H'$-copy. From now on we assume $G$ satisfies these two properties.

The remainder of the proof goes along the same lines as the proof of \cite[Theorem~4]{HgameRandomGraphs}. For completeness we repeat it here with the necessary modifications. We first need the following claim about the expansion of $G$.

\begin{claim}[\cite{HgameRandomGraphs}, proof of Theorem~4]\label{cl:P1}
Let $p=\omega(n^{-7/10})$ and $G\sim G(n,p)$. Then w.h.p, for every subset $U\sbst V(G)$ of size $|U|\leq 1/(2p)$, we have $|N_G(U)|\geq |U|np/4$.
\end{claim}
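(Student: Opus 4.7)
The plan is to prove this by a standard union bound combined with a Chernoff estimate on the size of the neighborhood of each fixed set $U$ of size $k \le 1/(2p)$. Since the events ``$v$ has no neighbor in $U$'' for different vertices $v \in V(G) \setminus U$ depend on disjoint edges, the random variable $|N_G(U)|$ is distributed as $\textrm{Bin}(n-k, 1 - (1-p)^k)$, and we can apply standard concentration directly.

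First I would compute a lower bound on the expectation $\mu_k := \mathbb{E}[|N_G(U)|] = (n-k)(1 - (1-p)^k)$. Using the expansion $1-(1-p)^k \ge kp - \binom{k}{2}p^2 \ge kp(1 - kp/2)$, the assumption $kp \le 1/2$ yields $1-(1-p)^k \ge 3kp/4$. Combined with $k \le 1/(2p) = o(n)$ (which follows from $p \gg n^{-7/10} \gg 1/n$) we get $\mu_k \ge knp/2$ for $n$ large enough. A Chernoff bound for the binomial distribution then gives
\[
\Pr[|N_G(U)| < knp/4] \le \Pr[|N_G(U)| < \mu_k/2] \le e^{-\mu_k/8} \le e^{-knp/16}.
\]

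Next I would union-bound over the choice of $U$. There are $\binom{n}{k} \le n^k$ subsets of size $k$, so the probability that some $U$ of size exactly $k$ fails the expansion property is at most
\[
n^k \cdot e^{-knp/16} = \bigl(n \cdot e^{-np/16}\bigr)^k.
\]
Since $p = \omega(n^{-7/10})$ implies $np = \omega(n^{3/10}) \gg \log n$, we have $n \cdot e^{-np/16} \le n^{-2}$ for $n$ large. Summing over $k$ from $1$ up to $\lfloor 1/(2p) \rfloor$, the total failure probability is bounded by a convergent geometric series $\sum_{k \ge 1} n^{-2k} = O(n^{-2}) = o(1)$, which completes the proof.

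The only mildly delicate step is confirming that the Chernoff deviation parameter ($\delta = 1/2$) is genuinely an underestimate, i.e.\ that $knp/4$ is at most $\mu_k/2$; this is exactly why the constant $1/2$ (rather than something closer to $1$) appears in the expansion lower bound $1-(1-p)^k \ge 3kp/4$. Beyond that the estimate is routine, and there is no real obstacle — the hypothesis $p = \omega(n^{-7/10})$ is used only in the very weak form $np \gg \log n$, which gives ample slack in the union bound.
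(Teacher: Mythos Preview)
Your argument is correct: the binomial count for $|N_G(U)|$, the Bonferroni-type lower bound $1-(1-p)^k \ge kp - \binom{k}{2}p^2 \ge 3kp/4$ when $kp \le 1/2$, the Chernoff tail $\Pr[X < \mu/2] \le e^{-\mu/8}$, and the union bound over $\binom{n}{k}$ sets all go through exactly as you wrote, and the slack $np = \omega(n^{3/10}) \gg \log n$ is ample.

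As for comparison with the paper: there is nothing to compare against. The paper does not give its own proof of this claim --- it simply quotes it from the proof of Theorem~4 in~\cite{HgameRandomGraphs} and uses it as a black box. Your Chernoff-plus-union-bound argument is the standard way such vertex-expansion statements are established, and is in all likelihood what the cited source does as well.
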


We now assume $G$ possesses this expansion property, and describe Maker's winning strategy, which we divide into five phases (after these phases are complete Maker plays arbitrarily until the end of the game). For $i \in [5]$ let $M_i$ denote the set of vertices claimed by Maker during the $i$th phase.

\begin{description}
\item [Phase 1] In his first three moves, Maker claims a triangle $v_1v_2v_3$.
\item [Phase 2] In his next $\left\lceil 50/(n^2p^3)\right\rceil$ moves, Maker claims arbitrary free vertices from $N_G(v_1)$.
\item [Phase 3] In his next $\left\lceil 5/(np^2)\right\rceil$ moves, Maker claims arbitrary free vertices from $N_G(M_2)$.
\item [Phase 4] In his next $\left\lceil 1/(2p)\right\rceil$ moves, Maker claims arbitrary free vertices from $N_G(M_3)$.
\item [Phase 5] In his next $n/20$ moves, Maker claims arbitrary free vertices from $N_G(M_4)$.
\end{description}

We first show that Maker can follow this strategy. He can follow Phase 1 even as a second player since $G$ contains two vertex disjoint $DD$-copies (recall the proof of Theorem~\ref{thm:triangle11}). Since $p = \omega(n^{-7/10})$ we have $|N_G(v_1)| \ge np/4 = \omega(1/(n^2p^3))$, implying Maker can follow Phase 2. For $i=2,3,4$ we have $|M_{i}| \le 1/(2p)$ and so $N_G(M_{i}) \ge |M_{i}|np/4$. In addition, denoting the number of vertices claimed by both players during the first $i$ phases by $n_i$, we have $n_i = o(|N_G(M_{i}|)$ since $np = \omega(1)$.
It follows that at least $|M_{i}|np/5$ of the vertices in $N_G(M_{i})$ are still free at the end of the $i$th phase, and Maker can claim $|M_{i}|np/10$ of them and complete the next phase.

It remains to observe that Maker wins the game by following the proposed strategy. Indeed, $|M_5| = n/20$ and so by assumption $G[M_5]$ contains an $H'$-copy. Since $v_1v_2v_3$ is a triangle, and since every vertex in $M_5$ is connected to $v_1$ via a path of length 4 disjoint from $\{v_2,v_3\}$ by construction, Maker's graph contains an $H$-copy at the end of Phase~5.
\end{proof}

\section{Other game types}\label{sec:others}

We begin this section with the description and analysis of a positional game in which all target sets are pairwise disjoint. This game is later used as an auxiliary game in many of the proofs in this section.

\subsection{Box games}
In their seminal paper~\cite{CE}, Chv\'atal and Erd\H{o}s introduced
the \emph{box game}. This is essentially a Maker-Breaker game where
all board elements are partitioned into element disjoint winning
sets. Each winning set is referred to as a \emph{box}, and the two
players are denoted by BoxMaker and BoxBreaker. Chv\'atal and
Erd\H{o}s used this game in their analysis of the connectivity game
as part of Breaker's strategy, who pretends to be BoxMaker in an
auxiliary game, and by this isolates a vertex in Maker's graph. This
is of course a winning strategy for Breaker in other games for which
positive minimum degree in his graph is a necessary condition for
Maker (called \emph{spanning} games), such as the Hamiltonicity game and the perfect matching game.

Chv\'atal and Erd\H{o}s were interested in the $(a:1)$ box game where the sizes of the smallest and largest
winning sets differ by at most one. Later, in~\cite{HLV}, Hamidoune and Las Vergnas analyzed the box game in
full generality (and also corrected a mistake in one of the proofs in~\cite{CE}). In this section we use
different variations of box games, in all of them we are only interested
in \emph{uniform} box games, that is, all boxes are of the same size. We denote this setting by $n \times k$, where $n$
indicates the number of boxes and $k$ denotes the size of each box. We first state and prove a trivial result for the Waiter-Client version of the game, that is, BoxWaiter is trying to force BoxClient to fully claim a box, while BoxClient is trying to avoid it.
We consider the $(a:b)$ uniform game, abbreviated to $WCBox(n \times k,(a:b))$.

\begin{claim}\label{cl:WCBox}
    Let $a, b, k$ be three positive integers. There exists an integer $N
    = N(a, b, k)$ such that for every $n \ge N$ BoxWaiter has a winning
    strategy in the game $WCBox(n \times k,(a:b))$.
\end{claim}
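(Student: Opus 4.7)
The plan is to prove the claim by induction on $k$, producing an explicit recursive bound for $N(a,b,k)$.

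For the base case $k=1$, set $N(a,b,1) = a+b$. If $n \ge a+b$ then BoxWaiter offers one element from each of $a+b$ distinct boxes. BoxClient must claim exactly $a \ge 1$ of them, and since every box has size $1$, each claimed element constitutes a fully claimed box. BoxWaiter thus wins immediately.

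For the inductive step, define $N(a,b,k) = \bigl\lceil \frac{a+b}{a} N(a,b,k-1) \bigr\rceil$, and suppose $n \ge N(a,b,k)$. BoxWaiter's strategy has two stages. In Stage~1, during each of the first $\lfloor n/(a+b) \rfloor$ moves, he offers one free element from each of $a+b$ previously untouched boxes. BoxClient claims exactly $a$ of these, so after the move $a$ boxes have one fewer free element and still no Waiter-element, while the remaining $b$ boxes now contain a Waiter-element and can be discarded (BoxClient can never fully claim them). After Stage~1, the number of surviving ``Waiter-free'' boxes is at least
\[
a\bigl\lfloor n/(a+b)\bigr\rfloor \ge N(a,b,k-1),
\]
and each surviving box has exactly $k-1$ free elements.

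In Stage~2, BoxWaiter restricts attention to these surviving boxes, together with their $k-1$ free elements each, and plays the inductive winning strategy for the $WCBox\bigl(N(a,b,k-1)\times(k-1),(a:b)\bigr)$ game on them. By the inductive hypothesis this forces BoxClient to claim all remaining free elements of some box; combined with the element BoxClient already claimed in Stage~1, that box is now fully claimed, and BoxWaiter has won. Any elements outside the strategy (from discarded boxes or leftover groups in Stage~1) can be offered in arbitrary $(a+b)$-batches after the winning condition is secured, so BoxWaiter can always conform to the rule of offering exactly $a+b$ elements per move until the very last round.

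The argument is essentially routine; the only point worth verifying is that after Stage~1 the surviving boxes indeed look like genuine size-$(k-1)$ boxes to BoxClient, in the sense that BoxClient has no winning leverage inherited from his earlier Stage~1 claims. This is automatic, since BoxClient's goal is to avoid ever completing a box, and for the surviving boxes completing the box is the same as completing the remaining $k-1$ free elements. Hence the inductive reduction is clean, and the resulting bound $N(a,b,k) \le \bigl(\tfrac{a+b}{a}\bigr)^{k-1}(a+b) + O(1)$ is finite, which is all that the statement requires.
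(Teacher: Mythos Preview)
Your proof is correct and follows essentially the same inductive approach as the paper: offer one element from each of $a+b$ fresh boxes per round until every box has been touched, then recurse on the $a$-fraction of boxes in which Client already holds an element. The paper simply takes $N(a,b,k)=(a+b)^k$, which sidesteps the minor floor/ceiling bookkeeping in your bound (as written, $a\lfloor n/(a+b)\rfloor \ge N(a,b,k-1)$ can fail by a small amount for your choice of $N(a,b,k)$; taking $N(a,b,k)=(a+b)\lceil N(a,b,k-1)/a\rceil$ or just $(a+b)^k$ fixes this).
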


\begin{proof}
We do not wish to optimize the bound on $N$ and rather show that given $a$ and $b$, BoxWaiter has a winning
strategy in the game $WCBox(n_k \times k,(a:b))$, where $n_k := (a+b)^k$. Clearly, this strategy is also
applicable to a game containing $n > n_k$ boxes since BoxWaiter can offer elements only from the first
$n_k$ boxes in his first $n_{k-1}$ moves, achieve his goal in the game, and then offer all remaining elements arbitrarily. We prove the
claim by induction on $k$. For $k = 1$ BoxWaiter simply offers all $(a+b)$ elements in the game, one from each
box. BoxClient then claims at least one of them and loses. Now Assume that BoxWaiter has a winning strategy in
the game $WCBox(n_k \times k,(a:b))$ and consider the game $WCBox(n_{k+1} \times (k+1),(a:b))$. In the first
$n_k$ rounds of the game, BoxWaiter offers arbitrary $a+b$ elements in every move, each from a different box
for which none of its elements has been offered yet. After these rounds there are $an_k \ge n_k$ boxes in
which Client has claimed an element, and no other element in them has been offered yet. BoxWaiter can then
apply his strategy for the game $WCBox(n_k \times k,(a:b))$ on these boxes and win.
\end{proof}

In~\cite{FKN}, Ferber, Krivelevich and Naor analyzed the Avoider-Enforcer version of the box game (where
BoxEnforcer is trying to force BoxAvoider to fully claim a box).
Following the ideas of~\cite{CE}, they also used the monotone version of the game as an auxiliary game in
order to provide Avoider with a strategy to isolate a vertex in his graph and win various spanning games. This
was indeed Avoider's strategy in the $k$-connectivity, Hamiltonicity, and perfect matching games later considered
in~\cite{FGKN}. We abbreviate the $(a:b)$ Avoider-Enforcer uniform box game to ${AEBox(n \times k,(a:b))}$. When
considering the monotone game, the situation is very simple. Similarly to BoxWaiter, BoxEnforcer wins if there
are sufficiently many boxes, where the number of required boxes depends on the bias of the players and the box
size, all of which are fixed.

\begin{theorem}[Theorem~1.7 in \cite{FKN}]\label{thm:AEBoxMonotone}
    Let $a, b, k$ be three positive integers. There exists an integer $N
    = N(a, b, k)$ such that for every $n \ge N$ BoxEnforcer has a
    winning strategy in the monotone game $AEBox(n \times k,(a:b))$ as a
    first or a second player.
\end{theorem}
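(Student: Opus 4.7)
The plan is to prove the statement by induction on the box size $k$. For the base case $k = 1$, it suffices to take $N(a, b, 1) = a + b$: every element is a full box by itself, so whichever player moves first, Avoider is forced on his first move to claim at least one full box, and thereby lose.

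For the inductive step, set $M := N(a, b, k-1)$, which exists by the inductive hypothesis, and choose $N = N(a, b, k)$ sufficiently large in $M, a, b, k$. Enforcer's strategy proceeds in two phases. In Phase~1, Enforcer designates a ``dump'' set $D$ of boxes of an appropriate size, and in every move claims exactly $b$ elements from $D$, thereby keeping all non-dump boxes free of Enforcer elements. Meanwhile Avoider must claim at least $a$ elements per move, and in order to avoid filling any box he can place at most $k - 1$ of his elements in any single box. Thus once his cumulative total reaches $A$, he has touched at least $\lceil A/(k-1) \rceil$ distinct boxes. By appropriate choice of $N$ and the length of Phase~1, one may arrange that at its end there exists a set $S$ of $M$ non-dump boxes each containing exactly one Avoider element and no Enforcer element, while every other box has already been filled or has no free elements left.

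In Phase~2, Enforcer restricts his attention to $S$. Each $B \in S$ contains one Avoider element and $k-1$ free elements, so Avoider fills $B$ precisely by claiming all $k-1$ of its free elements. Viewing the free elements of $S$-boxes as the board of a new monotone $(a:b)$ Avoider--Enforcer game on $M$ boxes of size $k-1$, Enforcer invokes the inductive strategy to force Avoider to claim all free elements of some $B \in S$; this fills $B$, so Avoider loses.

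The main obstacle will be the quantitative and strategic management of Phase~1: the dump $D$ must be large enough that Enforcer can sustain his strategy even against adversarial Avoider play inside $D$ (which depletes it faster), while simultaneously $N - |D|$ must be large enough to force Avoider to spread his claims over at least $M$ non-dump boxes. Moreover, one must ensure that all non-$S$ boxes have no free elements remaining at the start of Phase~2, so that Avoider's mandatory $\geq a$ claims per move must fall inside $S$ and the sub-game faithfully realises the induction; this can be arranged by having Enforcer shepherd the play toward $S$ in the closing rounds of Phase~1. Finally, the same $N$ works whether Enforcer plays first or second, since the turn-order shift amounts only to one extra initial move, which can be absorbed by an additive constant in $N$.
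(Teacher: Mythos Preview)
The paper does not give its own proof of this statement; it is quoted from \cite{FKN} and used as a black box. Judged on its own merits, your inductive outline is the right shape, but Phase~1 as written has a real gap. In the monotone rules Avoider too may claim arbitrarily many elements, and he can exploit this to play \emph{entirely inside $D$} and never touch a non-dump box. Concretely (take $a=b=1$, $k\ge 3$): after each Enforcer move touches a $D$-box, that box contains an Enforcer element and is safe for Avoider, so Avoider simply claims his $a$ elements inside already-safe $D$-boxes. Each Enforcer move creates $b(k-1)$ newly safe free slots while Avoider needs only $a$, so whenever $a\le b(k-1)$ Avoider never runs out; Phase~1 ends with $D$ exhausted and zero Avoider-touched non-dump boxes, hence no set $S$ exists. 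The repair is to use the monotone rules on Enforcer's side instead of a passive dump: in each move Enforcer \emph{fully claims} $\lceil b/k\rceil$ currently Avoider-untouched boxes (this is $\ge b$ elements, hence legal). Then every Avoider element necessarily lands in an Enforcer-free box, and after $R\ge M(k-1)/a$ rounds there are at least $M$ such boxes.

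Two further points need fixing. First, you cannot guarantee boxes with \emph{exactly one} Avoider element --- Avoider may concentrate up to $k-1$ per box --- so $S$ should consist of boxes with \emph{at least one} Avoider element, giving effective sizes $\le k-1$; a short monotonicity step (pad each box to size $k-1$ with phantom elements pre-assigned to Avoider and run the inductive strategy) reduces this to the uniform $M\times(k-1)$ game. Second, ``every other box has no free elements left'' is not delivered by your Phase~1 play and ``shepherding'' is not an argument; the clean way is a single Enforcer move that claims \emph{all} free elements outside $S$ (legal since Enforcer may claim arbitrarily many), after which the remaining game is exactly the inductive sub-game with Avoider to move.
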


However, when considering the strict game, things are more
complicated and the outcome of the game depends on some divisibility
conditions. The following theorem appears in a slightly weaker form
as Corollary~1.4 in~\cite{FKN}, and can be deduced from Theorems~1.2
and~1.3 of that paper.

\begin{theorem}[\cite{FKN}]\label{thm:AEBoxStrict}
    Let $a, b, k$ be three positive integers. If $\gcd(a + b, \ell) \le
    a$ holds for every $2 \le \ell \le k$, then there exists an integer
    $N = N(a, b, k)$ such that for every $n \ge N$ BoxEnforcer has a
    winning strategy in the strict game $AEBox(n \times k,(a:b))$ as a
    first or a second player. Otherwise, BoxAvoider wins this game for
    every $n$.
\end{theorem}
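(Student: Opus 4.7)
My plan is to prove the two directions separately, showing that the gcd condition is both sufficient and necessary for BoxEnforcer's win. For the sufficiency direction, I would proceed by induction on $k$, where the base case $k=1$ is immediate since any box is filled as soon as Avoider touches it. For the inductive step, assuming $\gcd(a+b,\ell)\le a$ for all $2\le\ell\le k$, I would have BoxEnforcer play a ``reduction'' strategy: in every round he selects $b$ elements from boxes in which BoxAvoider currently has no claim (``fresh'' boxes), thereby rendering them incapable of being fully claimed by Avoider. Because $a+b$ elements are removed from the board per round while Enforcer himself removes only from fresh boxes, the free elements are increasingly concentrated in ``touched'' boxes, and the gcd condition guarantees Avoider cannot balance his claims evenly so as to leave every touched box one short of being full. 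Once any touched box reaches $k-\ell$ Avoider-elements with $\ell\le k$ remaining free in it, the sub-game on that box (of effective size $\ell$) again satisfies $\gcd(a+b,\ell)\le a$ and the induction hypothesis applies, while $N$ is chosen large enough to guarantee that this reduction can be carried out before the board is exhausted.

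For the necessity direction, suppose $d:=\gcd(a+b,\ell)>a$ for some $2\le\ell\le k$. I would give BoxAvoider an explicit pairing-style strategy. In each box he earmarks an arbitrary subset of $\ell$ elements and partitions that subset into $\ell/d$ blocks of size $d$. His goal is the local invariant: after every full round, in every block he has claimed strictly fewer than $d$ elements. Since $a+b$ is divisible by $d$ and $a<d$, whenever Enforcer's $b$ moves either stay inside earmarked blocks or leave them, Avoider has room to respond with $a$ moves that do not complete any block (he pairs Enforcer's moves inside blocks when possible, claiming only ``safe'' earmarked elements; unearmarked elements are handled last and arbitrarily). Because every box contains a full earmarked block which always keeps at least one element for Enforcer, no box is ever fully claimed by Avoider, and this works uniformly for every~$n$.

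The hardest part will be the bookkeeping in the sufficiency direction: while the idea that Enforcer concentrates free elements in touched boxes is simple, making it into an induction requires maintaining a potential function that tracks, for each touched box, how many Avoider-elements it contains, and arguing that this potential grows monotonically on some box until the induction hypothesis becomes applicable. Similarly, in the necessity direction, one must verify the pairing invariant round by round, especially in the endgame when some earmarked sets have already been partly used; here the divisibility $d\mid(a+b)$ together with $a<d$ is precisely what lets Avoider always complete his $a$ moves without breaking the invariant, and I would expect the main technical step to be showing Avoider always has a legal response (as opposed to merely a ``good'' one).
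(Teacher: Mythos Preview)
This theorem is not proved in the present paper at all: it is quoted from~\cite{FKN}, and the text merely notes that it ``appears in a slightly weaker form as Corollary~1.4 in~\cite{FKN}, and can be deduced from Theorems~1.2 and~1.3 of that paper.'' There is therefore no in-paper proof to compare your proposal against.

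As for the proposal itself, both directions are at the level of a plan rather than a proof, and each has a genuine gap. In the sufficiency direction you yourself flag the difficulty: the ``reduction'' strategy of having Enforcer claim only from fresh boxes does not by itself force Avoider to fill any box, and the vague appeal to ``the gcd condition guarantees Avoider cannot balance his claims evenly'' is exactly the heart of the matter and is not argued. You never say what the potential function is, why it increases, or how the inductive hypothesis is invoked on a single box while the rest of the game is still in progress (Avoider is not obliged to keep playing in that box). In the necessity direction, the invariant ``fewer than $d$ Avoider-elements in every block'' is the right target, but the claim that ``$d\mid(a+b)$ together with $a<d$ is precisely what lets Avoider always complete his $a$ moves without breaking the invariant'' is asserted, not proved. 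The danger case is when Enforcer has arranged that every free element lies in a block where Avoider already holds $d-1$ elements; you need a counting argument (using $d\mid(a+b)$) showing this configuration cannot arise before Avoider's move, and you have not supplied one. Until those two points are filled in, this is an outline rather than a proof.
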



We conclude this subsection with the following related claim which will be very useful in the proof of
Theorem~\ref{thm:AEstrict}.

\begin{claim}\label{cl:AEAux}
    Let $n$ and $m$ be two non-negative integers and let $\cH = (X,\F)$
    be a hypergraph where $X = \{a_1,\dots,a_n,b_1,\dots,b_n,
    c_1,\dots,c_m\}$ and $\F =
    \big\{\{a_1,b_1\},\dots,\{a_n,b_n\}\big\}$.
    \begin{enumerate}[$(a)$]
        \item Avoider wins the strict $(1:1)$ Avoider-Enforcer game played on
        $\cH$ as a first or a second player.
        \item If, in addition, Avoider makes the last move in the game (that is,
        either $m$ is odd and Avoider plays first, or $m$ is even and
        Avoider plays second), he also wins the strict $(1:1)$ game played
        on $\cH' = (X', \F')$ where $X' = X \cup \{d\}$ and $\F' = \F \cup
        \big\{\{d\}\big\}$ for a new element $d \not\in X$ (note that when
        playing on $\cH'$ Enforcer makes the last move in the game).
    \end{enumerate}
\end{claim}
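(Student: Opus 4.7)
My plan is to prove parts (a) and (b) jointly by induction on $n$, using pairing strategies for Avoider.

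For part (a), I set up the natural pairing $\pi$ on $X$ given by $\pi(a_i) = b_i$, with the $c_j$'s paired arbitrarily among themselves (one left unpaired if $m$ is odd). The cleanest case is Avoider second with $m$ even, where $X$ is perfectly paired: Avoider responds to each Enforcer claim $x$ by claiming $\pi(x)$. A short induction on the number of moves shows that $\pi(x)$ is always free when Avoider needs it — if it had been claimed earlier, tracing back contradicts either the current freeness of $x$ or Avoider's own pairing responses. Since Avoider ends up with exactly one element from each losing pair $\{a_i,b_i\}$, he wins. The remaining parity cases of part (a) reduce to this base case: Avoider first with $m \geq 1$ claims $c_1$ and then plays as second on the reduced board; Avoider second with $m$ odd uses the unpaired $c_j$ as a one-time fallback; and Avoider first with $m = 0$, $n \geq 1$ claims $a_1$, after which the residual game has $n-1$ pair losing sets, the singleton losing set $\{b_1\}$ (taking $b_1$ would complete $\{a_1,b_1\}$), no neutrals, and Enforcer moves first on $2n-1$ elements so Enforcer makes the last move — exactly an instance of part (b) at smaller $n$.

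For part (b), the parity hypothesis guarantees that Enforcer makes the last move on $\cH'$; in particular, at every Avoider turn at least two free elements remain. Avoider plays his part (a) strategy on $\cH$ while refusing to ever claim $d$. I will formalize this by having Avoider maintain an imagined $\cH$-game alongside the real $\cH'$-game: every Enforcer claim inside $X$ advances the imagined game and Avoider responds per his (a) strategy, while Enforcer's (unique) claim of $d$ is treated as a skipped Enforcer move in the imagined game. Since $d$ must eventually be claimed by someone and Avoider never will, Enforcer is forced to take it (either during play or as his final move, which absorbs the one ``extra'' move he has on $\cH'$ relative to $\cH$); the parity condition ensures the move-counts in the real and imagined games match up so that Avoider's scheduled move is always a free (non-$d$) element.

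The main obstacle I anticipate is verifying the consistency of this imagined-versus-real bookkeeping when Enforcer takes $d$ at an arbitrary moment, together with ensuring that in the $m=0$ subcase of part (a) Enforcer cannot force Avoider toward the forbidden element $b_1$. Both are resolved by the joint induction: part (a) with $m=0$ invokes part (b) at smaller parameters, and part (b) is established from part (a) for the underlying pairing strategy on $\cH$. Carefully matching the turn-parities between the real $\cH'$-game and the imagined $\cH$-game under the hypothesis of (b) is the technical heart of the argument.
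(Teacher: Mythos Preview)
Your approach is correct but takes a different route from the paper. The paper gives a single uniform strategy for both parts: Avoider claims only ``safe'' vertices (free $c_i$'s, or free pair-vertices whose partner Enforcer already holds) until none remain; at that point only completely-free pairs $\{a_i,b_i\}$ are left --- in part (b) the parity hypothesis is invoked exactly once, to show that before such an Avoider move the number of free elements is even and hence $d$ is already gone --- and the paper then hands the endgame to Theorem~\ref{thm:AEBoxStrict} (BoxAvoider wins $AEBox(n'\times 2,(1:1))$ since $\gcd(2,2)>1$). Your joint induction on $n$ is well-founded and self-contained, avoiding that citation, but the bookkeeping is heavier than you acknowledge: your ``one-time fallback'' for Avoider-second with $m$ odd is underspecified (what does Avoider do when \emph{Enforcer} takes the unpaired $c_m$, so the fallback itself is unavailable?), and in part (b) the ``skipped Enforcer move'' is not merely a parity adjustment --- it forces Avoider to switch mid-game from second-player to first-player mode on the residual imagined board, which is precisely where the recursion into part (b) at smaller $n$ fires. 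Both points can be made rigorous along the lines you sketch, but the paper's version is substantially shorter by offloading the delicate endgame to the box-game theorem.
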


\begin{proof}
    Avoider uses the same strategy for both variants of the game. As
    long as there exists a free vertex $c_i$, or a free vertex of one of
    the pairs $\{a_i,b_i\}$ such that the other vertex in that pair is
    already claimed by Enforcer, then Avoider claims one of these
    vertices arbitrarily. If before one of his moves no such vertex
    exists, then at this point only pairs $\{a_i,b_i\}$ in which both
    vertices are free remain. Indeed, it is trivial when playing on
    $\cH$, and for $\cH'$ it follows from the fact that there is an even
    number of free vertices before each of Avoider's moves by
    assumption. From this moment until the end of the game Avoider plays
    as BoxAvoider in the strict game $AEBox(n' \times 2,(1:1))$, where
    $n' \le n$ is the number of unclaimed pairs. By
    Theorem~\ref{thm:AEBoxStrict} he can Avoid claiming both elements
    of a pair (as $\gcd(1+1,2) = 2 > 1)$, and by doing so win the game.
\end{proof}

%
%
%

\subsection{Proofs of Theorems~\ref{thm:trivialWins},~\ref{thm:AEstrict},~\ref{thm:WC},~\ref{thm:CWsimple} and~\ref{thm:CWtriangle}}
We begin this subsection with the proofs of Theorems~\ref{thm:trivialWins} and~\ref{thm:WC}. Since the
statements of the theorems are almost identical, it is not surprising that their proofs are also very similar.

\begin{proof}[Proof of Theorems~\ref{thm:trivialWins} and~\ref{thm:WC}]
    Let $G' \sbst G$ be a collection of vertex disjoint $H$-copies of
    maximal size. Enforcer and Waiter win their respective $H$-games if
    $G'$ is sufficiently large, in which case they can treat each
    $H$-copy in it as a box (where each vertex in the copy is one
    element in the box) and simulate some box game.

    By the proof of Claim~\ref{cl:WCBox}, Waiter has a winning strategy if $G'$
    contains $(a+b)^{v(H)}$ copies, because he simply plays first the
    $WCBox$ game on $G'$ and after achieving his goal offers all other
    vertices in $G$ arbitrarily. Enforcer's strategy is straight
    forward as well. We assume for simplicity that he plays first, and
    the proof is very similar when he is the second player. In his first
    move he claims all of $V(G) \stm V(G')$ and from that
    moment he plays as BoxEnforcer in the $(a:b)$ mis\`{e}re box game.
    By Theorem~\ref{thm:AEBoxMonotone}, BoxEnforcer wins as long as
    there are enough boxes, where enough means some constant depending
    only on $a$, $b$ and $v(H)$. Winning in this game means of course
    that Avoider is forced to claim all vertices of some $H$-copy.

    Since Client and Avoider win trivially in their respective games if
    $G$ is $H$-free, the result for a general graph $H$ in both theorems
    follows by Theorem~\ref{thm:HinGnp}.

    For a strictly balanced graph $H$ the result follows immediately by
    Corollary~\ref{cor:balancedDisjoint}, where the integer $N$ from the
    statements of the theorems represent the minimal number of boxes
    required for the wins of BoxWaiter and BoxEnforcer in
    the games described in this proof.
\end{proof}

It is immediate to see that by replacing the vertices of the
$H$-copies with their edges in the proof above, we can obtain the
same results for Avoider-Enforcer and Waiter-Client $(a:b)$
$H$-games played on the edge set of random graphs. The
Avoider-Enforcer result can also be obtained from Corollary~1.8
in~\cite{FKN}.

\begin{corollary}
    Theorems~\ref{thm:trivialWins} and~\ref{thm:WC} are still valid when
    stated for the edge versions of the games, with the only minor
    difference that the parameter $N$ in their statements depends (in
    addition to the bias of the players) on $e(H)$ rather than on
    $v(H)$.
\end{corollary}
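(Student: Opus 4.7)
The proof strategy is to mirror the proofs of Theorems~\ref{thm:trivialWins} and~\ref{thm:WC} almost verbatim, replacing each vertex-disjoint $H$-copy's vertex set (a box of size $v(H)$) by its edge set (a box of size $e(H)$). The key observation is that the auxiliary box-game results Claim~\ref{cl:WCBox}, Theorem~\ref{thm:AEBoxMonotone} and Theorem~\ref{thm:AEBoxStrict} are purely statements about abstract hypergraphs partitioned into equal-size ``boxes'' and are completely indifferent to whether the ground elements are vertices of a graph or edges of a graph.

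For the 1-statement, fix $N = N(a,b,e(H))$ large enough to win the appropriate box game with boxes of size $e(H)$. If $p = \omega(n^{-1/m(H)})$, Theorem~\ref{thm:HinGnp} guarantees that w.h.p.\ $G\sim\gnp$ contains a collection $G'$ of $N$ vertex-disjoint $H$-copies. Since these copies are vertex-disjoint, their edge sets $E_1,\dots,E_N \subseteq E(G)$ are pairwise disjoint and each has size $e(H)$, and thus form a suitable family of $N$ edge-boxes. In the Waiter-Client edge $H$-game, Waiter first plays the game $WCBox(N\times e(H),(a:b))$ on $\bigcup_i E_i$ according to Claim~\ref{cl:WCBox}, forcing Client to fully claim some $E_i$ (equivalently, an $H$-copy), and then offers the remaining edges of $E(G)\setminus \bigcup_i E_i$ arbitrarily. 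In the monotone Avoider-Enforcer edge $H$-game, Enforcer uses his very first move to claim \emph{all} edges of $E(G)\setminus\bigcup_i E_i$ — which is legal under monotone rules since he may claim at least $b$ elements per move (and hence also many more) — and then plays as BoxEnforcer on the $N$ remaining edge-boxes using Theorem~\ref{thm:AEBoxMonotone}, forcing Avoider to fully claim the edge set of some $H$-copy.

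For the 0-statement, if $p = o(n^{-1/m(H)})$, Theorem~\ref{thm:HinGnp} gives that w.h.p.\ $G$ is $H$-free, so Client and Avoider win trivially since no $H$-copy can ever be formed in any subgraph of $G$. The hitting-time refinement for strictly balanced $H$ is obtained from Corollary~\ref{cor:balancedDisjoint}: w.h.p.\ the first $N$ copies of $H$ appearing in the random graph process are vertex-disjoint, hence their edge sets are disjoint, providing the $N$ edge-boxes needed to start the box game at exactly the hitting time of $\G_{NH}$.

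There is essentially no genuine obstacle here; the entire exercise is checking that the vertex-to-edge translation does not break any step of the argument. The only place that requires a moment of care is the strict Avoider-Enforcer variant (not covered by the corollary), where Enforcer cannot claim all surplus edges in a single move; this is precisely why the corollary restricts to the monotone Avoider-Enforcer and to the Waiter-Client settings, in both of which the ``discard the non-box elements'' step is painless. Everything else goes through, and the resulting parameter $N$ depends on $a$, $b$ and $e(H)$ rather than on $a$, $b$ and $v(H)$, as stated.
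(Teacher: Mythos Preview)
Your proposal is correct and follows exactly the same approach as the paper, which simply remarks that one replaces the vertices of the $H$-copies with their edges in the proof of Theorems~\ref{thm:trivialWins} and~\ref{thm:WC}. You have spelled out the details (edge-disjointness from vertex-disjointness, how Waiter and Enforcer discard the surplus elements, and the hitting-time refinement via Corollary~\ref{cor:balancedDisjoint}) that the paper leaves implicit, and your side remark on why the strict Avoider-Enforcer case is excluded is accurate.
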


We continue with the strict Avoider-Enforcer game played on the vertex set of $\ggnp$. Before proving
Theorem~\ref{thm:AEstrict}, we would like to shortly discuss the $H$-game for a given arbitrary graph $H$.
Theorem~\ref{thm:AEBoxStrict} imply that there are infinitely many pairs of integers $(a,b)$, for which a
strict game equivalent of Theorem~\ref{thm:trivialWins} holds. These are all pairs satisfying
$\gcd(a + b, \ell) \le a$ for every $2 \le \ell \le v(H)$. For example, two obvious families of such pairs are
all pairs $(a,b)$ such that $a \ge v(H)$, and all pairs $(a,b)$ such that $a+b$ is a prime number greater than
$v(H)$. There is one small difference in the statement of the hitting time result, though. The number $N$ of
required $H$-copies in the graph does not depend only on $a$, $b$ and $v(H)$, but also on the identity of the
first player and the residue of $n \mod (a+b)$, where $n$ is the number of vertices in the graph process. It
does not depend on the value of $n$ itself, however, and therefore it is a constant. One simple example is
that if $n \md {0}{(a+b)}$, Enforcer plays first and $a \ge v(H)$, then $N = 1$ (Enforcer plays arbitrarily
and only avoiding the vertices of the single $H$-copy in $G$, which will be fully claimed by Avoider
eventually).

At the same time, there exist infinitely many integers $b$ such that \whp Avoider wins the $(1:b)$ $H$-game for every $p = O\left(n^{-1/\mu(H)}\right)$, where $\mu(H) = m(H) + \frac 1{2v(H)^2}$. Indeed, let $\hat{H} \sbst H$ be a strictly balanced graph with $m(\hat{H}) = m(H)$. By Claim~\ref{cl:balancedDense}, Theorem~\ref{thm:HinGnp},
the assumption on $p$, and the fact that two $\hat{H}$-copies can intersect in finitely many ways, all
$\hat{H}$-copies in $G$ are \whp vertex disjoint. Thus, if $b+1$ is divisible by an integer $2 \le \ell \le
v(\hat{H})$, then Avoider can win the $H$-game. His strategy would be to first play arbitrarily as long as he
does not claim the first unclaimed vertex of any $\hat{H}$-copy, and if it at some point he can no longer do
so, play until the end of the game as BoxAvoider where each box is an $\hat{H}$-copy. He wins this game by
Theorem~\ref{thm:AEBoxStrict}. This strategy ensures that he avoids fully claiming any $\hat{H}$-copy and thus
wins the $H$-game. For example, Avoider can apply this strategy in any $(1:b)$ $H$-game whenever $b$ is odd
(and thus $b+1$ is divisible by $2 \le v(\hat{H})$). In particular, the unbiased $H$-game is never an ``easy"
win for Enforcer, in the sense that the threshold probability $p^*_{1,H}$ in this case satisfies $p^*_{1,H} =
\omega\left(n^{-1/\mu(H)}\right)$ for every $H$, including, of course, the case $H = K_3$, to which we now
refer.

\begin{proof}[{Proof of Theorem~\ref{thm:AEstrict}}]
Not surprisingly, the proof goes along the lines of the proof of Theorem~\ref{thm:triangle11}. We omit or abbreviate some of the repeated arguments. We begin with sufficient conditions for Enforcer's win when playing the game on $V(G)$ for an arbitrary graph $G$. Assume first that Avoider makes the last move of the game and that $G$ contains a $DD$-copy $\D$. By Part~$(b)$ of Claim~\ref{cl:AEAux}, Enforcer can play an auxiliary game (as \emph{AuxAvoider}) and avoid claiming the center of $\mathcal D$ and any full pair of the natural pairs of $\D$. It means that Avoider, as \emph{AuxEnforcer}, must claim the center and at least one vertex form each pair, which leads to Enforcer's win in the original game by Observation~\ref{obs:DDpairing}. Similarly, Part~$(a)$ of Claim~\ref{cl:AEAux} implies that if $G$ contains two vertex disjoint $DD$-copies, then Enforcer (even if he makes the last move), can win by ensuring that Avoider claims at least one vertex from each of their natural pairs, and at least one of the centers (by pairing up the two centers as well).

Now let $\tilde G = \{G_i \}$ be a random graph process, and recall that $\mathcal G_{DD}$, $\mathcal
G_{2DD}$, $\E_{K_3}^A$ and $\E_{K_3}^E$ are the graph properties of containing one or two vertex disjoint
$DD$-copies, respectively, and being Enforcer's win in the strict $(1:1)$ triangle game, where Avoider or
Enforcer, respectively, moves last. Then $\tau(\tilde G,\E_{K_3}^A) \le \tau(\tilde G,\mathcal G_{DD})$
trivially holds, and, since the first two $DD$-copies appearing during the random graph process are \whp
vertex disjoint, $\tau(\tilde G,\E_{K_3}^E) \le \tau(\tilde G,\mathcal G_{2DD})$ holds \whp as well. Let $i_1
= \tau(\tilde G, \mathcal G_{DD}) - 1$ and $i_2 = \tau(\tilde G,\mathcal G_{2DD}) - 1$, and let $G^*_1$ and
$G^*_2$ be the $(K_3, 1)$-cores of $G_{i_1}$ and $G_{i_2}$, respectively. It remains to show that \whp Avoider
has a winning strategy in the game played on $V(G_{i_1})$, and, provided he is not the last player to play, in
the game played on $V(G_{i_2})$. Following the argument presented in the proof of
Theorem~\ref{thm:triangle11}, from now on we assume that $G_{i_2}$ is $\F$-free (and therefore $G^*_1$ and
$G^*_2$ are $\F$-free as well), where $\F$ is the family defined in Claim~\ref{cl:ConComp}. Since this is
true w.h.p., and since all remaining arguments are deterministic, the proof holds.

When playing the game on $V(G_{i_1})$, Avoider first runs the deletion algorithm to obtain an output
$U,W,G^*_1$. By our assumption and by Claim~\ref{cl:ConComp}, every connected component of $G^*_1$ is a
$\ttt$-copy. Avoider pairs up the two vertices of any bad pair $U_i \in U$, and for any $W_j \in W$ he chooses
arbitrary $\lfloor W_j / 2 \rfloor$ disjoint pairs contained in $W_j$. By Part~$(a)$ of Claim~\ref{cl:AEAux}
he can avoid claiming all these pairs and all the natural pairs of all components of $G^*_1$ (given in Definition~\ref{def:trioPairing}. It follows that
by the end of the game he claims at most one vertex from each bad pair $U_i$ and at most two vertices from
each small component $W_j$, and thus by Claim~\ref{cl:G*} and the fact that he avoids all triangles in
$G^*_1$, he wins the game.

When playing on $V(G_{i_2})$, every connected component of $G^*_2$ is a ${\ttt}$-copy, except for one
$DD$-copy $\D$. If Enforcer makes the last move of the game, then by Part~$(b)$ of Claim~\ref{cl:AEAux}
Avoider can avoid claiming all the pairs as in the previous case (that is, all the pairs generated by the
deletion algorithm and all natural pairs of the $\ttt$-copies in $G^*_2$), as well as the center of $\D$, and
win.
\end{proof}

It remains to prove the theorems regarding Client-Waiter games. The
1-statement of Theorem~\ref{thm:CWsimple} follows from
Theorem~\ref{thm:RamseyVertex}. The 0-statement of the theorem is an
immediate corollary of Lemma~\ref{lem:BwinG'}, the proof of the 0-statement of Theorem~\ref{thm:main} (given in Sections~\ref{sec:largeCliques},~\ref{sec:(1:2)triangle} and~\ref{sec:cycle}), and the following trivial
observation.

\begin{observation}
For any two graphs $G,H$ and integer $b$, if Breaker can win the $(1:b)$ Maker-Breaker $H$-game played on
$V(G)$ by using a pairing strategy (with respect to $\Lambda$), then Waiter can win the $(1:b)$ Client-Waiter
$H$-game played on $V(G)$ by using the same pairing strategy. That is, Waiter offers each set of at most $b+1$
vertices from $\Lambda$ in one move, and offers all other vertices (if there are any) arbitrarily.
\end{observation}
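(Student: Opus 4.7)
The plan is to make explicit the almost trivial mimicry: a pairing strategy in Maker-Breaker games is defined by the \emph{outcome} it guarantees (the mover against the pairer ends up with at most one element from each pair/set in $\Lambda$), and Waiter can force exactly the same outcome in Client-Waiter games by offering each set of $\Lambda$ in a single move. Since Waiter's bias in the $(1:b)$ Client-Waiter game allows him to offer between $1$ and $b+1$ free elements per round, and since every $A_i\in \Lambda$ has size at most $b+1$, offering $A_i$ as a block is a legal Waiter move.

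First I would describe Waiter's strategy formally: as long as there exists some $A_i \in \Lambda$ all of whose elements are still free, Waiter offers $A_i$ as his next move; Client must then choose exactly one element of $A_i$, and the remaining at most $b$ elements of $A_i$ are automatically claimed by Waiter. Once every $A_i$ has either been fully offered in this way or (if $\Lambda$ does not cover all of $X$) only elements outside $\bigcup_i A_i$ remain free, Waiter offers all remaining free elements arbitrarily (in sets of size between $1$ and $b+1$) until the game ends.

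Next I would verify that this strategy is well defined (the sets $A_i$ are pairwise disjoint, so at the moment Waiter wants to offer $A_i$ either all elements of $A_i$ are free or $A_i$ has already been offered, and in the latter case Waiter simply moves on to the next one) and that by construction Client claims at most one element from each $A_i$. This is precisely the guarantee furnished by Breaker's pairing strategy against Maker with respect to $\Lambda$. Since Breaker's strategy was assumed to be winning, the hypergraph $(X, \F)$ of the $H$-game has the property that no subset $S \sbst X$ containing at most one element of each $A_i$ contains a winning set. Applying this property to $S := $ Client's final vertex set shows that Client has not built an $H$-copy, so Waiter wins.

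There is no real obstacle here; the only thing worth emphasizing is why the translation is legal in the $(1:b)$ setting, namely that the size restriction $|A_i|\leq b+1$ in the definition of a pairing strategy matches exactly the upper bound $b+1$ on the number of elements Waiter may offer per move in the $(1:b)$ Client-Waiter game.
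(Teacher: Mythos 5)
Your argument is correct and spells out the observation that the paper states without proof (explicitly calling it ``trivial''). One step you pass over quickly deserves a word: you deduce from ``Breaker can win with the pairing strategy'' that no set meeting each $A_i$ in at most one element contains a winning set. Strictly read, Breaker's pairing strategy also includes arbitrary completion moves when $|A_i| < b+1$ or when Maker plays outside $\bigcup_i A_i$, and those completions could in principle be doing work that your extracted hypergraph property does not capture. The standard convention -- and the one in force in every concrete pairing appearing in the paper, where $\Lambda$ is always chosen so that each $H$-copy contains both members of some pair -- is that the pairing guarantee alone suffices, which is exactly the property you use. With that understanding in place your proof is precisely the intended one, and Waiter's move sizes being bounded by $b+1$ (matching $|A_i|\le b+1$) is, as you note, the only point at which the bias enters.
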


We finish this section with the proof of Theorem~\ref{thm:CWtriangle}.

\begin{proof}[{Proof of Theorem~\ref{thm:CWtriangle}}]
We set $p = an^{-2/3}$ where $a < 1$ is undetermined at this point, and consider the $(1:1)$ Client-Waiter triangle-game played on $\ggnp$. We begin with Waiter's side, and so by Lemma~\ref{lem:BwinG'} we may focus on $G^*$, the $(K_3,1)$-core of $G$. By Observation~\ref{obs:playSeparately} we may consider each connected component of $G^*$ separately, and by Claim~\ref{cl:noNegativeExponent} we may assume that all families $\xqts$ for which $G^*$ contains at least one of their members satisfy $2q + s \le 3$. Recall Definition~\ref{def:zqts} of the families $\zqts$ of $(K_3,1)$-stable graphs obtained via greedy explorations. Lemma~\ref{lem:noDang} and Corollary~\ref{cor:exploreK3} imply that any connected component of $G^*$ not contained in any $\zqts$ is a $K_3$-cycle of length at least four, for which there exists a natural pairing strategy for Waiter (see Definition~\ref{def:K3cyclePairing}).

It follows that if Client has a winning strategy in the game, then he must have a winning strategy when playing on the vertex set of some $\GG \in \zqts$, where $q,t,s$ are three integers satisfying $2q + s \le 3$. In fact, the last condition may be replaced with $2q + s = 3$. Indeed, if $2q + s \le 2$ and $\GG \in \zqts$, then $\GG$ is either a ${\ttt}$ or a $DD_t$ by Claim~\ref{cl:2qsle2}. In either case, Waiter can apply the natural pairing strategy (see Definition~\ref{def:trioPairing} and Claim~\ref{cl:DDtPairing}). It remains to consider the members of the families $\zqts$ for which $2q + s = 3$. As $s \ge 1$, there are only two cases to consider: $q = s = 1$ and $q = 0,~s = 3$. Claim~\ref{cl:11is30} shows that it suffices to consider only the latter case, that is, we only have to consider $\z03$. The next natural step is to bound the size of this family.

\begin{claim}\label{cl:y0t3}
    $|\z 03| \le 12 \binom {t+3}{2}$ holds for every integer $t$.
\end{claim}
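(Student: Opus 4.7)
The plan is to bound the number of greedy explorations that can produce elements of $\z 03$ and show that this number is at most $12\binom{t+3}{2}$. I would begin by observing that such an exploration has exactly $t+3$ steps, each adding either an external or internal $K_3$-copy, since $q=0$ forbids internal edges. By the greedy convention, step $1$ adds an external copy $H_1$ and step $2$ adds an internal copy $H_2$ sharing an edge with $H_1$, so $\GG_2$ is a diamond. Among the remaining steps $3, \ldots, t+3$ there are two more internal copies and $t-1$ external copies.

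Next I would argue that the external copies appearing between consecutive internal copies must form a $K_3$-chain. Indeed, if step $i$ is external, the exploration rule forces step $i+1$ to contain a new vertex of step $i$, so any maximal run of consecutive external copies extends the previous triangle as a $K_3$-chain (which is unique up to isomorphism since $K_3$ is a clique). Letting $i,j$ with $2<i<j\le t+3$ denote the positions of the second and third internal copies, the exploration decomposes into the initial diamond $\GG_2$ followed by three $K_3$-chain segments of lengths $a_1=i-3$, $a_2=j-i-1$, $a_3=t+3-j$, separated by the two additional internal copies $H_i, H_j$. The constraint $a_1+a_2+a_3=t-1$ with $a_k\ge 0$ yields $\binom{t+1}{2}$ ordered compositions, which is the main combinatorial factor.

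For each composition I would then bound the number of resulting graphs by a constant. Since chain length determines chain shape, only the attachment data remain: the vertex of $\GG_2$ (or of the previous segment) where each chain starts, and the edge where each of $H_i, H_j$ is placed. Stability of $\GG$ sharply restricts all of these. Each external chain ends in a triangle $uab$ whose two new vertices $a,b$ have degree $2$, so $\{a,b\}$ is an incipient bad pair which must be resolved in $\GG$; by the analysis underlying Claim~\ref{cl:2qsle2}, the only ways to do so are that the next segment extends from $a$ or $b$, or that the next internal copy is attached at the edge $ua$ or $ub$ (but not at $ab$). Tracking these constraints inductively after each block — analogously to the classification of $\ttt$- and $DD_t$-components — one checks that the attachment data for the three segments plus the two later internal copies admits at most $12$ possibilities. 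Combined with $\binom{t+1}{2}\le\binom{t+3}{2}$ this gives the claim.

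The main obstacle is the case analysis behind this $O(1)$ bound, because each internal copy plays two roles simultaneously: it must resolve the bad pair created by the preceding chain, and its own new vertex $w$ must not form a fresh bad pair with any existing degree-$2$ vertex. I would execute this by maintaining, after each block, the list of currently ``vulnerable'' pairs (adjacent degree-$2$ vertices, or degree-$3$ vertices sharing two internally-disjoint length-$2$ paths as in Claim~\ref{Ap:cl:generalizedTwins}), and by verifying that at each internal copy only a bounded number of edges avoid creating a bad pair that cannot be destroyed later. The stated constant $12$ then emerges from the product of the $O(1)$ choices available at the four attachment events (the starting vertex of the first chain, plus the two internal-copy placements, with the later chain attachment vertices already pinned by bad-pair resolution).
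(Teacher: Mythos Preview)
Your plan follows the same decomposition as the paper's proof, but it misidentifies where the entropy sits. The claim that the attachment data admit at most a constant number of possibilities per composition $(a_1,a_2,a_3)$ is false, and this is the gap.

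Concretely: take the first chain to start at one of the degree-$3$ vertices of the diamond, say $x$. This immediately resolves the only bad pair $\{x,y\}$ present in $\GG_2$. After adding the first later internal copy $H_i$ at an edge $u_{a_1-1}u_{a_1}$ of the last chain triangle, a direct check shows that the resulting subgraph contains \emph{no} bad pairs at all (every degree-$2$ vertex lies in a triangle one of whose other two vertices belongs to a further triangle not containing it). Since the exploration restriction applies only after an external step, the second chain is now completely unconstrained and may start at any of the $5+2a_1$ vertices present; for essentially every such choice the final graph (after the last internal copy at the end of chain two) is $(K_3,1)$-stable and lies in $\z 03$. These graphs have different degree sequences and are non-isomorphic, so there are $\Theta(a_1)$ of them for the single composition $(a_1,a_2,0)$ --- not $O(1)$.

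The paper's proof avoids this trap by not attempting to pin down the second chain via stability. It parametrizes only by $t_1$ (your $a_3$ is always zero by Corollary~\ref{cor:noExtrnlCps}, so the third segment is fictitious and your $\binom{t+1}{2}$ compositions collapse to $t$ real choices), openly allows all $5+2t_1$ vertices as the second chain's root, and then sums $\sum_{t_1=0}^{t-1} 12(t_1+3) \le 12\binom{t+3}{2}$. The quadratic growth comes from a linear-in-$t_1$ attachment count summed over linearly many values of $t_1$, not from a quadratic number of compositions each contributing a constant.
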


\begin{proof}
    Consider a graph $\GG \in \z 03$. By definition we start the
    exploration of $\GG$ greedily, with two triangles sharing an edge, $xyz_1$ and
    $xyz_2$. By
    Corollary~\ref{cor:noExtrnlCps} the remainder of the exploration
    goes as follows: $t_1$ additions of external triangles for some $0
    \le t_1 \le t-1$, an addition of an internal triangle $T_1$
    containing an existing edge $e_1$, the addition of $t_2 = t-1-t_1$
    external triangles, and finally the addition of an internal triangle
    $T_2$ containing an existing edge $e_2$. Furthermore, for $i = 1,2$,
    if $t_i > 0$ then $e_i$ must be part of the last external triangle
    that was added before it by the restriction of the exploration
    process.

    Given $t_1,t_2$, let us bound from above the number of graphs that can be
    created in this manner (some of them may be not stable), where all
    the arguments are done with respect to isomorphism. If $t_1 > 0$ then the
    first triangle in the chain contains either $x$ or $z_1$; there is
    only one option for the external chain; there are two options for
    $e_1$ whether $t_1 = 0$ or not; if $t_2 > 0$ there are $5 +2t_1$
    options for the existing vertex contained in the external chain, and
    then one option for the chain itself and by stability one for $e_2$;
    if $t_2 = 0$ there are $7 + 3t_1 $ options for $e_2$. All in all we
    can bound from above the number of graphs with $4(7 + 3t_1) \le
    12(t_1 + 3)$.  Hence we get
    \begin{equation*}
    |\z 03| \le \sum_{t_1 = 0}^{t-1} 12(t_1 + 3) \le 12\sum_{t_1 =
        0}^{t+2} t_1 = 12 \binom {t+3}{2}. \qedhere
    \end{equation*}
\end{proof}

We are now ready to prove Waiter's side. The arguments above show that Waiter wins the game if $G^*$ contains no members of $\z 03$. For every $\GG \in \z 03$ we get by Equation~(\ref{eq:xqts}) in the proof of Claim~\ref{cl:qsBound} that $\Pr[\Gamma \sbst G] \le a^{e(\Gamma)} = a^{3t +6}$. Using the fact that $\sum_{n=0}^\infty x^n = \frac 1{1-x}$ for any $|x| < 1$ we obtain that
$$\sum_{n=0}^\infty \binom n2 x^n = \frac 12{x^2}\sum_{n=0}^\infty n(n-1)
x^{n-2} = \frac 12{x^2}\frac {d^2}{dx^2}\left(\sum_{n=0}^\infty
x^{n}\right) = \frac 12{x^2}\frac {d^2}{dx^2}\left(\frac 1{1-x}
\right) = \frac {x^2}{(1-x)^3}$$
for every such $x$.

For any $a < 1$ the probability that $G^*$ contains a component $\GG \in \z 03$ (for any $t$) can therefore be bounded from above by
$$\sum_{t=1}^{\infty}\sum_{\GG \in \z 03}\Pr[\Gamma \sbst G] \le
\sum_{t=1}^{\infty} 12 \binom {t+3}{2} a^{3t +6} = 12
\sum_{t=4}^{\infty} \binom {t}{2} a^{3t-3}  \le 12
\sum_{t=0}^{\infty} \binom {t}{2} a^t = \frac {12a^2}{(1-a)^3}.$$

For $a = o(1)$  it follows that Waiter wins the game with
probability at least $1 - \frac {12a^2}{(1-a)^3} = 1 - o(1)$, and
thus Claim~$(1)$ of the theorem holds.

For every constant $a$ such that $\frac {12a^2}{(1-a)^3} < 1$ the probability Waiter wins is bounded away
from 0, thus Claim~$(2)$ of the theorem holds. In the terms of the theorem, one can set, for example, $c = 0.2$ (this is not the maximal choice for $c$) and $\alpha = 1/16$.

%
Since Client claims half the vertices of $G$ by the end of the game
no matter how he plays, Claim~$(4)$ of the theorem trivially holds
by Theorem~\ref{thm:RamseyVertex}.

It remains to prove Claim~$(3)$ of the theorem. For
this we describe a strictly balanced fixed graph with density $3/2$, to which we refer as a triple diamond, and show that if $G$ contains a copy of this
graph Client wins the game. The desired result then follows by
Theorem~\ref{thm:fixedGraphDist}. In the terms of the theorem, for any $d>0$ one can set $\beta = 1-e^{-d^{15}/16}$.

Before getting to the triple diamond, we first make some
observations and introduce new terminology. For any triangle $xyz$
in $G$, if at any point during the game Client claims the vertex $x$
while the vertices $y$ and $z$ have not been offered yet, we say
that the pair $\{y,z\}$ is \emph{forced} on Waiter, because if this
pair will not be offered later in the game, then at the moment the first vertex from the pair
is offered Client can abandon any other strategy, claim it, and then whenever the other vertex in the pair is offered claim it as well, and win.

Let $D$ be a diamond on the vertex set $\{x,y,z,w\}$, i.e., a copy of $K_4$ on this vertex set minus the edge $zw$. We say that $D$ is a \emph{winning diamond} if the
vertices $\{y,z,w\}$ have not been offered yet in the game, and in
addition either $x$ was already claimed by Client, or that a pair
$\{x,u\}$ is forced on Waiter for some vertex $u \not\in \{y,z,w\}$.
It is easy to see that the existence of such a diamond guarantees
Client's win in the game: assuming the pair $\{x,u\}$ will be
offered, Client can claim the vertices $x$, $y$ and either $z$ or
$w$, as $y$ cannot be offered in a pair with both of them. We now
define a triple diamond and show that if $G$ contains a copy of it then
Client has a winning strategy in the game.

A triple diamond is a diamond-chain of length 3, where each two consecutive diamonds intersect in a vertex of maximal degree in each diamond.
Let $H \sbst G$ be a triple diamond composed of the three diamonds
$D_L$, $D_M$ and $D_R$, with vertex sets $V_L =
\{x_1,y_1,z_1,z_2\}$, $V_M = \{x_1,x_2,w_1,w_2\}$, $V_R =
\{x_2,y_2,z_3,z_4\}$, respectively (see Figure~\ref{fig:DDD}).
Client's strategy goes as follows. He plays arbitrarily whenever the
pair offered to him is disjoint from $V(H)$. Let $\{v,u\}$ denote
the first pair offered to him which intersects $V(H)$. Client then
plays according to these four cases (all other options are
isomorphic to those described here).
\begin{enumerate}
    \item If $v = x_1$ Client claims $v$ and then either $D_L$ or $D_M$
    is a winning diamond (since $u$ belongs to at most one of them).
    \item Otherwise, if $v = y_1$ Client claims $v$. If $u \not\in V_L$
then $D_L$ is a winning diamond. If $u \in V_L$, then $u = z_1$, and
thus the pair $\{x_1,z_2\}$ is forced on Waiter, and so $D_M$ is a
winning diamond.
    \item Otherwise, if $v = w_1$ Client claims $v$, and the pair
    $\{x_1,x_2\}$ is then forced on Waiter. This means that either
    $D_L$ or $D_R$ is a winning diamond (since $u$ belongs to at most
    one of them).
    \item Otherwise, $v = z_1$ and $u \not\in V_M \cup \{y_1\}$. Client claims $v$,
    forces the pair $\{x_1,y_1\}$ on Waiter and $D_M$ becomes a winning
    diamond.\qedhere
\end{enumerate}

\iffigure
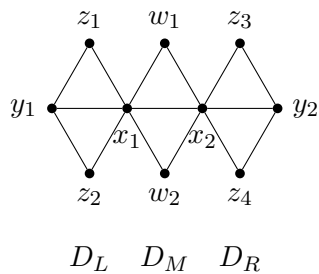
\begin{figure}
    \centering
    \begin{tikzpicture}[auto, vertex/.style={circle,draw=black!100,fill=black!100, thick,
        inner sep=0pt,minimum size=1mm}]
    \node (x1) at (-0.5,0) [vertex,label={[label distance=1mm]270:$x_1$}] {};
    \node (x2) at (0.5,0) [vertex,label={[label distance=1mm]270:$x_2$}] {};
    \node (y1) at (-1.5,0) [vertex,label=left:$y_1$] {};
    \node (y2) at (1.5,0) [vertex,label=right:$y_2$] {};
    \node (z1) at (-1,0.866) [vertex,label=above:$z_1$] {};
    \node (z2) at (-1,-0.866) [vertex,label=below:$z_2$] {};
    \node (z3) at (1,0.866) [vertex,label=above:$z_3$] {};
    \node (z4) at (1,-0.866) [vertex,label=below:$z_4$] {};
    \node (w1) at (0,0.866) [vertex,label=above:$w_1$] {};
    \node (w2) at (0,-0.866) [vertex,label=below:$w_2$] {};
    \node at (-1,-2) {$D_L$};
    \node at (0,-2) {$D_M$};
    \node at (1,-2) {$D_R$};

    \draw [-] (y1) --node[inner sep=0pt,swap]{} (x1);
    \draw [-] (y1) --node[inner sep=0pt,swap]{} (z1);
    \draw [-] (y1) --node[inner sep=0pt,swap]{} (z2);
    \draw [-] (x1) --node[inner sep=0pt,swap]{} (z1);
    \draw [-] (x1) --node[inner sep=0pt,swap]{} (z2);
    \draw [-] (x1) --node[inner sep=0pt,swap]{} (x2);
    \draw [-] (x1) --node[inner sep=0pt,swap]{} (w1);
    \draw [-] (x1) --node[inner sep=0pt,swap]{} (w2);
    \draw [-] (x2) --node[inner sep=0pt,swap]{} (w1);
    \draw [-] (x2) --node[inner sep=0pt,swap]{} (w2);
    \draw [-] (x2) --node[inner sep=0pt,swap]{} (y2);
    \draw [-] (x2) --node[inner sep=0pt,swap]{} (z3);
    \draw [-] (x2) --node[inner sep=0pt,swap]{} (z4);
    \draw [-] (y2) --node[inner sep=0pt,swap]{} (z3);
    \draw [-] (y2) --node[inner sep=0pt,swap]{} (z4);
    \end{tikzpicture}
    \caption{Triple Diamond}\label{fig:DDD}
\end{figure}

\fi

\end{proof}

\section{Concluding Remarks and Open Problems}\label{sec:concluding}
The main open problem raised in this paper is proving Breaker's side in Conjecture~\ref{conj:main}.  In the previous sections we provided two families of graphs for which Breaker's side of the conjecture holds. Our method was to first apply the general deletion algorithm that breaks down $G$ into small components with strong limitations on their structure, and then perform some case analysis on these components. We believe that the same can be done for any given graph $H$, although the case analysis might be very exhausting (as the cycle case shows). We also believe that the deletion algorithm, as described here, could be helpful in proving the conjecture in its general form. However, the specific case analysis should be replaced by some general argument about the possible structures of the surviving connected components, if there are any.


Another question concerns the sharpness of the threshold probabilities of $H$-games (see Chapter 1.2 in \cite{FrBook} for more information). We know that whenever Maker ``wins locally", i.e. wins as soon as some fixed graph appears in $G$, the threshold is coarse. This is the case for the unbiased triangle game and for forest games, which are not covered by Conjecture~\ref{conj:main}. From the discussion in Section~\ref{sec:constants}, we have that for any integer $b$ and
every $\eps > 0$ there exists a constant $k_0 := k_0(b,\varepsilon)$ such that for any $k
\ge k_0$, when playing the $(1:b)$ $K_k$-game on the vertex set of
$\ggnp$, Maker wins \whp for $p \ge (1 + \eps)n^{-2/k}$, and Breaker
wins \whp for $p \le (1 - \eps)n^{-2/k}$. Of course, ideally we could switch the order of the quantifiers in the above statement to establish a sharp threshold. More formally, we would like to know the following.

\begin{question}
Is there a constant $k_0$ such that for
every $k \ge k_0$ the $(1:b)$ Maker-Breaker $K_k$-game has a sharp
threshold at $p = n^{-2/k}$? If so, does $k_0$ equal $4$? Are there any other $H$-games with
a sharp threshold at $p = n^{-1/m_1(H)}$?
\end{question}

We conclude this section with a short discussion about the new type of positional games introduced in this paper. That is, given a graph $G$, the two players claim its vertices, where the outcome of the game is determined by  the subgraph of $G$ induced by the vertices claimed by one of the players (in some games the induced subgraphs of both players matter). We considered $H$-games, and it would be interesting to investigate other classical games such as the $k$-connectivity, the perfect-matching and the Hamiltonicity games.  Note that if the graph properties we consider are spanning, like in these examples, then  the whole nature of the game changes dramatically in comparison to the edge version of these games.

To begin with, the very meaning of the word ``spanning" is different. For instance, in the Maker-Breaker Hamiltonicity game, it is obviously impossible for Maker to claim a Hamilton cycle of $G$. Instead, Maker wins if at the end of the game the graph induced by the set of vertices he has claimed throughout the game is Hamiltonian. For the same reason, these games are not bias monotone -- claiming more vertices can harm both Maker and Breaker. Another difference of this sort, is the fact that for any given monotone increasing graph property $\mathcal P$, when playing on the edge set of a graph $G$,  the family of target sets $\mathcal T$ (either winning or losing) is closed upwards, that is, if $E(G_1)\in \mathcal T$ and $G_1\sbst G_2 \sbst G$, then $E(G_2)\in \mathcal T$. Clearly, this is not the case when playing on the vertex set of $G$ and $\mathcal P$ is a spanning graph property: returning to the Hamiltonicity example, if $U \sbst W \sbst V(G)$, there is no relation whatsoever between the Hamiltonicity of $G[U]$ and that of $G[W]$. Considering all this, it seems that these games might be hard to analyze (and perhaps it is not obvious how to even define them), and it is not clear whether their behavior is analogous in some sense to the behavior of their respective edge version games, like we have shown in this paper for $H$-games.

\subsection*{Acknowledgements}
The authors wish to thank Michael Krivelevich for suggesting the new setting of positional games introduced in this paper and for
helpful discussions.

\end{document}